\title{Arithmetic Siegel-Weil Formula on $\mathcal{X}_{0}(N)$}
\author{Baiqing Zhu}
\date{April 2023}
\begin{document}
\theoremstyle{definition}        
\newtheorem{definition}{Definition}[subsection] 
\newtheorem{example}[definition]{Example}
\newtheorem{remark}[definition]{Remark}

\theoremstyle{plain}
\newtheorem{theorem}[definition]{Theorem}
\newtheorem{lemma}[definition]{Lemma}         
\newtheorem{proposition}[definition]{Proposition}
\newtheorem{corollary}[definition]{Corollary}

\maketitle
\begin{abstract}
    We establish the arithmetic Siegel-Weil formula on the modular curve $\mathcal{X}_{0}(N)$ for arbitrary level $N$, i.e., we relate the arithmetic degrees of special cycles on $\mathcal{X}_{0}(N)$ to the derivatives of Fourier coefficients of a genus 2 Eisenstein series. We prove this formula by a precise identity between the local arithmetic intersection numbers on the Rapoport-Zink space associated to $\mathcal{X}_{0}(N)$ and the derivatives of local representation densities of quadratic forms. When $N$ is odd and square-free, this gives a different proof of the main results in the work of Sankaran, Shi and Yang \cite{SSY22}. This local identity is proved by relating it to an identity in one dimension higher, but at hyperspecial level.
\end{abstract}
\pagenumbering{roman}
\tableofcontents
\newpage
\pagenumbering{arabic}

\section{Introduction}
\subsection{Background}
The classical Siegel-Weil formula relates certain Siegel Eisenstein series to the arithmetic of quadratic forms, namely it expresses special values of these series as theta functions -- generating series of representation numbers of quadratic forms. Kudla initiated an influential program to establish the arithmetic Siegel-Weil formula relating certain Siegel Eisenstein series to objects in arithmetic geometry. 
\par 
In this article, we study the case of modular curves. Let $N$ be a positive integer, the classical modular curve $\mathcal{Y}_{0}(N)_{\mathbb{C}}$ over $\mathbb{C}$ is defined as the following smooth $1$-dimensional complex curve,
\begin{equation*}
    \mathcal{Y}_{0}(N)_{\mathbb{C}}\coloneqq\textup{GL}_{2}(\mathbb{Q})\backslash \mathbb{H}_{1}^{\pm}\times\textup{GL}_{2}(\mathbb{A}_{f})/\Gamma_{0}(N)(\hat{\mathbb{Z}})\simeq \Gamma_{0}(N)\backslash\mathbb{H}_{1}^{\pm},
\end{equation*}
where $\mathbb{H}_{1}^{\pm}=\mathbb{C}\backslash\mathbb{R}$ and $\mathbb{H}_{1}^{+}=\{z=x+iy\in\mathbb{C}:x\in\mathbb{R}, y\in\mathbb{R}_{>0}\}$ is the upper half plane. The group $\Gamma_{0}(N)(\hat{\mathbb{Z}})$ is the following open compact subgroup of $\textup{GL}_{2}(\mathbb{A}_{f})$,
\begin{equation*}
   \Gamma_{0}(N)(\hat{\mathbb{Z}}) = \left\{x=\begin{pmatrix}
    a & b\\
    Nc & d
    \end{pmatrix}\in \textup{GL}_{2}(\hat{\mathbb{Z}})\,\,:\,\,a,b,c,d\in\hat{\mathbb{Z}}\right\},
\end{equation*}
and $\Gamma_{0}(N)=\Gamma_{0}(N)(\hat{\mathbb{Z}})\bigcap \textup{GL}_{2}(\mathbb{Z})$.
\par
The smooth curve $\mathcal{Y}_{0}(N)_{\mathbb{C}}$ is not proper, its compactification $\mathcal{X}_{0}(N)_{\mathbb{C}} \coloneqq\mathcal{Y}_{0}(N)_{\mathbb{C}}\cup\{\textup{cusps}\}$ is a smooth projective curve over $\mathbb{C}$. It is a classical fact that the curve $\mathcal{Y}_{0}(N)_{\mathbb{C}}$ parameterizes cyclic isogenies between elliptic curves over $\mathbb{C}$, here an isogeny $\pi:E\rightarrow E^{\prime}$ between two elliptic curves over $\mathbb{C}$ is called cyclic if $\textup{ker}(\pi)$ is a cyclic group.
\par
Katz and Mazur \cite{KM85} extends the concept of cyclic isogeny to arbitrary base scheme, an isogeny $\pi:E\rightarrow E^{\prime}$ between two elliptic curves is called cyclic if $\textup{ker}(\pi)$ is a cyclic group scheme (cf. Definition \ref{cycdef}). They also defined the $\Gamma_{0}(N)$-level structures on elliptic curves. In this article, we will mainly work on a 2-dimensional regular flat Deligne-Mumford stack $\mathcal{X}_{0}(N)$, defined by Cesnavicius in \cite{Ces17}, which is the moduli stack of generalized elliptic curves with $\Gamma_{0}(N)$-level structures and whose fiber over $\mathbb{C}$ is $\mathcal{X}_{0}(N)_{\mathbb{C}}$. We define the (arithmetic) special cycles on $\mathcal{X}_{0}(N)$ and study their intersection numbers. Finally, we prove that these intersection numbers are identified with the derivatives of Fourier coefficients of certain Siegel Eisenstein series of genus 2.
\par
When $N$ is an odd, square-free positive integer, the relation has already been obtained in the work of Sankaran, Shi and Yang \cite[Theorem 2.14]{SSY22} by computing both sides explicitly based on the previous works of Yang \cite{Yn98} and Kudla, Rapoport and Yang\cite{KRY06}. In this article, we use a different method and work with arbitrary level $N$. We introduce a formal scheme $\mathcal{N}_{0}(N)$ which is the Rapoport-Zink space associated to $\mathcal{X}_{0}(N)$. Via formal uniformization of the supersingular locus of the stack $\mathcal{X}_{0}(N)$ and its special cycles, we reduce the identity, which relates intersection numbers on $\mathcal{X}_{0}(N)$ and derivatives of Fourier coefficients of Eisenstein series, to a local identity between local arithmetic intersection numbers on $\mathcal{N}_{0}(N)$ and derivatives of local densities of quadratic forms. Now the key observation is that, both sides of the local identity, regardless of the level $N$, can be related to another intersection problem on Rapoport-Zink space of 1 dimension higher, but in a hyperspecial level, while the computation of the latter has been done in the works of Gross and Keating \cite[Proposition 5.4]{GK93}, Wedhorn \cite[$\S$2.16]{Wed07} and Rapoport \cite[Theorem 1.1]{Rap07} (see also the work of Li and Zhang \cite[Theorem 1.2.1]{LZ22}).

\subsection{Summary of main results}
\subsubsection{Arithmetic Siegel-Weil formula on $\mathcal{X}_{0}(N)$}
Let $\Delta(N)$ be the following rank 3 quadratic lattice over $\mathbb{Z}$,
\begin{equation}
    \Delta(N) = \left\{x=\begin{pmatrix}
    -Na & b\\
    c & a
    \end{pmatrix}:\,\, a,b,c\in\mathbb{Z}\right\}
\end{equation}
equipped with the quadratic form $x\mapsto \textup{det}(x)$. 
\par
We use $v$ to denote a place of $\mathbb{Q}$. For every finite place $v$, let $\delta_{v}(N)=\Delta(N)\otimes_{\mathbb{Z}}\mathbb{Z}_{v}$ be a rank 3 quadratic lattice over $\mathbb{Z}_{v}$. Let $\mathbb{A}$ be the ring of ad$\grave{\textup{e}}$les over $\mathbb{Q}$. Let $\mathbb{V}=\{\mathbb{V}_{v}\}$ be the incoherent collection of quadratic spaces of $\mathbb{A}$ of rank 3 nearby $\Delta(N)$ at $\infty$, i.e.,
\begin{equation}
    \mathbb{V}_{v}=\delta_{v}(N)\otimes\mathbb{Q}_{v}\,\,\textup{if $v<\infty$, and $\mathbb{V}_{\infty}$ is positive definite.}
    \label{incoherent}
\end{equation}
\par
For a finite place $v$. Let $\mathbb{V}_{f}\coloneqq\mathbb{V}\otimes\mathbb{A}_{f}$ (resp. $\mathbb{V}_{f}^{v}\coloneqq\mathbb{V}\otimes\mathbb{A}_{f}^{v}$) be the quadratic space of rank 3 over $\mathbb{A}_{f}$ (resp. $\mathbb{A}_{f}^{v}$). Let $\mathscr{S}(\mathbb{V}^{2})$ (resp. $\mathscr{S}(\mathbb{V}_{f}^{2})$, $\mathscr{S}((\mathbb{V}_{f}^{v})^{2})$) be the space of Schwartz functions on $\mathbb{V}^{2}$ (resp. $\mathbb{V}_{f}^{2}$, $(\mathbb{V}_{f}^{v})^{2}$). Associated to $\tilde{\boldsymbol\varphi}=\boldsymbol\varphi\,\otimes\,\boldsymbol\varphi_{\infty}\in\mathscr{S}(\mathbb{V}^{2})$, where $\boldsymbol\varphi_{\infty}$ is the Gaussian function on $\mathbb{V}_{\infty}^{2}$ and $\boldsymbol\varphi\in\mathscr{S}(\mathbb{V}_{f}^{2})$, there is a classical incoherent Eisenstein series $E(\mathsf{z},s,\boldsymbol\varphi)$ (cf. \S\ref{inco}) on the Siegel upper half plane,
\begin{equation*}
    \mathbb{H}_{2}=\{\mathsf{z}=\mathsf{x}+i\mathsf{y}\,\,\vert\,\,\mathsf{x}\in\textup{Sym}_{2}(\mathbb{R}),\mathsf{y}\in\textup{Sym}_{2}(\mathbb{R})_{>0}\}.
\end{equation*}
This is essentially the Siegel Eisenstein series associated to a standard Siegel-Weil section of the degenerate principal series. The Eisenstein series here has a meromorphic continuation and a functional equation relating $s \leftrightarrow -s$.  The central value $E(\mathsf{z},s,\boldsymbol\varphi)=0$ by the incoherence. We thus consider its central derivative
\begin{equation*}
    \partial\textup{Eis}(\mathsf{z},\boldsymbol\varphi)\coloneqq\frac{\textup{d}}{\textup{d}s}\bigg\vert_{s=0}E(\mathsf{z},s,\boldsymbol\varphi).
\end{equation*}
Associated to the standard additive character $\psi:\mathbb{A}/\mathbb{Q}\rightarrow\mathbb{C}^{\times}$, it has a decomposition into the central derivatives of the Fourier coefficients
\begin{equation*}
    \partial\textup{Eis}(\mathsf{z},\boldsymbol\varphi) = \sum\limits_{T\in\textup{Sym}_{2}(\mathbb{Q})}\partial\textup{Eis}_{T}(\mathsf{z},\boldsymbol\varphi)
\end{equation*}
\par
On the geometric side, there is a regular integral model of the modular curve $\mathcal{Y}_{0}(N)_{\mathbb{C}}$ over $\mathbb{Z}$ defined by Katz and Mazur: for any scheme $S$, the groupoid $\mathcal{Y}_{0}(N)(S)$ consists of objects $(E\stackrel{\pi}\longrightarrow E^{\prime})$ where $E$, $E^{\prime}$ are elliptic curves over $S$ and $\pi$ is a cyclic isogeny such that $\pi^{\vee}\circ\pi=N$. They proved that $\mathcal{Y}_{0}(N)$ is 2-dimensional regular flat Deligne-Mumford stack (cf. \cite[Theorem 5.1.1]{KM85}), but $\mathcal{Y}_{0}(N)$ is not proper. There is a moduli stack $\mathcal{X}_{0}(N)$ defined in \cite{Ces17} which serves as a ``compactification'' of $\mathcal{Y}_{0}(N)$. It is a proper regular flat 2-dimensional Deligne-Mumford stack which contains $\mathcal{Y}_{0}(N)$ as an open substack, so we can consider the arithmetic intersection theory on $\mathcal{X}_{0}(N)$ following the lines in \cite{Gil09}.
\par
The key concept is that of a special cycle. A typical special cycle is of the following form: $\mathcal{Z}(T,\boldsymbol\varphi)$ where $T$ is a $2\times2$ symmetric matrix and $\boldsymbol\varphi\in\mathscr{S}(\mathbb{V}_{f}^{2})$ is the characteristic function of some open compact subset of $\mathbb{V}_{f}^{2}$. It is a Deligne-Mumford stack finite unramified over $\mathcal{X}_{0}(N)$. For an object $(E\stackrel{\pi}\longrightarrow E^{\prime})$ of $\mathcal{Y}_{0}(N)(S)$, the special cycle $\mathcal{Z}(T,\boldsymbol\varphi)$ parameterizes pairs of isogenies between $E$ and $E^{\prime}$ with inner product matrix $T$ and orthogonal to the cyclic isogeny $\pi$, along with some level structures given by the Schwartz function $\boldsymbol\varphi$ (cf. Definition \ref{spe}). For every nonsingular $T\in \textup{Sym}_{2}(\mathbb{Q})$ and prime number $l$, we say $T$ is represented by $\Delta(N)\otimes\mathbb{Q}_{l}$ if there exist two vectors $x_{1},x_{2}\in\Delta(N)\otimes\mathbb{Q}_{l}$ such that $T=\frac{1}{2}((x_{i},x_{j}))$. Define the following diffenence set,
\begin{align*}
    \text{Diff}(T,\Delta(N)) & = \{l \text{ is a finite prime}:T\text{ is not represented by $\Delta(N)\otimes\mathbb{Q}_{l}$} \}
\end{align*}
\par
Let $\widehat{\textup{CH}}^{2}_{\mathbb{C}}(\mathcal{X}_{0}(N))$ be the codimension 2 arithmetic Chow group with complex coefficients of the regular flat Deligne-Mumford stack $\mathcal{X}_{0}(N)$. We also construct arithmetic special cycles on the stack $\mathcal{X}_{0}(N)$, they are elements of the form $\widehat{\mathcal{Z}}(T,\mathsf{y},\boldsymbol\varphi)\in \widehat{\textup{CH}}^{2}_{\mathbb{C}}(\mathcal{X}_{0}(N))$ where $T\in\textup{Sym}_{2}(\mathbb{Q})$ is nonsingular, $\mathsf{y}\in\textup{Sym}_{2}(\mathbb{R})$ is positive definite and $\boldsymbol{\varphi}\in\mathscr{S}(\mathbb{V}_{f}^{2})$ is a Schwartz function. Let $\boldsymbol{\varphi}\in\mathscr{S}(\mathbb{V}_{f}^{2})$ be a $T$-admissible Schwartz function, which roughly means $\boldsymbol\varphi$ is invariant under the action of $\Gamma_{0}(N)(\hat{\mathbb{Z}})$ and for every $p\in\text{Diff}(T,\Delta(N))$, $\boldsymbol\varphi=\boldsymbol\varphi^{p}\otimes\boldsymbol\varphi_{p}$ where $\boldsymbol\varphi^{p}\in\mathscr{S}((\mathbb{V}_{f}^{p})^{2})$ and $\boldsymbol\varphi_{p}=c\cdot\boldsymbol{1}_{\delta_{p}(N)^{2}}\in\mathscr{S}(\mathbb{V}_{p}^{2})$ for some $c\in\mathbb{C}$. Our main goal is to relate $\widehat{\textup{deg}}(\widehat{\mathcal{Z}}(T,\mathsf{y},\boldsymbol\varphi))$ to derivatives of the Fourier coefficients of a genus 2 Siegel Eisenstein series, where $\widehat{\textup{deg}}: \widehat{\textup{CH}}^{2}_{\mathbb{C}}(\mathcal{X}_{0}(N))\rightarrow\mathbb{C}$ is the arithmetic degree map (cf. (\ref{degreemap})).
\begin{theorem}
Let $N$ be a positive integer. Let $T\in \textup{Sym}_{2}(\mathbb{Q})$ be a nonsingular symmetric matrix. Let $\boldsymbol{\varphi}\in\mathscr{S}(\mathbb{V}_{f}^{2})$ be a $T$-admissible Schwartz function, then
\begin{equation*}
     \widehat{\textup{deg}}(\widehat{\mathcal{Z}}(T,\mathsf{y},\boldsymbol{\varphi}))q^{T}= \frac{\psi(N)}{24}\cdot\partial\textup{Eis}_{T}(\mathsf{z},\boldsymbol\varphi),
\end{equation*}
for any $\mathsf{z}=\mathsf{x}+i\mathsf{y}\in\mathbb{H}_{2}$. Here $\psi(N)= N\cdot\prod\limits_{l\vert N}(1+l^{-1})$, $q^{T}=e^{2\pi i \,\textup{tr}\,(T\mathsf{z})}$.
\label{MainGlobal}
\end{theorem}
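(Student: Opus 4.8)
The plan is to match the two sides of the identity place by place and thereby reduce Theorem \ref{MainGlobal} to a single local statement at one prime. Since $\mathbb{V}$ is incoherent, the local Hasse invariants $\epsilon_v(\mathbb{V}_v)$ multiply to $-1$, so for nonsingular $T$ the set of places at which $T$ is \emph{not} represented by $\mathbb{V}_v$ has odd cardinality; using that $\mathbb{V}_\infty$ is positive definite of rank $3$ together with the product formula for Hilbert symbols, one sees that $\mathbb{V}_\infty$ represents $T$ exactly when $T$ is positive definite. Hence $\text{Diff}(T,\Delta(N))$ has odd cardinality if $T>0$ and even cardinality otherwise. When $|\text{Diff}(T,\Delta(N))|\geq 2$, or when $T$ is not positive definite, the special cycle $\mathcal{Z}(T,\boldsymbol\varphi)$ is empty over every field — it can meet the supersingular locus in characteristic $p$ only when $\text{Diff}(T,\Delta(N))=\{p\}$ — so $\widehat{\mathcal{Z}}(T,\mathsf{y},\boldsymbol\varphi)$ is represented purely by Kudla's Green current, and the desired identity in these cases is the archimedean arithmetic Siegel--Weil formula combined with Kudla's analysis of the nonsingular Fourier coefficients of incoherent Eisenstein series; in particular $\partial\textup{Eis}_T(\mathsf{z},\boldsymbol\varphi)=0$ whenever $|\text{Diff}(T,\Delta(N))|\geq 2$, because then $E_T(\mathsf{z},s,\boldsymbol\varphi)$ vanishes to order $\geq|\text{Diff}(T,\Delta(N))|\geq 2$ at $s=0$. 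The essential case is therefore $T>0$ with $\text{Diff}(T,\Delta(N))=\{p\}$ for a single prime $p$.

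In that case $\mathcal{Z}(T,\boldsymbol\varphi)$ is concentrated in, and purely vertical over, the supersingular locus of $\mathcal{X}_0(N)_{\mathbb{F}_p}$. The next step is to apply the formal uniformization of this supersingular locus and of the special cycles lying over it: this rewrites $\widehat{\textup{deg}}(\widehat{\mathcal{Z}}(T,\mathsf{y},\boldsymbol\varphi))$ as the archimedean term coming from the Green current, plus $\log p$ times a weighted sum — over the relevant double coset space of the definite orthogonal (equivalently quaternionic) group attached to the $\mathbb{Q}$-rational ternary space $V^{(p)}$ that agrees with $\mathbb{V}_v$ at every $v\neq p$ — of local arithmetic intersection numbers on the Rapoport--Zink space $\mathcal{N}_0(N)$, the weights being orbital integrals of the prime-to-$p$ Schwartz function $\boldsymbol\varphi^{p}$, with stacky points contributing through the usual $|\textup{Aut}|^{-1}$ factors. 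Since $\boldsymbol\varphi$ is $T$-admissible, its component at $p$ is a multiple of $\boldsymbol{1}_{\delta_p(N)^2}$, so the only local intersection number that occurs is the universal one, $\textup{Int}_p(T)$, attached to a $\mathbb{Z}_p$-lattice in $\mathbb{V}_p$ with Gram matrix $T$.

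On the analytic side, the $T$-th Fourier coefficient of $E(\mathsf{z},s,\boldsymbol\varphi)$ factors as a product of local Whittaker functions; as $p\in\text{Diff}(T,\Delta(N))$, one has $W_{T,p}(0)=0$ while $W_{T,v}(0)\neq 0$ for $v\neq p$, so $\partial\textup{Eis}_T(\mathsf{z},\boldsymbol\varphi)=W'_{T,p}(0)\cdot\prod_{v\neq p}W_{T,v}(0)$. Up to normalization, the product over finite $v\neq p$ is a product of local representation densities which, by the classical Siegel--Weil formula for the coherent space $V^{(p)}$, equals exactly the orbital-integral weight appearing on the geometric side, while $W_{T,\infty}$ reproduces the Green-current term via Kudla's computation of the archimedean local Whittaker function against his Green current. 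Carrying out this bookkeeping — including pinning down the global constant $\psi(N)/24$, which is (up to the archimedean normalization) the arithmetic volume of $\mathcal{X}_0(N)$, i.e.\ the degree of its Hodge bundle, and can be checked by specializing to $N=1$ and invoking \cite{SSY22} or by evaluating one convenient coefficient — reduces Theorem \ref{MainGlobal} to the purely local identity
\[
  \textup{Int}_p(T)\cdot\log p \;=\; c_p\cdot W'_{T,p}(0),
\]
where $W'_{T,p}(0)$ is the derivative at $s=0$ of the local Whittaker function (equivalently, the local representation density) attached to $T$ and the lattice $\delta_p(N)$, $c_p$ is an explicit constant, and $T$ ranges over the nonsingular positive definite matrices with $\text{Diff}(T,\Delta(N))=\{p\}$.

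To prove this local identity I would follow the strategy announced in the introduction: adjoin to the pair of special endomorphisms underlying $\textup{Int}_p(T)$ one further special vector which records the $\Gamma_0(N)$-structure, turning the intersection problem on $\mathcal{N}_0(N)$ into a triple-intersection problem on a Rapoport--Zink space of one higher dimension that now carries a hyperspecial level structure; on the analytic side the matching recursion between local densities (of Kitaoka type) replaces $W'_{T,p}(0)$ by the derivative of the local density of a lattice of rank $3$. On that higher, hyperspecial space, the arithmetic intersection number of the three divisors attached to a rank-$3$ lattice has been computed by Gross--Keating \cite{GK93}, Wedhorn \cite{Wed07} and Rapoport \cite{Rap07}, and equals the derivative of the corresponding local density by the work of Li--Zhang \cite{LZ22}; transporting this equality back down gives the local identity with the correct constant. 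I expect the main obstacle to be precisely this local identity and the dimension shift underlying it: correctly matching the lattices and the $\text{Diff}$-conditions in ranks $2$ and $3$, controlling the behaviour at the primes $p\mid N$ where $\mathcal{N}_0(N)$ carries a nontrivial Drinfeld level structure and a non-reduced special fibre, and keeping track of all powers of $p$ and of the archimedean factors so that the constants assemble into $\psi(N)/24$.
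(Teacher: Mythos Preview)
Your overall strategy matches the paper's: reduce to the case $T>0$ with $\textup{Diff}(T,\Delta(N))=\{p\}$, use formal uniformization to express $\widehat{\textup{deg}}$ as a weighted sum of local intersection numbers on $\mathcal{N}_0(N)$, match the weights to Whittaker functions at $v\neq p$ via the classical local Siegel--Weil formula, and reduce everything to a local identity at $p$ which is then proved by passing to one dimension higher at hyperspecial level and invoking Gross--Keating/Wedhorn/Rapoport.

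Two points where your sketch diverges from what actually happens. First, when $T>0$ the arithmetic cycle is $(\mathcal{Z}(T,\boldsymbol\varphi),0)$ with \emph{zero} Green current, so there is no additive archimedean term on the geometric side; the archimedean Whittaker function $W_{T,\infty}$ enters only as a multiplicative factor in $\partial\textup{Eis}_T$ and is absorbed into the constant. Second, and more seriously, your phrase ``adjoin one further special vector which records the $\Gamma_0(N)$-structure'' is too naive. Adjoining $x_0$ to $M$ gives the special cycle $\mathcal{Z}^\sharp(x_0)$ on the hyperspecial space $\mathcal{N}$, but $\mathcal{N}_0(N)$ is \emph{not} $\mathcal{Z}^\sharp(x_0)$; it is the \emph{difference divisor} $\mathcal{D}(x_0)=\mathcal{Z}^\sharp(x_0)-\mathcal{Z}^\sharp(p^{-1}x_0)$ (this identification $\mathcal{N}_0(N)\simeq\mathcal{D}(x_0)$, proved via the cyclicity criterion for isogenies and a regularity argument, is the key geometric input). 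Consequently the local identity is not a single triple-intersection equality but a \emph{difference} of two:
\[
\textup{Int}_{\mathcal{N}_0(N)}(M)=\textup{Int}^\sharp(M\obot\langle x_0\rangle)-\textup{Int}^\sharp(M\obot\langle p^{-1}x_0\rangle),
\]
and one must prove a matching difference formula $\partial\textup{Den}^+_{\Delta(N)}(M)=\partial\textup{Den}^+(M\obot\langle N\rangle)-\partial\textup{Den}^+(M\obot\langle p^{-2}N\rangle)$ on the analytic side. Without this difference structure the dimension shift does not close up. You correctly anticipate that this is where the main obstacle lies, but the specific mechanism---difference divisors on both sides---is what you are missing.
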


\subsubsection{The local arithmetic Siegel-Weil formula with level $N$} 
Fix a prime number $p$. Let $\mathbb{F}$ be the algebraic closure of $\mathbb{F}_{p}$. Let $W$ be the integer ring of the completion of the maximal unramified extension of $\mathbb{Q}_{p}$. 
\par
On the geometry side, let $\mathbb{X}$ be a $p$-divisible group over $\mathbb{F}$ of dimension 1 and height 2. Let $\mathbb{B}$ be the unique division quaternion algebra over $\mathbb{Q}_{p}$, then $\textup{End}^{0}(\mathbb{X})\simeq\mathbb{B}$ as quadratic spaces. The Rapoport-Zink space associated to $\mathcal{X}_{0}(N)$ is the following deformation space $\mathcal{N}_{0}(N)$: for a $W$-scheme $S$ where $p$ is locally nilpotent, and an element $x_{0}\in\mathbb{B}$ such that $x_{0}^{\vee}\circ x_{0}=N$, the set $\mathcal{N}_{0}(N)(S)$ consists of elements $(X\stackrel{\pi}\longrightarrow X^{\prime})$ where $X$, $X^{\prime}$ are deformations over $S$ of $\mathbb{X}$ with certain restrictions on polarizations (cf. $\S$\ref{locdeform}), the morphism $\pi$ is a cyclic isogeny deforming $x_{0}$ and $\pi^{\vee}\circ\pi=N$.
\par
Let $\mathbb{W}=\{x_{0}\}^{\bot}\subset\mathbb{B}$ be the subspace of quasi-isogenies which are orthogonal to $x_{0}$. For any $x\in\mathbb{W}$, there is a closed formal subscheme $\mathcal{Z}(x)$ of $\mathcal{N}_{0}(N)$ over which the quasi-isogeny $x$ lifts to an isogeny. This is an example of special cycle (cf. Definition \ref{esp2}) on $\mathcal{N}_{0}(N)$. For a rank $2$ lattice $M\subset\mathbb{W}$, we choose a $\mathbb{Z}_{p}$-basis $\{x_{1},x_{2}\}$ of $M$, then define the local arithmetic intersection number of $M$ on $\mathcal{N}_{0}(N)$ to be
\begin{equation*}
    \textup{Int}_{\mathcal{N}_{0}(N)}(M) = \chi(\mathcal{N}_{0}(N),\mathcal{O}_{\mathcal{Z}(x_{1})}\otimes^{\mathbb{L}}_{\mathcal{O}_{\mathcal{N}_{0}(N)}}\mathcal{O}_{\mathcal{Z}(x_{2})}).
\end{equation*}
This number is independent of the choice of the basis $\{x_{1},x_{2}\}$ of $M$.
\par
On the analytic side, for any two integral quadratic $\mathbb{Z}_{p}$-lattices $L$ and $M$. Let $\textup{Rep}_{M,L}$ be the scheme of integral representations, an $\mathbb{Z}_{p}$-scheme such that for any $\mathbb{Z}_{p}$-algebra $R$, $\textup{Rep}_{M,L}(R)=\textup{QHom}(L\otimes_{\mathbb{Z}_{p}}R, M\otimes_{\mathbb{Z}_{p}}R)$, where $\textup{QHom}$ denotes the set of quadratic module homomorphisms. The local density of integral representations is defined to be
\begin{equation*}
    \textup{Den}(M,L)=\lim\limits_{d\rightarrow\infty}\frac{\#\textup{Rep}_{M,L}(\mathbb{Z}_{p}/p^{d})}{p^{d\cdot \textup{dim}(\textup{Rep}_{M,L})_{\mathbb{Q}_{p}}}}.
\end{equation*}
\par
Let $H_{2}^{+}=\mathbb{Z}_{p}^{2}$ be the rank 2 quadratic $\mathbb{Z}_{p}$-lattice equipped with the quadratic form $q_{H_{2}^{+}}(x,y)=xy$. For any $k\geq0$, let $H_{2k}^{+}\coloneqq (H_{2}^{+})^{\obot n}$ be a rank $2k$ quadratic $\mathbb{Z}_{p}$-lattice. For any $\mathbb{Z}_{p}$-lattice $M\subset\mathbb{W}$ of rank $2$, define the local density of $M$ with level $N$ to be the polynomial $\textup{Den}^{+}_{\Delta(N)}(X, M)$ such that for all $k\geq0$, 
\begin{equation}
    \textup{Den}^{+}_{\Delta(N)}(X,M)\,\big\vert_{X=p^{-k}} =\left\{\begin{aligned}
    &\frac{\textup{Den}(\delta_{p}(N)\obot H_{2k}^{+},M)}{\textup{Nor}^{+}(p^{-k},1)}, & \textup{when $p\,\vert\, N$};\\
    &\frac{\textup{Den}(\delta_{p}(N)\obot H_{2k}^{+},M)}{\textup{Nor}^{(N,p)_{p}}(p^{-k},2)}, & \textup{when $p\nmid N$}.
    \end{aligned}\right.
    \label{pcase}
\end{equation}
where $(\cdot,\cdot)_{p}$ is the Hilbert symbol at $p$, the polynomials $\textup{Nor}^{\varepsilon}(X,n)$ are normalizing factors defined in Definition \ref{normal1}. Then $\textup{Den}^{+}_{\Delta(N)}(1,M)=0$ since $M$ can't be isometrically embedded into the quadratic lattice $\delta_{p}(N)$, we define the derived local density of $M$ with level $N$ to be
\begin{equation*}
    \partial\textup{Den}^{+}_{\Delta(N)}(M) = -\frac{\textup{d}}{\textup{d}X}\bigg|_{X=1}\textup{Den}^{+}_{\Delta(N)}(X,M).
\end{equation*}
\par
The local arithmetic Siegel-Weil formula with level $N$ is an exact identity between the two integers just defined.
\begin{theorem}
    Let $M\subset\mathbb{W}$ be a $\mathbb{Z}_{p}$-lattice of rank 2. Then
\begin{equation*}
    \textup{Int}_{\mathcal{N}_{0}(N)}(M) = \partial\textup{Den}^{+}_{\Delta(N)}(M).
\end{equation*}
\label{MainLocal}
\end{theorem}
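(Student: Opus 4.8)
The plan is to relate \emph{both} sides of the asserted identity to the analogous arithmetic intersection problem on a Rapoport--Zink space of one dimension higher that carries a hyperspecial level structure, where the intersection number has been computed by Gross--Keating \cite[Proposition 5.4]{GK93}, Wedhorn \cite[$\S$2.16]{Wed07} and Rapoport \cite[Theorem 1.1]{Rap07}, and where its comparison with the derived local density is the Kudla--Rapoport type identity of Li--Zhang \cite[Theorem 1.2.1]{LZ22}. First I would localize: the formal scheme $\mathcal{N}_{0}(N)$, the number $\textup{Int}_{\mathcal{N}_{0}(N)}(M)$ and the polynomial $\textup{Den}^{+}_{\Delta(N)}(X,M)$ depend on $N$ only through $a\coloneqq v_{p}(N)$ and the Hilbert symbol $(N,p)_{p}$, so one may assume $N=p^{a}$ or $p\nmid N$. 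When $p\nmid N$ one has $\mathcal{N}_{0}(N)\cong\mathcal{M}$ (the isogeny $\pi$ being automatically an isomorphism, as $N\in\mathbb{Z}_{p}^{\times}$) and the statement is the classical hyperspecial-level identity for a rank $2$ lattice in the anisotropic ternary space $\mathbb{W}$; the case $p\mid N$ is where the argument is new.

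For the geometric side I would argue as follows. Let $\mathcal{M}$ be the hyperspecial-level Rapoport--Zink space parametrizing deformations of $\mathbb{X}$, a regular formal scheme of relative dimension $1$ over $W$, and form $\mathcal{M}\times_{\textup{Spf}\,W}\mathcal{M}$, of relative dimension $2$. For $v\in\mathbb{B}=\textup{Hom}^{0}(\mathbb{X},\mathbb{X})$ let $\mathcal{Z}(v)\subset\mathcal{M}\times_{\textup{Spf}\,W}\mathcal{M}$ be the locus where the quasi-isogeny $v$ deforms to an isogeny; these are relative Cartier divisors of the kind studied in \cite{Rap07}. The element $x_{0}$ (with $\textup{Nrd}(x_{0})=N$) gives the divisor $\mathcal{Z}(x_{0})$, and the key point is that $\mathcal{N}_{0}(N)$ is identified with $\mathcal{Z}(x_{0})$: a cyclic isogeny $\pi$ deforming $x_{0}$ with $\pi^{\vee}\circ\pi=N$ is precisely a lift of $x_{0}$ to an isogeny, the cyclicity and the norm condition being automatic by rigidity of quasi-isogenies and the structure of $\mathbb{X}$. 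Under this identification the special cycle $\mathcal{Z}(x)$ on $\mathcal{N}_{0}(N)$, for $x\in\mathbb{W}=\{x_{0}\}^{\bot}$, is the restriction of $\mathcal{Z}(x)$ on $\mathcal{M}\times_{\textup{Spf}\,W}\mathcal{M}$, so that for a basis $x_{1},x_{2}$ of $M$
\begin{equation*}
    \textup{Int}_{\mathcal{N}_{0}(N)}(M)=\chi\bigl(\mathcal{M}\times_{\textup{Spf}\,W}\mathcal{M},\ \mathcal{O}_{\mathcal{Z}(x_{0})}\otimes^{\mathbb{L}}\mathcal{O}_{\mathcal{Z}(x_{1})}\otimes^{\mathbb{L}}\mathcal{O}_{\mathcal{Z}(x_{2})}\bigr)\eqqcolon\textup{Int}_{\mathcal{M}\times\mathcal{M}}(L),
\end{equation*}
where $L\coloneqq\mathbb{Z}_{p}x_{0}\obot M$ is a rank $3$ lattice in $\mathbb{B}$ --- a hyperspecial-level intersection number one dimension higher. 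By \cite[Proposition 5.4]{GK93} (see also \cite[$\S$2.16]{Wed07}, \cite[Theorem 1.1]{Rap07}) this equals the value given by the Gross--Keating formula, and by \cite[Theorem 1.2.1]{LZ22} it equals the suitably normalized derived local density $\partial\textup{Den}(\Lambda,L)$ of $L$ into the maximal lattice $\Lambda\subset\mathbb{B}$. Here I use that $\mathbb{W}=\{x_{0}\}^{\bot}$ is isometric to $\delta_{p}(N)\otimes\mathbb{Q}_{p}=\mathbb{V}_{p}$, both being anisotropic ternary $\mathbb{Q}_{p}$-quadratic spaces with the prescribed discriminant and Hasse invariant, and that $\Lambda\cap\mathbb{W}$ is commensurable with $\delta_{p}(N)$ in a way I can make explicit.

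It then remains to prove the purely analytic identity $\partial\textup{Den}(\Lambda,L)=\partial\textup{Den}^{+}_{\Delta(N)}(M)$. Both sides are $-\tfrac{\textup{d}}{\textup{d}X}\big|_{X=1}$ of normalized representation-counting polynomials which vanish at $X=1$, because $M$ does not embed into $\delta_{p}(N)$, equivalently $L$ does not embed into $\Lambda$; so it suffices to compare, for all $k\geq0$,
\begin{equation*}
    \textup{Den}(\Lambda\obot H_{2k}^{+},\ \mathbb{Z}_{p}x_{0}\obot M)\quad\text{and}\quad\textup{Den}(\delta_{p}(N)\obot H_{2k}^{+},\ M)
\end{equation*}
up to the normalizing factors of Definition \ref{normal1}. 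Factoring a representation of $\mathbb{Z}_{p}x_{0}\obot M$ into $\Lambda\obot H_{2k}^{+}$ as a representation of the single vector $x_{0}$ followed by a representation of $M$ into the orthogonal complement of its image --- a lattice close to $\delta_{p}(N)\obot H_{2k}^{+}$ --- and computing the density of representing $x_{0}$ together with the attendant Gram-matrix bookkeeping should produce exactly $\textup{Nor}^{+}(p^{-k},1)$ when $p\mid N$ and $\textup{Nor}^{(N,p)_{p}}(p^{-k},2)$ when $p\nmid N$; this is precisely how those factors are defined in (\ref{pcase}). Differentiating at $X=1$ and combining with the geometric reduction above then gives $\textup{Int}_{\mathcal{N}_{0}(N)}(M)=\partial\textup{Den}^{+}_{\Delta(N)}(M)$.

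The hard part will be making the geometric identification exact at the bad prime $p\mid N$: controlling the cyclicity of the universal isogeny and the polarization data so that the three structure sheaves, hence their derived tensor product and its Euler characteristic, agree on the nose, and then checking that the geometric normalization so obtained is compatible with the analytic normalization built into the $\textup{Nor}$ factors, so that no spurious constant survives in the final identity. A secondary difficulty is the quadratic-lattice bookkeeping in the last step across the cases $p\mid N$ versus $p\nmid N$ and ramified versus split.
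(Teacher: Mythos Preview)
Your strategy is the right one---reduce both sides to the hyperspecial identity one dimension higher---but the central geometric step contains a genuine error. You assert that $\mathcal{N}_{0}(N)$ is identified with the full special divisor $\mathcal{Z}(x_{0})\subset\mathcal{M}\times_{\textup{Spf}\,W}\mathcal{M}$, claiming that cyclicity of a lift of $x_{0}$ is automatic. This is false as soon as $\nu_{p}(N)\geq 2$. Indeed, in that range there is a closed immersion $\mathcal{Z}^{\sharp}(p^{-1}x_{0})\hookrightarrow\mathcal{Z}^{\sharp}(x_{0})$ obtained by post-composing a lift of $p^{-1}x_{0}$ with multiplication by $p$; over this sublocus the universal lift of $x_{0}$ factors through $[p]$, so its kernel contains $\ker([p])$ and is \emph{not} cyclic. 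Thus $\mathcal{N}_{0}(N)$ is a proper closed subscheme of $\mathcal{Z}^{\sharp}(x_{0})$, and your formula $\textup{Int}_{\mathcal{N}_{0}(N)}(M)=\textup{Int}_{\mathcal{M}\times\mathcal{M}}(\mathbb{Z}_{p}x_{0}\obot M)$ overcounts.

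What the paper does instead is identify $\mathcal{N}_{0}(N)$ with the \emph{difference divisor} $\mathcal{D}(x_{0})$, cut out by $d_{x_{0}}=f_{x_{0}}/f_{p^{-1}x_{0}}$ (Theorem~\ref{keydecomgeo}); this requires showing $\mathcal{D}(x_{0})$ is regular (Theorem~\ref{regularity}) and that the universal isogeny over it does not factor through $[p]$, hence is cyclic by Corollary~\ref{corcyc}. The short exact sequence $0\to\mathcal{O}_{\mathcal{Z}^{\sharp}(p^{-1}x_{0})}\to\mathcal{O}_{\mathcal{Z}^{\sharp}(x_{0})}\to\mathcal{O}_{\mathcal{D}(x_{0})}\to 0$ then yields the \emph{difference formula}
\[
\textup{Int}_{\mathcal{N}_{0}(N)}(M)=\textup{Int}^{\sharp}(M\obot\mathbb{Z}_{p}x_{0})-\textup{Int}^{\sharp}(M\obot\mathbb{Z}_{p}\cdot p^{-1}x_{0}),
\]
and a matching difference formula is proved on the analytic side (Theorem~\ref{anadiff}): $\partial\textup{Den}^{+}_{\Delta(N)}(M)=\partial\textup{Den}^{+}(M\obot\langle N\rangle)-\partial\textup{Den}^{+}(M\obot\langle p^{-2}N\rangle)$. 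Your proposed analytic step, factoring representations of $\mathbb{Z}_{p}x_{0}\obot M$ through the image of $x_{0}$, is essentially the content of Theorem~\ref{anadecom}, but it produces a sum over all $i\leq[\nu_{p}(N)/2]$ of terms involving $\delta_{p}(\pi^{-2i}N)$, not a single term; the normalizing factors do not absorb this. So on both sides the correction term for $p^{-1}x_{0}$ (equivalently $p^{-2}N$) is essential, and your argument as written only goes through when $\nu_{p}(N)\leq 1$.
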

We refer to $\textup{Int}_{\mathcal{N}_{0}(N)}(M)$ as the geometric side of the identity (related to the geometry of Rapoport–Zink spaces and Shimura varieties) and $\partial\textup{Den}^{+}_{\Delta(N)}(M)$ the analytic side (related to the derivatives of Eisenstein series and $L$-functions).

\subsubsection{Formal uniformization}
For any prime $p$, let $\mathcal{X}_{0}(N)_{\mathbb{F}_{p}}^{ss}$ be the supersingular locus of the stack $\mathcal{X}_{0}(N)$, i.e., those $\mathbb{F}$-points of $\mathcal{X}_{0}(N)$ which is isogenous to a supersingular elliptic curve. Let $B$ be the unique quaternion algebra which ramifies exactly at $p$ and $\infty$. Let $\hat{\mathcal{X}}_{0}(N)/_{(\mathcal{X}_{0}(N)_{\mathbb{F}_{p}}^{ss})}$ be the completion of the stack $\mathcal{X}_{0}(N)$ along the closed substack $\mathcal{X}_{0}(N)_{\mathbb{F}_{p}}^{ss}$. Let $\Gamma_{0}(N)(\hat{\mathbb{Z}}^{p})$ be the group $\prod\limits_{v\neq\infty,p}\Gamma_{0}(N)(\mathbb{Z}_{v})$. We have the following formal uniformization theorem of the stack $\mathcal{X}_{0}(N)$.
\begin{proposition}
There is an isomorphism of formal stacks over $W$,
\begin{equation*}
    \hat{\mathcal{X}}_{0}(N)/_{(\mathcal{X}_{0}(N)_{\mathbb{F}_{p}}^{ss})}\underset{\sim}{\stackrel{\Theta_{\mathcal{X}_{0}(N)}}\longrightarrow} B^{\times}(\mathbb{Q})_{0}\backslash[\mathcal{N}_{0}(N)\times  \textup{GL}_{2}(\mathbb{A}_{f}^{p})/\Gamma_{0}(N)(\hat{\mathbb{Z}}^{p})]
\end{equation*}
where $B^{\times}(\mathbb{Q})_{0}$ is the subgroup of $B^{\times}(\mathbb{Q})$ consisting of elements whose norm has $p$-adic valuation 0.
\end{proposition}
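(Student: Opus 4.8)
The plan is to run the classical argument for $p$-adic uniformization of the supersingular (basic) locus --- going back to Deuring for modular curves, and to Drinfeld and Rapoport--Zink in general --- adapted to the $\Gamma_{0}(N)$-moduli problem defining $\mathcal{X}_{0}(N)$. First I would reduce everything to the deformation theory of $p$-divisible groups. A N\'eron polygon has smooth part of multiplicative type, so the cusps of $\mathcal{X}_{0}(N)$ lie in the ordinary locus and do not meet $\mathcal{X}_{0}(N)_{\mathbb{F}_{p}}^{ss}$; hence an $\mathbb{F}$-point of $\mathcal{X}_{0}(N)_{\mathbb{F}_{p}}^{ss}$ is an honest cyclic isogeny $\pi\colon E\rightarrow E^{\prime}$ of supersingular elliptic curves over $\mathbb{F}$ with $\pi^{\vee}\circ\pi=N$. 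Writing $N=p^{a}M$ with $p\nmid M$, the isogeny $\pi$ canonically decomposes as $\pi_{p}\times\pi^{(p)}$ into its $p$-primary and prime-to-$p$ parts, where $\pi_{p}$ is a morphism of $p$-divisible groups $E[p^{\infty}]\rightarrow E^{\prime}[p^{\infty}]$ with cyclic kernel and $\pi_{p}^{\vee}\circ\pi_{p}=p^{a}$. By the Serre--Tate theorem, deforming $(E,E^{\prime},\pi)$ over an Artinian local $W$-algebra is the same as deforming $(E[p^{\infty}],E^{\prime}[p^{\infty}],\pi_{p})$, the prime-to-$p$ Tate module and $\pi^{(p)}$ being rigid; this is exactly the local moduli problem of $\mathcal{N}_{0}(N)$ once a framing is fixed, and the relevant structural inputs (including the behaviour of the non-\'etale $\Gamma_{0}(p^{a})$-datum when $p\mid N$) are provided by \cite{KM85} and \cite{Ces17}.

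Next I would construct $\Theta_{\mathcal{X}_{0}(N)}$ on $\mathbb{F}$-points and identify the relevant group. Fix a supersingular elliptic curve $\bar{E}_{0}/\mathbb{F}$ with $\textup{End}^{0}(\bar{E}_{0})=B$ together with an isomorphism $\bar{E}_{0}[p^{\infty}]\cong\mathbb{X}$, so that $\textup{End}(\mathbb{X})=\mathcal{O}_{\mathbb{B}}$, the maximal order of $\mathbb{B}=B\otimes_{\mathbb{Q}}\mathbb{Q}_{p}$. Any two supersingular elliptic curves over $\mathbb{F}$ are linked by an isogeny of degree prime to $p$ (Deuring; connectedness of the supersingular $\ell$-isogeny graph for some $\ell\neq p$), so for $(E,E^{\prime},\pi)$ an $\mathbb{F}$-point of $\mathcal{X}_{0}(N)_{\mathbb{F}_{p}}^{ss}$ one may choose a quasi-isogeny $\rho\colon E\rightarrow\bar{E}_{0}$ whose $p$-component is an isomorphism $E[p^{\infty}]\cong\mathbb{X}$; this produces (i) at $p$, a framed point of $\mathcal{N}_{0}(N)(\mathbb{F})$ carrying the cyclic $p$-isogeny, and (ii) away from $p$, via the prime-to-$p$ Tate module together with $\pi^{(p)}$, a coset in $\textup{GL}_{2}(\mathbb{A}_{f}^{p})/\Gamma_{0}(N)(\hat{\mathbb{Z}}^{p})$. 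Two such choices of $\rho$ differ by $g\in\textup{Aut}^{0}(\bar{E}_{0})^{\times}=B^{\times}(\mathbb{Q})$ whose $p$-component must then be an automorphism of $\mathbb{X}$, i.e. $g_{p}\in\mathcal{O}_{\mathbb{B}}^{\times}$, equivalently the reduced norm of $g$ has $p$-adic valuation $0$: this is exactly the subgroup $B^{\times}(\mathbb{Q})_{0}$, acting on $\mathcal{N}_{0}(N)$ through $g_{p}\in\textup{Aut}(\mathbb{X})$ (change of framing) and on $\textup{GL}_{2}(\mathbb{A}_{f}^{p})/\Gamma_{0}(N)(\hat{\mathbb{Z}}^{p})$ through its prime-to-$p$ component. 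One then checks that the induced map on $\mathbb{F}$-points
\[
\mathcal{X}_{0}(N)_{\mathbb{F}_{p}}^{ss}(\mathbb{F})\;\longrightarrow\;B^{\times}(\mathbb{Q})_{0}\backslash\bigl[\mathcal{N}_{0}(N)(\mathbb{F})\times\textup{GL}_{2}(\mathbb{A}_{f}^{p})/\Gamma_{0}(N)(\hat{\mathbb{Z}}^{p})\bigr]
\]
is a bijection which respects automorphism groups --- surjectivity from Deuring, injectivity and well-definedness from the preceding discussion of $\rho$.

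Finally I would promote this to an isomorphism of formal stacks over $W$. By the first step, $\hat{\mathcal{X}}_{0}(N)/_{(\mathcal{X}_{0}(N)_{\mathbb{F}_{p}}^{ss})}$ pro-represents the formal deformation functor of its $\mathbb{F}$-points, which the Serre--Tate theorem identifies with the deformation functor attached to the right-hand side --- pro-represented by the formal structure of $\mathcal{N}_{0}(N)$, the factor $\textup{GL}_{2}(\mathbb{A}_{f}^{p})/\Gamma_{0}(N)(\hat{\mathbb{Z}}^{p})$ being discrete and the $B^{\times}(\mathbb{Q})_{0}$-action on the product being sufficiently discrete (finite stabilizers) that the quotient is a formal Deligne--Mumford stack. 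The equivalence of groupoids on $\mathbb{F}$-points from the second step then upgrades to the desired isomorphism $\Theta_{\mathcal{X}_{0}(N)}$, compatible with the universal objects on both sides and with the generic fibres. \textbf{Main obstacle.} The substantive work lies in the second step and its interaction with the first: carefully matching the rigidified/framed deformation problem defining $\mathcal{N}_{0}(N)$ (with its cyclic $p$-isogeny datum) against the framed version of the $\mathcal{X}_{0}(N)$-problem, pinning the ambiguity group down to exactly $B^{\times}(\mathbb{Q})_{0}$ --- the valuation-$0$ condition being forced by the choice of $\rho_{p}$ as an honest isomorphism onto $\mathbb{X}$ --- and verifying that Serre--Tate genuinely respects the non-\'etale $\Gamma_{0}(p^{a})$-structure, together with the bookkeeping of automorphism groups needed to make the statement hold for stacks rather than merely coarse spaces.
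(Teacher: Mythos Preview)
Your outline is correct and would yield the proposition, but the paper takes a shorter route. Rather than running the Rapoport--Zink argument directly for $\mathcal{X}_{0}(N)$, the paper first invokes the already-known uniformization of the ambient stack $\mathcal{H}=\mathcal{E}ll\times_{\mathbb{Z}}\mathcal{E}ll$ (Theorem~\ref{H}, which is the standard Rapoport--Zink theorem at hyperspecial level), and then restricts along the closed immersion $\hat{\mathcal{Y}}_{0}(N)/_{(\mathcal{Y}_{0}(N)_{\mathbb{F}_{p}}^{ss})}\hookrightarrow\hat{\mathcal{H}}/_{(\mathcal{H}_{\mathbb{F}_{p}}^{ss})}$ coming from $(E\stackrel{\pi}\to E')\mapsto(E,E')$. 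Under $\Theta_{\mathcal{H}}$, a point of the substack corresponds to $(z,(g,g'))$ with $z\in\mathcal{N}_{0}(N)$ and $g'$ determined by $g$ via the commutative square forcing $\eta'^{p}\circ V^{p}(\pi)\circ(\eta^{p})^{-1}=w_{N}$; the identification of the level group as $\Gamma_{0}(N)(\hat{\mathbb{Z}}^{p})=\textup{GL}_{2}(\hat{\mathbb{Z}}^{p})\cap w_{N}^{-1}\textup{GL}_{2}(\hat{\mathbb{Z}}^{p})w_{N}$ then falls out of the requirement that both $k_{1}$ and $w_{N}k_{1}w_{N}^{-1}$ lie in $\textup{GL}_{2}(\hat{\mathbb{Z}}^{p})$.

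The trade-off: the paper's argument offloads all of the genuine Serre--Tate and deformation-theoretic content to the citation for $\mathcal{H}$, so what remains is pure bookkeeping with level structures and the matrix $w_{N}$. Your direct approach is more self-contained and makes the appearance of $B^{\times}(\mathbb{Q})_{0}$ (via $\rho_{p}\in\textup{Aut}(\mathbb{X})=\mathcal{O}_{\mathbb{B}}^{\times}$) transparent, but it forces you to redo the stacky Rapoport--Zink machinery and, as you note, to verify by hand that Serre--Tate transports the Katz--Mazur cyclic-$p^{a}$-isogeny condition correctly --- exactly the issue the paper sidesteps by working inside $\mathcal{H}$, where the cyclicity condition is already packaged into the closed immersion.
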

The proposition was previously known only in the case that $N$ is odd and square-free (see the work of Kim \cite[Theorem 4.7]{Kim18} for the case that $p\nmid N$, and the work of Oki \cite[Theorem 6.1]{Oki20} for the case that $p\,\vert\, N$). As a corollary, let $\hat{\mathcal{Z}}^{ss}(T,\boldsymbol{\varphi})$ be the completion of $\mathcal{Z}(T,\boldsymbol{\varphi})$ along its supersingular locus $\mathcal{Z}^{ss}(T,\boldsymbol{\varphi})\coloneqq \mathcal{Z}(T,\boldsymbol{\varphi})\times_{\mathcal{X}_{0}(N)}\mathcal{X}_{0}(N)_{\mathbb{F}_{p}}^{ss}$. Let $\Delta(N)^{(p)}$ be the unique quadratic space over $\mathbb{Q}$ (up to isometry) such that: 1. It is positive definite at $\infty$; 2. For finite prime $l\neq p$, $\Delta(N)^{(p)}\otimes\mathbb{Q}_{l}$ is isometric to $\delta_{l}(N)\otimes\mathbb{Q}_{l}$; 3. $\Delta(N)^{(p)}\otimes\mathbb{Q}_{p}$ is isometric to $\mathbb{W}$. For a pair of vectors $\boldsymbol{x}=(x_{1},x_{2})\in (\Delta(N)^{(p)})^{2}$, let $T(\boldsymbol{x})=(\frac{1}{2}(x_{i},x_{j}))$ be the inner product matrix. We have the following formal uniformization theorem of the special cycle $\mathcal{Z}(T,\boldsymbol{\varphi})$.
\begin{corollary}
Let $T\in\textup{Sym}_{2}(\mathbb{Q})$ be a nonsingular symmetric matrix, and $\textup{Diff}(T,\Delta(N))=\{p\}$. Let $\boldsymbol{\varphi}\in\mathscr{S}(\mathbb{V}_{f}^{2})$ be a $T$-admissible Schwartz function. Let $K_{0}^{\prime}(\hat{\mathcal{X}}_{0}(N)/_{(\mathcal{X}_{0}(N)_{\mathbb{F}_{p}}^{ss})})$ be the Grothendieck group of coherent sheaves of $\mathcal{O}_{\hat{\mathcal{X}}_{0}(N)/_{(\mathcal{X}_{0}(N)_{\mathbb{F}_{p}}^{ss})}}$-modules. Then we have the following identity in $K_{0}^{\prime}(\hat{\mathcal{X}}_{0}(N)/_{(\mathcal{X}_{0}(N)_{\mathbb{F}_{p}}^{ss})})$,
\begin{equation*}
    \hat{\mathcal{Z}}^{ss}(T,\boldsymbol{\varphi}) = \sum\limits_{\substack{\boldsymbol{x}\in B^{\times}(\mathbb{Q})_{0}\backslash (\Delta(N)^{(p)})^{2}\\ T(\boldsymbol{x})= T}}\sum\limits_{g\in B^{\times}_{\boldsymbol{x}}(\mathbb{Q})_{0}\backslash \textup{GL}_{2}(\mathbb{A}_{f}^{p})/\Gamma_{0}(N)(\hat{\mathbb{Z}}^{p})}\boldsymbol\varphi(g^{-1}\boldsymbol{x})\cdot \Theta_{\mathcal{X}_{0}(N)}^{-1}(\mathcal{Z}(\boldsymbol{x}),g),
\end{equation*}
where $B^{\times}_{\boldsymbol{x}}\subset B^{\times}$ is the stabilizer of $\boldsymbol{x}\in(\Delta(N)^{(p)})^{2}$.
\label{uniZ}
\end{corollary}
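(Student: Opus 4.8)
The plan is to deduce the statement from the formal uniformization theorem of the stack $\mathcal{X}_{0}(N)$ stated above, by transporting the moduli description of the special cycle $\mathcal{Z}(T,\boldsymbol{\varphi})$ through the isomorphism $\Theta_{\mathcal{X}_{0}(N)}$. First I would record that the supersingular locus $\mathcal{X}_{0}(N)_{\mathbb{F}_{p}}^{ss}$ contains no cusp: a cusp corresponds to a generalized elliptic curve whose smooth locus is a torus, hence is ordinary. Consequently, on a formal neighborhood of $\mathcal{X}_{0}(N)_{\mathbb{F}_{p}}^{ss}$ the stack $\mathcal{X}_{0}(N)$ coincides with the Katz--Mazur stack $\mathcal{Y}_{0}(N)$, so that $\hat{\mathcal{Z}}^{ss}(T,\boldsymbol{\varphi})$ is governed by the honest moduli problem of Definition \ref{spe}: over an object $(E\stackrel{\pi}\longrightarrow E^{\prime})$ it parametrizes pairs $\boldsymbol{y}=(y_{1},y_{2})$ of isogenies between $E$ and $E^{\prime}$ with inner product matrix $T$, orthogonal to $\pi$, together with the prime-to-$p$ level data recorded by $\boldsymbol{\varphi}^{p}$ and the $p$-level data recorded by $\boldsymbol{\varphi}_{p}$.

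Next I would run the uniformization at the level of the cycle. Fix an $\mathbb{F}$-point $\xi$ of $\mathcal{X}_{0}(N)_{\mathbb{F}_{p}}^{ss}$. The framing built into $\Theta_{\mathcal{X}_{0}(N)}$ identifies the rational endomorphism algebra of the supersingular elliptic curves underlying $\xi$ with the definite quaternion algebra $B$ ramified exactly at $p$ and $\infty$, the cyclic isogeny at $\xi$ with the fixed element $x_{0}$, and the $p$-divisible group with $\mathbb{X}$, so that the local deformation functor becomes $\mathcal{N}_{0}(N)$; moreover the prime-to-$p$ Tate-module data becomes a point of $\textup{GL}_{2}(\mathbb{A}_{f}^{p})/\Gamma_{0}(N)(\hat{\mathbb{Z}}^{p})$, and $B^{\times}(\mathbb{Q})_{0}$ acts by changing the framing. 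Under this dictionary, an isogeny $y_{i}$ orthogonal to $x_{0}$ that carries the prescribed level data amounts to a vector $x_{i}$ of the global quadratic space $\Delta(N)^{(p)}$ --- which by construction is positive definite at $\infty$, agrees with $\mathbb{V}_{f}^{p}$ away from $p$, and agrees with $\mathbb{W}$ at $p$ --- subject to two conditions: at $p$, the quasi-isogeny $x_{i}$ lifts to the deformation, i.e. the point lies on the special cycle $\mathcal{Z}(x_{i})\subset\mathcal{N}_{0}(N)$ of Definition \ref{esp2}; and away from $p$, the vector $g^{-1}x_{i}$ lies in the support of $\boldsymbol{\varphi}^{p}$. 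Consequently $\Theta_{\mathcal{X}_{0}(N)}$ carries $\hat{\mathcal{Z}}^{ss}(T,\boldsymbol{\varphi})$ to the image in the quotient of $\bigsqcup_{\boldsymbol{x},g}\boldsymbol{\varphi}(g^{-1}\boldsymbol{x})\cdot\bigl(\mathcal{Z}(\boldsymbol{x})\times\{g\}\bigr)$, where $\boldsymbol{x}=(x_{1},x_{2})$ runs over pairs in $(\Delta(N)^{(p)})^{2}$ with $T(\boldsymbol{x})=T$, where $g$ runs over $\textup{GL}_{2}(\mathbb{A}_{f}^{p})/\Gamma_{0}(N)(\hat{\mathbb{Z}}^{p})$, and where $\mathcal{Z}(\boldsymbol{x})$ denotes the special cycle attached to the pair $\boldsymbol{x}$, i.e. the derived intersection $\mathcal{Z}(x_{1})\otimes^{\mathbb{L}}_{\mathcal{O}_{\mathcal{N}_{0}(N)}}\mathcal{Z}(x_{2})$. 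Unwinding the diagonal $B^{\times}(\mathbb{Q})_{0}$-action organizes this collection into $B^{\times}(\mathbb{Q})_{0}$-orbits of $\boldsymbol{x}$ with inner sum over the double coset $B^{\times}_{\boldsymbol{x}}(\mathbb{Q})_{0}\backslash\textup{GL}_{2}(\mathbb{A}_{f}^{p})/\Gamma_{0}(N)(\hat{\mathbb{Z}}^{p})$, and yields precisely the double sum in the statement. Finally, because the special cycles on both sides are defined by the same derived-tensor recipe and $\Theta_{\mathcal{X}_{0}(N)}$ is an isomorphism of formal stacks, this identification of formal stacks upgrades termwise to the asserted identity of classes in $K_{0}^{\prime}(\hat{\mathcal{X}}_{0}(N)/_{(\mathcal{X}_{0}(N)_{\mathbb{F}_{p}}^{ss})})$.

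The main obstacle is the bookkeeping that makes all of this rigorous. Finiteness of the two sums must be established: nonsingularity of $T$ forces $B^{\times}_{\boldsymbol{x}}$ to be finite (a subgroup of the units of a definite quaternion order fixing two linearly independent vectors), while compactness of $\textup{supp}(\boldsymbol{\varphi})$ together with discreteness of $B^{\times}(\mathbb{Q})_{0}$ makes both the orbit sum and the double-coset sum finite; and the hypothesis $\textup{Diff}(T,\Delta(N))=\{p\}$ is what guarantees that $\{\boldsymbol{x}\in(\Delta(N)^{(p)})^{2}:T(\boldsymbol{x})=T\}$ is nonempty, so that the right-hand side is the correct object and the supersingular contribution at $p$ is the only one. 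The genuinely delicate step, however, is matching the level structures at the primes $l\mid N$, where the level is non-maximal, and at $p$: one must verify that the moduli-theoretic $\Gamma_{0}(N)$-level data entering the definition of $\mathcal{Z}(T,\boldsymbol{\varphi})$ translates, under the Tate-module dictionary, exactly into the condition ``$g^{-1}\boldsymbol{x}\in\textup{supp}(\boldsymbol{\varphi}^{p})$'' for the prime-to-$p$ component together with the integrality condition carved out by the moduli problem $\mathcal{N}_{0}(N)$ at $p$. Here the $T$-admissibility of $\boldsymbol{\varphi}$, in particular $\boldsymbol{\varphi}_{p}=c\cdot\boldsymbol{1}_{\delta_{p}(N)^{2}}$, is exactly what makes the $p$-component of $\boldsymbol{\varphi}$ compatible with the lattice model $\mathcal{N}_{0}(N)$, so that the weight $\boldsymbol{\varphi}(g^{-1}\boldsymbol{x})$ can be read off from the prime-to-$p$ part alone (up to the constant $c$), as the statement requires.
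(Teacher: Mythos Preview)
Your proposal is correct and follows essentially the same approach as the paper: both deduce the corollary directly from the formal uniformization isomorphism $\Theta_{\mathcal{X}_{0}(N)}$ by transporting the moduli description of $\mathcal{Z}(T,\boldsymbol{\varphi})$---special endomorphisms orthogonal to the cyclic isogeny become vectors $\boldsymbol{x}\in(\Delta(N)^{(p)})^{2}$, the lifting condition at $p$ becomes $z\in\mathcal{Z}(\boldsymbol{x})$, and the prime-to-$p$ level data becomes $g^{-1}\boldsymbol{x}\in\textup{supp}(\boldsymbol{\varphi}^{p})$. Your write-up is in fact more thorough than the paper's terse proof in spelling out the finiteness of the double sum, the role of $T$-admissibility at $p$, and the level-structure matching at primes dividing $N$.
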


\subsection{Strategy of the proof of main results}
\subsubsection{Difference formula at the geometric side}
Let $\mathcal{N}$ be the following deformation functor: for a $W$-scheme $S$ where $p$ is locally nilpotent, the set $\mathcal{N}(S)$ consists of elements $(X, X^{\prime})$ where both $X$ and $X^{\prime}$ are deformations over $S$ of $\mathbb{X}$ with certain restrictions on polarizations (cf. $\S$\ref{locdeform}). For a nonzero integral element $x\in\mathbb{B}$, i.e., $0\leq\nu_{p}(x^{\vee}\circ x)<\infty$, there is a closed formal subscheme $\mathcal{Z}^{\sharp}(x)$ of $\mathcal{N}$ over which the quasi-isogeny $x$ lifts to an isogeny. This is an example of special cycle (cf. Definition \ref{specialcycle}) on $\mathcal{N}$. \par
For a rank $3$ lattice $L\subset\mathbb{B}$, we choose a $\mathbb{Z}_{p}$-basis $\{x_{1},x_{2},x_{3}\}$ of $L$, then define the local arithmetic intersection number of $L$ on $\mathcal{N}$ to be
\begin{equation*}
    \textup{Int}^{\sharp}(L) = \chi(\mathcal{N},\mathcal{O}_{\mathcal{Z}^{\sharp}(x_{1})}\otimes^{\mathbb{L}}_{\mathcal{O}_{\mathcal{N}}}\mathcal{O}_{\mathcal{Z}^{\sharp}(x_{2})}\otimes^{\mathbb{L}}_{\mathcal{O}_{\mathcal{N}}}\mathcal{O}_{\mathcal{Z}^{\sharp}(x_{3})}).
\end{equation*}
This number is independent of the choice of the basis $\{x_{1},x_{2},x_{3}\}$ of the lattice $L$. 
\par
The special cycle $\mathcal{Z}^{\sharp}(x)$ is cut out by a single element $f_{x}\in\mathfrak{m}=(p,t_{1},t_{2})\subset W[[t_{1},t_{2}]]$, and when $\nu_{p}(x^{\vee}\circ x)\geq2$, $f_{p^{-1}x}\vert f_{x}$. We define $d_{x}=f_{x}/f_{p^{-1}x} \in W[[t_{1},t_{2}]]$ when $\nu_{p}(x^{\vee}\circ x)\geq2$, $d_{x}=f_{x}$ when $\nu_{p}(x^{\vee}\circ x)=0$ or $1$. The following divisor
\begin{equation*}
    \mathcal{D}(x)\coloneqq\textup{Spf}\,W[[t_{1},t_{2}]]/d_{x},
\end{equation*}
is called the difference divisor associated to $x$ (cf. Definition \ref{diff}), which is originally introduced by Terstiege in \cite{Ter11}.
\par
Fix $x_{0}\in\mathbb{B}$ such that $x_{0}^{\vee}\circ x_{0}=N$, recall that we have defined the deformation function $\mathcal{N}_{0}(N)$. In Theorem \ref{keydecomgeo}, we prove that $\mathcal{N}_{0}(N)$ is identified with the difference divisor $\mathcal{D}(x_{0})$, i.e., there is an isomorphism of formal schemes,
\begin{equation*}
    \mathcal{D}(x_{0})\stackrel{\sim}\longrightarrow \mathcal{N}_{0}(N).
\end{equation*}
Let $x_{0}^{\textup{univ}}:X^{\textup{univ}}\rightarrow X^{\prime\textup{univ}}$ be the universal isogeny deforming $x_{0}$ over the special cycle $\mathcal{Z}^{\sharp}(x_{0})$. We will prove that the base change of $x_{0}^{\textup{univ}}$ to $\mathcal{D}(x_{0})$ is cyclic, therefore there is a natural morphism $\mathcal{D}(x_{0})\rightarrow \mathcal{N}_{0}(N)$. The natural morphism is an isomorphism because both sides of the morphism are closed formal subschemes of $\mathcal{N}$ and are represented by 2-dimensional regular local rings. The identification of $\mathcal{D}(x_{0})$ and $\mathcal{N}_{0}(N)$ implies the following difference formula of local arithmetic intersection numbers,
\begin{theorem}
For any rank 2 lattice $M\subset\mathbb{W}$, the following identity holds,
\begin{equation*}
    \textup{Int}_{\mathcal{N}_{0}(N)}(M) = 
    \textup{Int}^{\sharp}(M\obot\mathbb{Z}_{p}\cdot x_{0})-\textup{Int}^{\sharp}(M\obot\mathbb{Z}_{p}\cdot p^{-1}x_{0}).
\end{equation*}\label{intro1}
\end{theorem}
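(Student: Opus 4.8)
The plan is to reduce the identity to a bookkeeping computation in $K$-theory on the ambient formal scheme $\mathcal{N}\cong\textup{Spf}\,W[[t_{1},t_{2}]]$, the only essential geometric input being the identification $\mathcal{D}(x_{0})\xrightarrow{\sim}\mathcal{N}_{0}(N)$ of Theorem \ref{keydecomgeo}. Write $i\colon\mathcal{N}_{0}(N)=\mathcal{D}(x_{0})\hookrightarrow\mathcal{N}$ for the resulting closed immersion. The first point I would record is that the special cycles are compatible with $i$: for $x\in\mathbb{W}$ the closed formal subscheme $\mathcal{Z}(x)\subset\mathcal{N}_{0}(N)$ equals the scheme-theoretic fibre product $\mathcal{Z}^{\sharp}(x)\times_{\mathcal{N}}\mathcal{N}_{0}(N)$, since both are cut out by the single condition that the quasi-isogeny $x$ lift to an isogeny, a condition not referring to the auxiliary cyclic isogeny recorded by a point of $\mathcal{N}_{0}(N)$. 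Fix a $\mathbb{Z}_{p}$-basis $\{x_{1},x_{2}\}$ of $M$; since $M\subset\mathbb{W}=\{x_{0}\}^{\bot}$ and $x_{0}^{\vee}\circ x_{0}=N\neq0$, the set $\{x_{1},x_{2},x_{0}\}$ is a $\mathbb{Z}_{p}$-basis of $M\obot\mathbb{Z}_{p}\cdot x_{0}$ and $\{x_{1},x_{2},p^{-1}x_{0}\}$ one of $M\obot\mathbb{Z}_{p}\cdot p^{-1}x_{0}$. When $\nu_{p}(N)\leq1$ the vector $p^{-1}x_{0}$ is not integral, so $\mathcal{Z}^{\sharp}(p^{-1}x_{0})=\emptyset$ and $\textup{Int}^{\sharp}(M\obot\mathbb{Z}_{p}\cdot p^{-1}x_{0})=0$; this matches $d_{x_{0}}=f_{x_{0}}$, i.e.\ $\mathcal{D}(x_{0})=\mathcal{Z}^{\sharp}(x_{0})$, in that range, and the desired identity is then immediate from the compatibility just stated. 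So I may assume $\nu_{p}(N)\geq2$.

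Next I would rewrite the left-hand side as an intersection number on $\mathcal{N}$. Because $\mathbb{W}$ is anisotropic, $M$ is non-degenerate, every nonzero element of $M$ has nonzero norm, and each $\mathcal{Z}(x_{j})$ is a proper effective Cartier divisor on the two-dimensional regular formal scheme $\mathcal{N}_{0}(N)$; equivalently, the image of $f_{x_{j}}$ in the (regular, hence integral) local rings of $\mathcal{N}_{0}(N)=\textup{Spf}\,W[[t_{1},t_{2}]]/d_{x_{0}}$ is a non-zero-divisor, so $\mathcal{O}_{\mathcal{Z}^{\sharp}(x_{j})}\otimes^{\mathbb{L}}_{\mathcal{O}_{\mathcal{N}}}\mathcal{O}_{\mathcal{D}(x_{0})}$ is concentrated in degree $0$ and coincides with $i_{*}\mathcal{O}_{\mathcal{Z}(x_{j})}$. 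Using this together with the exactness of $i_{*}$, the associativity of $\otimes^{\mathbb{L}}$, and the projection formula, one obtains
\[
\textup{Int}_{\mathcal{N}_{0}(N)}(M)=\chi\big(\mathcal{N}_{0}(N),\,\mathcal{O}_{\mathcal{Z}(x_{1})}\otimes^{\mathbb{L}}_{\mathcal{O}_{\mathcal{N}_{0}(N)}}\mathcal{O}_{\mathcal{Z}(x_{2})}\big)=\chi\big(\mathcal{N},\,\mathcal{O}_{\mathcal{Z}^{\sharp}(x_{1})}\otimes^{\mathbb{L}}_{\mathcal{O}_{\mathcal{N}}}\mathcal{O}_{\mathcal{Z}^{\sharp}(x_{2})}\otimes^{\mathbb{L}}_{\mathcal{O}_{\mathcal{N}}}\mathcal{O}_{\mathcal{D}(x_{0})}\big).
\]

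Finally I would feed in the divisor factorisation. By Definition \ref{diff}, on $\mathcal{N}=\textup{Spf}\,W[[t_{1},t_{2}]]$ one has $f_{x_{0}}=d_{x_{0}}\cdot f_{p^{-1}x_{0}}$, whence a short exact sequence of $\mathcal{O}_{\mathcal{N}}$-modules
\[
0\longrightarrow\mathcal{O}_{\mathcal{Z}^{\sharp}(p^{-1}x_{0})}\xrightarrow{\,\cdot\, d_{x_{0}}\,}\mathcal{O}_{\mathcal{Z}^{\sharp}(x_{0})}\longrightarrow\mathcal{O}_{\mathcal{D}(x_{0})}\longrightarrow0.
\]
Applying $-\otimes^{\mathbb{L}}_{\mathcal{O}_{\mathcal{N}}}\big(\mathcal{O}_{\mathcal{Z}^{\sharp}(x_{1})}\otimes^{\mathbb{L}}_{\mathcal{O}_{\mathcal{N}}}\mathcal{O}_{\mathcal{Z}^{\sharp}(x_{2})}\big)$ gives a distinguished triangle; since the lattices $M$, $M\obot\mathbb{Z}_{p}\cdot x_{0}$ and $M\obot\mathbb{Z}_{p}\cdot p^{-1}x_{0}$ are non-degenerate of rank $3$ (as $\mathbb{W}$ is anisotropic), all three vertices have cohomology of finite length, so $\chi(\mathcal{N},-)$ is defined on each and additive along the triangle. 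This yields
\[
\textup{Int}^{\sharp}(M\obot\mathbb{Z}_{p}\cdot x_{0})=\textup{Int}^{\sharp}(M\obot\mathbb{Z}_{p}\cdot p^{-1}x_{0})+\chi\big(\mathcal{N},\,\mathcal{O}_{\mathcal{Z}^{\sharp}(x_{1})}\otimes^{\mathbb{L}}_{\mathcal{O}_{\mathcal{N}}}\mathcal{O}_{\mathcal{Z}^{\sharp}(x_{2})}\otimes^{\mathbb{L}}_{\mathcal{O}_{\mathcal{N}}}\mathcal{O}_{\mathcal{D}(x_{0})}\big),
\]
and combining with the previous display gives exactly the claimed formula.

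The substantive difficulty is not in this argument but in the geometric input on which it rests, Theorem \ref{keydecomgeo} --- the identification of $\mathcal{N}_{0}(N)$ with the difference divisor $\mathcal{D}(x_{0})$, where the cyclicity of the universal deformed isogeny must be matched against the factorisation $f_{x_{0}}=d_{x_{0}}f_{p^{-1}x_{0}}$, and both sides must be shown to be represented by the same two-dimensional regular local rings. Granting that, the only step above that is not purely formal is the Tor-independence $\mathcal{O}_{\mathcal{Z}^{\sharp}(x_{j})}\otimes^{\mathbb{L}}_{\mathcal{O}_{\mathcal{N}}}\mathcal{O}_{\mathcal{D}(x_{0})}\simeq i_{*}\mathcal{O}_{\mathcal{Z}(x_{j})}$, equivalently, that the special divisor $\mathcal{Z}^{\sharp}(x_{j})$ does not contain the irreducible divisor $\mathcal{N}_{0}(N)=\mathcal{D}(x_{0})$ as a component; this is forced by the anisotropy of $\mathbb{W}$ (so $x_{j}$ has nonzero norm and $\mathcal{Z}(x_{j})$ is a genuine, hence proper, divisor on $\mathcal{N}_{0}(N)$) together with the integrality of the local rings of the regular formal scheme $\mathcal{N}_{0}(N)$.
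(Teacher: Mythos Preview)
Your argument is correct and follows essentially the same route as the paper: both use the identification $\mathcal{N}_{0}(N)\simeq\mathcal{D}(x_{0})$ from Theorem \ref{keydecomgeo}, the short exact sequence $0\to\mathcal{O}_{\mathcal{Z}^{\sharp}(p^{-1}x_{0})}\xrightarrow{\cdot d_{x_{0}}}\mathcal{O}_{\mathcal{Z}^{\sharp}(x_{0})}\to\mathcal{O}_{\mathcal{D}(x_{0})}\to0$, and additivity of $\chi$ along the resulting triangle after tensoring with $\mathcal{O}_{\mathcal{Z}^{\sharp}(x_{1})}\otimes^{\mathbb{L}}\mathcal{O}_{\mathcal{Z}^{\sharp}(x_{2})}$. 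The only cosmetic difference is that you justify the Tor-independence step directly via anisotropy of $\mathbb{W}$ and regularity of $\mathcal{D}(x_{0})$, whereas the paper packages this as Lemma \ref{coprime} (which explicitly covers the case where one of the divisors is a difference divisor).
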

We refer to this formula as the difference formula at the geometric side.

\subsubsection{Difference formula at the analytic side}
For any rank 3 quadratic $\mathbb{Z}_{p}$-lattice $L\subset\mathbb{B}$, define the local density of $L$ to be the polynomial $\textup{Den}^{+}(X,L)\in\mathbb{Z}[X]$ such that for all $k\geq0$,
\begin{equation*}
    \textup{Den}^{+}(X,L)\,\big\vert_{X=p^{-k}}=\frac{\textup{Den}(H_{2k+4}^{+},L)}{\textup{Nor}^{+}(p^{-k},3)}.
\end{equation*}
then $\textup{Den}^{+}(1,L)=0$ since $L$ can't be isometrically embedded into the quadratic lattice $H_{4}^{+}$, we define the derived local density of $L$ to be
\begin{equation*}
    \partial\textup{Den}^{+}(L)\coloneqq-\frac{\textup{d}}{\textup{d}X}\bigg\vert_{X=1}\textup{Den}^{+}(X,L).
\end{equation*}
\begin{theorem}
For any rank 2 lattice $M\subset\mathbb{W}$, the following identity holds,
\begin{equation*}
    \textup{Den}_{\Delta(N)}^{+}(X,M)=\textup{Den}^{+}(X, M\obot\mathbb{Z}_{p}\cdot x_{0})-X^{2}\cdot\textup{Den}^{+}(X, M\obot\mathbb{Z}_{p}\cdot p^{-1}x_{0}).
\end{equation*}
Since the lattice $M\obot\mathbb{Z}_{p}\cdot x_{0}$ can't be isometrically embedded into the lattice $H_{4}^{+}$,
\begin{equation*}
    \partial\textup{Den}^{+}_{\Delta(N)}(M)= \partial\textup{Den}^{+}(M\obot\mathbb{Z}_{p}\cdot x_{0})-\partial\textup{Den}^{+}(M\obot\mathbb{Z}_{p}\cdot p^{-1}x_{0}).
\end{equation*}
\label{intro2}
\end{theorem}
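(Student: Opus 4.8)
The plan is to reduce the polynomial identity for $\textup{Den}^{+}_{\Delta(N)}(X,M)$ to a purely combinatorial relation between the local densities $\textup{Den}(\delta_{p}(N)\obot H_{2k}^{+},M)$ and $\textup{Den}(H_{2k+4}^{+}, M\obot\mathbb{Z}_{p}\cdot x_{0})$, together with bookkeeping of the normalizing factors $\textup{Nor}^{\varepsilon}(X,n)$. First I would observe that, since $x_{0}^{\vee}\circ x_{0}=N$ and $M\subset\mathbb{W}=\{x_{0}\}^{\bot}$, we have orthogonal decompositions $M\obot\mathbb{Z}_{p}\cdot x_{0}$ and $M\obot\mathbb{Z}_{p}\cdot p^{-1}x_{0}$ inside $\mathbb{B}$, where $\mathbb{Z}_{p}\cdot x_{0}$ (resp. $\mathbb{Z}_{p}\cdot p^{-1}x_{0}$) is the rank $1$ lattice with quadratic form $q(x_0) = N$ (resp. $q(p^{-1}x_0)=N/p^{2}$, when $p\,\vert\, N$). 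The key input is a local density identity comparing representations into $\delta_{p}(N)\obot H^{+}_{2k}$ with representations into $H^{+}_{2k+4}$: since $\delta_{p}(N) = \mathbb{W}'\obot \mathbb{Z}_p\cdot x_0$ for the appropriate rank $2$ lattice (where $\delta_p(N)=\Delta(N)\otimes\mathbb{Z}_p$ and $\mathbb{W}'$ is its orthogonal complement of $x_0$ inside $\delta_p(N)$), one relates $\delta_p(N)\obot H^+_{2k}$ to a lattice that, after splitting off a hyperbolic plane, becomes $H^+_{2k+4}$ in one dimension lower, using the fact that over $\mathbb{Z}_p$ a rank $1$ lattice of norm $N$ combined with hyperbolic planes reproduces $H^+_{2k+2}$ or a unimodular twist thereof. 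The $X^{2}$ factor on the right-hand side is exactly the discrepancy between the two normalizations $\textup{Nor}^{+}(p^{-k},3)$ and the factor appearing in (\ref{pcase}), shifted by replacing $x_0$ with $p^{-1}x_0$ (which changes $H^+_{2k}$-count by a shift of $k$, producing the factor $X^{2}=p^{-2k}$ dilation).

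Concretely, the steps are: (1) Expand both sides at $X=p^{-k}$ for all $k\geq 0$ using the definitions; since a polynomial identity of bounded degree is determined by its values at infinitely many points, it suffices to verify the identity for every $k$. (2) Prove the fundamental local density relation
\[
\textup{Den}(\delta_{p}(N)\obot H_{2k}^{+},M) = \textup{Den}(H_{2k+4}^{+},M\obot\mathbb{Z}_{p}\cdot x_{0}) - \text{(correction term)},
\]
where the correction term should be identified, via the theory of local densities for orthogonal sums (splitting off the rank $1$ lattice $\mathbb{Z}_p\cdot x_0$), with the $\textup{Den}$-count for $M\obot\mathbb{Z}_p\cdot p^{-1}x_0$ into $H^+_{2k+4}$ up to a power of $p$; here I would invoke the standard recursion that relates $\textup{Den}(L\obot H_2^+, M)$ to $\textup{Den}(L,M)$, or more precisely the recursion on the Jordan decomposition of $\delta_p(N)$ induced by $v_p(N)$. (3) Track the normalizing factors: verify that $\textup{Nor}^{+}(p^{-k},1)$ (when $p\,\vert\,N$) or $\textup{Nor}^{(N,p)_p}(p^{-k},2)$ (when $p\nmid N$) are precisely the ratios needed so that dividing the density identity by them yields the polynomial identity as stated, with the $X^2=p^{-2k}$ appearing from the norm of $p^{-1}x_0$ being $p^{-2}$ times that of $x_0$. (4) For the second assertion, note that since $M\obot\mathbb{Z}_p\cdot x_0$ has rank $3$ and $\textup{Den}^+(1,\cdot)=0$ on rank $3$ lattices that do not embed into $H_4^+$ (which holds here because $\mathbb{B}$ is anisotropic, hence $M\obot\mathbb{Z}_p\cdot x_0$ is not represented by the split $H_4^+$), both $\textup{Den}^+(X,M\obot\mathbb{Z}_p\cdot x_0)$ and $\textup{Den}^+(X,M\obot\mathbb{Z}_p\cdot p^{-1}x_0)$ vanish at $X=1$, so differentiating the polynomial identity at $X=1$ and using $\frac{d}{dX}|_{X=1}(X^2 f(X)) = 2f(1)+f'(1) = f'(1)$ gives the stated relation among derived densities.

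The main obstacle I expect is step (2)–(3): getting the local density recursion exactly right, including the precise power of $p$ and the Hilbert-symbol-dependent choice of normalizing factor when $p\nmid N$ versus $p\,\vert\, N$. The subtlety is that the orthogonal complement $\mathbb{W}$ of $x_0$ in $\mathbb{B}$ has a discriminant and Hasse invariant that depend on $N$ modulo squares and on whether $p\,\vert\,N$, and the two cases in (\ref{pcase}) reflect exactly this; one must check that the combinatorial identity for $\textup{Den}$ is compatible with these choices so that the final answer is normalization-independent. I would handle this by reducing to the known explicit formulas for local densities of quadratic forms over $\mathbb{Z}_p$ — e.g. the formulas of Katsurada or the Cho–Yamauchi formula for $\textup{Den}^+$ — applied to the Jordan splitting of $\delta_p(N)$ determined by $v_p(N)$, and then matching term by term. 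The case $p=2$ (if not excluded) would require extra care with the dyadic local density formulas, but since $\mathbb{B}$ is the quaternion division algebra and the lattices involved are explicit, this should still be tractable.
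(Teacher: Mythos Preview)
Your high-level strategy---check the polynomial identity at $X=p^{-k}$ for all $k$ and then differentiate at $X=1$---is sound, and step~(4) is correct as written. But there is a genuine gap in step~(2), beginning with a setup confusion: the decomposition ``$\delta_p(N)=\mathbb{W}'\obot\mathbb{Z}_p\cdot x_0$'' does not make sense, since $\delta_p(N)$ is a lattice in the \emph{split} space $\textup{M}_2(\mathbb{Q}_p)$ while $x_0$ lives in the \emph{anisotropic} space $\mathbb{B}$; these are non-isometric. The correct relationship, which is the paper's key geometric input (Example~\ref{explictchoice}), is that $\delta_p(p^{-2i}N)$ is the orthogonal complement in $H_4^{+}=\textup{M}_2(\mathbb{Z}_p)$ of a \emph{primitive} vector of norm $p^{-2i}N$, so that $H_{2k+4}^{+}(N,i)\simeq\delta_p(p^{-2i}N)\obot H_{2k}^{+}$.

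More seriously, the mechanism you propose in step~(2)---``the standard recursion that relates $\textup{Den}(L\obot H_2^{+},M)$ to $\textup{Den}(L,M)$''---varies the target lattice, whereas what is needed is a recursion in the \emph{source} lattice $M\obot\langle N\rangle$ along the rank-one summand $\langle N\rangle$. The paper achieves this by a primitive decomposition (Theorem~\ref{anadecom}, via Lemmas~\ref{decomcodim1} and~\ref{fiberr}): stratify $\textup{Rep}_{H,M\obot\langle N\rangle}(\mathcal{O}_F/\pi^d)$ by the $\pi$-divisibility of the image of the generator $l_N$, and on each stratum identify the fiber count with a density into the orthogonal complement $H(N,i)$. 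After normalization this yields
\[
\textup{Den}^{+}(X,\,M\obot\langle N\rangle)\;=\;\sum_{i=0}^{[n/2]}X^{2i}\,\textup{Den}^{+}_{\Delta(p^{-2i}N)}(X,M),\qquad n=\nu_p(N),
\]
and the theorem follows by subtracting $X^{2}$ times the analogous expansion for $p^{-2}N$ (telescoping). Your ``correction term'' is thus not a single term but the entire tail of this sum; it collapses to $X^{2}\,\textup{Den}^{+}(X,M\obot\langle p^{-2}N\rangle)$ only through the telescoping, not directly. Falling back on Katsurada or Cho--Yamauchi explicit formulas would in principle recover this, but is much heavier (and delicate at $p=2$) than the primitive-decomposition argument, which is uniform in $p$. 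The normalizing-factor bookkeeping in step~(3) also falls out cleanly from this approach once one observes (as in the proof of Theorem~\ref{anadiff}) that $\textup{Nor}^{+}(p^{-k},3)=\textup{Pden}(H_{2k+4}^{+},\langle p^{-2i}N\rangle)\cdot\textup{Nor}^{\ast}(p^{-k},\ast)$ with the right-hand normalization matching exactly the two cases of~(\ref{pcase}).
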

The theorem is proved in a more general form in Theorem \ref{anadiff}. We refer to this formula as the difference formula at the analytic side.

\subsubsection{Proof of Theorem \ref{MainGlobal}}
The following local arithmetic Siegel-Weil formula is proved in \cite[$\S$2.16]{Wed07} (see also \cite[Theorem 1.2.1]{LZ22} when $p$ is odd).
\begin{theorem}
    For any rank 3 lattice $L\subset\mathbb{B}$, the following identity holds,
    \begin{equation*}
        \textup{Int}^{\sharp}(L)=\partial\textup{Den}^{+}(L).
    \end{equation*}
\end{theorem}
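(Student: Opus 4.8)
\textit{Proof proposal.}
The plan is to reduce the identity to the computations of Gross--Keating \cite[Proposition 5.4]{GK93} on the geometric side and to an explicit local density computation on the analytic side, matching both against the known formula in terms of the Gross--Keating invariants; this is exactly what is carried out in \cite[\S2.16]{Wed07} and, for $p$ odd, in \cite[Theorem 1.2.1]{LZ22}, so in practice the work is to set up the dictionary between the objects here and those references. First I would observe that, by Serre--Tate and the polarization bookkeeping of \S\ref{locdeform}, $\mathcal{N}$ is the formal deformation space of a pair of supersingular $p$-divisible groups of dimension $1$ and height $2$ together with a framing, hence the regular $2$-dimensional formal scheme $\textup{Spf}\,W[[t_1,t_2]]$, and under this identification the special divisor $\mathcal{Z}^{\sharp}(x)$ of Definition \ref{specialcycle}, for an integral $x\in\mathbb{B}=\textup{End}^{0}(\mathbb{X})$, is precisely the locus where $x$ deforms to an isogeny, i.e. the divisor studied by Gross and Keating. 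Thus $\textup{Int}^{\sharp}(L)=\chi(\mathcal{N},\mathcal{O}_{\mathcal{Z}^{\sharp}(x_{1})}\otimes^{\mathbb{L}}\mathcal{O}_{\mathcal{Z}^{\sharp}(x_{2})}\otimes^{\mathbb{L}}\mathcal{O}_{\mathcal{Z}^{\sharp}(x_{3})})$ is the triple arithmetic intersection multiplicity of \cite[Proposition 5.4]{GK93} (with the reformulations of \cite{Wed07}, \cite{Rap07}), which is an explicit function of the Gross--Keating invariants $(a_{1},a_{2},a_{3})$ of the quadratic $\mathbb{Z}_{p}$-lattice $(L,Q)$, $Q(x)=x^{\vee}\circ x$.

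Next, on the analytic side I would show that $\partial\textup{Den}^{+}(L)$ is the same function of $(a_{1},a_{2},a_{3})$. This amounts to inserting the classical closed formula for the local density $\textup{Den}(H^{+}_{2k+4},L)$ of representations of $L$ by the split even unimodular lattice of rank $2k+4$ -- a polynomial in $p^{-k}$ with known coefficients, via the Jordan-block product formula of Kitaoka, Yang \cite{Yn98}, Kudla--Rapoport--Yang \cite{KRY06} -- dividing by the normalizing polynomial $\textup{Nor}^{+}(p^{-k},3)$ of Definition \ref{normal1} to obtain $\textup{Den}^{+}(X,L)$, checking that this polynomial vanishes at $X=1$ (which it must, since $L$ contains vectors of large $p$-valuation and hence does not embed into $H^{+}_{4}$), and then reading off $\partial\textup{Den}^{+}(L)=-\tfrac{\mathrm d}{\mathrm dX}\big|_{X=1}\textup{Den}^{+}(X,L)$. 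Comparing the resulting expression with the Gross--Keating formula from the previous paragraph yields $\textup{Int}^{\sharp}(L)=\partial\textup{Den}^{+}(L)$. The comparison is made in \cite{Wed07} for all $p$ and in \cite[Theorem 1.2.1]{LZ22} for $p$ odd.

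The main obstacle is bookkeeping rather than geometry: one must verify that the particular normalization $\textup{Den}^{+}(X,L)=\textup{Den}(H^{+}_{2k+4},L)/\textup{Nor}^{+}(p^{-k},3)$ used here agrees, after the substitution $X=p^{-k}$, with the normalized density appearing in \cite{LZ22}, which is a finite but fussy computation with the factors $\textup{Nor}^{\varepsilon}$; and one must treat $p=2$ separately, where \cite{LZ22} does not apply and one instead returns to \cite{GK93} and \cite{Wed07} (both of which compute the intersection multiplicity with no parity restriction) together with the even-place local density formulas. No new input beyond Gross--Keating is needed for the geometric side.
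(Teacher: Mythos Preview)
Your proposal is correct and follows essentially the same route as the paper: both sides are expressed explicitly in terms of the Gross--Keating invariants $(a_1,a_2,a_3)$ of $L$ via \cite{GK93}, \cite{Rap07} on the geometric side and \cite{Wed07} on the analytic side, and then compared (with \cite{LZ22} as an alternative for odd $p$). One small correction: the vanishing $\textup{Den}^{+}(1,L)=0$ is not because ``$L$ contains vectors of large $p$-valuation'' but because $L\otimes\mathbb{Q}_p$ is a $3$-dimensional subspace of the anisotropic form $\mathbb{B}$ and hence cannot embed isometrically into the split space $H_4^{+}\otimes\mathbb{Q}_p\simeq\textup{M}_2(\mathbb{Q}_p)$.
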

For a rank 2 lattice $M\subset\mathbb{W}$, let $L=M\obot\mathbb{Z}_{p}\cdot x_{0}\subset\mathbb{B}$, the local arithmetic Siegel-Weil formula with level $N$ in Theorem \ref{MainLocal} follows immediately from $\textup{Int}^{\sharp}(L)=\partial\textup{Den}^{+}(L)$ and two difference formulas we stated before (Theorem \ref{intro1} and Theorem \ref{intro2}).\\
\subsubsection{Proof of Theorem \ref{MainLocal}}
Let $T\in\textup{Sym}_{2}(\mathbb{Q})$ be a nonsingular matrix. Let $\boldsymbol{\varphi}\in\mathscr{S}(\mathbb{V}_{f}^{2})$ be a $T$-admissible function. When $T$ is not positive definite, the arithmetic special cycle $\widehat{\mathcal{Z}}(T,\mathsf{y},\boldsymbol\varphi)$ is essentially an $(1,1)$-current on the proper smooth complex curve $\mathcal{X}_{0}(N)_{\mathbb{C}}$, the number $\widehat{\textup{deg}}(\widehat{\mathcal{Z}}(T,\mathsf{y},\boldsymbol\varphi))$ has been computed explicitly in \cite[Theorem 4.10]{SSY22}. 
\par
We focus on the case that $T$ is positive definite. In this case, $\widehat{\mathcal{Z}}(T,\mathsf{y},\boldsymbol{\varphi})=[(\mathcal{Z}(T,\varphi),0)]$ where $\mathcal{Z}(T,\boldsymbol\varphi)$ is a cycle of codimension 2 on $\mathcal{X}_{0}(N)$. Moreover, $\mathcal{Z}(T,\boldsymbol\varphi)=\varnothing$ only if $\text{Diff}(T,\Delta(N))=\{p\}$ for some prime number $p$, in this case the special cycle $\mathcal{Z}(T,\boldsymbol\varphi)$ is concentrated in the supersingular locus of $\mathcal{X}_{0}(N)$ in characteristic $p$. Suppose that the $2\times2$ matrix $T$ has diagonal element $t_{1}$ and $t_{2}$, $\boldsymbol\varphi=\varphi_{1}\times\varphi_{2}\in\mathscr{S}(\mathbb{V}_{f}^{2})$, where $\varphi_{i}\in\mathscr{S}(\mathbb{V}_{f})$. We will show that
\begin{equation*}
    \widehat{\textup{deg}}(\widehat{\mathcal{Z}}(T,\mathsf{y},\boldsymbol{\varphi})) = \chi(\mathcal{Z}(T,\boldsymbol{\varphi}), \mathcal{O}_{\mathcal{Z}(t_{1},\varphi_{1})}\otimes^{\mathbb{L}}\mathcal{O}_{\mathcal{Z}(t_{2},\varphi_{2})})\cdot\textup{log}(p),
\end{equation*}
By the formal uniformization of the special cycle $\mathcal{Z}(T,\boldsymbol{\varphi})$ in Corollary \ref{uniZ}, the Euler characteristic $\chi(\mathcal{Z}(T,\boldsymbol{\varphi}), \mathcal{O}_{\mathcal{Z}(t_{1},\varphi_{1})}\otimes^{\mathbb{L}}\mathcal{O}_{\mathcal{Z}(t_{2},\varphi_{2})})$ is a weighted linear combination of local arithmetic intersection numbers on $\mathcal{N}_{0}(N)$, Theorem \ref{MainGlobal} follows from the local arithmetic Siegel-Weil formula with level $N$ at the place $p$ and the classical local Siegel-Weil formula at other places.

\subsection{Supplement}
By the Windows theory developed by Zink in \cite{Zin01}, if $\nu_{p}(N)\geq1$, we prove that the special fiber $\mathcal{Z}(x_{0})_{p}$ of $\mathcal{Z}(x_{0})$ has the following explicit description (cf. Theorem \ref{spci}, Corollary \ref{spcci}),
\begin{equation*}
    \mathcal{Z}(x_{0})_{p}\simeq\textup{Spf}\,\mathbb{F}[[t_{1},t_{2}]]/\left(\prod\limits_{a+b=n\atop a,b\geq0}(t_{1}^{p^{a}}-t_{2}^{p^{b}})\right).
\end{equation*}
Based on the isomorphism $\mathcal{D}(x_{0})\stackrel{\sim}\rightarrow \mathcal{N}_{0}(N)$, the special fiber $\mathcal{N}_{0}(N)_{p}$ of $\mathcal{N}_{0}(N)$ can be described by
\begin{equation*}
    \mathcal{N}_{0}(N)_{p}\simeq\textup{Spf}\,\mathbb{F}[[t_{1},t_{2}]]/\left((t_{1}-t_{2}^{p^{n}})\cdot(t_{2}-t_{1}^{p^{n}})\cdot\prod\limits_{a+b=n\atop a,b\geq1}(t_{1}^{p^{a-1}}-t_{2}^{p^{b-1}})^{p-1}\right).
\end{equation*}
Both these two isomorphisms are proved in \cite[Theorem 13.4.6, Theorem 13.4.7]{KM85} by a totally different method.
\\
\par
\noindent\textbf{Acknowledgements.}\,\,First, I would like to express my deep gratitude to Professor Chao Li, who suggested I consider the question, and had countless instructive conversations with me during this project, without which this article would never exist. Next, I would like to thank Michael Rapoport, Tonghai Yang and Qiao He for their careful reading of the manuscript and many helpful comments. Moreover, I thank Cathy Chen for her unwavering support. The author is supported by the Department of Mathematics at Columbia University in the city of New York.

\section{Quadratic lattices and local densities}
\label{2}
\subsection{Notations on quadratic lattices}
Let $p$ be a prime number. Let $F$ be a nonarchimedean local field of residue characteristic $p$, with ring of integers $\mathcal{O}_{F}$, residue field $\kappa = \mathbb{F}_{q}$ of size $q$, and uniformizer $\pi$. Let $\nu_{\pi} : F \rightarrow \mathbb{Z}\cup\{\infty\}$ be the valuation on $F$ and $\vert\cdot\vert: F \rightarrow \mathbb{R}_{\geq0}$ be the normalized absolute value on $F$. Let $(\cdot,\cdot)_{F}$ be the Hilbert symbol on the local field $F$. Consider the following character:
\begin{align*}
    \chi_{F}:F^{\times}/(F^{\times})^{2}&\rightarrow\{\pm1,0\},\\
    x&\mapsto\begin{cases}
        (x,\pi)_{F}, &\textup{when $\nu_{\pi}(x)$ is even;}\\
        0, &\textup{when $\nu_{\pi}(x)$ is odd.}
    \end{cases}
\end{align*}
the character $\chi_{F}$ is independent of the choice of the uniformizer $\pi$.
\par
A quadratic space $(U,q_{U})$ over $F$ is a finite dimensional vector space $U$ over $F$ equipped with a quadratic form $q_{U}:U\rightarrow F$, the quadratic form $q_{U}$ induces a symmetric bilinear form given by 
\begin{align}
    (\cdot,\cdot):U\times U&\rightarrow F,\notag\\
    (x,y)&\mapsto q_{U}(x+y)-q_{U}(x)-q_{U}(y).\label{to}
\end{align}
An isometry between two quadratic spaces $(U,q_{U})$ and $(U^{\prime},q_{U^{\prime}})$ is a linear isomorphism $\phi:U\rightarrow U^{\prime}$ preserving quadratic forms, i.e., $q_{U^{\prime}}(\phi(x))=q_{U}(x)$ for any $x\in U$. In that case, we say $U$ and $U^{\prime}$ are isometric.
\par
A quadratic lattice $(L,q_{L})$ is a finite free $\mathcal{O}_{F}$-module equipped with a quadratic form $q_{L}:L\rightarrow F$. The quadratic form $q_{L}$ also induces a symmetric bilinear form $L\times L\stackrel{(\cdot,\cdot)}\longrightarrow F$ by similar formula (\ref{to}). Let $L^{\vee}=\{x\in L\otimes_{\mathcal{O}_{F}}F:(x,L)\subset \mathcal{O}_{F}\}$. We say a quadratic lattice is integral if $q_{L}(x)\in\mathcal{O}_{F}$ for all $x\in L$, is self-dual if it is integral and $L$ = $L^{\vee}$.
\par
Let's assume that $\textup{dim}_{F}U=n$ and the symmetric bilinear form $(\cdot,\cdot)$ is nondegenerate. Let $\{x_{i}\}_{i=1}^{n}$ be a basis of $U$, and $t_{ij}=\frac{1}{2}(x_{i},x_{j})$, we define the discriminant of the quadratic space $U$ to be:
\begin{equation*}
    \textup{disc}(U) = (-1)^{n\choose 2}\textup{det}\left((t_{ij})\right)\in F^{\times}/(F^{\times})^{2}.
\end{equation*}
If $\{x_{i}\}_{i=1}^{n}$ is an orthogonal basis of $U$ then $t_{ij}=0$ if $i\neq j$ and $t_{ii}\neq 0$ by the nondegeneracy of $(\cdot,\cdot)$. The Hasse invariant of the quadratic space $U$ is
\begin{equation*}
    \epsilon(U) = \prod\limits_{i<j}(t_{ii},t_{jj})_{F},
\end{equation*}
\par
For a quadratic lattice $L$, we use $\textup{disc}(L)$ and $\epsilon(L)$ to denote the corresponding invariants on the quadratic space $L_{F}=L\otimes_{\mathcal{O}_{F}}F$. Recall that when $p$ is odd, quadratic spaces $U$ over $F$ are classified by the following three invariants:
\begin{equation*}
    \textup{dim}_{F}\,U,\,\,\,\,\,\chi_{F}(U)\coloneqq\chi_{F}(\textup{disc}(U)),\,\,\,\,\,\epsilon(U).
\end{equation*}
i.e., two quadratic spaces $U$ and $U^{\prime}$ are isometric if and only if the above three invariants for $U$ and $U^{\prime}$ are the same. 
\par
When $p$ is odd, the quadratic space $U$ admits a self-dual sub-lattice if and only if $\epsilon(U)=+1$ and $\chi_{F}(U)\neq0$, we will use $H_{k}^{\varepsilon}$ to denote the unique self-dual lattice of rank $k$ and 
\begin{equation*}
    \chi_{F}(H_{k}^{\varepsilon})\coloneqq\chi_{F}(\textup{disc}(H_{k}^{\varepsilon}))=\varepsilon.
\end{equation*}
When $p=2$, let $H_{2n}^{+}=(H_{2}^{+})^{\obot n}$ be a self-dual lattice of rank $2n$, where the quadratic form on $H_{2}^{+}=\mathcal{O}_{F}^{2}$ is given by $(x,y)\in \mathcal{O}_{F}^{2}\mapsto xy$.
\label{quadratic}
\begin{example}
Let $N\in\mathcal{O}_{F}$. Let $\delta_{F}(N)$ be the following rank 3 quadratic lattice over $\mathcal{O}_{F}$,
\begin{equation*}
    \delta_{F}(N) = \left\{x=\begin{pmatrix}
    -Na & b\\
    c & a
    \end{pmatrix}:\,\, a,b,c\in\mathcal{O}_{F}\right\}.
\end{equation*}
equipped with the quadratic form induced by $x\mapsto \textup{det}(x)$. Under the following basis of $\delta_{F}(N)$,
\begin{equation*}
    e_{1}=\begin{pmatrix}
    -N & \\
     & 1
    \end{pmatrix},\,\,e_{2}=\begin{pmatrix}
    \,  & 1\\
     \, & 
    \end{pmatrix},\,\,e_{3}=\begin{pmatrix}
     \, & \,\\
     1& \,
    \end{pmatrix}.
\end{equation*}
the quadratic form can be represented by the following symmetric matrix,
\begin{equation*}
    T=\begin{pmatrix}
    -N & 0 & 0 \\
      0 & 0 & -\frac{1}{2}\\
      0 & -\frac{1}{2} & 0
    \end{pmatrix}.
\end{equation*}
therefore $\textup{disc}(\delta_{F}(N))=-\frac{1}{4}N\equiv -N$, $\epsilon(\delta_{F}(N))=(N,-1)_{F}$. Moreover,
\begin{equation*}
    \delta_{F}(N)^{\vee} = \left\{x=\begin{pmatrix}
    -Na & b\\
    c & a
    \end{pmatrix}:\,\, a\in\frac{1}{2N}\mathcal{O}_{F},b,c\in\mathcal{O}_{F}\right\}.
\end{equation*}
therefore $\delta_{F}(N)^{\vee}/\delta_{F}(N)\simeq\mathcal{O}_{F}/2N$.
\par
Throughout this article, we will mainly focus on the case that $F=\mathbb{Q}_{p}$. In this case, we simply use $\delta_{p}(N)$ to denote the lattice $\delta_{\mathbb{Q}_{p}}(N)$ (as we did in the introduction).
\label{lambda}
\end{example}

\subsection{Local densities of quadratic lattices}
\begin{definition}
Let $L, M$ be two quadratic $\mathcal{O}_{F}$-lattices. Let $\textup{Rep}_{M,L}$ be the scheme of integral representations, an $\mathcal{O}_{F}$-scheme such that for any $\mathcal{O}_{F}$-algebra $R$,
\begin{equation*}
    \textup{Rep}_{M,L}(R)=\textup{QHom}(L\otimes_{\mathcal{O}_{F}}R, M\otimes_{\mathcal{O}_{F}}R),
\end{equation*}
where \textup{QHom} denotes the set of injective module homomorphisms which preserve the quadratic forms. The local density of integral representations is defined to be
\begin{equation*}
    \textup{Den}(M,L)=\lim\limits_{d\rightarrow\infty}\frac{\#\textup{Rep}_{M,L}(\mathcal{O}_{F}/\pi^{d})}{q^{d\cdot \textup{dim}(\textup{Rep}_{M,L})_{F}}}.
\end{equation*}
\end{definition}
\begin{remark}
If $L,M$ have rank $n,m$ respectively and the generic fiber $(\textup{Rep}_{M,L})_{F}\neq\varnothing$, then $n\leq m$ and 
\begin{equation*}
    \textup{dim}(\textup{Rep}_{M,L})_{F}=\textup{dim} \,\textup{O}_{m}-\textup{dim}\, \textup{O}_{m-n}=\tbinom{m}{2}-\tbinom{m-n}{2}=mn-\frac{n(n+1)}{2}.
\end{equation*}
\end{remark}
\begin{definition}
Let $L, M$ be two quadratic $\mathcal{O}_{F}$-lattices. Let $\textup{PRep}_{M,L}$ be the $\mathcal{O}_{F}$-scheme of primitive integral representations such that for any $\mathcal{O}_{F}$-algebra $R$,
\begin{equation*}
    \textup{PRep}_{M,L}(R)=\{\phi\in\textup{Rep}_{M,L}(R):\textup{$\phi$ is an isomorphism between $L_{R}$ and a direct summand of $M_{R}$}\}.
\end{equation*}
where $L_{R}$ (resp. $M_{R}$) is $L\otimes_{\mathcal{O}_{F}}R$ (resp. $M\otimes_{\mathcal{O}_{F}}R$). The primitive local density is defined to be
\begin{equation*}
    \textup{Pden}(M,L)=\lim\limits_{d\rightarrow\infty}\frac{\#\textup{PRep}_{M,L}(\mathcal{O}_{F}/\pi^{d})}{q^{d\cdot \textup{dim}(\textup{Rep}_{M,L})_{F}}}.
\end{equation*}
\end{definition}
\begin{remark}
For any positive integer $d$, a homomorphism $\phi\in\textup{Rep}_{M,L}(\mathcal{O}_{F}/\pi^{d})$ or $\textup{Rep}_{M,L}(\mathcal{O}_{F})$ is primitive if and only if $\overline{\phi} \coloneqq \phi\,\,\textup{mod}\,\pi\in\textup{PRep}(\mathcal{O}_{F}/\pi)$, which is equivalent to $\textup{dim}_{\mathbb{F}_{q}}(\phi(L)+\pi\cdot M)/\pi\cdot M = \textup{rank}_{\mathcal{O}_{F}}(L)$. 
\end{remark}
\begin{lemma}
Let $H$ be a self-dual quadratic lattice. Let L be a quadratic $\mathcal{O}_{F}$-lattice, k is any positive integer, then we have the following stratification,
\begin{equation*}
    \textup{Rep}_{H,L}(\mathcal{O}_{F}) = \bigsqcup_{L\subset L^{\prime}\subset L^{\prime\vee}}\textup{PRep}_{H,L^{\prime}}(\mathcal{O}_{F}).
\end{equation*}
\label{stratification}
\end{lemma}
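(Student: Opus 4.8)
The plan is to prove the stratification
\begin{equation*}
    \textup{Rep}_{H,L}(\mathcal{O}_{F}) = \bigsqcup_{L\subset L^{\prime}\subset L^{\prime\vee}}\textup{PRep}_{H,L^{\prime}}(\mathcal{O}_{F})
\end{equation*}
by classifying each integral representation $\phi\colon L\rightarrow H$ according to the saturation of its image. First I would observe that, since $H$ is self-dual and $\phi$ preserves quadratic forms (hence the associated bilinear forms), the image $\phi(L)$ is an integral sublattice of $H_{F}$ with $\phi(L)^{\vee}\supset H^{\vee}=H\supset\phi(L)$. Let $L^{\prime}$ be the saturation of $\phi(L)$ in $H$, i.e.\ $L^{\prime} = (\phi(L)\otimes_{\mathcal{O}_{F}}F)\cap H$; this is a direct summand of $H$ by definition of saturation in a free module over a DVR, and it is integral since it is a sublattice of the integral lattice $H$. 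Transporting the quadratic form back through $\phi$, we get a quadratic lattice structure on $L^{\prime}$ (as an overlattice of $L$ inside $L_{F}$), and $\phi$ factors as an isometric embedding $L\hookrightarrow L^{\prime}$ followed by a primitive representation $L^{\prime}\hookrightarrow H$ whose image is the direct summand $L^{\prime}\subset H$. The key point to check here is that $L^{\prime}$, as an abstract quadratic lattice, satisfies $L\subset L^{\prime}\subset L^{\prime\vee}$: the inclusion $L\subset L^{\prime}$ is by construction, and $L^{\prime}\subset L^{\prime\vee}$ amounts to the integrality of the bilinear form on $L^{\prime}$, which holds because $L^{\prime}$ sits inside the self-dual lattice $H$ and the bilinear form on $H$ is $\mathcal{O}_{F}$-valued.

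Next I would set up the two maps realizing the bijection. For the forward direction, the assignment $\phi \mapsto (L^{\prime}, \phi)$ sends $\textup{Rep}_{H,L}(\mathcal{O}_{F})$ to $\bigsqcup_{L\subset L^{\prime}\subset L^{\prime\vee}}\textup{PRep}_{H,L^{\prime}}(\mathcal{O}_{F})$, where in the component indexed by $L^{\prime}$ we regard $\phi$ as a map out of $L^{\prime}$ via the fixed inclusion $L\subset L^{\prime}$ — one must check that this extension is well-defined and takes values precisely in the primitive representations, which is exactly the statement that $\phi(L^{\prime})=L^{\prime}$ is a direct summand of $H$. For the reverse direction, given $L\subset L^{\prime}\subset L^{\prime\vee}$ and a primitive representation $\psi\colon L^{\prime}\hookrightarrow H$, restrict $\psi$ along $L\subset L^{\prime}$ to get an element of $\textup{Rep}_{H,L}(\mathcal{O}_{F})$; this restriction is still an injective isometric embedding, hence lies in $\textup{Rep}_{H,L}(\mathcal{O}_{F})$. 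I would then verify that these two maps are mutually inverse: starting from $\phi$ and recovering $L^{\prime}$ as the saturation, restriction of the extended map gives back $\phi$; conversely, starting from $(L^{\prime},\psi)$, the saturation of $\psi(L)$ inside $H$ is $\psi(L^{\prime})$ because $\psi(L^{\prime})$ is a direct summand containing $\psi(L)$ with the same $F$-span (as $L\subset L^{\prime}$ is of finite index), so we recover the same index $L^{\prime}$ and the same map.

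Finally I would confirm disjointness of the union: if $\phi$ lies in both $\textup{PRep}_{H,L^{\prime}_{1}}(\mathcal{O}_{F})$ and $\textup{PRep}_{H,L^{\prime}_{2}}(\mathcal{O}_{F})$ with $L\subset L^{\prime}_{i}\subset L^{\prime\vee}_{i}$, then $\phi(L^{\prime}_{1})$ and $\phi(L^{\prime}_{2})$ are both the saturation of $\phi(L)$ in $H$, hence equal, and since $\phi$ is injective $L^{\prime}_{1}=L^{\prime}_{2}$ as overlattices of $L$ in $L_{F}$. I expect the main obstacle — though it is more a matter of care than of depth — to be the precise bookkeeping of \emph{which} quadratic form each $L^{\prime}$ carries and checking that the condition $L^{\prime}\subset L^{\prime\vee}$ (integrality of $L^{\prime}$) is both necessary and sufficient for $L^{\prime}$ to appear: necessity is the integrality argument above using self-duality of $H$, and sufficiency follows because any integral overlattice $L^{\prime}$ with a primitive embedding into $H$ genuinely contributes. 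A secondary point worth spelling out is that the bilinear form is $2$-integral issues do not arise here because we only use that $H$ being self-dual forces $(\phi(x),\phi(y))\in\mathcal{O}_{F}$, i.e.\ the bilinear (not quadratic) form, which is exactly the $L^{\prime}\subset L^{\prime\vee}$ condition.
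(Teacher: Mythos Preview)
Your argument is correct and is exactly the standard proof: saturate the image of $\phi$ inside $H$, pull back to an overlattice $L'$ of $L$ in $L_F$, and observe that the extension $L'\to H$ is primitive while the restriction recovers $\phi$; self-duality of $H$ is precisely what forces the bilinear form on $L'$ to be $\mathcal{O}_F$-valued, i.e.\ $L'\subset L'^{\vee}$. The paper does not give its own proof here but simply cites Cho--Yamauchi \cite[(3.1)]{CY20}, so you have supplied the argument that the paper outsources; your treatment of the $p=2$ bookkeeping (that only the bilinear-form integrality $L'\subset L'^{\vee}$ is needed, and any $L'$ with non-integral quadratic form contributes an empty set anyway) is also on target.
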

\begin{proof}
This is proved by Cho and Yamauchi in \cite[(3.1)]{CY20}.
\end{proof}
\begin{definition}
Let $n\geq0$, for $\varepsilon\in\{\pm 1\}$, we define the normalizing factors to be
    \begin{align*}
    \textup{Nor}^{\varepsilon}(X,n)&=(1-\frac{1+(-1)^{n+1}}{2}\cdot\varepsilon q^{-(n+1)/2}X)\prod\limits_{1\leq i<(n+1)/2}(1-q^{-2i}X^{2})
\end{align*}   
\label{normal1}
\end{definition}
\begin{lemma}
Let $L$ be a quadratic lattice of rank $n$. When $p$ is odd, there exist polynomials $\textup{Den}^{\varepsilon}(X,L)$ and $\textup{Den}^{\flat\varepsilon}(X,L)\in\mathbb{Z}[X]$, such that
\begin{equation*}
    \textup{Den}^{\varepsilon}(X,L)\,\big\vert_{X=q^{-k}} = \frac{\textup{Den}(H_{2k+n+1}^{\varepsilon}, L)}{\textup{Nor}^{\varepsilon}(q^{-k},n)},\,\,\,\,\,\,\textup{Den}^{\flat\varepsilon}(X,L)\,\big\vert_{X=q^{-k}}=\frac{\textup{Den}(H_{2k+n}^{\varepsilon},L)}{\textup{Nor}^{\varepsilon}(q^{-k},n-1)}\cdot(1-\varepsilon\chi_{F}(L)q^{-k}),
\end{equation*}
we define the derived local density of $L$ to be
\begin{equation*}
    \partial\textup{Den}^{\varepsilon}(L) = -\frac{\textup{d}}{\textup{d}X}\bigg|_{X=1}\textup{Den}^{\varepsilon}(X,L).
\end{equation*}
There also exists a polynomial $\textup{Pden}^{\varepsilon}(X,L)$, such that
\begin{equation*}
    \textup{Pden}^{\varepsilon}(X,L)\,\big\vert_{X=q^{-k}} = \frac{\textup{Pden}(H_{2k+n+1}^{\varepsilon}, L)}{\textup{Nor}^{\varepsilon}(q^{-k},n)}.
\end{equation*}\par
When $p=2$ and $n$ is odd, the polynomials $\textup{Den}^{+}(X,L)$ and $\textup{Pden}^{+}(X,L)$ with the same evaluation formulas at $X=q^{-k}$ exist, the derived local density $\partial\textup{Den}^{+}(L)$ of $L$ is defined similarly.
\label{poly}
\end{lemma}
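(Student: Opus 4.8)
The plan is to reduce the assertion to explicit counts of primitive representations over the residue field, in the spirit of Cho--Yamauchi \cite{CY20}. First I would feed the stratification of Lemma \ref{stratification} into the definition of the local density with $H = H_{2k+n+1}^{\varepsilon}$. Every $L'$ with $L \subseteq L' \subseteq (L')^{\vee}$ has the same rank $n$ as $L$ and the same rational quadratic space $L_{F}$, so $\dim(\textup{Rep}_{H,L'})_{F} = \dim(\textup{Rep}_{H,L})_{F}$; passing to the limit therefore yields the finite sum
\[
\textup{Den}(H_{2k+n+1}^{\varepsilon}, L) = \sum_{L \subseteq L' \subseteq (L')^{\vee}} \textup{Pden}(H_{2k+n+1}^{\varepsilon}, L'),
\]
the sum being finite because all such $L'$ lie between $L$ and $L^{\vee}$. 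The same identity holds verbatim for $H_{2k+n}^{\varepsilon}$. Hence it suffices to treat the primitive densities $\textup{Pden}(H_{m}^{\varepsilon}, L')$ attached to a self-dual lattice $H_{m}^{\varepsilon}$, show that after division by the appropriate factor of Definition \ref{normal1} they become polynomials in $q^{-k}$ (this directly produces $\textup{Pden}^{\varepsilon}(X,L)$, with no summation needed), and then sum over $L'$ to obtain $\textup{Den}^{\varepsilon}(X,L)$ and $\textup{Den}^{\flat\varepsilon}(X,L)$. When $p = 2$ and $n$ is odd the integer $2k+n+1$ is even, so $H_{2k+n+1}^{+} = (H_{2}^{+})^{\obot(k + (n+1)/2)}$ is the self-dual lattice in the statement, and the reduction step is unchanged.

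The second step computes $\textup{Pden}(H_{m}^{\varepsilon}, L')$. On the primitive locus the Gram equations cutting out $\textup{Rep}_{H_{m}^{\varepsilon}, L'}$ have surjective differential (here self-duality of $H_{m}^{\varepsilon}$ enters), so $\textup{PRep}_{H_{m}^{\varepsilon}, L'}$ is smooth over $\mathcal{O}_{F}$, and for a smooth $\mathcal{O}_{F}$-scheme $X$ of relative dimension $\delta$ one has $\#X(\mathcal{O}_{F}/\pi^{d}) = q^{(d-1)\delta}\#X(\mathbb{F}_{q})$. With $\delta = \dim(\textup{Rep}_{H_{m}^{\varepsilon}, L'})_{F}$ this gives $\textup{Pden}(H_{m}^{\varepsilon}, L') = q^{-\delta}\,\#\textup{PRep}_{H_{m}^{\varepsilon}, L'}(\mathbb{F}_{q})$, reducing the whole problem to a point count over $\mathbb{F}_{q}$. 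That count I would carry out by Witt theory: decompose the reduction $\overline{L'} = L'/\pi L'$, with its induced $\mathbb{F}_{q}$-quadratic form, into its radical and a nondegenerate part, and enumerate isometric embeddings of $\overline{L'}$ into $\overline{H_{m}^{\varepsilon}}$ as a direct summand through the orbits of $O(\overline{H_{m}^{\varepsilon}})(\mathbb{F}_{q})$ via Witt's extension theorem. The outcome is a closed product of factors of the shape $q^{2j}-1$ and $q^{j} \pm q^{j/2}$ in which, upon setting $m = 2k+n+1$ (respectively $m = 2k+n$), the only $k$-dependence is through $q^{-k}$; in characteristic $2$ one invokes the classification of quadratic $\mathbb{F}_{2}$-spaces, for which Witt's theorem and the relevant orthogonal-group cardinalities persist.

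The third step is the bookkeeping that matches this closed form against the normalizing factor. Dividing by $\textup{Nor}^{\varepsilon}(q^{-k}, n)$, the product $\prod_{1 \le i < (n+1)/2}(1 - q^{-2i}X^{2})$ is exactly what cancels the $\prod(q^{2j}-1)$-type denominators produced by $|O(\overline{H_{m}^{\varepsilon}})(\mathbb{F}_{q})|$ relative to the stabilizer of $\overline{L'}$, while the linear factor $1 - \tfrac{1+(-1)^{n+1}}{2}\,\varepsilon\, q^{-(n+1)/2}X$ cancels the parity-sensitive factor recording the discriminant and Hasse invariant of the orthogonal complement of $\phi(\overline{L'})$ inside $\overline{H_{m}^{\varepsilon}}$; this is the source of both the appearance of $\varepsilon$, $\chi_{F}$ and the difference between the cases $n$ even and $n$ odd. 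What is left is a genuine polynomial in $X = q^{-k}$, valid for all $k \ge 0$, with integer coefficients because each $\textup{Den}(\cdot,\cdot)$ lies in $\mathbb{Z}[q^{-1}]$ and the cancellations above are polynomial identities; summing over the finitely many $L'$ gives $\textup{Den}^{\varepsilon}(X,L), \textup{Pden}^{\varepsilon}(X,L) \in \mathbb{Z}[X]$, and $\partial\textup{Den}^{\varepsilon}(L)$ is thereby well-defined. For $\textup{Den}^{\flat\varepsilon}(X,L)$ one runs the identical computation with $H_{2k+n}^{\varepsilon}$ and the normalizer $\textup{Nor}^{\varepsilon}(q^{-k}, n-1)$; the extra factor $1 - \varepsilon\chi_{F}(L)q^{-k}$ arises because the orthogonal complement of a full-rank sublattice now has one fewer dimension than in the $\textup{Den}^{\varepsilon}$ situation, changing by exactly that linear term the factor one must strip off to land in $\mathbb{Z}[X]$.

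The main obstacle is the combination of the second and third steps: one must control the orbit decomposition of primitive embeddings $\overline{L'} \hookrightarrow \overline{H_{m}^{\varepsilon}}$ precisely enough to see that, after division by $\textup{Nor}^{\varepsilon}$, every denominator cancels and the discriminant/parity contributions collapse to exactly the stated linear factor --- equivalently, that the normalized density is a polynomial and not merely a rational function assuming polynomial values at the points $X = q^{-k}$. This is in essence the explicit primitive-density computation of \cite{CY20}, reorganized so as to isolate the normalizing factors of Definition \ref{normal1}, and in the $p = 2$ case it additionally requires the characteristic $2$ analogues of those formulas.
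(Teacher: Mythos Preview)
The paper does not prove this lemma at all; its entire proof is the one-line citation ``This is a combination of \cite[Lemma 3.3.2, Definition 3.4.1, Definition 3.5.2]{LZ22}.'' Your proposal is therefore not a comparison against a proof in the paper but a reconstruction of the argument behind that citation, and it follows the Cho--Yamauchi method that \cite{LZ22} itself relies on. The overall architecture --- reduce to primitive densities via the stratification, use smoothness of $\textup{PRep}_{H,L'}$ over $\mathcal{O}_F$ (from self-duality of $H$) to reduce to an $\mathbb{F}_q$-count, then match the resulting orthogonal-group factors against $\textup{Nor}^\varepsilon$ --- is exactly right and is what \cite{LZ22} and \cite{CY20} do.

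One inaccuracy to flag: the identity you write as $\textup{Den}(H,L) = \sum_{L'} \textup{Pden}(H,L')$ is not literally correct. Passing from the $\mathcal{O}_F$-stratification of Lemma~\ref{stratification} to densities requires tracking the mod-$\pi^d$ counts (compare the proof of Theorem~\ref{anadecom} later in the paper, or the rank-one case in Lemma~\ref{decomcodim1}); the correct formula carries a weight $[L':L]^{\,n+1-m}$ with $m=\textup{rk}(H)$. The point that saves your argument is that for $H=H_{2k+n+1}^{\varepsilon}$ one has $n+1-m=-2k$, so this weight equals $X^{2\ell(L'/L)}$, a monomial in $X=q^{-k}$. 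Hence after inserting the missing weights the sum over $L'$ is still a polynomial in $X$, and the rest of your outline (smoothness, the $\mathbb{F}_q$-count, and the cancellation of the $\textup{Nor}^\varepsilon$ factors) goes through unchanged.
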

\begin{proof}
This is a combination of \cite[Lemma 3.3.2, Definition 3.4.1, Definition 3.5.2]{LZ22}
\end{proof}
\par
For any $N\in\mathcal{O}_{F}$, let $(\langle N\rangle,q_{\langle N\rangle})$ be the rank 1 $\mathcal{O}_{F}$-lattice with an $\mathcal{O}_{F}$ generator $l_{N}$ such that $q_{\langle N\rangle}(l_{N})=N$. The following induction formula of local density is proved in \cite{LZ22},
\begin{lemma}
Assume $p$ is odd. Let $L$ be a quadratic lattice of rank $n$. Let $N\in\mathcal{O}_{F}$, when $\nu_{\pi}(N)$ is sufficiently large,
\begin{equation*}
    \textup{Den}^{\varepsilon}(X,L\obot\langle N\rangle)=X^{2}\cdot\textup{Den}^{\varepsilon}(X,L\obot\langle \pi^{-2}N\rangle)+(1-\varepsilon\chi_{F}(L)X)^{-1}(1-X^{2})\cdot\textup{Den}^{\flat\varepsilon}(X,L).
\end{equation*}
Moreover, if $L\obot\langle N\rangle$ can't be isometrically embedded into the quadratic space $H_{n+2}^{\varepsilon}\otimes_{\mathcal{O}_{F}}F$, then 
\begin{equation*}
    \partial\textup{Den}^{\varepsilon}(L\obot\langle N\rangle)-\partial\textup{Den}^{\varepsilon}(L\obot\langle \pi^{-2}N\rangle)=\begin{cases}
    -\varepsilon\chi_{F}(L)\cdot\textup{Den}^{\flat\varepsilon}(1,L), & \textup{if $\chi_{F}(L)\neq 0$;}\\
    2\cdot \textup{Den}^{\flat\varepsilon}(1,L), &\textup{if $\chi_{F}(L)=0$.}\\
    \end{cases}
\end{equation*}
\label{induction}
\end{lemma}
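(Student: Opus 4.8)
The plan is to establish the polynomial identity by a ``peel off the last variable'' analysis of local densities combined with the primitive decomposition of Lemma~\ref{stratification}, and then to obtain the derived-density identity by differentiating at $X=1$. Throughout $p$ is odd, so $2$ is a unit and quadratic spaces over $F$ are classified by $(\dim,\chi_{F},\epsilon)$. Fix $k\ge 0$ and put $H=H^{\varepsilon}_{2k+n+2}$, the self-dual lattice appearing in $\textup{Den}^{\varepsilon}(X,L\obot\langle N\rangle)\big|_{X=q^{-k}}=\textup{Den}(H,L\obot\langle N\rangle)/\textup{Nor}^{\varepsilon}(q^{-k},n+1)$. First I would observe that a homomorphism in $\textup{Rep}_{H,L\obot\langle N\rangle}(\mathcal{O}_{F}/\pi^{d})$ is the datum of a representation $\psi$ of $L$ together with a vector $w$ in the orthogonal complement of $\psi(L)$ with $q(w)=N$, and split the count according to whether $w$ is divisible by $\pi$ or primitive. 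The divisible $w$ are in bijection --- up to a controlled level shift between $\mathcal{O}_{F}/\pi^{d}$ and $\mathcal{O}_{F}/\pi^{d-1}$ coming from $w\mapsto\pi^{-1}w$ --- with representations of $L\obot\langle\pi^{-2}N\rangle$; once the exponents are matched against the normalizing factor $\textup{Nor}^{\varepsilon}(q^{-k},n+1)$, this stratum contributes $X^{2}\cdot\textup{Den}^{\varepsilon}(X,L\obot\langle\pi^{-2}N\rangle)$.

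For the primitive $w$, the hypothesis that $\nu_{\pi}(N)$ is large is essential: a primitive vector of norm $N$ in the rank-$(2k+2)$ complement has orthogonal complement inside $H$ of a forced isometry type, namely $H^{\varepsilon}_{2k+n}\obot\langle u\pi^{\nu_{\pi}(N)}\rangle$ for a unit $u$ (here $p$ odd is used), so that representing $L$ into it --- summed over the admissible $w$ and re-expanded using Lemma~\ref{stratification} together with the $\varepsilon$-versus-$\flat\varepsilon$ relation built into Lemma~\ref{poly} --- reproduces exactly the term $(1-\varepsilon\chi_{F}(L)X)^{-1}(1-X^{2})\cdot\textup{Den}^{\flat\varepsilon}(X,L)$, the twist $(1-\varepsilon\chi_{F}(L)X)$ being the stable contribution of a rank-one summand of large even valuation. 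Adding the two strata gives the asserted identity at each $X=q^{-k}$, hence identically. I expect the matching of normalizing exponents in these two steps --- in particular pinning down the complement of a primitive $N$-vector for $\nu_{\pi}(N)\gg 0$ and checking that the combinatorial cancellations leave precisely the stated coefficients --- to be the main obstacle; this is the computation carried out in \cite{LZ22} via the Cho--Yamauchi primitivization, which I would follow.

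For the ``moreover'' part I would differentiate. First note that $(1-\varepsilon\chi_{F}(L)X)^{-1}(1-X^{2})$ is the polynomial $1-X^{2}$ when $\chi_{F}(L)=0$ and the polynomial $1+\varepsilon\chi_{F}(L)X$ when $\chi_{F}(L)=\pm1$, so the right-hand side is polynomial in every case. The hypothesis that $L\obot\langle N\rangle$ does not embed into $H^{\varepsilon}_{n+2}\otimes_{\mathcal{O}_{F}}F$ gives $\textup{Den}^{\varepsilon}(1,L\obot\langle N\rangle)=0$, and likewise $\textup{Den}^{\varepsilon}(1,L\obot\langle\pi^{-2}N\rangle)=0$ since the two lattices have isometric generic fibers; moreover, when $\chi_{F}(L)=\varepsilon$ this same hypothesis forces $\epsilon(L)=-1$ (otherwise $L\otimes F\cong H^{\varepsilon}_{n}\otimes F$, and $\langle N\rangle$, being represented by a hyperbolic plane, embeds into the complement, contradicting non-embeddability), whence $\textup{Den}^{\flat\varepsilon}(X,L)$ vanishes to order $\ge 2$ at $X=1$. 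Applying $-\tfrac{\mathrm d}{\mathrm dX}\big|_{X=1}$ to the polynomial identity: the term $X^{2}\textup{Den}^{\varepsilon}(X,L\obot\langle\pi^{-2}N\rangle)$ contributes $-\partial\textup{Den}^{\varepsilon}(L\obot\langle\pi^{-2}N\rangle)$, since its value at $X=1$ vanishes; and a one-line computation of the derivative at $X=1$ of $(1-X^{2})\textup{Den}^{\flat\varepsilon}(X,L)$, respectively $(1+\varepsilon\chi_{F}(L)X)\textup{Den}^{\flat\varepsilon}(X,L)$, using the vanishing just noted, gives $2\,\textup{Den}^{\flat\varepsilon}(1,L)$ when $\chi_{F}(L)=0$ and $-\varepsilon\chi_{F}(L)\,\textup{Den}^{\flat\varepsilon}(1,L)$ when $\chi_{F}(L)\neq 0$. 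Moving $\partial\textup{Den}^{\varepsilon}(L\obot\langle\pi^{-2}N\rangle)$ to the left-hand side yields the stated formula.
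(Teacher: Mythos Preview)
The paper does not prove this lemma itself; it simply cites \cite[Theorem~3.6.1]{LZ22}. Your sketch follows that reference's primitive-decomposition strategy (which the paper itself implements in greater generality in Theorem~\ref{anadecom} and Theorem~\ref{anadiff}), and your differentiation argument for the ``moreover'' clause --- in particular the observation that, under the non-embeddability hypothesis, $\chi_F(L)=\varepsilon$ forces $\epsilon(L)=-1$ and hence $\textup{Den}^{\flat\varepsilon}(X,L)$ vanishes to order at least two at $X=1$ --- is correct.
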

\begin{proof}
This is proved in \cite[Theorem 3.6.1]{LZ22}
\end{proof}

\section{Incoherent Eisenstein series and the main theorem}
\label{5}
\subsection{Incoherent Eisenstein series}
Let $\mathsf{W}$ be the standard symplectic space over $\mathbb{Q}$ of dimension 4. Let $P=MN\subset \textup{Sp}(\mathsf{W})$ be the standard Siegel parabolic subgroup, which take the following form under the standard basis of $\mathsf{W}$,
\begin{align*}
    M(\mathbb{Q}) & = \left\{m(a)=\begin{pmatrix}a & 0\\0 & ^{t}a^{-1}\end{pmatrix}: a\in \textup{GL}_{2}(\mathbb{Q})\right\},\\
    N(\mathbb{Q}) & = \left\{n(b)=\begin{pmatrix}1_{2} & b\\0 & 1_{2}\end{pmatrix}: b\in \textup{Sym}_{2}(\mathbb{Q})\right\}.
\end{align*}
Let $\mathbb{A}$ be the ad$\grave{\textup{e}}$le ring over $\mathbb{Q}$. Let $\textup{Mp}(\mathsf{W}_{\mathbb{A}})$ be the following metaplectic extension of $\textup{Sp}(\mathsf{W})(\mathbb{A})$,
\begin{equation*}
    1\rightarrow \mathbb{C}^{1}\rightarrow \textup{Mp}(\mathsf{W}_{\mathbb{A}})\rightarrow \textup{Sp}(\mathsf{W})({\mathbb{A}})\rightarrow 1,
\end{equation*}
where $\mathbb{C}^{1}=\{z\in\mathbb{C}^{\times}:\vert z\vert=1\}$. There is an isomorphism $\textup{Mp}(\mathsf{W}_{\mathbb{A}})\stackrel{\sim}\rightarrow \textup{Sp}(\mathsf{W})({\mathbb{A}})\times \mathbb{C}^{1}$ with the multiplication on the latter is given by the global Rao cycle, therefore we can write an element of $\textup{Mp}(\mathsf{W}_{\mathbb{A}})$ as $(g,t)$ where $g\in \textup{Sp}(\mathsf{W})({\mathbb{A}})$ and $t\in \mathbb{C}^{1}$.
\par
Let $P(\mathbb{A}) = M(\mathbb{A})N(\mathbb{A})$ be the standard Siegel parabolic subgroup of $\textup{Mp}(\mathsf{W}_{\mathbb{A}})$ where 
\begin{align*}
    M(\mathbb{A}) & = \left\{(m(a),t): a\in \textup{GL}_{2}(\mathbb{A}), t\in\mathbb{C}^{1}\right\},\\
    N(\mathbb{A}) & = \left\{n(b): b\in \textup{Sym}_{2}(\mathbb{A})\right\}.
\end{align*}
Recall the following incoherent collection of rank $3$ quadratic spaces $\mathbb{V}=\{\mathbb{V}_{v}\}$ over $\mathbb{A}$ we defined in (\ref{incoherent}),
\begin{equation*}
    \mathbb{V}_{v}=\delta_{v}(N)\otimes\mathbb{Q}_{v}\,\,\textup{if $v<\infty$, and $\mathbb{V}_{\infty}$ is positive definite.}
\end{equation*}
then we can verify immediately that $\prod\limits_{v}\epsilon(\mathbb{V}_{v})=-1$. 
\par
Let $\chi:\mathbb{A}^{\times}/\mathbb{Q}^{\times}\rightarrow\mathbb{C}^{\times}$ be the quadratic character given by $\chi(x)=\prod\limits_{v\leq\infty}\chi_{v}(x_{v}) =\prod\limits_{v\leq\infty}(x_{v},-N)_{v}$ for all $x=(x_{v})\in\mathbb{A}^{\times}$. Fix the standard additive character $\psi:\mathbb{A}/\mathbb{Q}\rightarrow\mathbb{C}^{\times}$ such that $\psi_{\infty}(x)=e^{2\pi ix}$. We may view $\chi$ as a character on $M(\mathbb{A})$ by
\begin{equation*}
    \chi(m(a),t)=\chi(\textup{det}(a))\cdot\gamma(\textup{det}(a),\psi)^{-1}\cdot t.
\end{equation*}
and extend it to $P(\mathbb{A})$ trivially on $N(\mathbb{A})$. Here $\gamma(\textup{det}(a),\psi)$ is the Weil index (see the work of Kudla \cite[p. 548]{Kud97}). We define the degenerate principal series to be the unnormalized smooth induction
\begin{equation*}
    I(s,\chi)\coloneqq\textup{Ind}_{P(\mathbb{A})}^{\textup{Mp}(\mathsf{W}_{\mathbb{A}})}(\chi\cdot\vert\cdot\vert_{\mathbb{Q}}^{s+3/2}),\,\,\,s\in\mathbb{C}.
\end{equation*}
For a standard section $\Phi(-,s)\in I(s,\chi)$ (i.e., its restriction to the standard maximal compact subgroup of $\textup{Mp}(\mathsf{W}_{\mathbb{A}})$ is independent of $s$), we define the associated Siegel Eisenstein series
\begin{equation*}
    E(g,s,\Phi)=\sum\limits_{\gamma\in P(\mathbb{Q})\backslash \textup{Sp}(\mathbb{Q})}\Phi(\gamma g,s),
\end{equation*}
which converges for $\textup{Re}(s)\gg 0$ and admits meromorphic continuation to $s\in\mathbb{C}$.
\par
Recall that $\mathscr{S}(\mathbb{V}^{2})$ is the space of Schwartz functions on $\mathbb{V}^{2}$. The fixed choice of $\chi$ and $\psi$ gives a Weil representation $\omega = \omega_{\chi,\psi}$ of $\textup{Mp}(\mathsf{W}_{\mathbb{A}})\times\textup{O}(\mathbb{V})$ on $\mathscr{S}(\mathbb{V}^{2})$. For $\tilde{\boldsymbol\varphi}\in\mathscr{S}(\mathbb{V}^{2})$, define a function
\begin{equation*}
    \Phi_{\tilde{\boldsymbol\varphi}}(g)\coloneqq\omega(g)\tilde{\boldsymbol\varphi}(0),\,\,\,g\in\textup{Mp}(\mathsf{W}_{\mathbb{A}}).
\end{equation*}
Then $\Phi_{\tilde{\boldsymbol\varphi}}(g)\in I(0,\chi)$. Let $\Phi_{\tilde{\boldsymbol\varphi}}(-,s)\in I(s,\chi)$ be the associated standard section, known as the standard Siegel–Weil section associated to $\tilde{\boldsymbol\varphi}$. For $\tilde{\boldsymbol\varphi}\in \mathscr{S}(\mathbb{V}^{2})$, we write $E(g,s,\tilde{\boldsymbol\varphi})\coloneqq E(g,s,\Phi_{\tilde{\boldsymbol\varphi}})$.

\subsection{Fourier coefficients and derivatives}
We have a Fourier expansion of the Siegel Eisenstein series defined above,
\begin{equation*}
    E(g,s,\Phi)=\sum\limits_{T\in\textup{Sym}_{2}(\mathbb{Q})}E_{T}(g,s,\Phi),
\end{equation*}
where
\begin{equation*}
    E_{T}(g,s,\Phi)=\int_{\textup{Sym}_{2}(\mathbb{Q})\backslash \textup{Sym}_{2}(\mathbb{A})}E(n(b)g,s,\Phi)\psi(-\textup{tr}(Tb))\textup{d}n(b),
\end{equation*}
the Haar measure $\textup{d}n(b)$ is normalized to be self-dual with respect to $\psi$. When $T$ is nonsingular, for factorizable $\Phi = \otimes_{v}\Phi_{v}$, we have a factorization of the Fourier coefficient into a product
\begin{equation*}
     E_{T}(g,s,\Phi)=\prod_{v}W_{T,v}(g_{v},s,\Phi_{v}),
\end{equation*}
where the product ranges over all places $v$ of $\mathbb{Q}$ and the local Whittaker function $W_{T,v}(g_{v},s,\Phi_{v})$ is defined by
\begin{equation}
    W_{T,v}(g_{v},s,\Phi_{v})=\int_{\textup{Sym}_{2}(\mathbb{Q}_{v})}\Phi_{v}(w^{-1}n(b)g_{v},s)\cdot\psi_{v}(-\textup{tr}(Tb))\textup{d}n(b),\,\,\,w=\begin{pmatrix}0 & 1_{2}\\-1_{2} & 0\end{pmatrix},
    \label{lowhit}
\end{equation}
it has analytic continuation to $s \in \mathbb{C}$. Thus we have a decomposition of the derivative of a nonsingular Fourier coefficient,
\begin{equation*}
    E_{T}^{\prime}(g,s,\Phi) = \sum\limits_{v} E_{T,v}^{\prime}(g,s,\Phi),
\end{equation*}
where
\begin{equation}
    E_{T,v}^{\prime}(g,s,\Phi)=W_{T,v}^{\prime}(g_{v},s,\Phi_{v})\cdot\prod\limits_{v^{\prime}\neq v}W_{T,v^{\prime}}(g_{v^{\prime}},s,\Phi_{v^{\prime}}).
    \label{dewhit}
\end{equation}

\subsection{Whittaker functions and local densities}
Let $v$ be a finite place of $\mathbb{Q}$. Define the local degenerate principal series to be the unnormalized smooth induction
\begin{equation*}
     I_{v}(s,\chi_{v})\coloneqq\textup{Ind}_{P(\mathbb{Q}_{v})}^{\textup{Mp}(\mathsf{W}_{v})}(\chi_{v}\cdot\vert\cdot\vert_{v}^{s+3/2}),\,\,\,s\in\mathbb{C}.
\end{equation*}
\par
The fixed choice of $\chi_{v}$ and $\psi_{v}$ gives a local Weil representation $\omega_{v}=\omega_{\chi_{v},\psi_{v}}$ of $\textup{Mp}(\mathsf{W}_{v})\times\textup{O}(\mathbb{V}_{v})$ on the Schwartz function space $\mathscr{S}(\mathbb{V}_{v}^{2})$. We define the local Whittaker function associated to $\boldsymbol\varphi_{v}$ and $T\in\textup{Sym}_{2}(\mathbb{Q}_{v})$ to be
\begin{equation*}
    W_{T,v}(g_{v},s,\boldsymbol\varphi_{v})\coloneqq W_{T,v}(g_{v},s,\Phi_{\boldsymbol\varphi_{v}}),
\end{equation*}
where $\Phi_{\boldsymbol\varphi_{v}}(g_{v})\coloneqq\omega_{v}(g_{v})\boldsymbol\varphi_{v}(0)\in I_{v}(0,\chi_{v})$ and $\Phi_{\boldsymbol\varphi_{v}}(-,s)$ is the associated standard section.

\par
The relationship between Whittaker functions and local densities is encoded in the following proposition.
\begin{proposition}
Suppose $v\neq\infty$. Let $M$ be an integral $\mathbb{Z}_{v}$-quadratic lattice of rank 3 contained in $\mathbb{V}_{v}$. Let $L$ be an integral quadratic $\mathbb{Z}_{v}$-lattice of rank 2. Suppose that the quadratic form of $L$ is represented by a matrix $T\in \textup{Sym}_{2}(\mathbb{Q}_{v})$ after a choice of $\mathbb{Z}_{v}$-basis of $L$, we have the following identity,
\begin{equation}
    W_{T,v}(1,k,1_{M^{2}})=\vert  M^{\vee}/M\vert_{v}\cdot\gamma(\mathbb{V}_{v})^{2}\cdot\vert 2\vert_{v}^{1/2}\cdot \textup{Den}(M\obot H_{2k}^{+}, L).
\end{equation} 
where the constant $\gamma(\mathbb{V}_{v})=\gamma(\textup{det}(\mathbb{V}_{v}),\psi_{v})^{-1}\cdot\epsilon(\mathbb{V}_{v})\cdot\gamma(\psi_{v})^{-3}$, $\gamma(\textup{det}(\mathbb{V}_{v}),\psi_{v})$ and $\gamma(\psi_{v})$ are Weil indexes (cf. Appendix of \cite{Rao93}).
\label{non-homo}
\end{proposition}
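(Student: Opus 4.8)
The plan is to deduce the identity from the standard fact that, at the Siegel--Weil point, the $T$-th Whittaker function attached to the characteristic function of $L^{2}$ for a lattice $L$ computes a local representation density, after first passing to a quadratic space of dimension $2k+3$.

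First I would make the ``hyperbolic plane'' reduction. By construction $\Phi_{1_{M^{2}}}(-,s)$ is the standard section of $I_{v}(s,\chi_{v})=\textup{Ind}_{P(\mathbb{Q}_{v})}^{\textup{Mp}(\mathsf{W}_{v})}(\chi_{v}\cdot\vert\cdot\vert_{v}^{s+3/2})$ extending $\omega_{v}(-)1_{M^{2}}(0)\in I_{v}(0,\chi_{v})$, where $s=0$ is the point attached to the rank-$3$ space $\mathbb{V}_{v}$. Since $H_{2k}^{+}$ is a self-dual lattice in a split quadratic space of dimension $2k$, the discriminant, and hence the character $\chi_{v}$, is unchanged on passing from $\mathbb{V}_{v}$ to $\mathbb{V}_{v}\obot(H_{2k}^{+}\otimes_{\mathbb{Z}_{p}}\mathbb{Q}_{v})$, whose associated Siegel--Weil point is $s=k$. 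I would then check that the two standard sections $\Phi_{1_{M^{2}}}(-,s)$ and $\Phi_{1_{(M\obot H_{2k}^{+})^{2}}}(-,s)$ of $I_{v}(s,\chi_{v})$ coincide at $s=k$; by the Iwasawa decomposition this reduces to comparing their restrictions to the standard maximal compact $K$ of $\textup{Mp}(\mathsf{W}_{v})$, and then, using multiplicativity of the Weil representation over orthogonal direct sums, to the equality $\omega_{v}(\kappa)1_{(H_{2k}^{+})^{2}}(0)=1$ for $\kappa\in K$, which holds because $H_{2k}^{+}$ is self-dual (the Weil representation of a split even-dimensional space is unramified for $K$). This yields
\begin{equation*}
    W_{T,v}(1,k,1_{M^{2}})=W_{T,v}(1,k,1_{(M\obot H_{2k}^{+})^{2}}),
\end{equation*}
where on the right $M\obot H_{2k}^{+}$ is regarded as a lattice of rank $2k+3$ and $s=k$ is its Siegel--Weil point.

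Next I would invoke --- or reprove by unfolding --- the Siegel--Weil point identity: for a lattice $\Lambda$ of rank $m=2k+3$ in a nondegenerate quadratic space over $\mathbb{Q}_{v}$ and the rank-$2$ lattice $L$ with Gram matrix $T$, the Whittaker integral (\ref{lowhit}) with $\Phi_{v}=\Phi_{1_{\Lambda^{2}}}$ at $s=(m-3)/2$, after integrating over $\textup{Sym}_{2}(\mathbb{Q}_{v})$, collapses to the limit of the normalized point counts $\#\{x\in(\Lambda/\pi^{d}\Lambda)^{2}:\frac{1}{2}((x_{i},x_{j}))\equiv T\}\cdot q^{-d\,\textup{dim}(\textup{Rep}_{\Lambda,L})_{\mathbb{Q}_{v}}}$, i.e.\ to $\textup{Den}(\Lambda,L)$, multiplied by an explicit normalizing constant; this is the computation carried out in \cite[Ch.\,5]{KRY06} and \cite{Yn98}, and in the framework of \cite[\S 3]{LZ22}. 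The constant equals $\vert\Lambda^{\vee}/\Lambda\vert_{v}\cdot\gamma(\Lambda_{\mathbb{Q}_{v}})^{2}\cdot\vert 2\vert_{v}^{1/2}$, where $\vert\Lambda^{\vee}/\Lambda\vert_{v}$ and $\vert 2\vert_{v}^{1/2}$ arise from comparing the self-dual Haar measure on $\textup{Sym}_{2}(\mathbb{Q}_{v})$ with counting measure, and $\gamma(\Lambda_{\mathbb{Q}_{v}})^{2}$ is the square of the Weil index of $\Lambda_{\mathbb{Q}_{v}}$, one factor for each Schwartz variable.

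Finally I would assemble the constants for $\Lambda=M\obot H_{2k}^{+}$: self-duality of $H_{2k}^{+}$ gives $\vert\Lambda^{\vee}/\Lambda\vert_{v}=\vert M^{\vee}/M\vert_{v}$, and triviality of the Weil index of a split space gives $\gamma(\Lambda_{\mathbb{Q}_{v}})=\gamma(\mathbb{V}_{v})$, so combining with the reduction step produces exactly the asserted formula. The step I expect to be most delicate is the precise bookkeeping of the metaplectic constants --- in particular the vanishing of the contribution of the hyperbolic part in $\omega_{v}(\kappa)1_{(H_{2k}^{+})^{2}}(0)$ on the maximal compact and the uniform evaluation of $\gamma(\mathbb{V}_{v})^{2}\cdot\vert 2\vert_{v}^{1/2}$, including at the place $v=2$, where the Rao cocycle and the structure theory of quadratic $\mathbb{Z}_{2}$-lattices are more subtle; the unfolding of the Whittaker integral at the Siegel--Weil point is by contrast routine.
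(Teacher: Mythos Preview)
Your proposal is correct and follows the standard approach. The paper itself does not give an independent proof but simply cites \cite[Lemma 5.7.1]{KRY06}; what you have sketched --- the hyperbolic-plane reduction $W_{T,v}(1,k,1_{M^{2}})=W_{T,v}(1,k,1_{(M\obot H_{2k}^{+})^{2}})$ via unramifiedness of the Weil representation on a self-dual lattice, followed by the unfolding of the Whittaker integral at the Siegel--Weil point into a representation density, and finally the assembly of the constants using self-duality and splitness of $H_{2k}^{+}$ --- is precisely the argument carried out in that reference.
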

\begin{proof}
This is proved in \cite[Lemma 5.7.1]{KRY06}.    
\end{proof}

\subsection{Classical incoherent Eisenstein series}
The hermitian symmetric domain for $\textup{Sp}(\mathsf{W})$ is the Siegel upper half space
\begin{equation*}
    \mathbb{H}_{2}=\{\mathsf{z}=\mathsf{x}+i\mathsf{y}\,\,\vert\,\,\mathsf{x}\in\textup{Sym}_{2}(\mathbb{R}),\mathsf{y}\in\textup{Sym}_{2}(\mathbb{R})_{>0}\}.
\end{equation*}
Let $\mathsf{z}=\mathsf{x}+i\mathsf{y}\in\mathbb{H}_{2}$ with $\mathsf{x},\mathsf{y}\in\textup{Sym}_{2}(\mathbb{R})$ and $\mathsf{y}={^{t}a}\cdot a$ is positive definite. Define the classical incoherent Eisenstein series to be
\begin{equation*}
    E(\mathsf{z},s,\tilde{\boldsymbol\varphi})=\chi_{\infty}(m(a))^{-1}\vert\textup{det}(m(a))\vert^{-3/2}\cdot E(g_{\mathsf{z}},s,\tilde{\boldsymbol\varphi}),\,\,g_{\mathsf{z}}=n(x)m(a)\in\textup{Mp}(\mathsf{W}_{\mathbb{A}}).
\end{equation*}
Notice that $E(\mathsf{z},s,\tilde{\boldsymbol\varphi})$ doesn't depend on the choice of $\chi$. We write the central derivatives as,
\begin{equation}
    \partial\textup{Eis}(\mathsf{z},\tilde{\boldsymbol\varphi}) \coloneqq E^{\prime}(\mathsf{z},0,\tilde{\boldsymbol\varphi}),\,\,\partial\textup{Eis}_{T}(\mathsf{z},\tilde{\boldsymbol\varphi}) \coloneqq E_{T}^{\prime}(\mathsf{z},0,\tilde{\boldsymbol\varphi}).
    \label{classi}
\end{equation}
Then we have a Fourier expansion
\begin{equation*}
     \partial\textup{Eis}(\mathsf{z},\tilde{\boldsymbol\varphi})=\sum\limits_{T\in\textup{Sym}_{2}(\mathbb{Q})}\partial\textup{Eis}_{T}(\mathsf{z},\tilde{\boldsymbol\varphi}).
\end{equation*}
For the open compact subgroup $\Gamma_{0}(N)(\hat{\mathbb{Z}})\subset \textup{GL}_{2}(\mathbb{A}_{f})$ , we will choose
\begin{equation*}
    \tilde{\boldsymbol\varphi} = \boldsymbol\varphi\otimes\boldsymbol\varphi_{\infty}\in\mathscr{S}(\mathbb{V}^{2}).
\end{equation*}
such that $\boldsymbol\varphi\in \mathscr{S}(\mathbb{V}_{f}^{2})$ is $\Gamma_{0}(N)(\hat{\mathbb{Z}})$-invariant and $\boldsymbol\varphi_{\infty}$ is the Gaussian function
\begin{equation*}
    \boldsymbol\varphi_{\infty}(x)=e^{-\pi\,\textup{tr}\,T(x)}.
\end{equation*}
For our fixed choice of Gaussian $\boldsymbol\varphi_{\infty}$, we write
\begin{equation}
    E(\mathsf{z},s,\boldsymbol\varphi)=E(\mathsf{z},s,\boldsymbol\varphi\otimes\boldsymbol\varphi_{\infty}),\,\,\,\partial\textup{Eis}(\mathsf{z},\boldsymbol\varphi)=\partial\textup{Eis}(\mathsf{z},\boldsymbol\varphi\otimes\boldsymbol\varphi_{\infty}),\,\,\,\partial\textup{Eis}_{T}(\mathsf{z},\boldsymbol\varphi) = \partial\textup{Eis}_{T}(\mathsf{z},\boldsymbol\varphi\otimes\boldsymbol\varphi_{\infty}).
    \label{eis}
\end{equation}
and so on for short.
\label{inco}

\section{The modular curve $\mathcal{X}_{0}(N)$ and special cycles}
\label{3}
\subsection{Cyclic group schemes}
Let $S$ be a scheme. Let $G/S$ be a finite locally free group scheme over $S$. On every connected component of $S$, the rank of $G$ is a constant, if the rank is a same number $N$ for every connected component, we say that $G$ has order $N$.
\par
Let $\mathcal{O}_{S}$ be the structure sheaf of the scheme $S$. Let $G/S$ be a finite locally free group scheme of order $N$, then the structure sheaf $\mathcal{O}_{G}$ of $G$ is finite locally free of rank $N$ as an $\mathcal{O}_{S}$-module. Any element $f\in \mathcal{O}_{G}$ acts on itself by left multiplication, this defines an $\mathcal{O}_{S}$-linear endomorphism of $\mathcal{O}_{G}$, the characteristic polynomial of this endomorphism
\begin{equation*}
    \textup{det}(T-f) = T^{N}-\textup{tr}(f)T^{N-1}+\cdot\cdot\cdot+(-1)^{N}\textup{N}(f),
\end{equation*}
is a monic polynomial in $\mathcal{O}_{S}[T]$ of degree $N$.
\begin{definition}
We say that a set of $N$ not necessarily distinct points $\{P_{i}\}_{i=1}^{N}$ in $G(S)$ is a full set of sections of $G/S$ if the following condition is fulfilled: for any element $f\in \mathcal{O}_{G}$, the following equality of polynomials with coefficients in $\mathcal{O}_{S}$ holds,
\begin{equation*}
    \textup{det}(T-f) = \prod\limits_{i=1}^{N}(T-f(P_{i})).
\end{equation*}
\end{definition}
\begin{definition}
We say a finite locally free group scheme $G/S$ of rank $N$ is cyclic over $S$ if there exits a section $P\in G(S)$ such that $\{aP\}_{a=1}^{N}$ forms a full set of sections of $G/S$, we say $P$ is a generator of $G$ over $S$. We say $G/S$ is \textup{cyclic} if $G_{T}$ is cyclic over $T$ after some fppf covering by some scheme $T\rightarrow S$.
\label{cycdef}
\end{definition}
\begin{remark}
The cyclicity of a group scheme is preserved under base change by the definition, i.e., if $G/S$ is cyclic, then for any morphism $S^{\prime}\rightarrow S$, the base change group scheme $G\times_{S}S^{\prime}/S^{\prime}$ is also cyclic.
\end{remark}
\begin{proposition}
Let $S$ be a scheme. Let $E/S$ be an elliptic curve over $S$. Let $G\subset E[N]$ be a finite locally free group scheme of order $N$ over $S$. Then there exists a closed subscheme $S^{\textup{cyc}}\subset S$ which is universal for the condition ``$G$ is cyclic", in the sense that for any morphism $T\rightarrow S$, the base change $G_{T}/T$ is cyclic if and only if the morphism $T\rightarrow S$ factors through the closed subscheme $S^{\textup{cyc}}$.
\label{cyclo}
\end{proposition}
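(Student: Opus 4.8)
The plan is to build $S^{\textup{cyc}}$ in two steps: first produce, inside $G$ itself, a closed ``scheme of generators'' $Z\subseteq G$, and then cut out of $S$ the locus over which $Z$ is large enough. Since cyclicity is stable under base change (by the Remark above), it suffices to exhibit such a closed subscheme fppf-locally — in fact Zariski-locally — on $S$, so I may assume $S=\textup{Spec}\,R$ is affine and $\mathcal{O}_{G}$ is free of rank $N$ over $R$, with a fixed basis $e_{1},\dots,e_{N}$.

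First I would construct the closed subscheme $Z\subseteq G$ characterized by: for $T\to S$, a section $Q\in G(T)$ factors through $Z$ if and only if $\{aQ\}_{a=1}^{N}$ is a full set of sections of $G_{T}/T$. To do this, work over $G$ itself. Let $G_{G}\coloneqq G\times_{S}G$ be the base change of $G$ along the structure morphism $G\to S$, and let $\tau\in G(G)$ be the tautological section (the identity of $G$, i.e.\ the diagonal $G\hookrightarrow G\times_{S}G$). Then $\{a\tau\}_{a=1}^{N}$ are sections of $G_{G}/G$, and $\mathcal{O}_{G_{G}}$ is free of rank $N$ over $\mathcal{O}_{G}$. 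Writing a general element of $\mathcal{O}_{G_{G}}$ as $f=\sum_{j}c_{j}e_{j}$ with the $c_{j}$ indeterminate, both $\textup{det}(T-f)$ — the characteristic polynomial of multiplication by $f$ — and $\prod_{a=1}^{N}\bigl(T-f(a\tau)\bigr)$ are monic of degree $N$ in $T$ with coefficients polynomial in the $c_{j}$ over $\mathcal{O}_{G}$; requiring the two polynomials to coincide amounts to the vanishing of finitely many elements of $\mathcal{O}_{G}$, and this cuts out $Z$. That being a full set of sections is a closed condition is \cite[\S 1.10]{KM85}, and the stated universal property of $Z$ is then immediate.

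Next I would unwind the definition of cyclicity: $G_{T}$ is cyclic precisely when $G_{T}$ admits a generator after some fppf covering of $T$, equivalently precisely when $Z\times_{S}T\to T$ admits a section fppf-locally on $T$. So it remains to show that the set of $T\to S$ with this last property is represented by a closed subscheme of $S$. This is the step where the group-scheme structure enters essentially: for a general finite morphism $Z\to S$, admitting an fppf-local section is strictly stronger than surjectivity and need not be a closed condition (e.g.\ $\textup{Spec}\,k\hookrightarrow\textup{Spec}\,k[\epsilon]$ is surjective on points but admits no fppf-local section). Here, however, one appeals to the Katz–Mazur analysis of $E[N]$ and of its cyclic subgroups \cite[Chapter 6]{KM85}: the point is that $Z\to S$ is finite locally free — hence an fppf cover, so that it acquires a section after that very cover — exactly over a closed subscheme $S^{\textup{cyc}}\subseteq S$, while over the complement the scheme of generators is too small for $G$ to be cyclic. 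Since the formation of $Z$, and hence of $S^{\textup{cyc}}$, commutes with base change, $S^{\textup{cyc}}$ has the asserted universal property. (Alternatively, $S^{\textup{cyc}}$ is the pullback, along the classifying map $S\to\mathcal{G}$ for $G$, of the closed subscheme of cyclic subgroups inside the projective $S$-scheme $\mathcal{G}$ of order-$N$ subgroups of $E[N]$, cf.\ \cite[Theorem 6.1.1]{KM85}.)

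The genuine obstacle is this last point: the formal reductions and the construction of $Z$ are routine, but the \emph{closedness} of the cyclicity locus — as opposed to mere constructibility or local closedness — rests on the flatness properties of the Katz–Mazur moduli of cyclic subgroups, and it is there that the geometry of $E[N]$ (the ``crossings theorem'' and the regularity of these moduli) really enters.
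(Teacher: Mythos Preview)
The paper's proof is a one-line citation to \cite[Theorem 6.4.1]{KM85}, so there is nothing to compare beyond noting that your outline is essentially a sketch of that very argument: construct the closed scheme of generators $Z\subseteq G$ (KM \S1.10, \S6.5), then identify $S^{\textup{cyc}}$ as the locus over which $Z$ is finite locally free of the expected rank. You have correctly flagged that the substantive content --- namely that this locus is \emph{closed} rather than merely constructible --- is precisely what Katz--Mazur establish via the regularity and flatness of their moduli of cyclic subgroups, and your alternative description of $S^{\textup{cyc}}$ as a pullback from the scheme of order-$N$ subgroups is a clean way to package that input.
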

\begin{proof}
This is proved in \cite[Theorem 6.4.1]{KM85}.
\end{proof} 
\begin{lemma}
Let $W$ be a discrete valuation ring with residue characteristic $p$ and uniformizer $\pi$. Let $S$ be a reduced, noetherian, quasi-separated and flat scheme over $W$. Let $G$ be a finite locally free group scheme of order $p^{n}$ over $S$, which is also embedded into an elliptic curve $E/S$. If for every generic point $\xi$ of $S$, $G_{\xi}$ doesn't factor through the multiplication-by-$p$ morphism of $E_{\xi}$, then $G$ is a cyclic group scheme.
\label{cyclemma}
\end{lemma}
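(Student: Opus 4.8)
The plan is to show that the universal closed locus of $S$ on which $G$ becomes cyclic is all of $S$, reducing by reducedness and noetherianity of $S$ to a pointwise check at the generic points, which is exactly where the hypothesis is available.

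First, since a finite locally free commutative group scheme is annihilated by its order, $G$ is killed by $p^{n}$, hence $G\subseteq E[p^{n}]$, and Proposition \ref{cyclo} applies with $N=p^{n}$: it produces a closed subscheme $S^{\textup{cyc}}\hookrightarrow S$ such that a morphism $T\to S$ factors through $S^{\textup{cyc}}$ if and only if $G_{T}$ is cyclic over $T$. The content of the lemma is precisely the identity $S^{\textup{cyc}}=S$ of closed subschemes of $S$.

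Next, because $S$ is reduced, a closed subscheme of $S$ whose underlying topological space is all of $S$ must equal $S$ (its ideal sheaf lies in the nilradical, which is zero). So it suffices to prove $|S^{\textup{cyc}}|=|S|$. Since $S$ is noetherian it has finitely many irreducible components, and every point of $S$ lies in the closure of the generic point of one of them; as $S^{\textup{cyc}}$ is closed, it therefore suffices to show that every generic point $\xi$ of $S$ lies in $S^{\textup{cyc}}$. But $\xi\in S^{\textup{cyc}}$ if and only if the canonical morphism $\textup{Spec}\,\kappa(\xi)\to S$ factors through $S^{\textup{cyc}}$, which by the universal property above is equivalent to the assertion that $G_{\xi}\coloneqq G\times_{S}\textup{Spec}\,\kappa(\xi)$ is cyclic over the field $\kappa(\xi)$.

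We are thus reduced to a statement over a field $k=\kappa(\xi)$: if a finite subgroup scheme $G\subseteq E$ of $p$-power order over $k$ does not factor through $[p]\colon E\to E$, then $G$ is cyclic over $k$. By Definition \ref{cycdef} it suffices to verify cyclicity after a finite extension of the field, so we may assume $k$ algebraically closed. Unwinding the hypothesis, ``$G$ does not factor through $[p]$'' means that the isogeny with kernel $G$ does not factor through multiplication by $p$, equivalently $E[p]\not\subseteq G$; and since $G$ has $p$-power order it meets $E[p]$ nontrivially, so $G\cap E[p]$ has order exactly $p$ (unless $G=0$, which is trivially cyclic). It then remains to invoke the structure theory of cyclic subgroup schemes of elliptic curves from \cite{KM85}: over an algebraically closed field, a subgroup scheme $G\subseteq E$ of $p$-power order is cyclic precisely when $G\cap E[p]$ has order $p$ (equivalently, $G\cap E[p^{j}]$ has order $p^{j}$ for every $j$). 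I expect this field-theoretic input to be the only substantive step. When $E$ is ordinary or $k$ has characteristic $\neq p$ it follows at once from the connected--\'etale decomposition: an \'etale subgroup of $E[p^{\infty}]\simeq(\mathbb{Q}_{p}/\mathbb{Z}_{p})^{2}$ not containing the full $p$-torsion is cyclic, and a connected subgroup of $\mu_{p^{\infty}}$ is cyclic. The supersingular characteristic-$p$ case is the real point and is part of Katz--Mazur's classification of finite flat subgroup schemes of the associated one-dimensional formal group. Finally, note that in the situation of this paper $W$ has mixed characteristic, so by flatness of $S$ over $W$ every generic point of $S$ has residue field of characteristic zero, and there only the elementary \'etale case is needed.
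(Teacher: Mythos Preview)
Your proposal is correct and follows essentially the same approach as the paper: reduce to $S^{\textup{cyc}}=S$ via Proposition~\ref{cyclo}, use reducedness to pass to a check at generic points, and then verify cyclicity of $G_{\xi}$ over the residue field. The only difference is organizational: the paper immediately invokes flatness of $S$ over $W$ (together with $W$ having fraction field of characteristic~$0$, which holds in the intended application) to conclude that each $\kappa(\xi)$ has characteristic~$0$, so $G_{\xi}$ is \'etale and the structure theorem for finite abelian $p$-groups finishes the argument; you instead sketch the general field case (characteristic~$0$, ordinary, supersingular) before observing at the end that only the characteristic-$0$ case is needed here. Your detour is harmless but unnecessary for the lemma as used, and your final remark already contains the paper's shortcut.
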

\begin{proof} Since $S$ is quasi-separated, quasi-compact and flat over $W$, then $S[\pi^{-1}]$ is dense in $S$ since the scheme-theoretic image commutes with flat base change, therefore every generic point $\xi$ lies in the open dense subscheme $S[\pi^{-1}]$. Let $\kappa(\xi)$ be the residue field of $\xi$, it has characteristic 0. 
\par
The group scheme $G_{\xi}$ is of order $p^{n}$ over the characteristic 0 field $\kappa(\xi)$, hence $G_{\xi}\simeq\prod\limits_{i=1}^{k}\mathbb{Z}/p^{a_{i}}\mathbb{Z}$ where $\sum\limits_{i=1}^{k}a_{i}=n$. Now the fact that $G_{\xi}$ doesn't factor through multiplication-by-$p$ morphism of $E_{\xi}$ implies the only possibility is $k=1$ and $G_{\xi}\simeq\mathbb{Z}/p^{n}\mathbb{Z}$.
\par
Let $S^{\textup{cyc}}$ be the closed subscheme described by Proposition \ref{cyclo}, we know that every generic point is contained in the closed subscheme $S^{\textup{cyc}}$, hence $S^{\textup{cyc}}=S$ since $S$ is reduced.
\end{proof}
\begin{corollary}
Let $W$ be a discrete valuation ring with residue characteristic $p$ and uniformizer $\pi$. Let $S$ be an integral noetherian scheme, quasi-separated and flat over $W$. Let $G$ be a finite locally free group scheme of order $p^{n}$ over $S$, which is also embedded into an elliptic curve $E/S$. If the isogeny $\pi_{G}:E\rightarrow E/G$ doesn't factor through multiplication-by-$p$ morphism of $E$, then $G$ is a cyclic group scheme.
\label{corcyc}
\end{corollary}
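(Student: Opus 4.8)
The plan is to deduce this from Lemma~\ref{cyclemma}. An integral scheme is reduced and has a unique generic point $\xi$, so the standing hypotheses on $S$ and $G$ already match those of Lemma~\ref{cyclemma}; the only thing left to verify is the condition there on the generic point, namely that $G_{\xi}$ does not factor through the multiplication-by-$p$ morphism of $E_{\xi}$. I would prove this by contraposition, and the first step is to unwind what ``factoring through $[p]$'' means: since $[p]_{E}$ has kernel $E[p]$, the isogeny $\pi_{G}\colon E\to E/G$ factors through $[p]_{E}$ if and only if $E[p]\subseteq\ker\pi_{G}=G$ as closed subschemes of $E$, and similarly $G_{\xi}$ factors through $[p]_{E_{\xi}}$ if and only if $E_{\xi}[p]\subseteq G_{\xi}$.

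So the contrapositive to be shown is: if $E_{\xi}[p]\subseteq G_{\xi}$, then $E[p]\subseteq G$ over all of $S$. To prove this I would consider the scheme-theoretic intersection $H\coloneqq E[p]\times_{E}G$, which is a closed subscheme of $E[p]$ obtained by base change from the closed immersion $G\hookrightarrow E$. Its generic fibre is $E_{\xi}[p]\cap G_{\xi}=E_{\xi}[p]$ by hypothesis, so the closed immersion $H\hookrightarrow E[p]$ becomes an isomorphism over $\xi$. Now $E[p]$ is finite locally free, in particular flat, over $S$, and $S$ is integral with generic point $\xi$; hence the generic fibre of $E[p]$ is schematically dense in $E[p]$. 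Concretely, on an affine open $\textup{Spec}\,A\subseteq S$ with $A$ a domain, the ideal cutting out $H$ lies inside $\mathcal{O}_{E[p]}$, which is $A$-torsion-free because it is $A$-flat, so that ideal is itself $A$-torsion-free; since it vanishes after $-\otimes_{A}\textup{Frac}(A)$, it must be zero. Therefore $H=E[p]$, i.e.\ $E[p]\subseteq G$, which contradicts the hypothesis that $\pi_{G}$ does not factor through $[p]$.

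Consequently $G_{\xi}$ does not factor through $[p]_{E_{\xi}}$, so all hypotheses of Lemma~\ref{cyclemma} are met and $G$ is cyclic. I expect the only non-formal point to be the passage in the second paragraph from the generic point to all of $S$: the schematic density of the generic fibre of the flat $S$-scheme $E[p]$ over the integral base $S$ is exactly what lets one upgrade the inclusion $E_{\xi}[p]\subseteq G_{\xi}$, which is a statement over the function field, to the inclusion $E[p]\subseteq G$ over $S$. Everything else is bookkeeping --- translating ``$\pi_{G}$ factors through $[p]$'' into ``$E[p]\subseteq G$'', and observing that an integral scheme is reduced, so that Lemma~\ref{cyclemma} applies verbatim once the generic-point condition has been checked.
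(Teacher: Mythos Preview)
Your proof is correct and follows essentially the same approach as the paper: reduce to Lemma~\ref{cyclemma} by showing that if $E_{\xi}[p]\subseteq G_{\xi}$ then $E[p]\subseteq G$ over all of $S$, using integrality of $S$. The only cosmetic difference is that the paper packages the key step by citing \cite[Lemma~1.3.4]{KM85} (``containment of one Cartier divisor in another is a closed condition on the base'') to get an ideal sheaf $\mathcal{I}$ on $S$ and then uses the injection $\mathcal{I}\hookrightarrow\mathcal{I}_{\xi}$, whereas you unwind this directly upstairs on $E[p]$ via flatness and torsion-freeness; the content is the same.
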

\begin{proof}
The isogeny $\pi_{G}:E\rightarrow E/G$ factors through the multiplication-by-$p$ morphism of $E$ if and only if $\textup{ker}([p]_{E})$ is contained (as a Cartier divisor on $E$) in $G$, this is a closed condition on the base scheme $S$ by \cite[Lemma 1.3.4]{KM85}. We use $\mathcal{I}\neq 0$ (since the morphism $\pi_{G}$ doesn't factor through the multiplication-by-$p$ morphism of $E$) to denote the ideal sheaf of this closed subscheme of $S$, it is functorial with respect to the base change of $S$.
\par
Let $\xi$ be the only generic point of $S$, then $G_{\xi}$ doesn't factor through the multiplication-by-$p$ morphism because otherwise $\mathcal{I}_{\xi}=0$, but the injection $\mathcal{I}\rightarrow\mathcal{I}_{\xi}$ will imply that $\mathcal{I}=0$, which is a contradiction. Then the corollary follows from Lemma \ref{cyclemma}.
\end{proof}

\subsection{$\Gamma_{0}(N)$-structures on elliptic curves}
Let $S$ be a scheme. We say a scheme $C$ over $S$ is a smooth curve over $S$ if the structure morphism $C\rightarrow S$ is a smooth proper morphism  of relative dimension 1. 
\begin{definition}
A closed immersion $i: D\rightarrow C$ is called an effective Cartier divisor if the following conditions hold,\\
(i) The closed subscheme $D$ is flat over $S$;\\
(ii) The ideal sheaf $\mathcal{I}(D)$ defining $D$ is an invertible $\mathcal{O}_{C}$-module.
\end{definition}
\begin{lemma}
If C/S is a smooth curve, then any section $s\in C(S)$ defines an effective Cartier divisor on C, denoted by $[s]$.
\end{lemma}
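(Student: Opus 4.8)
The plan is to reduce the statement to the invertibility of the ideal sheaf of $s(S)$ in $C$, and then to deduce the latter from the smoothness of $C/S$ via the conormal sequence. First I would observe that $\pi\colon C\to S$ is proper, hence separated, so the section $s\colon S\to C$ is a closed immersion; write $\mathcal{I}\subset\mathcal{O}_C$ for its ideal sheaf and $[s]$ for the closed subscheme it defines. Since $\pi\circ s=\mathrm{id}_S$, the composite $[s]\hookrightarrow C\xrightarrow{\pi}S$ is inverse to the isomorphism $S\xrightarrow{\sim}[s]$ induced by $s$, hence is itself an isomorphism; in particular $[s]\to S$ is flat, which is condition (i) in the definition of an effective Cartier divisor. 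It then remains to prove that $\mathcal{I}$ is an invertible $\mathcal{O}_C$-module, a question that is local on $C$.

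For this I would use the standard fact about immersions of smooth $S$-schemes: since both $C\to S$ and $\mathrm{id}\colon S\to S$ are smooth (the latter of relative dimension $0$), the immersion $s$ is regular and the conormal sequence
\[
0\longrightarrow \mathcal{I}/\mathcal{I}^{2}\longrightarrow s^{*}\Omega_{C/S}\longrightarrow \Omega_{S/S}\longrightarrow 0
\]
is short exact with finite locally free terms. Because $\Omega_{S/S}=0$ while $\Omega_{C/S}$ is locally free of rank $1$ (smoothness of relative dimension $1$), the conormal sheaf $\mathcal{I}/\mathcal{I}^{2}\cong s^{*}\Omega_{C/S}$ is invertible on $[s]$; equivalently, $s$ is a regular immersion of codimension $1$. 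Hence, locally on $C$, the ideal $\mathcal{I}$ is generated by a single nonzerodivisor $f$, and multiplication by $f$ gives an isomorphism $\mathcal{O}_C\xrightarrow{\sim}\mathcal{I}$ (surjectivity being clear, and injectivity holding since $f$ is a nonzerodivisor). Thus $\mathcal{I}$ is invertible, which is condition (ii), and $[s]$ is an effective Cartier divisor on $C$.

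The one point that is not purely formal is the passage from the fact that the conormal sheaf has rank $1$ to the invertibility of $\mathcal{I}$ itself: for an arbitrary closed subscheme of $C$ with invertible conormal sheaf the ideal need not be locally principal, and here one must use crucially that a section of a \emph{smooth} morphism is a regular immersion, so that a local generator of $\mathcal{I}/\mathcal{I}^{2}$ lifts to a nonzerodivisor generating $\mathcal{I}$. An alternative route, avoiding this input, is to note that the diagonal $\Delta_{C/S}\subset C\times_S C$ is a relative effective Cartier divisor over $S$ precisely because $C/S$ is smooth of relative dimension $1$, and to identify $[s]$ with the scheme-theoretic preimage of $\Delta_{C/S}$ under the $S$-morphism $(\mathrm{id}_C,\,s\circ\pi)\colon C\to C\times_S C$; since $C$ and $[s]\cong S$ are both flat over $S$, this preimage is again a relative effective Cartier divisor, which is exactly the claim.
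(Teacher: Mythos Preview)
Your argument is correct. The paper does not give a proof of this lemma at all; it simply cites \cite[Lemma 1.2.2]{KM85}. Your write-up supplies what that citation contains: the section is a closed immersion because $\pi$ is separated, the closed subscheme $[s]$ is isomorphic to $S$ via $\pi$ (hence flat over $S$), and the ideal sheaf is invertible because a section of a smooth morphism is a regular immersion whose codimension equals the relative dimension, here $1$. Your caution about the passage from ``$\mathcal{I}/\mathcal{I}^{2}$ invertible'' to ``$\mathcal{I}$ invertible'' is well placed, and your alternative via pulling back the diagonal $\Delta_{C/S}\subset C\times_{S}C$ is exactly the argument Katz--Mazur give in the cited lemma.
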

\begin{proof}
This is proved in \cite[Lemma 1.2.2]{KM85}.
\end{proof}
Given two effective Cartier divisors $D$ and $D^{\prime}$ on $C/S$, we can define their sum $D+D^{\prime}$. It is an effective Cartier divisor on $C/S$ defined locally by the product of the defining equations of $D$ and $D^{\prime}$. Explicitly, if $S=\textup{Spec}\,R$ and if over an affine open subscheme $\textup{Spec}\,A$ of $C$, the Cartier divisor $D$ (resp. $D^{\prime}$) is defined by an element $f\in A$ (resp. $g\in A$), then the Cartier divisor $D+D^{\prime}$ is defined by the equation $fg$.
\begin{lemma}
Suppose $E/S$ and $E^{\prime}/S$ are two elliptic curves over $S$, $\pi: E\rightarrow E^{\prime}$ is an isogeny, i.e., $\pi$ is surjective and $\textup{ker}(\pi)$ is a finite flat group scheme locally of finite presentation over $S$. Then $\textup{ker}(\pi)\rightarrow E$ is an effective Cartier divisor.
\end{lemma}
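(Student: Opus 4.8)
The plan is to exhibit $\ker(\pi)$ as the scheme-theoretic preimage under $\pi$ of the identity section of $E'/S$, and then to pull back the fact, recalled in the lemma above, that a section of a smooth $S$-curve cuts out an effective Cartier divisor; in particular the identity section $[e_{E'}]$ is an effective Cartier divisor on $E'/S$. Concretely, $\ker(\pi) = E \times_{\pi, E', e_{E'}} S$; since $E' \to S$ is separated the identity section $e_{E'} \colon S \to E'$ is a closed immersion, so its base change $\ker(\pi) \hookrightarrow E$ along $\pi$ is again a closed immersion, and the ideal sheaf of $\ker(\pi)$ in $\mathcal{O}_{E}$ is the image of $\pi^{*}\mathcal{I}([e_{E'}]) \to \mathcal{O}_{E}$. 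Condition (i) in the definition of an effective Cartier divisor over $S$ — flatness of $\ker(\pi)$ over $S$ — is precisely part of the hypothesis that $\ker(\pi)$ is a finite flat group scheme, so it remains to verify condition (ii): that this ideal sheaf is invertible.

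The crux is to show that $\pi$ itself is flat. First, $\pi$ is finite: it is proper, being a morphism between two proper $S$-schemes, and it is quasi-finite because each of its geometric fibres is a torsor under the corresponding geometric fibre of the finite group scheme $\ker(\pi)$, hence finite; a proper quasi-finite morphism is finite. For flatness I would apply the fibrewise criterion of flatness: all schemes in sight are of finite presentation over $S$ and $E' \to S$ is flat, so it suffices to check that $\pi_{s} \colon E_{s} \to E'_{s}$ is flat for every $s \in S$. But $E_{s}$ and $E'_{s}$ are smooth curves over $\kappa(s)$ and $\pi_{s}$ is finite and surjective; after base change to $\overline{\kappa(s)}$ the source becomes an integral regular curve, and a finite surjective morphism from an integral regular curve to a regular curve is automatically flat (its local behaviour is governed by the discrete valuation rings and fields that occur as local rings, over which finite torsion-free modules are free), and flatness descends along $\kappa(s) \to \overline{\kappa(s)}$. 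Hence $\pi$ is finite flat. One could alternatively invoke ``miracle flatness'' fibrewise, using that a smooth curve over a field is Cohen--Macaulay while the target is regular of the same dimension.

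Granting flatness of $\pi$, condition (ii) is then formal. By the lemma recalled above, $\mathcal{I}([e_{E'}])$ is an invertible $\mathcal{O}_{E'}$-module and the inclusion $\mathcal{I}([e_{E'}]) \hookrightarrow \mathcal{O}_{E'}$ is injective; applying the exact functor $\pi^{*}$ produces an injection $\pi^{*}\mathcal{I}([e_{E'}]) \hookrightarrow \pi^{*}\mathcal{O}_{E'} = \mathcal{O}_{E}$ whose image is exactly $\mathcal{I}(\ker(\pi))$. Since the pullback of an invertible sheaf along any morphism is invertible, $\mathcal{I}(\ker(\pi))$ is an invertible $\mathcal{O}_{E}$-module, which is condition (ii); together with (i) this shows that $\ker(\pi) \hookrightarrow E$ is an effective Cartier divisor. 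The only genuinely non-formal ingredient, and the step I expect to require the most care, is the verification that $\pi$ is flat — correctly combining the fibrewise flatness criterion with the classical flatness of finite surjective morphisms between smooth curves over a field; once that is in hand, everything else is a routine pull-back manipulation.
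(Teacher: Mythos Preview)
Your proof is correct, but it takes a genuinely different route from the paper's. The paper argues only that $\pi$ is locally of finite presentation (by the cancellation property, since both $E\to S$ and $E'\to S$ are), deduces that $\ker(\pi)$ is locally of finite presentation over $S$, and then invokes \cite[Lemma 1.2.3]{KM85}, which says that any closed subscheme of a smooth $S$-curve that is finite, flat, and locally of finite presentation over $S$ is automatically an effective Cartier divisor. By contrast, you realize $\ker(\pi)$ as $\pi^{-1}([e_{E'}])$ and prove directly that $\pi$ is flat (via the fibrewise criterion and the classical fact that finite surjective maps of smooth curves over a field are flat), so that the Cartier divisor $[e_{E'}]$ pulls back to one. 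The paper's argument is shorter because it outsources the real work to Katz--Mazur; yours is more self-contained and makes the geometric mechanism (flat pullback of a Cartier divisor) completely explicit, at the cost of having to establish flatness of $\pi$ rather than just its finite presentation. Both are sound; the flatness step you flag as ``the only genuinely non-formal ingredient'' is handled correctly.
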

\begin{proof}
By cancellation theorem of morphisms of locally finite presentation, any morphism between abelian schemes are locally of finite presentation. Hence $\pi$ is locally of finite presentation, and therefore $\textup{ker}(\pi)$ is also locally of finite presentation over $S$. Then the lemma follows from  of \cite[Lemma 1.2.3]{KM85}.\end{proof}
\begin{definition}
We say an isogeny $\pi:E\rightarrow E^{\prime}$ between two elliptic curves $E$ and $E^{\prime}$ is a cyclic $N$-isogeny if $\pi^{\vee}\circ\pi=N$, and there exists an fppf covering of $S$ by a scheme $T\rightarrow S$ with a point $P\in \textup{ker}(\pi)(T)$ such that the following equality of Cartier divisors on $E_{T}$ holds:
\begin{equation*}
    \textup{ker}(\pi)_{T} = \sum\limits_{a=1}^{N}\,[aP].
\end{equation*}
A $\Gamma_{0}(N)$-structure on an elliptic curve $E/S$ is a cyclic $N$-isogeny $E\stackrel{\pi}\longrightarrow E^{\prime}$.
\end{definition}
\begin{lemma}
Let $\pi:E\rightarrow E^{\prime}$ be an isogeny between two elliptic curves $E$ and $E^{\prime}$, the isogeny $\pi$ is a $N$-cyclic isogeny if and only if $\textup{ker}(\pi)$ is a cyclic group scheme of order $N$.
\end{lemma}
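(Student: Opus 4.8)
The plan is to decouple the numerical part of the statement from the ``generator'' part, and to quote the Katz--Mazur dictionary between full sets of sections and Cartier-divisor decompositions for the latter. On the numerical side, recall the standard identity $\pi^{\vee}\circ\pi=[\deg\pi]$ for an isogeny of elliptic curves, together with $\deg(\pi^{\vee})=\deg(\pi)$ and $\deg[N]=N^{2}$; hence $\pi^{\vee}\circ\pi=N$ holds if and only if $\deg\pi=N$, which is exactly the condition that the finite locally free group scheme $\ker(\pi)$ have order $N$. In particular, once we know $\ker(\pi)$ has order $N$ the relation $\pi^{\vee}\circ\pi=[N]$ comes for free, so the only remaining content of the equivalence is to match the notion of a generator occurring in Definition \ref{cycdef} with the Cartier-divisor decomposition $\ker(\pi)_{T}=\sum_{a=1}^{N}[aP]$ appearing in the definition of a cyclic $N$-isogeny.

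Both of these ``generator'' conditions are, by construction, fppf-local on the base $S$, so I would first pass to an fppf covering $T\to S$ carrying a section $P\in\ker(\pi)(T)$. Over $T$, the group scheme $\ker(\pi)_{T}$ is finite locally free of rank $N$ and is embedded as a closed subscheme of the smooth curve $E_{T}/T$, hence is a relative effective Cartier divisor on $E_{T}$. The key input is the following statement of Katz and Mazur: for a closed subscheme $Z\subset C$ of a smooth curve $C/T$ which is finite locally free of rank $N$ over $T$, a family of $N$ sections $Q_{1},\dots,Q_{N}$ of $Z$ is a full set of sections of $Z/T$ if and only if $Z=\sum_{a=1}^{N}[Q_{a}]$ as effective Cartier divisors on $C$ (see \cite{KM85}). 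Applying this with $C=E_{T}$, $Z=\ker(\pi)_{T}$, and $Q_{a}=aP$ shows that ``$\{aP\}_{a=1}^{N}$ is a full set of sections of $\ker(\pi)_{T}$'', which is the generator condition of Definition \ref{cycdef}, is literally equivalent to ``$\ker(\pi)_{T}=\sum_{a=1}^{N}[aP]$ on $E_{T}$''.

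Assembling the two directions: if $\pi$ is a cyclic $N$-isogeny, then $\pi^{\vee}\circ\pi=N$ forces $\deg\pi=N$, so $\ker(\pi)$ has order $N$, and the Cartier-divisor equation over the fppf cover translates, via the dictionary above, into the existence of a generator of $\ker(\pi)_{T}$, so $\ker(\pi)$ is a cyclic group scheme; conversely, if $\ker(\pi)$ is cyclic of order $N$, then $\deg\pi=N$ yields $\pi^{\vee}\circ\pi=[N]$ automatically, and a generator of $\ker(\pi)_{T}$ produces, again via the dictionary, the required Cartier-divisor decomposition, so $\pi$ is a cyclic $N$-isogeny. I expect the only delicate point to be the precise form and applicability of the Katz--Mazur equivalence between full sets of sections and Cartier-divisor decompositions on a smooth curve (together with checking that the fppf descent is harmless, which it is because every condition involved is stable under, and can be tested after, fppf base change); the rest is bookkeeping with degrees of isogenies.
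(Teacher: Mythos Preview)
Your proposal is correct and takes essentially the same approach as the paper: both reduce the equivalence to the Katz--Mazur criterion (\cite[Theorem~1.10.1]{KM85}) that a family $\{aP\}_{a=1}^{N}$ is a full set of sections of a finite locally free subgroup scheme of a smooth curve if and only if it equals $\sum_{a=1}^{N}[aP]$ as Cartier divisors. You are simply more explicit than the paper about the numerical identity $\pi^{\vee}\circ\pi=[\deg\pi]$ and about the fppf-local nature of both conditions, which the paper leaves implicit.
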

\begin{proof} By \cite[Theorem 1.10.1]{KM85}, the set $\{aP\}_{a=1}^{N}$ (where $P\in \textup{ker}(\pi)(S)$) forms a full set of sections of $\textup{ker}(\pi)$ if and only if we have the following equality of effective Cartier divisors in $E/S$,
\begin{equation*}
    \textup{ker}(\pi) = \sum\limits_{a=1}^{N}\,[aP],
\end{equation*}
which is exactly the definition of the cyclicity of a $N$-isogeny.
\end{proof}
\begin{example}
(a) Suppose $\tau=x+iy\in\mathbb{H}_{1}^{+}$, we consider the elliptic curve $E_{\tau}=\mathbb{C}/\mathbb{Z}+\mathbb{Z}\tau$, and a finite subgroup $K$ generated by $1/N$ inside $E_{\tau}$, then $\pi:E_{\tau}\rightarrow E_{\tau}/K$ is a cyclic isogeny.\\
(b) Suppose $E/S$ is an elliptic curve over a $\mathbb{F}_{p}$-scheme $S$, then for any $n\geq1$, the $n^{\textup{th}}$ iterated relative Frobenius
\begin{equation*}
    F^{n}: E\rightarrow E^{(p^{n})},
\end{equation*}
is a cyclic $p^{n}$-isogeny. The origin $P=0$ is a generator of $\textup{ker}(F^{n})$ because $\textup{ker}(F^{n})\simeq\mathcal{O}_{S}[T]/(T^{p^{n}})$ Zariski locally (cf. \cite[Lemma 12.2.1]{KM85}).
\end{example}
\par
Let $\mathcal{E}ll$ be the stack of elliptic curves, i.e., for an arbitrary scheme $S$, $\mathcal{E}ll(S)$ is a groupoid whose objects are elliptic curves $p: E\rightarrow S$ and morphisms are isomorphisms of elliptic curves over $S$. We use $\mathcal{Y}_{0}(N)$ to denote the stack which consists of all the $\Gamma_{0}(N)$-structures on elliptic curves, i.e., for a scheme $S$, $\mathcal{Y}_{0}(N)(S)$ is a groupoid whose objects are cyclic $N$-isogenies $(E\stackrel{\pi}\rightarrow E^{\prime})$ where $E$ and $E^{\prime}$ are elliptic curves over $S$, a morphism between two cyclic isogenies $(E_{1}\stackrel{\pi_{1}}\rightarrow E_{1}^{\prime})$ and $(E_{2}\stackrel{\pi_{2}}\rightarrow E_{2}^{\prime})$ is a pair of isomorphisms of elliptic curves $a:E_{1}\stackrel{\sim}\rightarrow E_{2}$ and $a^{\prime}:E_{1}^{\prime}\stackrel{\sim}\rightarrow E_{2}^{\prime}$ such that $a^{\prime}\circ\pi_{1}  = \pi_{2}\circ a$. We have the following functors,
\begin{align*}
     s:\mathcal{Y}_{0}(N)&\longrightarrow\mathcal{E}ll,\\
    (E/S\stackrel{\pi}\rightarrow E^{\prime}/S)&\longmapsto E/S.
\end{align*}
\par
\begin{lemma}
Both $\mathcal{Y}_{0}(N)$ and $\mathcal{E}ll$ are 2-dimensional Deligne-Mumford stacks. The functor $s:\mathcal{Y}_{0}(N)\rightarrow \mathcal{E}ll$ is finite flat of degree $\psi(N)=N\cdot\prod\limits_{l\vert N}(1+l^{-1})$, and representable by schemes, $s$ is also étale over $\textup{Spec}\,\mathbb{Z}[1/N]$.
\label{fiet}
\end{lemma}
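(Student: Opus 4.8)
The plan is to reduce the statement to Katz--Mazur's theory of $\Gamma_{0}(N)$-structures. The stack $\mathcal{E}ll$ of elliptic curves is classically a smooth separated Deligne--Mumford stack over $\textup{Spec}\,\mathbb{Z}$ of relative dimension $1$ (see \cite{KM85}), hence a regular $2$-dimensional Deligne--Mumford stack. Once the forgetful morphism $s$ is shown to be representable by schemes and finite, the Deligne--Mumford property and the dimension count for $\mathcal{Y}_{0}(N)$ follow formally from $\dim\mathcal{E}ll=2$ and the finiteness (hence quasi-finiteness) of $s$, so everything reduces to the assertions about $s$.

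To see that $s$ is relatively representable by schemes, fix an elliptic curve $E/S$, i.e.\ an $S$-point of $\mathcal{E}ll$; then $\mathcal{Y}_{0}(N)\times_{\mathcal{E}ll}S$ is the functor $T\mapsto\{\Gamma_{0}(N)\text{-structures on }E_{T}\}$, which by the correspondence $\pi\leftrightarrow\ker(\pi)$, $C\mapsto(E\to E/C)$ established above is the functor of cyclic finite locally free subgroup schemes of order $N$ of $E[N]$. The functor of all finite locally free order-$N$ subgroup schemes of the finite locally free group scheme $E[N]/S$ is representable by a projective $S$-scheme, namely a closed subscheme of a suitable Grassmannian of sub-$\mathcal{O}_{S}$-modules of $\mathcal{O}_{E[N]}$ cut out by the subgroup-scheme conditions, and Proposition~\ref{cyclo} exhibits the cyclic ones as a closed subscheme of it. Hence $\mathcal{Y}_{0}(N)\times_{\mathcal{E}ll}S$ is a scheme, projective over $S$; in particular $s$ is representable by schemes and proper.

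Properness together with quasi-finiteness (a geometric fibre of $s$ is the finite set of cyclic order-$N$ subgroup schemes of an elliptic curve over an algebraically closed field) gives that $s$ is finite. For flatness I would apply miracle flatness: $\mathcal{Y}_{0}(N)$ is regular of dimension $2$ by \cite[Theorem~5.1.1]{KM85} (recalled in the introduction), $\mathcal{E}ll$ is regular of dimension $2$, and a finite morphism between regular noetherian stacks of equal dimension with zero-dimensional fibres is flat; thus $s$ is finite flat (and, having positive degree, surjective). The degree is locally constant, so it may be computed on the generic fibre: for $E$ an elliptic curve over an algebraically closed field of characteristic $0$, a $\Gamma_{0}(N)$-structure on $E$ is a cyclic subgroup of order $N$ of $E[N]\cong(\mathbb{Z}/N\mathbb{Z})^{2}$, and reducing to prime powers by the Chinese remainder theorem counts exactly $N\prod_{l\mid N}(1+l^{-1})=\psi(N)$ of these.

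Finally, over $\textup{Spec}\,\mathbb{Z}[1/N]$ the group scheme $E[N]$ is finite étale, so the functor of its order-$N$ subgroup schemes — and a fortiori its cyclicity locus — is unramified over the base; being finite flat and unramified there, $s$ is étale over $\textup{Spec}\,\mathbb{Z}[1/N]$. The main obstacle is the flatness of $s$ at primes dividing $N$, where the fibres of $s$ are genuinely non-reduced and no naïve count of subgroups is available; the route above sidesteps this by importing the regularity of $\mathcal{Y}_{0}(N)$ from \cite[Theorem~5.1.1]{KM85} and appealing to miracle flatness, but one may equally well invoke Katz--Mazur's direct proof that the moduli problem $[\Gamma_{0}(N)]$ is finite flat of degree $\psi(N)$ over $\mathcal{E}ll$.
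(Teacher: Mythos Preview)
Your proposal is correct and, like the paper, ultimately rests on \cite[Theorem~5.1.1]{KM85}; the paper's proof is simply a direct citation of that theorem together with the remark that a finite group scheme of order invertible on the base is automatically \'etale. Your write-up is a more detailed unpacking of the same reference: you supply the representability-by-schemes argument explicitly and then reach flatness via miracle flatness using the regularity of $\mathcal{Y}_{0}(N)$, whereas Katz--Mazur (and hence the paper) invoke the finite-flatness assertion of the same theorem directly. Since both regularity and finite flatness are contained in \cite[Theorem~5.1.1]{KM85}, the two routes are logically equivalent and neither buys anything the other does not.
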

\begin{proof} This is proved in \cite[Theorem 5.1.1]{KM85}. The key input is that a finite order group scheme is automatically étale if the order is invertible in the base scheme.\end{proof}
\par
For a $\mathbb{Z}_{(p)}$-scheme $S$, a geometric point $\overline{s}$ of $S$ and an elliptic curve $E$ over $S$. Let $E_{\overline{s}}$ be the base change of $E$ to $\overline{s}$. Let $T^{p}(E_{\overline{s}})$ (resp. $V^{p}(E_{\overline{s}})$) be the integral (resp. rational) Tate module of the elliptic curve $E_{\overline{s}}$. A $\mathbb{Z}_{(p)}^{\times}$-isogeny $f:E\rightarrow E^{\prime}$ over $S$ is a quasi-isogeny and there exists a prime-to-$p$ number $M$, such that $M\circ f$ is an isogeny. Let $V^{p}(f)$ be the homomorphism on rational Tate modules induced by $f$.
\begin{lemma}
Let $\mathcal{E}ll_{(p)}$ be the localization of the stack $\mathcal{E}ll$ to $\textup{Spec}\,\mathbb{Z}_{(p)}$. Then $\mathcal{E}ll_{(p)}$ can be described by the following stack: for every $\mathbb{Z}_{(p)}$-scheme S, $\mathcal{E}ll_{(p)}(S)$ is a groupoid whose objects are pairs $(E/S,\overline{\eta^{p}})$, where $\overline{\eta^{p}}$ is a $\pi_{1}(S,\overline{s})$-invariant $\textup{GL}_{2}(\hat{\mathbb{Z}}^{p})$-equivalence class of isomorphism
\begin{equation*}
    \eta^{p}: V^{p}(E_{\overline{s}})\stackrel{\simeq}\longrightarrow (\mathbb{A}_{f}^{p})^{2}
\end{equation*}
A morphism between two objects $(E/S,\overline{\eta^{p}})$ and $(E^{\prime}/S,\overline{\eta^{\prime p}})$ is a $\mathbb{Z}_{(p)}^{\times}$-isogeny $f: E\rightarrow E^{\prime}$ over S such that $\overline{\eta^{p}}=\overline{V^{p}(f)\circ \eta^{\prime p}}$.
\end{lemma}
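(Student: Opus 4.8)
The plan is to build an explicit equivalence of stacks $F\colon\mathcal{E}ll_{(p)}\xrightarrow{\ \sim\ }\mathcal{M}$, where $\mathcal{M}$ denotes the stack described in the statement; this realizes the familiar adelic (``limit'') description of the moduli problem away from $p$. The guiding idea is that away from $p$ every elliptic curve $E/S$ carries a canonical ``standard'' trivialization — the $\textup{GL}_{2}(\hat{\mathbb{Z}}^{p})$-orbit $\overline{\eta^{p}_{\textup{std}}}$ of any isomorphism carrying $T^{p}(E_{\overline{s}})$ onto $(\hat{\mathbb{Z}}^{p})^{2}$ — so that recording a general orbit $\overline{\eta^{p}}$ adds no information, while enlarging morphisms to $\mathbb{Z}_{(p)}^{\times}$-isogenies compatible with these orbits exactly compensates for having allowed non-standard lattices $\Lambda=(\eta^{p})^{-1}((\hat{\mathbb{Z}}^{p})^{2})\subset V^{p}(E_{\overline{s}})$.

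I would begin by making the right-hand side geometric-point-free. For $E/S$, the presheaf $S'\mapsto\underline{\textup{Isom}}_{\mathbb{A}_{f}^{p}}(V^{p}(E_{S'}),(\mathbb{A}_{f}^{p})^{2})/\textup{GL}_{2}(\hat{\mathbb{Z}}^{p})$ is a sheaf for the étale topology on $S$ — a left $\textup{GL}_{2}(\mathbb{A}_{f}^{p})$-torsor, since $V^{p}(E_{\overline{s}})$ is free of rank $2$, divided on the left — whose sections over a connected $S$ are precisely the $\pi_{1}(S,\overline{s})$-invariant $\textup{GL}_{2}(\hat{\mathbb{Z}}^{p})$-orbits $\overline{\eta^{p}}$ as in the statement; this disposes of the dependence on $\overline{s}$ and of the gluing over $\pi_{0}(S)$. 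Such an orbit is the same datum as a $\pi_{1}(S,\overline{s})$-stable $\hat{\mathbb{Z}}^{p}$-lattice $\Lambda\subset V^{p}(E_{\overline{s}})$ (take $\Lambda=(\eta^{p})^{-1}((\hat{\mathbb{Z}}^{p})^{2})$; it is fixed by the $\textup{GL}_{2}(\hat{\mathbb{Z}}^{p})$-action and determines $\overline{\eta^{p}}$), automatically commensurable with $T^{p}(E_{\overline{s}})$. The one structural input I will use is the standard fact that such a $\Lambda$ equals $V^{p}(\phi)^{-1}(T^{p}(E''_{\overline{s}}))$ for a $\mathbb{Z}_{(p)}^{\times}$-isogeny $\phi\colon E\to E''$ that is unique up to unique isomorphism: concretely $\phi$ is the quotient of $E$ by the finite flat (prime-to-$p$, étale) subgroup scheme attached, by étale descent from finite $\pi_{1}$-modules, to $\Lambda/(\Lambda\cap T^{p}(E_{\overline{s}}))$, composed with the formal inverse of the analogous isogeny for $T^{p}(E_{\overline{s}})/(\Lambda\cap T^{p}(E_{\overline{s}}))$; this rests on the computation of the prime-to-$p$ Tate module of a quotient of $E$ by a finite flat subgroup scheme.

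Now set $F(E/S)\coloneqq(E/S,\overline{\eta^{p}_{\textup{std}}})$ and, for an isomorphism $g\colon E\xrightarrow{\sim}E'$ in $\mathcal{E}ll_{(p)}$, let $F(g)$ be $g$ viewed as a $\mathbb{Z}_{(p)}^{\times}$-isogeny, which respects standard orbits because $V^{p}(g)(T^{p}(E_{\overline{s}}))=T^{p}(E'_{\overline{s}})$. For essential surjectivity, given $(E',\overline{\eta^{\prime p}})$ with lattice $\Lambda'$, the isogeny $\phi\colon E'\to E''$ just described furnishes an isomorphism $(E',\overline{\eta^{\prime p}})\xrightarrow{\sim}F(E'')$ in $\mathcal{M}$. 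For full faithfulness, a morphism $F(E)\to F(E')$ in $\mathcal{M}$ is a $\mathbb{Z}_{(p)}^{\times}$-isogeny $f\colon E\to E'$ with $V^{p}(f)(T^{p}(E_{\overline{s}}))=T^{p}(E'_{\overline{s}})$; recalling that a $\mathbb{Z}_{(p)}^{\times}$-isogeny becomes invertible after inverting only the primes distinct from $p$, hence is an isomorphism of $p$-divisible groups, and noting that $f$ carries the integral prime-to-$p$ Tate module isomorphically, we see $f$ is an isomorphism on every torsion subgroup scheme of $E$, hence an isomorphism of elliptic curves. Conversely every such isomorphism occurs this way, and the case $E=E'$ identifies $\textup{Aut}_{\mathcal{M}}(F(E))$ with $\textup{Aut}(E)=\textup{Aut}_{\mathcal{E}ll_{(p)}}(E)$. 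Hence $F$ is an equivalence.

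The step I expect to be the main obstacle is the descent bookkeeping of the second paragraph: checking that $\overline{\eta^{p}_{\textup{std}}}$, the lattice $\Lambda$, and the subgroup scheme attached to $\Lambda/(\Lambda\cap T^{p}(E_{\overline{s}}))$ are independent of $\overline{s}$, compatible with base change, and glue over non-connected bases, and that $F$ together with its quasi-inverse data is functorial. Once the sheaf-theoretic packaging of $\overline{\eta^{p}}$ via $\underline{\textup{Isom}}/\textup{GL}_{2}(\hat{\mathbb{Z}}^{p})$ is in place this becomes routine, as do the remaining algebraic verifications (commensurability of $\Lambda$ with $T^{p}(E_{\overline{s}})$, the identification of $f$ with an isomorphism on torsion, and the Tate module of an isogenous quotient).
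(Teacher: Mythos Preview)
Your proposal is correct and follows essentially the same strategy as the paper: both define $F(E) = (E, \overline{\eta^p_{\text{std}}})$, establish essential surjectivity by realizing an arbitrary lattice $\Lambda$ via a prime-to-$p$ isogeny (the paper cites \cite[Corollary 1.3.5.4]{Lan13} while you sketch the quotient-by-finite-flat-subgroup construction directly), and prove full faithfulness by showing that a $\mathbb{Z}_{(p)}^{\times}$-isogeny carrying $T^{p}(E_{\overline{s}})$ onto $T^{p}(E'_{\overline{s}})$ is an honest isomorphism. Your sheaf-theoretic packaging of $\overline{\eta^{p}}$ via $\underline{\textup{Isom}}/\textup{GL}_{2}(\hat{\mathbb{Z}}^{p})$ and your decomposition of the full-faithfulness step into the $p$-divisible and prime-to-$p$ parts are minor presentational variants of the paper's more hands-on argument (which instead shows $Mf$ vanishes on $E[M]$ fiberwise and invokes rigidity from \cite{MF82}).
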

\begin{proof} 
We temporarily use $\mathcal{E}ll^{\prime}$ to denote the stack described in the lemma. It suffices to show that for a connected scheme $S$ over $\textup{Spec}\,\mathbb{Z}_{(p)}$, there is a category equivalence between $\mathcal{E}ll(S)$ and $\mathcal{E}ll^{\prime}(S)$. We first construct a functor $F$ from $\mathcal{E}ll(S)$ to $\mathcal{E}ll^{\prime}(S)$. Given an elliptic curve $E$ over $S$, and a geometric point $\overline{s}$ of $S$, we choose an isomorphism
\begin{equation*}
    \eta^{p}: T^{p}(E_{\overline{s}})\simeq (\hat{\mathbb{Z}}^{p})^{2}
\end{equation*}
then clearly the $\textup{GL}_{2}(\hat{\mathbb{Z}}^{p})$-orbit of $\overline{\eta^{p}}$ is $\pi_{1}(S,\overline{s})$-invariant (because $\pi_{1}(S,\overline{s})$ acts linear on $T^{p}(E_{\overline{s}})$). We define $F(E)=(E, \overline{\eta^{p}})$, this functor is independent of the choice of $\eta^{p}$.\\
\indent
Now we prove that this functor is essentially surjective and fully faithful. For essential surjectivity, we pick an arbitrary object $(E/S, \overline{\eta^{p}})$ of $\mathcal{E}ll^{\prime}(S)$, by the work of Lan \cite[Corollary 1.3.5.4]{Lan13}, there is a $\mathbb{Z}_{(p)}^{\times}$-isogeny $f: E^{\prime}\rightarrow E$, such that $\eta^{\prime p}=\eta^{p}\circ V^{p}(f): V^{p}(E_{\overline{s}}^{\prime})\stackrel{\simeq}\longrightarrow (\mathbb{A}_{f}^{p})^{2}$ maps $T^{p}(E_{\overline{s}}^{\prime})$ to $(\hat{\mathbb{Z}}^{p})^{2}$. Therefore the object $(E/S, \overline{\eta^{p}})$ is isomorphic to $(E^{\prime}/S, \overline{\eta^{\prime p}})$, which is the essential image of $E^{\prime}\in Ob\,\mathcal{E}ll(S)$.\\
\indent
Next we show that there is an isomorphism,
\begin{equation}
    \textup{Hom}_{\mathcal{E}ll(S)}(E, E^{\prime})\simeq \textup{Hom}_{\mathcal{E}ll^{\prime}(S)}((E,\overline{\eta^{p}}), (E^{\prime},\overline{\eta^{\prime p}}))
    \label{ff}
\end{equation}
This is clearly injective by the above discussion. Now we pick an arbitrary element $f$ from the right hand side, then $f$ is a $\mathbb{Z}_{(p)}^{\times}$-isogeny, and $\eta^{\prime p}=\eta^{p}\circ V^{p}(f)$. There exists an integer $M$ prime to $p$, such that $\Tilde{f}=M\circ f$ is an isogeny from $E$ to $E^{\prime}$. We claim that this isogeny factors through the multiplication-by-$M$ map, i.e., $f$ itself is an isogeny. By the relation $\eta^{\prime p}=\eta^{p}\circ V^{p}(f)$ and the construction above, $V^{p}(f)$ maps $T^{p}(E_{\overline{s}})$ isomorphically to $T^{p}(E_{\overline{s}}^{\prime})$, then obviously $\Tilde{f}$ maps $E^{\prime}_{\overline{s}}[M]\simeq E^{\prime}[M]_{\overline{s}}$ to $0$, this holds for every geometric point $\overline{s}$ of $S$, then since $S$ is a $\mathbb{Z}_{(p)}$-scheme and rigidity result proved by Mumford and Fogarty \cite[Proposition 6.1]{MF82}, we know the isogeny $\Tilde{f}$ vanishes on $E^{\prime}[M]$, hence $f$ itself is an isogeny. Now $\textup{ker}(f)$ is a finite flat group scheme over $S$ of order prime to $p$, but since $V^{p}(f)$ maps $T^{p}(E_{\overline{s}})$ isomorphically to $T^{p}(E_{\overline{s}}^{\prime})$, this group scheme must be trivial, i.e., $f$ is an isomorphism, therefore it comes from an element of the left hand side of (\ref{ff}).
\end{proof}
\begin{remark}
We consider the following Deligne-Mumford stack, 
\begin{equation*}
    \mathcal{H}=\mathcal{E}ll\times_{\mathbb{Z}}\mathcal{E}ll
\end{equation*}
it parametrizes equivalence classes of pairs of elliptic curves. For any prime $p$, we use $\mathcal{H}_{(p)}$ to denote the localization of $\mathcal{H}$ to $\textup{Spec}\,\mathbb{Z}_{(p)}$. 
\par
There is a similar description of the stack $\mathcal{H}_{(p)}$ as follows, for any $\mathbb{Z}_{(p)}$-scheme $S$, the groupoid $\mathcal{H}_{(p)}(S)$ consists of pairs $((E,E^{\prime}),(\overline{\eta^{p}}, \overline{\eta^{\prime p}}))$, where $\overline{\eta^{p}}$ (resp. $\overline{\eta^{\prime p}}$) is a $\pi_{1}(S,\overline{s})$-invariant $\textup{GL}_{2}(\hat{\mathbb{Z}}^{p})$-equivalence class of isomorphism $V^{p}(E_{\overline{s}})\stackrel{\simeq}\longrightarrow (\mathbb{A}_{f}^{p})^{2}$ (resp. $V^{p}(E_{\overline{s}}^{\prime})\stackrel{\simeq}\longrightarrow (\mathbb{A}_{f}^{p})^{2}$). 
\label{HHH}
\end{remark}
\par
For any $N\in\mathbb{Z}_{>0}$, let $w_{N}$ be the following $2\times2$ matrix,
\begin{equation*}
    w_{N} = \begin{pmatrix}
    N & 0\\
    0 & 1
    \end{pmatrix}.
\end{equation*}
We consider the following stack $\mathcal{Y}_{0}(N)_{(p)}^{\prime}$ over $\textup{Spec}\,\mathbb{Z}_{(p)}$: for every $\mathbb{Z}_{(p)}$-scheme $S$, $\mathcal{Y}_{0}(N)_{(p)}^{\prime}(S)$ is a groupoid whose objects are pairs $(E\stackrel{\pi}\rightarrow E^{\prime},\overline{(\eta^{p},\eta^{\prime p})})$, where $E\stackrel{\pi}\rightarrow E^{\prime}$ is a cyclic $N$-isogeny and  $\overline{(\eta^{p},\eta^{\prime p})}$ is a pair of $\pi_{1}(S,\overline{s})$-invariant $\Gamma_{0}(N)(\hat{\mathbb{Z}}^{p})$-equivalence class (we will specify the action of $\Gamma_{0}(N)(\hat{\mathbb{Z}}^{p})$ in (\ref{action})) of isomorphisms
\begin{equation*}
    \eta^{p}: V^{p}(E_{\overline{s}})\stackrel{\simeq}\longrightarrow (\mathbb{A}_{f}^{p})^{2},\,\,\,\eta^{\prime p}: V^{p}(E_{\overline{s}}^{\prime})\stackrel{\simeq}\longrightarrow (\mathbb{A}_{f}^{p})^{2}.
\end{equation*}
which maps $T^{p}(E_{\overline{s}})$ and $T^{p}(E_{\overline{s}}^{\prime})$ to $(\hat{\mathbb{Z}}^{p})^{2}$, and the isomorphism $\eta^{\prime p}$ is determined by $\eta^{p}$ by the following commutative diagram,
\begin{equation}
    \xymatrix{
    V^{p}(E_{\overline{s}})\ar[d]^{V^{p}(\pi)}\ar[r]^{\eta^{p}}&(\mathbb{A}_{f}^{p})^{2}\ar[d]^{w_{N}}\\
    V^{p}(E_{\overline{s}}^{\prime})\ar[r]^{\eta^{\prime p}}&(\mathbb{A}_{f}^{p})^{2}_{.}}
    \label{diagram}
\end{equation}
\par
A morphism from $(E_{1}\stackrel{\pi_{1}}\rightarrow E_{1}^{\prime},\overline{(\eta_{1}^{p},\eta_{1}^{\prime p})})$ to $(E_{2}\stackrel{\pi_{2}}\rightarrow E_{2}^{\prime},\overline{(\eta_{2}^{p},\eta_{2}^{\prime p})})$ is a pair $(f,f^{\prime})$ of isomorphisms $f:E_{1}\rightarrow E_{2}$ and $f^{\prime}:E_{1}^{\prime}\rightarrow E_{2}^{\prime}$ such that $f^{\prime}\circ \pi_{1}=\pi_{2}\circ f$ and $\overline{(\eta_{1}^{p},\eta_{1}^{\prime p})} = \overline{(\eta_{2}^{p}\circ V^{p}(f),\eta_{2}^{\prime p}\circ V^{p}(f^{\prime}))}$ as $\Gamma_{0}(N)(\hat{\mathbb{Z}}^{p})$-orbits. The action of $\Gamma_{0}(N)(\hat{\mathbb{Z}}^{p})$ on the pair $(\eta^{p},\eta^{\prime p})$ is given by
\begin{equation}
    g\cdot((\eta^{p},\eta^{\prime p})) = (g\circ\eta^{p},w_{N}gw_{N}^{-1}\circ\eta^{\prime p}).
    \label{action}
\end{equation}
\begin{lemma}
Let $\mathcal{Y}_{0}(N)_{(p)}$ be the localization of $\mathcal{Y}_{0}(N)$ to $\mathbb{Z}_{(p)}$. There is an isomorphism $G:\mathcal{Y}_{0}(N)_{(p)}\rightarrow\mathcal{Y}_{0}(N)_{(p)}^{\prime}$ of stacks over $\textup{Spec}\,\mathbb{Z}_{(p)}$.
\label{YYY}
\end{lemma}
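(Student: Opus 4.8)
The plan is to construct the functor $G$ and its inverse explicitly, reducing everything to the categorical description of $\mathcal{E}ll_{(p)}$ established in the preceding lemma. The key point is that a $\Gamma_0(N)(\hat{\mathbb{Z}}^p)$-orbit of a prime-to-$p$ level structure on $E_{\overline{s}}$ is, by definition of $\Gamma_0(N)(\hat{\mathbb{Z}}^p)$, the same data as a prime-to-$p$ cyclic subgroup of order $N$ together with a full prime-to-$p$ level structure on the quotient — exactly as in the classical complex-analytic picture, but now over an arbitrary $\mathbb{Z}_{(p)}$-base. So most of the argument is a formal matching of data.

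First I would define $G$ on objects. Given $(E \stackrel{\pi}\to E') \in \mathcal{Y}_0(N)_{(p)}(S)$ and a geometric point $\overline{s}$ of the (WLOG connected) scheme $S$, I would choose an isomorphism $\eta^p : T^p(E_{\overline{s}}) \stackrel{\sim}\to (\hat{\mathbb{Z}}^p)^2$ and then \emph{define} $\eta'^p$ by the commutative square (\ref{diagram}); one must check that $\eta'^p$ does send $T^p(E'_{\overline{s}})$ into $(\hat{\mathbb{Z}}^p)^2$, which follows because $V^p(\pi)$ carries $T^p(E_{\overline{s}})$ into $(V^p(\pi))^{-1}(w_N^{-1}(\hat{\mathbb{Z}}^p)^2)$ and the cyclicity of $\pi$ of degree $N$ away from $p$ forces this lattice to lie between $T^p(E_{\overline{s}})$ and $w_N^{-1}T^p(E_{\overline{s}})$ — indeed the prime-to-$p$ part of $\ker\pi$ is cyclic of its order, so its image under $\eta^p$ is an appropriate submodule. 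The resulting pair's $\Gamma_0(N)(\hat{\mathbb{Z}}^p)$-orbit is independent of the choice of $\eta^p$ precisely because of the twisted action (\ref{action}): replacing $\eta^p$ by $g\circ\eta^p$ replaces $\eta'^p$ by $w_N g w_N^{-1}\circ \eta'^p$ by the commutativity of (\ref{diagram}). The $\pi_1(S,\overline{s})$-invariance of the orbit is inherited from the linearity of the Galois action on Tate modules, as in the previous lemma. On morphisms $G$ sends $(f,f')$ to $(f,f')$; compatibility with the level structures is immediate since $V^p(\pi)$ intertwines $V^p(f)$ and $V^p(f')$ by functoriality of $V^p$ applied to $f'\circ\pi = \pi\circ f$.

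Next I would construct the inverse $G^{-1}$. Given $(E\stackrel{\pi}\to E', \overline{(\eta^p,\eta'^p)})$ in $\mathcal{Y}_0(N)'_{(p)}(S)$, I simply forget $\eta'^p$ (which is redundant by (\ref{diagram})) and the resulting $(E\stackrel{\pi}\to E')$ lies in $\mathcal{Y}_0(N)_{(p)}(S)$; the extra rigidification by $\eta^p$ modulo $\Gamma_0(N)(\hat{\mathbb{Z}}^p)$ is trivial data on $\mathcal{Y}_0(N)_{(p)}$ just as the full level structure is trivial data on $\mathcal{E}ll_{(p)}$. More precisely, one runs the same essential-surjectivity and full-faithfulness argument as in the proof of the previous lemma: given the orbit, one uses Lan's result \cite[Corollary 1.3.5.4]{Lan13} to replace $E$ by a prime-to-$p$-isogenous curve on which $\eta^p$ becomes integral, noting that the isogeny must be compatible with $\pi$ since $w_N$ is fixed; and one checks that any morphism of pairs is a genuine isomorphism of the underlying cyclic isogenies by the rigidity argument of Mumford–Fogarty \cite[Proposition 6.1]{MF82}. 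Then $G\circ G^{-1}$ and $G^{-1}\circ G$ are naturally isomorphic to the identity.

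The main obstacle, and the step I would spend the most care on, is verifying that the subgroup $\ker\pi$ away from $p$ really does correspond under $\eta^p$ to a $\Gamma_0(N)(\hat{\mathbb{Z}}^p)$-conjugate of the standard ``$\Gamma_0(N)$-line'' $w_N^{-1}(\hat{\mathbb{Z}}^p)^2/(\hat{\mathbb{Z}}^p)^2$ — i.e., that \emph{cyclicity} of $\pi$ (in the Katz–Mazur sense, via full sets of sections) translates into the group-theoretic condition pinned down by the matrix $w_N$. Over a field this is classical, but over a general $\mathbb{Z}_{(p)}$-base one must argue that cyclicity of $\ker\pi$ is detected on the prime-to-$p$ Tate module at geometric points, which is fine since the order of $\ker\pi$ away from $p$ is invertible there and such group schemes are étale-locally constant; one then invokes Lemma \ref{cyclemma}-type reasoning, or more simply the fact that over a $\mathbb{Z}_{(p)}$-scheme the prime-to-$p$ part of any cyclic $N$-isogeny is étale and the étale-local picture reduces to the field case. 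I expect no further difficulty, but this translation between Cartier-divisor cyclicity and the lattice condition built into $w_N$ is the conceptual heart of the lemma.
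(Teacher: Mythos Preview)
Your construction of $G$ and the verification that the $\Gamma_0(N)(\hat{\mathbb{Z}}^p)$-orbit is well defined match the paper's proof almost verbatim, and your identification of the ``main obstacle'' (translating Katz--Mazur cyclicity into the lattice condition encoded by $w_N$) is handled the same way: reduce to a geometric point, where the prime-to-$p$ part of $\ker\pi$ is \'etale and the question becomes classical.

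Where you diverge is in the equivalence check, and there you have over-complicated things because you appear to have misread the definition of $\mathcal{Y}_0(N)'_{(p)}$. In the paper, objects of $\mathcal{Y}_0(N)'_{(p)}$ already carry \emph{integral} trivializations (the $\eta^p$ are required to send $T^p(E_{\overline{s}})$ onto $(\hat{\mathbb{Z}}^p)^2$), and morphisms are already genuine pairs of \emph{isomorphisms} $(f,f')$, not $\mathbb{Z}_{(p)}^\times$-isogenies. So there is nothing to rigidify: essential surjectivity is immediate from the construction of $G$, and full faithfulness is tautological because the Hom-sets on both sides consist of the same pairs $(f,f')$. Your appeals to Lan's result and to Mumford--Fogarty rigidity are therefore unnecessary; those tools were needed in the previous lemma precisely because $\mathcal{E}ll'_{(p)}$ was defined with rational trivializations and quasi-isogeny morphisms, a choice the paper deliberately does \emph{not} make for $\mathcal{Y}_0(N)'_{(p)}$. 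The paper's entire equivalence argument is the single line ``but this is clearly bijective by the definition.''
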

\begin{proof}
Let $S$ be a scheme over $\textup{Spec}\,\mathbb{Z}_{(p)}$, and an object $(E\stackrel{\pi}\rightarrow E^{\prime})$ in the groupoid $\mathcal{Y}_{0}(N)_{(p)}(S)$. Let $\overline{s}$ is a geometric point of $S$, the cyclicity of $\pi$ implies that $\pi_{\overline{s}}$ is also cyclic, since $l$ is invertible in $\textup{Spec}\,\kappa(\overline{s})$ if $l\neq p$, there exist isomorphisms $\eta^{p}: T^{p}(E_{\overline{s}})\simeq (\hat{\mathbb{Z}}^{p})^{2}$ and $\eta^{\prime p}: T^{p}(E^{\prime}_{\overline{s}})\simeq (\hat{\mathbb{Z}}^{p})^{2}$ such that $\omega_{N}\circ\eta^{p}=\eta^{\prime p}\circ T^{p}(\pi)$. Now we consider a different choice of $(\eta^{p},\eta^{\prime p})$, say $(\tilde{\eta}^{p},\tilde{\eta}^{\prime p})$, satisfying the above conditions. Then $\tilde{\eta}^{\prime p}$ differs $\eta^{p}$ by an element $g\in\textup{GL}_{2}(\hat{\mathbb{Z}}^{p})$, i.e. $\tilde{\eta}^{p}=g\circ \eta^{p}$, correspondingly $\tilde{\eta}^{\prime p}=\omega_{N}g\omega_{N}^{-1}\circ \eta^{\prime p}$. However, $\omega_{N}g\omega_{N}^{-1}\in \textup{GL}_{2}(\hat{\mathbb{Z}}^{p})$ since both $\eta^{\prime p}$ and $\tilde{\eta}^{\prime p}$ give isomorphisms from $T^{p}(E_{\overline{s}})$ to $(\hat{\mathbb{Z}}^{p})^{2}$, therefore $g\in\textup{GL}_{2}(\hat{\mathbb{Z}}^{p})\cap \omega_{N}^{-1}\textup{GL}_{2}(\hat{\mathbb{Z}}^{p})\omega_{N}=\Gamma_{0}(N)(\hat{\mathbb{Z}}^{p})$, thus the $\Gamma_{0}(N)(\hat{\mathbb{Z}}^{p})$-orbit $\overline{(\eta^{p},\eta^{\prime p})}$ is well-defined. We define $G((E\stackrel{\pi}\rightarrow E^{\prime}))=((E\stackrel{\pi}\rightarrow E^{\prime}),\overline{(\eta^{p},\eta^{\prime p})})$. For a pair of isomorphisms $(f,f^{\prime})$, where $f:E_{1}\rightarrow E_{1}^{\prime}$ and $f^{\prime}:E_{2}\rightarrow E_{2}^{\prime}$, define $G((f,f^{\prime}))=(f,f^{\prime})$.
\par
It suffices to show that for a connected scheme $S$ over $\textup{Spec}\,\mathbb{Z}_{(p)}$, the functor $G(S): \mathcal{Y}_{0}(N)_{(p)}(S)\rightarrow\mathcal{Y}_{0}(N)_{(p)}(S)$ is an equivalence of categories. This functor is essentially surjective by definition, now we show that it is fully faithful, i.e., the following morphism between sets is bijective,
\begin{align*}
    \textup{Hom}_{\mathcal{Y}_{0}(N)_{(p)}(S)}((E_{1}\stackrel{\pi_{1}}\rightarrow E_{1}^{\prime}),((E_{2}\stackrel{\pi_{2}}\rightarrow E_{2}^{\prime})))&\stackrel{G}\longrightarrow \textup{Hom}_{\mathcal{Y}_{0}(N)_{(p)}(S)}((E_{1}\stackrel{\pi_{1}}\rightarrow E_{1}^{\prime},\overline{(\eta_{1}^{p},\eta_{1}^{\prime p})}),(E_{2}\stackrel{\pi_{2}}\rightarrow E_{2}^{\prime},\overline{(\eta_{2}^{p},\eta_{2}^{\prime p})})),\\
    (f,f^{\prime})&\longmapsto (f,f^{\prime}).
\end{align*}
but this is clearly bijective by the definition.
\end{proof}
There is a natural morphism from $\mathcal{Y}_{0}(N)_{(p)}$ to $\mathcal{H}_{(p)}$, i.e., $(E\stackrel{\pi}\longrightarrow E^{\prime})\longrightarrow (E,E^{\prime})$. By Remark \ref{HHH} and Lemma \ref{YYY}, we can also describe it as follows
\begin{align}
    \mathcal{Y}_{0}(N)_{(p)}&\longrightarrow\mathcal{H}_{(p)},\notag\\
    (E\stackrel{\pi}\longrightarrow E^{\prime},\overline{(\eta^{p},\eta^{\prime p})})&\longmapsto((E,E^{\prime}),(\overline{\eta^{p}}, \overline{\eta^{\prime p}})).
    \label{keymap}
\end{align}\\
\subsubsection{Compactification of $\mathcal{Y}_{0}(N)$} Next we introduce the compactification of the moduli stack $\mathcal{Y}_{0}(N)$. Let $S$ be a scheme, we first introduce the notion of N$\acute{\textup{e}}$ron $n$-gons.
\begin{definition}
For any integer $n\geq1$ and an scheme $S$, the N$\acute{\textup{e}}$ron $n$-gon over $S$ is the coequalizer of
\begin{equation*}
    \bigsqcup\limits_{i\in\mathbb{Z}/n\mathbb{Z}}S\rightrightarrows \bigsqcup\limits_{i\in\mathbb{Z}/n\mathbb{Z}}\mathbb{P}_{S}^{1}.
\end{equation*}
where the top (resp. the bottom) closed immersion includes the $i^{\textup{th}}$ copy of $S$ as the 0 (resp. the $\infty$) section of the $i^{\textup{th}}$ (resp. $(i+1)^{\textup{st}}$) copy of $\mathbb{P}_{S}^{1}$.
\end{definition}
\begin{definition}
A generalized elliptic curve over a scheme $S$ is the data of \\
$\bullet$ A proper, flat, finitely presented morphism $E\rightarrow S$ each of whose geometric fibers is either a smooth connected curve of genus 1 or a N$\acute{\textup{e}}$ron $n$-gon for some $n\geq1$;\\
$\bullet$ An $S$-morphism $E^{\textup{sm}}\times_{S}E\stackrel{+}\rightarrow E$ that restricts to a commutative $S$-group scheme structure on $E^{\textup{sm}}$ for which $+$ becomes an $S$-group action such that via the pullback of line bundles the action $+$ induces the trivial action of $E^{\textup{sm}}$ on $\textup{Pic}^{0}_{E/S}$.
\end{definition}
We will use $\mathcal{X}$ to denote the moduli stack consisting of generalized elliptic curves whose degenerate fibers are N$\acute{\textup{e}}$ron $1$-gons, i.e., for a scheme $S$, $\mathcal{X}(S)$ is a groupoid whose objects are generalized elliptic curves $E$ over $S$ and whose geometric fibers are either elliptic curves or N$\acute{\textup{e}}$ron $1$-gons. The following result is proved in \cite{Ces17}.
\begin{lemma}
$\mathcal{X}$ is a proper smooth 2-dimensional Deligne-Mumford stack.
\label{smooth}
\end{lemma}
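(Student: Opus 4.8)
The plan is to realize $\mathcal{X}$ as a global quotient stack of Weierstrass type, from which the stack, Deligne--Mumford, dimension and smoothness assertions are formal, and then to treat properness --- the only genuine point --- via the valuative criterion. First I would recall that any generalized elliptic curve $E/S$ with N\'eron $1$-gon degenerate fibers carries the line bundle $\mathcal{L}=\mathcal{O}_{E}(3[e])$, where $e\in E^{\textup{sm}}(S)$ is the identity section; on each geometric fiber --- a smooth genus $1$ curve, or a nodal cubic of arithmetic genus $1$ --- Riemann--Roch gives $h^{0}(\mathcal{L})=3$ and $h^{1}(\mathcal{L})=0$, so $\pi_{*}\mathcal{L}$ is locally free of rank $3$ and, Zariski-locally on $S$ after choosing an adapted basis of its sections, $|\mathcal{L}|$ embeds $E$ as a plane cubic over $S$ in Tate normal form. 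This identifies $\mathcal{X}$ with the quotient stack $[W^{\circ}/G]$, where $W\cong\mathbb{A}^{5}_{\mathbb{Z}}$ is the space of Weierstrass coefficients $(a_{1},a_{2},a_{3},a_{4},a_{6})$, $G$ is the smooth affine group scheme of changes of Weierstrass coordinates (with underlying scheme $\mathbb{G}_{m}\times\mathbb{G}_{a}^{3}$, of relative dimension $4$), and $W^{\circ}\subset W$ is the open complement of $V(\Delta,c_{4})$, i.e.\ the locus where the associated cubic is nowhere cuspidal. This $\mathcal{X}$ is the Deligne--Rapoport compactification $\overline{\mathcal{M}}_{1,1}$ of $\mathcal{E}ll$.

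From this presentation three of the four claims follow at once. Effectivity of fppf descent for the quotient datum shows $\mathcal{X}$ is a stack. The $G$-stabilizer of a point of $W^{\circ}$ is the automorphism group scheme of the corresponding generalized elliptic curve, which is finite and \'etale --- it is $\mathbb{Z}/2\mathbb{Z}$ for a N\'eron $1$-gon, and the usual finite automorphism group for an elliptic curve (\'etale even in residue characteristics $2$ and $3$) --- so $G$ acts with finite unramified stabilizers and $\mathcal{X}=[W^{\circ}/G]$ is a Deligne--Mumford stack. Since $W^{\circ}$ is connected and the $G$-orbits are $4$-dimensional, $\dim\mathcal{X}=\dim W^{\circ}-\dim G=6-4=2$. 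For smoothness, $W^{\circ}$ is an open subscheme of $\mathbb{A}^{5}_{\mathbb{Z}}$, hence smooth over $\mathbb{Z}$, and $W^{\circ}\to\mathcal{X}$ is a torsor under the smooth group $G$; therefore $\mathcal{X}$ is smooth over $\mathbb{Z}$, and its open substack cut out by $\Delta\neq0$ recovers the smooth $2$-dimensional stack $\mathcal{E}ll$ of Lemma~\ref{fiet}.

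It remains to prove $\mathcal{X}$ is proper, which I would do by the valuative criterion over discrete valuation rings (quasi-compactness and quasi-separatedness being clear from the presentation). For separatedness, two extensions over a discrete valuation ring $R$ of a generalized elliptic curve over $\textup{Frac}(R)$ yield two nowhere-cuspidal, hence minimal, integral Weierstrass models agreeing generically, and one checks that minimal Weierstrass models over a trait are related by a coordinate change in $G(R)$, so the two extensions agree in $\mathcal{X}(R)$. For the existence part I would invoke the semistable reduction theorem for elliptic curves: after a finite extension $K'$ of $\textup{Frac}(R)$ the base change acquires good or multiplicative reduction; in the good case it extends to an elliptic curve over the corresponding extension of $R$, and in the multiplicative case it is a Tate curve $\textup{Tate}(q)$ with $q$ a non-unit, whose stable model has a N\'eron polygon as special fiber, which one collapses by the contraction morphism to a generalized elliptic curve with N\'eron $1$-gon special fiber --- precisely the reason $\mathcal{X}$ is defined via $1$-gon degenerations. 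Descending this family back along the finite extension using separatedness and fppf descent finishes the proof. The main obstacle is exactly this last step: constructing the contraction to a $1$-gon and checking it is a generalized elliptic curve functorially in families (so the descent step makes sense), together with the uniqueness of minimal Weierstrass models needed for separatedness; this is the technical core of \cite{Ces17}, going back to Deligne--Rapoport, while everything else is bookkeeping around the quotient presentation.
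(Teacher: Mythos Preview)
Your argument is correct and is the standard proof of this fact; the paper itself simply cites \cite[Theorem~3.1.6]{Ces17} without further detail, so you have in effect supplied what the reference contains. The Weierstrass presentation $\mathcal{X}\simeq[W^{\circ}/G]$ you describe is exactly the approach of Deligne--Rapoport (on which \cite{Ces17} builds), so there is no genuine methodological difference to discuss. Two small cosmetic points: in your dimension count ``$\dim G=4$'' should be read as the relative dimension of $G$ over $\mathbb{Z}$ (its absolute dimension is $5$), though the conclusion $\dim\mathcal{X}=2$ is of course correct; and in the properness argument you do not actually need to descend along the finite extension of the fraction field, since the valuative criterion for Deligne--Mumford stacks already permits passing to such extensions.
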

\begin{proof}
This is proved in Theorem 3.1.6 of \cite{Ces17}.
\end{proof}
We have a natural morphism of Deligne-Mumford stacks $\mathcal{E}ll\rightarrow\mathcal{X}$, which sends an elliptic curve $E$ over $S$ to itself. This morphism is an open immersion, i.e., the stack $\mathcal{E}ll$ is an open substack of the stack $\mathcal{X}$. Recall that we have a finite flat representable morphism $\mathcal{Y}_{0}(N)\rightarrow \mathcal{E}ll$ by Lemma \ref{fiet}, let $\mathcal{X}_{0}(N)$ be the normalization of $\mathcal{Y}_{0}(N)$ with respect to this morphism. A moduli description of $\mathcal{X}_{0}(N)$ in terms of level structures on the generalized elliptic curves can be found in \cite[section 5.9]{Ces17}. The stack $\mathcal{Y}_{0}(N)$ can be realized as an open substack of the stack $\mathcal{X}_{0}(N)$ based on this description. We also have the following theorem
\begin{theorem}
$\mathcal{X}_{0}(N)$ is a regular proper 2-dimensional Deligne-Mumford stack, it is finite flat over $\mathcal{X}$.
\end{theorem}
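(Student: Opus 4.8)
The plan is to read everything off the construction of $\mathcal{X}_{0}(N)$ as a normalization, reducing the one nontrivial point --- regularity along the boundary --- to the moduli description of Cesnavicius. First I would record that $\mathcal{X}_{0}(N)\to\mathcal{X}$ is finite. By definition $\mathcal{X}_{0}(N)$ is the normalization of $\mathcal{X}$ in the finite $\mathcal{E}ll$-stack $\mathcal{Y}_{0}(N)$, the morphism $\mathcal{Y}_{0}(N)\stackrel{s}{\longrightarrow}\mathcal{E}ll\hookrightarrow\mathcal{X}$ being finite flat by Lemma \ref{fiet} and $\mathcal{E}ll$ being an open dense substack of $\mathcal{X}$. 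Since $\mathcal{X}$ is of finite type over $\mathbb{Z}$, hence excellent and Nagata, the normalization morphism is finite; this can be verified on an \'etale atlas, as normalization commutes with \'etale base change. A stack finite over a Deligne-Mumford (resp.\ proper) stack is again Deligne-Mumford (resp.\ proper), so $\mathcal{X}_{0}(N)$ is a proper Deligne-Mumford stack finite over $\mathcal{X}$. Its open substack lying over $\mathcal{E}ll$ is $\mathcal{Y}_{0}(N)$ itself (which is normal, being regular by \cite[Theorem 5.1.1]{KM85}), and it is dense because the boundary $\mathcal{X}_{0}(N)\setminus\mathcal{Y}_{0}(N)$ maps into the codimension-one cuspidal locus of $\mathcal{X}$; as $\mathcal{Y}_{0}(N)$ is equidimensional of dimension $2$, so is $\mathcal{X}_{0}(N)$.

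Next I would prove regularity, which is \'etale-local and may therefore be checked on a scheme chart. On the open locus $\mathcal{Y}_{0}(N)$ this is \cite[Theorem 5.1.1]{KM85}, so the problem is concentrated at the boundary points, all of which lie over the cuspidal locus of $\mathcal{X}$. There I would invoke the moduli interpretation of $\mathcal{X}_{0}(N)$ in terms of $\Gamma_{0}(N)$-structures on generalized elliptic curves, \cite[\S5.9]{Ces17}: the completed local ring of $\mathcal{X}_{0}(N)$ at a geometric point of the boundary pro-represents the deformation functor of a standard N\'eron polygon equipped with its $\Gamma_{0}(N)$-structure, and one checks directly --- as in \cite[\S5.9]{Ces17}, building on the cusp computations of \cite{KM85} --- that this ring is a two-dimensional regular local ring, essentially a formal power series ring over a complete discrete valuation ring in a suitable root of the Tate parameter $q$. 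This shows $\mathcal{X}_{0}(N)$ is regular along the boundary, hence everywhere.

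Flatness over $\mathcal{X}$ would then follow from miracle flatness: the morphism $f\colon\mathcal{X}_{0}(N)\to\mathcal{X}$ is finite by the first step, $\mathcal{X}$ is regular by Lemma \ref{smooth}, $\mathcal{X}_{0}(N)$ is regular, hence Cohen-Macaulay, by the second step, and both stacks are equidimensional of dimension $2$; since $f$ is finite its fibres are zero-dimensional and, by the dimension formula, $\dim\mathcal{O}_{\mathcal{X}_{0}(N),y}=\dim\mathcal{O}_{\mathcal{X},f(y)}$ for every point $y$. The local criterion of flatness then yields that $f$ is flat, hence finite flat, which also reconfirms properness.

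The hard part is the regularity computation at the cusps. One must understand $\Gamma_{0}(N)$-structures on Tate curves and, for non-squarefree $N$, on N\'eron $n$-gons with $n>1$, where the cyclic subgroup scheme can spread over several components of the polygon; the ensuing deformation-theoretic and combinatorial analysis is exactly the content of \cite[\S5.9]{Ces17} (classically \cite{KM85}). The remaining assertions --- finiteness of the normalization, properness, equidimensionality, and flatness via the miracle-flatness criterion --- are formal once this local computation is in hand.
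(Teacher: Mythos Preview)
Your proposal is a reasonable and essentially correct outline of how one proves this result, but the paper's ``proof'' is simply a one-line citation: it refers to \cite[Theorem 5.13]{Ces17} and nothing more. So there is no argument in the paper to compare against; the theorem is quoted as a black box from Cesnavicius.

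Your sketch is in fact close to how the cited result is established: finiteness of the normalization over an excellent base, regularity on $\mathcal{Y}_0(N)$ from Katz--Mazur, the delicate cusp analysis from \cite[\S5]{Ces17}, and flatness via miracle flatness once regularity is known. The only caveat is that in Cesnavicius's treatment the regularity at the boundary is not reduced to a single deformation ring computation as cleanly as you suggest; it goes through a careful study of the moduli of generalized elliptic curves with level structure and the structure of the boundary strata, which for non-squarefree $N$ involves more than ``a formal power series ring in a suitable root of $q$''. But since you explicitly defer that step to \cite[\S5.9]{Ces17}, your proposal is sound as a roadmap and matches the paper's intent of citing that source.
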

\begin{proof}
This is proved in Theorem 5.13 of \cite{Ces17}.
\end{proof}

\subsubsection{Integral models of GSpin Shimura varities}
Both the stacks $\mathcal{Y}_{0}(N)$ and $\mathcal{H}$ are examples of integral models of GSpin Shimura varieties, we will explain this and discuss the general case. This part will not be used in the following chapters, readers can skip on first reading.
\par
Let $L$ be an integral quadratic lattice over $\mathbb{Z}$ of finite rank $n\geq3$, i.e., it is equipped with a quadratic form $q_{L}$ valued in $\mathbb{Z}$. Let $(\cdot,\cdot)$ be the associated symmetric bilinear form, assume that the bilinear form is non-degenerate, we define $\textup{disc}(L)\coloneqq L^{\vee}/L$, where $L^{\vee}=\{x\in L\otimes\mathbb{Q}:(x,y)\in\mathbb{Z},\,\text{for any $y\in L$}\}$. Let $C\coloneqq C(L)$ be the associated Clifford algebra over $\mathbb{Z}$.
\par
Assume that the quadratic space $L\otimes\mathbb{R}$ over $\mathbb{R}$ has signature $(n-2,2)$. Let $G=\textup{GSpin}(L\otimes\mathbb{Q})$ and $X$ be the space of oriented negative definite planes in $L\otimes\mathbb{R}$. Let the level group $K(L)\coloneqq G(\mathbb{A}_{f})\cap C(L\otimes\hat{\mathbb{Z}})^{\times}$, where $C(L\otimes\hat{\mathbb{Z}})^{\times}$ is the unit group of the Clifford algebra of $L\otimes\hat{\mathbb{Z}}$. The $\mathbb{Q}$-algebraic stack $\textup{Sh}_{K(L)}(G,X)\coloneqq G(\mathbb{Q})\backslash [X\times G(\mathbb{A}_{f})/K(L)]$ is the Shimura variety associated to the pair $(G,X)$ with level structure $K(L)$, or simply speaking, associated to the quadratic lattice $L$.
\begin{theorem}
    Assume that the integer $\#\textup{disc}(L)$ is square-free. Then, over $\mathbb{Z}[\frac{1}{2}]$, the Shimura variety $\textup{Sh}_{K(L)}(G,X)$ admits a regular canonical integral model $\mathscr{S}(L)$ with a regular compactification.
    \label{integralmodel}
\end{theorem}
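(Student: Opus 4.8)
The plan is to deduce the theorem from the general construction of integral canonical models of Shimura varieties of Hodge type together with their arithmetic toroidal compactifications, specialized to the \textup{GSpin} case via the Kuga--Satake embedding; the relevant foundations are in the work of Kisin, Madapusi Pera, and Howard--Madapusi Pera, and what follows is a sketch of how they apply.

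\emph{Hodge type embedding.} The Clifford algebra $C=C(L)$ carries the left multiplication action of $G=\textup{GSpin}(L\otimes\mathbb{Q})$. Fixing $\delta\in C(L\otimes\mathbb{Q})^{\times}$ with $\delta^{*}=-\delta$, where $*$ is the canonical involution, the pairing $\psi(x,y)=\textup{Trd}(x\,\delta\,y^{*})$ turns $H\coloneqq C(L\otimes\mathbb{Q})$ into a symplectic $\mathbb{Q}$-space on which $G$ acts through symplectic similitudes, sending the Hodge cocharacters parametrized by $X$ to Siegel ones. This produces a closed immersion of Shimura data from $(G,X)$ into a Siegel datum $(\textup{GSp}(H),\mathfrak{H}^{\pm})$, hence, after choosing a neat compact open $K'\subset\textup{GSp}(H)(\mathbb{A}_{f})$ with $K(L)\subset K'$, a finite unramified morphism $\textup{Sh}_{K(L)}(G,X)\to\textup{Sh}_{K'}(\textup{GSp}(H),\mathfrak{H}^{\pm})$, whose target $\mathcal{A}$ is a moduli stack of polarized abelian varieties with level structure admitting a smooth integral model over $\mathbb{Z}[\tfrac{1}{2}]$.

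\emph{The integral model.} Since $\#\textup{disc}(L)$ is square-free and $2$ is inverted, for each odd prime $p$ the lattice $L_{\mathbb{Z}_{p}}$ is maximal: self-dual when $p\nmid\#\textup{disc}(L)$, and otherwise with a one-dimensional radical modulo $p$; correspondingly $K(L)_{p}$ is the stabilizer of a vertex of the Bruhat--Tits building of $G$ over $\mathbb{Q}_{p}$ (hyperspecial, resp.\ a special maximal parahoric). Define $\mathscr{S}(L)$ over $\mathbb{Z}[\tfrac{1}{2}]$ to be the normalization of the Zariski closure of the image of $\textup{Sh}_{K(L)}(G,X)$ in $\mathcal{A}$. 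The local model diagram---of Kisin in the hyperspecial case and of Kisin--Pappas / Pappas--Rapoport in the parahoric case---identifies the strictly henselian local rings of $\mathscr{S}(L)$ with those of the corresponding \textup{GSpin} local model, and for a $p$-maximal quadratic lattice of square-free discriminant these are regular: smooth away from the primes dividing $\#\textup{disc}(L)$, and with an isolated singularity of the same shape as that of $\mathcal{X}_{0}(p)$ at such a prime. This gives regularity of $\mathscr{S}(L)$, while the N\'eron-type extension property characterizing it as the canonical model is inherited from $\mathcal{A}$ through the normalization.

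\emph{The compactification.} By the arithmetic toroidal compactification theory for Siegel varieties (Faltings--Chai, Lan) one may choose a smooth projective admissible cone decomposition giving a regular proper model $\mathcal{A}\hookrightarrow\overline{\mathcal{A}}$ over $\mathbb{Z}[\tfrac{1}{2}]$ with $\mathcal{A}$ open dense and normal crossings boundary. Taking $\overline{\mathscr{S}}(L)$ to be the normalization of $\overline{\mathcal{A}}$ in $\textup{Sh}_{K(L)}(G,X)$ yields a proper model over $\mathbb{Z}[\tfrac{1}{2}]$ containing $\mathscr{S}(L)$ as an open dense substack. I expect the main obstacle to be the regularity of $\overline{\mathscr{S}}(L)$: one has to describe its formal completion along the boundary strata in terms of the integral models of the mixed Shimura varieties attached to the rational boundary components of $(G,X)$ and run the local-model and regularity analysis there, exactly along the lines of Howard--Madapusi Pera. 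Once more the square-free hypothesis on $\#\textup{disc}(L)$ is precisely what keeps every local model that occurs, in the interior and along the boundary, within the range where regularity is available.
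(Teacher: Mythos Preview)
The paper does not supply its own proof of this theorem: it is quoted as the main result of Madapusi \cite{Mad16}, and the paper only sketches Madapusi's construction. Your proposal is a reasonable outline of a proof, but the route you describe is \emph{not} the one the paper summarizes. In the paper's account (following \cite{Mad16}), at an odd prime $p$ dividing $\#\textup{disc}(L)$ one does not appeal to the Kisin--Pappas/Pappas--Rapoport parahoric local model machinery. Instead one enlarges $L$ to a rank $n+1$ quadratic lattice $\tilde{L}$ which is \emph{self-dual at $p$}, so that the associated GSpin Shimura variety already has a smooth Kisin model $\mathscr{S}(\tilde{L})_{(p)}$; the integral model $\mathscr{S}(L)_{(p)}$ is then constructed as a \emph{special divisor} on $\mathscr{S}(\tilde{L})_{(p)}$, and its regularity is established by analyzing that divisor directly. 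This ``go up one dimension to hyperspecial level'' trick is specific to the orthogonal/GSpin setting and is exactly the mechanism the present paper exploits later (the stack $\mathcal{Y}_{0}(N)$ sits inside the hyperspecial $\mathcal{H}$, and $\mathcal{N}_{0}(N)$ is identified with a difference divisor on $\mathcal{N}$).

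Your approach via normalization in the Siegel model and local model diagrams is more in the spirit of the general parahoric theory and would also lead to the result, but it buys generality at the cost of heavier input, and it obscures the divisor picture that the paper actually uses. Also, one caution: you describe the level at a bad prime as ``a special maximal parahoric'' and the singularity as ``of the same shape as that of $\mathcal{X}_{0}(p)$''; for general $n$ the level $K(L)_{p}$ at a prime with one-dimensional radical is a non-special maximal parahoric, and the local structure is not literally that of $\mathcal{X}_{0}(p)$---so that part of your sketch would need sharpening if you pursued this route.
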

Theorem \ref{integralmodel} is the main result of Madapusi \cite{Mad16}. Roughly speaking, the integral model $\textup{Sh}_{K(L)}(G,X)$ is constructed by building a regular integral model $\mathscr{S}(L)_{(p)}$ of $\textup{Sh}_{K(L)}(G,X)$ over $\mathbb{Z}_{(p)}$ for every odd prime $p$. Let $L_{p}=L\otimes\mathbb{Z}_{p}$. The level group has the following decomposition: $K(L)=\prod\limits_{p}K_{p}$, where $K_{p}= G(\mathbb{Q}_{p})\cap C(L_{p})^{\times}$. When $p\nmid\textup{disc}(L)$, the quadratic lattice $L_{p}$ is self-dual and the group $K_{p}$ is hyperspecial, Kisin \cite{Kis10} has already constructed a smooth integral canonical model $\mathscr{S}(L)_{(p)}$ of $\textup{Sh}_{K(L)}(G,X)$ over $\mathbb{Z}_{(p)}$. In general, we can embed $L$ into a bigger quadratic lattice $\tilde{L}$ of rank $n+1$ over $\mathbb{Z}$ that is self-dual at $p$ and the associated quadratic space $\tilde{L}\otimes{\mathbb{R}}$ over $\mathbb{R}$ has signature $(n-1, 2)$. Let $\tilde{G}=\textup{GSpin}(\tilde{L}\otimes\mathbb{Q})$ and $\tilde{X}$ be the space of oriented negative definite planes in $\tilde{L}\otimes{\mathbb{R}}$. Let the level group $K(\tilde{L})\coloneqq \tilde{G}(\mathbb{A}_{f})\cap C(\tilde{L}\otimes\hat{\mathbb{Z}})^{\times}$, here again $C(\tilde{L}\otimes\hat{\mathbb{Z}})^{\times}$ is the unit group of the Clifford algebra of $\tilde{L}\otimes\hat{\mathbb{Z}}$. Let $\textup{Sh}_{K(\tilde{L})}(\tilde{G},\tilde{X})$ be the Shimura variety attached to the pair $(\tilde{G},\tilde{X})$ with level structure $K(\tilde{L})$, it has a smooth integral model $\mathscr{S}(\tilde{L})_{(p)}$ over $\mathbb{Z}_{(p)}$ for every odd prime $p$ since the quadratic lattice $\tilde{L}\otimes\mathbb{Z}_{p}$ is self-dual. Then the regular integral model $\mathscr{S}(L)_{(p)}$ of $\textup{Sh}_{K(L)}(G,X)$ is constructed as a divisor on $\mathscr{S}(\tilde{L})_{(p)}$ (cf. \cite[Chapter 7]{Mad16}). We will explain Theorem \ref{integralmodel} and constructions above by the following example.
\par
Let $\tilde{L}=\textup{M}_{2}(\mathbb{Z})$ be the rank 4 lattice equipped with the quadratic form $x\mapsto \textup{det}(x)$. It can be checked immediately that the quadratic lattice $\tilde{L}$ is self-dual. Let $N$ be a positive integer. Recall that $w_{N} = \textup{diag}\{N,1\}\in\tilde{L}$. Let $\Delta(N)=\{w_{N}\}^{\bot}\subset\tilde{L}$, then $\Delta(N)$ is the following quadratic lattice of rank 3 over $\mathbb{Z}$,
\begin{equation}
    \Delta(N) = \left\{x=\begin{pmatrix}
    -Na & b\\
    c & a
    \end{pmatrix}:\,\, a,b,c\in\mathbb{Z}\right\}.
\end{equation}
Similiar computations in Example \ref{lambda} implies that $\textup{disc}(\Delta(N))\simeq\mathbb{Z}/2N$. When $N$ is odd and square-free, the Shimura vairty associated to the quadratic lattice $\Delta(N)$ admits a regular canonical model with a regular compactification by Theorem \ref{integralmodel}. The following lemma will imply that the integral model is $\mathcal{Y}_{0}(N)$, and the regular compactification is $\mathcal{X}_{0}(N)$.
\begin{lemma}
The following diagram is commutative,
\begin{equation*}
     \xymatrix{
    \textup{GSpin}(\Delta(N)\otimes\mathbb{Q})\ar[r]\ar[d]_{\iota}&\textup{GSpin}(\tilde{L}\otimes\mathbb{Q})\ar[d]^{\tilde{\iota}}\\
    \textup{GL}_{2}\ar[r]&\textup{GL}_{2}\times_{\mathbb{G}_{m}}\textup{GL}_{2}.}
\end{equation*}
here $\iota$ and $\tilde{\iota}$ are isomorphisms of algebraic groups, and the bottom arrow is given by $g\in\textup{GL}_{2}\mapsto (g,w_{N}gw_{N}^{-1})\in \textup{GL}_{2}\times_{\mathbb{G}_{m}}\textup{GL}_{2}$. An element $g\in\textup{GSpin}(\Delta(N)\otimes\mathbb{Q})$ acts on the quadratic space $\Delta(N)\otimes\mathbb{Q}$ by the following formula,
\begin{equation*}
    g(x)=w_{N}\iota(g)w_{N}^{-1}\cdot x\cdot \iota(g^{-1}),\,\,\,\forall x\in \Delta(N)\otimes\mathbb{Q}.
\end{equation*}
Moreover, for every prime number $p$, we consider the following subgroup of $\textup{GL}_{2}(\mathbb{Z}_{p})$,
\begin{equation*}
    \Gamma_{0}(N)(\mathbb{Z}_{p}) = \left\{x=\begin{pmatrix}
    a & b\\
    Nc & d
    \end{pmatrix}\in \textup{GL}_{2}(\mathbb{Z}_{p})\,\,:\,\,a,b,c,d\in\mathbb{Z}_{p}\right\}.
\end{equation*}
then the isomorphism $\iota$ induces the following isomorphism,
\begin{equation*}
    \textup{GSpin}(\delta_{p}(N)\otimes\mathbb{Q}_{p})\cap C(\delta_{p}(N))^{\times}\simeq \Gamma_{0}(N)(\mathbb{Z}_{p}),
\end{equation*}
and the isomorphism $\tilde{\iota}$ induces the following isomorphism,
\begin{equation*}
    \textup{GSpin}(\tilde{L}\otimes\mathbb{Q}_{p})\cap C(\tilde{L}\otimes\mathbb{Z}_{p})^{\times}\simeq \textup{GL}_{2}(\mathbb{Z}_{p})\times_{\mathbb{Z}_{p}^{\times}}\textup{GL}_{2}(\mathbb{Z}_{p}).
\end{equation*}
\label{cliffordal}
\end{lemma}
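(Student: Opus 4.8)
The plan is to make the exceptional isomorphisms $\textup{GSpin}_{4}\cong\textup{GL}_{2}\times_{\mathbb{G}_{m}}\textup{GL}_{2}$ and $\textup{GSpin}_{3}\cong\textup{GL}_{2}$ fully explicit through the model $\tilde{L}=\textup{M}_{2}(\mathbb{Z})$ with the determinant form, and then to transport the $\textup{GSpin}$-actions and the level subgroups through these isomorphisms by direct matrix computation. The first ingredient is the Clifford algebra: for $x\in\textup{M}_{2}$ write $\hat{x}$ for the adjugate, so that $x\hat{x}=\hat{x}x=\textup{det}(x)\cdot 1_{2}$; then
\[
    x\longmapsto\begin{pmatrix}0 & x\\ \hat{x} & 0\end{pmatrix}
\]
sends $\tilde{L}$ into $\textup{M}_{4}(\mathbb{Z})$ and squares to $\textup{det}(x)\cdot 1_{4}$, hence extends to an algebra homomorphism $C(\tilde{L})\rightarrow\textup{M}_{4}(\mathbb{Z})$. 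It is injective over $\mathbb{Q}$, and since $\tilde{L}$ is self-dual both $C(\tilde{L})$ and $\textup{M}_{4}(\mathbb{Z})$ are maximal orders in $\textup{M}_{4}(\mathbb{Q})$, so the map is an isomorphism; under it $C^{+}(\tilde{L})$ goes to the diagonal blocks $\textup{M}_{2}(\mathbb{Z})\times\textup{M}_{2}(\mathbb{Z})$, with $xy\mapsto(x\hat{y},\hat{x}y)$ for $x,y\in\tilde{L}$, and the same holds after base change to $\mathbb{Z}_{p}$, $\mathbb{Q}$ or $\mathbb{Q}_{p}$.

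From this one reads off $\tilde{\iota}$. By definition $\textup{GSpin}(\tilde{L}\otimes\mathbb{Q})$ is the subgroup of $(C^{+}(\tilde{L}\otimes\mathbb{Q}))^{\times}=\textup{GL}_{2}(\mathbb{Q})\times\textup{GL}_{2}(\mathbb{Q})$ normalizing $\tilde{L}\otimes\mathbb{Q}\subset C(\tilde{L}\otimes\mathbb{Q})$, and conjugating $\left(\begin{smallmatrix}0 & x\\ \hat{x} & 0\end{smallmatrix}\right)$ by $\textup{diag}(g_{1},g_{2})$ produces $\left(\begin{smallmatrix}0 & g_{1}xg_{2}^{-1}\\ \ast & 0\end{smallmatrix}\right)$, which lies again in $\tilde{L}\otimes\mathbb{Q}$ exactly when $\textup{det}(g_{1})=\textup{det}(g_{2})$. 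This gives $\tilde{\iota}$ and realizes the $\textup{GSpin}$-action by $(g_{1},g_{2})\cdot x=g_{1}xg_{2}^{-1}$. To intersect with $C(\tilde{L}\otimes\mathbb{Z}_{p})^{\times}$ I would use that the $\mathbb{Z}/2$-grading is saturated, namely $C^{+}(\tilde{L}\otimes\mathbb{Q}_{p})\cap C(\tilde{L}\otimes\mathbb{Z}_{p})=C^{+}(\tilde{L}\otimes\mathbb{Z}_{p})$; this reduces the intersection to $\textup{GSpin}(\tilde{L}\otimes\mathbb{Q}_{p})\cap(\textup{GL}_{2}(\mathbb{Z}_{p})\times\textup{GL}_{2}(\mathbb{Z}_{p}))=\textup{GL}_{2}(\mathbb{Z}_{p})\times_{\mathbb{Z}_{p}^{\times}}\textup{GL}_{2}(\mathbb{Z}_{p})$, which is the asserted isomorphism for $\tilde{\iota}$.

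For $\Delta(N)=\{w_{N}\}^{\bot}$: since $w_{N}$ is anisotropic, the homomorphism $\textup{GSpin}(\Delta(N)\otimes\mathbb{Q})\rightarrow\textup{GSpin}(\tilde{L}\otimes\mathbb{Q})$ induced by $C(\Delta(N)\otimes\mathbb{Q})\hookrightarrow C(\tilde{L}\otimes\mathbb{Q})$ is injective with image contained in the stabilizer of $w_{N}$; by the formula above this stabilizer is $\{(g_{1},g_{2}):g_{1}w_{N}g_{2}^{-1}=w_{N}\}=\{(w_{N}gw_{N}^{-1},g):g\in\textup{GL}_{2}\}\cong\textup{GL}_{2}$, and since $\textup{GSpin}$ of a ternary form is smooth connected of dimension $4$ the map is an isomorphism onto it. This defines $\iota$, makes the square commute once one has fixed which factor of $C^{+}(\tilde{L})$ is which, and yields the action formula $g(x)=w_{N}\iota(g)w_{N}^{-1}\cdot x\cdot\iota(g^{-1})$ by substituting $g_{1}=w_{N}\iota(g)w_{N}^{-1}$ and $g_{2}=\iota(g)$ into $(g_{1},g_{2})\cdot x=g_{1}xg_{2}^{-1}$.

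Finally, for the level subgroup of $\Delta(N)$ the same saturation argument, together with $\textup{GSpin}(V)=(C^{+}(V))^{\times}$ for a nondegenerate ternary $V$, gives $\textup{GSpin}(\delta_{p}(N)\otimes\mathbb{Q}_{p})\cap C(\delta_{p}(N))^{\times}=(C^{+}(\delta_{p}(N)))^{\times}$, so it remains to identify the even Clifford order $C^{+}(\delta_{p}(N))$ with the order $\iota^{-1}\!\left(\left\{\left(\begin{smallmatrix}a & b\\ Nc & d\end{smallmatrix}\right)\in\textup{M}_{2}(\mathbb{Z}_{p})\right\}\right)$, whose unit group is $\Gamma_{0}(N)(\mathbb{Z}_{p})$. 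One containment follows at once from $xy\mapsto x\hat{y}$ together with the explicit shape of elements of $\delta_{p}(N)$; for the reverse I would compute $\iota$ on the $\mathbb{Z}_{p}$-basis $1,e_{1}e_{2},e_{1}e_{3},e_{2}e_{3}$ of $C^{+}(\delta_{p}(N))$ attached to the basis $e_{1},e_{2},e_{3}$ of Example \ref{lambda}, and check that the four resulting matrices $\mathbb{Z}_{p}$-span this order. This last step is the only genuinely delicate bookkeeping — in particular making it uniform over all $p$, including $p=2$ and $p\mid N$, where the relation between $\textup{disc}(\delta_{p}(N))$ and the reduced discriminant of its even Clifford order acquires a correction at $2$; the rest of the argument is just a matter of fixing conventions (the ordering of the two $\textup{GL}_{2}$-factors of $C^{+}(\tilde{L})$, whether $\iota$ is the first or the second projection, and $w_{N}$ versus $w_{N}^{-1}$) consistently throughout.
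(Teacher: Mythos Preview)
Your argument is correct, and it is organized in the opposite direction from the paper's. The paper first builds $\iota$ by writing down an explicit algebra isomorphism $C^{+}(\Delta(N)\otimes\mathbb{Q})\stackrel{\sim}{\to}\textup{M}_{2}(\mathbb{Q})$ on the basis $1,\,u_{1}=e_{1}e_{2},\,u_{2}=e_{1}e_{3},\,u_{3}=e_{2}e_{3}$, and only then obtains $\tilde{\iota}$ from the decomposition $C^{+}(\tilde{L}\otimes\mathbb{Q})\simeq C^{+}(\Delta(N)\otimes\mathbb{Q})\otimes\mathbb{Q}(\lambda)$ with $\lambda=w_{N}e_{1}(e_{2}+e_{3})(e_{2}-e_{3})$ central, followed by a twist $(\textup{id}\times\textup{Ad}_{w_{N}})$. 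You instead realize $\tilde{\iota}$ first, via the adjugate embedding $x\mapsto\left(\begin{smallmatrix}0&x\\\hat{x}&0\end{smallmatrix}\right)$ of $\tilde{L}$ into $\textup{M}_{4}$, which gives $C^{+}(\tilde{L})\simeq\textup{M}_{2}\times\textup{M}_{2}$ and the action $(g_{1},g_{2})\cdot x=g_{1}xg_{2}^{-1}$ at once, and then recover $\iota$ as the stabilizer of $w_{N}$.

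Your route makes the commutativity of the square and the action formula essentially tautological, and your maximal-order argument for the integral identification $C^{+}(\tilde{L}\otimes\mathbb{Z}_{p})\simeq\textup{M}_{2}(\mathbb{Z}_{p})\times\textup{M}_{2}(\mathbb{Z}_{p})$ (using that $\tilde{L}$ is unimodular) is cleaner than the paper's explicit basis check for that step. Conversely, the paper's explicit formula for $\iota$ on $u_{1},u_{2},u_{3}$ makes the identification of $C^{+}(\delta_{p}(N))^{\times}$ with $\Gamma_{0}(N)(\mathbb{Z}_{p})$ immediate (the images are $\left(\begin{smallmatrix}0&1\\0&0\end{smallmatrix}\right)$, $\left(\begin{smallmatrix}0&0\\-N&0\end{smallmatrix}\right)$, $\left(\begin{smallmatrix}-1&0\\0&0\end{smallmatrix}\right)$, and one reads off the order); this is exactly the basis computation you defer to the end, and it goes through uniformly in $p$ without any special attention at $p=2$ or $p\mid N$, so the ``delicate bookkeeping'' you flag is in fact harmless.
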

\begin{proof}
We choose the following basis of $\Delta(N)$ over $\mathbb{Z}$, 
\begin{equation*}
    e_{1}=\begin{pmatrix}
    -N & \\
     & 1
    \end{pmatrix},\,\,e_{2}=\begin{pmatrix}
    \,  & 1\\
     \, & 
    \end{pmatrix},\,\,e_{3}=\begin{pmatrix}
     \, & \,\\
     1& \,
    \end{pmatrix}.
\end{equation*}
the Clifford algebra $C(\Delta(N)\otimes\mathbb{Q})$ is an associated algebra over $\mathbb{Q}$ generated by $e_{1}.e_{2},e_{3}$ subject to the relations that $v\cdot v=(v,v)$ for any $v\in\Delta(N)\otimes\mathbb{Q}$ where $(\cdot,\cdot)$ is the quadratic form on $\Delta(N)\otimes\mathbb{Q}$. The even part of the $C(\Delta(N)\otimes\mathbb{Q})^{+}$ is spanned over $\mathbb{Q}$ by
\begin{equation*}
    1,\,\,u_{1}=e_{1}e_{2},\,\,u_{2}=e_{1}e_{3},\,\,u_{3}=e_{2}e_{3},
\end{equation*}
and the even part of the $C(\tilde{L}\otimes\mathbb{Q})^{+}$ is spanned over $\mathbb{Q}$ by
\begin{equation*}
    1,\,\,u_{1},\,\,u_{2},\,\,u_{3},\,\,w_{N}e_{1},\,\,w_{N}e_{2},\,\,w_{N}e_{3},\,\,\lambda=w_{N}e_{1}(e_{2}+e_{3})(e_{2}-e_{3}). 
\end{equation*}
We easily verify that $\lambda^{2}=N^{2}$. By the work of Kudla and Rapoport \cite[Chapter 0]{KR99}, there is an algebra isomorphism $C(\tilde{L}\otimes\mathbb{Q})^{+}\simeq C(\Delta(N)\otimes\mathbb{Q})^{+}\otimes\mathbb{Q}(\lambda)$, where $\mathbb{Q}(\lambda)\simeq\mathbb{Q}\times\mathbb{Q}$ is the center of $C(\tilde{L}\otimes\mathbb{Q})^{+}$. The following is an isomorphism of algebras over $\mathbb{Q}$,
\begin{align*}
    \iota: C(\Delta(N)\otimes\mathbb{Q})^{+}&\longrightarrow \textup{M}_{2}(\mathbb{Q}),\\
    1\mapsto 1_{2}\coloneqq\begin{pmatrix}
    1 & \\
    \, & 1
    \end{pmatrix},\,\,
    u_{1}\mapsto \begin{pmatrix}
     & 1\\
    \, & 
    \end{pmatrix},\,\,u_{2}&\mapsto \begin{pmatrix}
     & \,\\
    -N & \,
    \end{pmatrix},\,\,u_{3}\mapsto \begin{pmatrix}
    -1 & \\
     & \,
    \end{pmatrix},
\end{align*}
under this isomorphism, the main involution $g\mapsto g^{t}$ on the algebra $C(\Delta(N)\otimes\mathbb{Q})^{+}$ transfers to the adjoint involution of $\textup{M}_{2}(\mathbb{Q}): g\mapsto g^{\ast}$, here $g^{\ast}$ is the adjoint matrix of $g$. Therefore $\iota$ induces an isomorphism of algebras $\iota\otimes\mathbb{Q}(\lambda):C(\tilde{L}\otimes\mathbb{Q})^{+}\simeq \textup{M}_{2}(\mathbb{Q})\times \textup{M}_{2}(\mathbb{Q})$, where the element $\lambda$ is mapped to $(-N\cdot1_{2},N\cdot1_{2})$. The main involution $\tilde{g}\mapsto\tilde{g}^{t}$ on the algebra $C(\tilde{L}\otimes\mathbb{Q})^{+}$ transfers to the following involution of $\textup{M}_{2}(\mathbb{Q})\times \textup{M}_{2}(\mathbb{Q}):(g_{1},g_{2})\mapsto (g_{1}^{\ast},g_{2}^{\ast})$. By definition of the spinor group, for any $\mathbb{Q}$-algebra $R$,
\begin{equation*}
    \textup{GSpin}(\Delta(N)\otimes\mathbb{Q})(R)=\{g\in C(\Delta(N)\otimes\mathbb{Q})^{+}\otimes_{\mathbb{Q}}R: g\cdot g^{t}\in R^{\times}\},
\end{equation*}
\begin{equation*}
    \textup{GSpin}(\tilde{L}\otimes\mathbb{Q})(R)=\{\tilde{g}\in C(\tilde{L}\otimes\mathbb{Q})^{+}\otimes_{\mathbb{Q}}R: \tilde{g}\cdot\tilde{g}^{t}\in R^{\times}\}.
\end{equation*}
Therefore $\iota$ induces isomorphisms of algebraic groups over $\mathbb{Q}$, $\iota:\textup{GSpin}(\Delta(N)\otimes\mathbb{Q})\simeq \textup{GL}_{2}$ and $\iota\otimes\mathbb{Q}(\lambda): \textup{GSpin}(\tilde{L}\otimes\mathbb{Q})\simeq\textup{GL}_{2}\times_{\mathbb{G}_{m}}\textup{GL}_{2}$.
\par
By definition, $\textup{GSpin}(\delta_{p}(N)\otimes\mathbb{Q}_{p})\cap C(\delta_{p}(N))^{\times}$ is the subgroup of $\textup{GSpin}(\delta_{p}(N)\otimes\mathbb{Q}_{p})$ whose elements are $\mathbb{Z}_{p}$-linear combinations of $1,u_{1},u_{2},u_{3}$, and invertible. Therefore we have the following identification under the isomorphism $\iota$,
\begin{equation*}
    \textup{GSpin}(\delta_{p}(N)\otimes\mathbb{Q}_{p})\cap C(\delta_{p}(N))^{\times}\simeq\left\{g=\begin{pmatrix}
    a & b\\
    -Nc & d
    \end{pmatrix}\in \textup{GL}_{2}(\mathbb{Z}_{p})\,\,:\,\,a,b,c,d\in\mathbb{Z}_{p}\right\}=\Gamma_{0}(N)(\mathbb{Z}_{p}).
\end{equation*}
Let $\textup{Ad}_{w_{N}}$ be the conjugation action on $\textup{M}_{2}(\mathbb{Q})$, i.e., $\textup{Ad}_{w_{N}}(g)=w_{N}gw_{N}^{-1}$ for $g\in\textup{M}_{2}(\mathbb{Q})$. Let $\tilde{\iota}=(\textup{id}\times\textup{Ad}_{w_{N}})\circ(\iota\otimes\mathbb{Q}(\lambda))$. By definition, $\textup{GSpin}(\tilde{L}\otimes\mathbb{Q}_{p})\cap C(\tilde{L})^{\times}$ is the subgroup of $\textup{GSpin}(\tilde{L}\otimes\mathbb{Q}_{p})$ whose elements are $\mathbb{Z}_{p}$-linear combinations of $1,\frac{N-w_{N}e_{1}}{2N}, \frac{w_{N}e_{2}+u_{1}}{2}, \frac{w_{N}e_{3}+u_{2}}{2},\frac{w_{N}e_{2}-u_{1}}{2N},\frac{w_{N}e_{3}-u_{2}}{2N},u_{3}$, and invertible. Under the isomorphism $\tilde{\iota}$,
\begin{equation*}
    \textup{GSpin}(\tilde{L}\otimes\mathbb{Q}_{p})\cap C(\tilde{L}\otimes\mathbb{Z}_{p})^{\times}\simeq \textup{GL}_{2}(\mathbb{Z}_{p})\times_{\mathbb{Z}_{p}^{\times}}\textup{GL}_{2}(\mathbb{Z}_{p}).
\end{equation*}
\end{proof}
By Lemma \ref{cliffordal}, the level group $K(\tilde{L})\simeq\prod\limits_{p<\infty}\left(\textup{GL}_{2}(\mathbb{Z}_{p})\times_{\mathbb{Z}_{p}^{\times}}\textup{GL}_{2}(\mathbb{Z}_{p})\right)$ and $\linebreak K(\Delta(N))\simeq\prod\limits_{p<\infty}\Gamma_{0}(N)(\mathbb{Z}_{p})$. These two isomorphisms are compatible with the homomorphism $(\textup{id},\textup{Ad}_{w_{N}}):\textup{GL}_{2}\rightarrow\textup{GL}_{2}\times_{\mathbb{G}_{m}}\textup{GL}_{2}$. The GSpin Shimura variety associated to $\tilde{L}$ is $\textup{Sh}_{K(\tilde{L})}(\textup{GL}_{2}\times_{\mathbb{G}_{m}}\textup{GL}_{2},\mathbb{H}_{1}^{\pm}\times\mathbb{H}_{1}^{\pm})\simeq\mathcal{Y}_{0}(1)_{\mathbb{C}}\times\mathcal{Y}_{0}(1)_{\mathbb{C}}\simeq\mathcal{H}_{\mathbb{C}}$, the smooth Deligne-Mumford stack $\mathcal{H}=\mathcal{E}ll\times_{\mathbb{Z}}\mathcal{E}ll$ is the canonical integral model $\mathscr{S}(\tilde{L})$ of this Shimura variety, its compactification is $\mathcal{X}\times_{\mathbb{Z}}\mathcal{X}$, which is also a smooth Deligne-Mumford stack by Lemma \ref{smooth}. The GSpin Shimura variety associated to $\Delta(N)$ is $\textup{Sh}_{K(\Delta(N))}(\textup{GL}_{2},\mathbb{H}_{1}^{\pm})\simeq\mathcal{Y}_{0}(N)_{\mathbb{C}}$, the regular Deligne-Mumford stack $\mathcal{Y}_{0}(N)$ is the canonical integral model $\mathscr{S}(\Delta(N))$ of this Shimura variety, whose compactification is the regular Deligne-Mumford stack $\mathcal{X}_{0}(N)$. 
\begin{remark}
Although Theorem \ref{integralmodel} requires the integer $N$ to be odd and square-free, all the computations in Lemma \ref{cliffordal} and the discussions about the canonical integral model of the Shimura variety associated to the quadratic lattice $\Delta(N)$ don't need this requirement. Moreover, canonical integral models $\mathcal{H}$ and $\mathcal{Y}_{0}(N)$ for the lattice $\tilde{L}=\textup{M}_{2}(\mathbb{Z})$ and $\Delta(N)$ respectively are both regular over $\mathbb{Z}$, not only over $\mathbb{Z}[\frac{1}{2}]$.
\end{remark}
For every prime number $p$, the natural morphism from $\mathcal{Y}_{0}(N)_{(p)}$ to $\mathcal{H}_{(p)}$ is described in the formula (\ref{keymap}), this morphism realizes the regular integral model $\mathscr{S}(\Delta(N))_{(p)}\simeq \mathcal{Y}_{0}(N)_{(p)}$ as a divisor on the smooth integral model $\mathscr{S}(\tilde{L})_{(p)}\simeq \mathcal{H}_{(p)}$ (cf. \cite[6.12, 7.17]{Mad16}) when $N$ is odd and square-free.

\subsection{Special cycles on $\mathcal{H}$ and $\mathcal{X}_{0}(N)$}
Let $p$ be a prime number, we first define the special cycles on the stack $\mathcal{H}_{(p)}$.
\begin{definition}
For every symmetric $n\times n$ matrix $T=(T_{ik})$. Let $\tilde{\boldsymbol\varphi}^{p}$ be the characteristic function of an open compact subset $\tilde{\boldsymbol\omega}^{p}$ of $\textup{M}_{2}(\mathbb{A}_{f}^{p})^{n}$ invariant under the action of $\textup{GL}_{2}(\hat{\mathbb{Z}}^{p})\times \textup{GL}_{2}(\hat{\mathbb{Z}}^{p})$. We consider the stack $\mathcal{Z}^{\sharp}(T,\tilde{\boldsymbol\varphi}^{p})$, whose fibered category over a $\mathbb{Z}_{(p)}$-scheme $S$ consists of the following objects,
\begin{equation*}
   ((E,E^{\prime}), (\overline{\eta^{p}},\overline{\eta^{\prime p}}), \boldsymbol{j}),
\end{equation*}
where $((E, E^{\prime}), (\overline{\eta^{p}},\overline{\eta^{\prime p}}))$ is an object in $\mathcal{H}_{(p)}(S)$, $\boldsymbol{j}=(j_{1},j_{2},\cdot\cdot\cdot,j_{n})\in (\textup{Hom}(E,E^{\prime})\otimes_{\mathbb{Z}}\mathbb{Z}_{(p)})^{n}$ and $\eta^{\ast}(\boldsymbol{j})\coloneqq\eta^{\prime p}\circ  V^{p}(\boldsymbol{j})\circ(\eta^{p})^{-1}\in\tilde{\boldsymbol{\omega}}^{p}$. Moreover, 
\begin{equation*}
    T_{ik}=\frac{1}{2}\left(\textup{deg}(j_{i}+j_{k})-\textup{deg}(j_{i})-\textup{deg}(j_{k})\right)=\frac{1}{2}(j_{i}\circ j_{k}^{\vee}+j_{k}\circ j_{i}^{\vee}).
\end{equation*}
The special cycle $\mathcal{Z}^{\sharp}(T,\tilde{\boldsymbol{\varphi}}^{p})$ may be empty.
\end{definition}
For every symmetric $n\times n$ matrix $T$, we have a natural finite unramified morphism $i_{n}^{\sharp}: \mathcal{Z}^{\sharp}(T,\tilde{\boldsymbol{\varphi}}^{p})\rightarrow \mathcal{H}_{(p)}$ by forgetting the morphisms $\boldsymbol{j}$ of an object $((E,E^{\prime}), (\overline{\eta^{p}},\overline{\eta^{\prime p}}), \boldsymbol{j})$ of $\mathcal{Z}^{\sharp}(T,\tilde{\boldsymbol\varphi}^{p})$. We recall the following definition of generalized Cartier divisor appeared in the work of Howard and Madapusi \cite[Definition 2.4.1]{HM22}.
\begin{definition}
Suppose $D\rightarrow X$ is any finite, unramified, and relatively representable morphism of Deligne-Mumford stacks, then there is an étale cover $U\rightarrow X$ by a scheme such that the pullback $D_{U}\rightarrow U$ is a finite disjoint union
\begin{equation*}
    D_{U}=\bigsqcup\limits_{i}D_{U}^{i}
\end{equation*}
with each map $D_{U}^{i} \rightarrow U$ a closed immersion. If each of these closed immersions is an effective Cartier divisor on $U$ in the usual sense (the corresponding ideal sheaves are invertible), then we call $D \rightarrow X$ a generalized Cartier divisor.
\end{definition}
\begin{proposition}
    Let $\tilde{\varphi}^{p}$ be the characteristic function of an open compact subset $\tilde{\omega}^{p}$ of $\textup{M}_{2}(\mathbb{A}_{f}^{p})$ invariant under the action of $\textup{GL}_{2}(\hat{\mathbb{Z}}^{p})\times \textup{GL}_{2}(\hat{\mathbb{Z}}^{p})$. For any positive number $d\in\mathbb{Q}$, the finite unramified morphism $i_{1}^{\sharp}: \mathcal{Z}^{\sharp}(d,\tilde{\varphi}^{p})\rightarrow \mathcal{H}_{(p)}$ is a generalized Cartier divisor
    \label{divisorH}
\end{proposition}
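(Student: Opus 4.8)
The morphism $i_{1}^{\sharp}$ is finite and unramified by hypothesis, and it is relatively representable since a homomorphism of abelian schemes admits no nontrivial automorphisms; hence the étale-local structure recalled in the definition of a generalized Cartier divisor applies, and the content of the statement is that the closed immersions occurring there are effective Cartier divisors. Concretely, I would pass to an étale cover $U\to\mathcal{H}_{(p)}$ by a scheme over which the universal prime-to-$p$ level structures on the universal pair $(E,E^{\prime})$ are trivialized. Since $i_{1}^{\sharp}$ is finite and $\mathcal{H}_{(p)}$ is Noetherian, only finitely many prime-to-$p$ homomorphism data $j^{p}$ supported in $\tilde{\omega}^{p}$ and compatible with the degree $d$ actually occur over $U$; and by rigidity of homomorphisms of abelian schemes, for each such $j^{p}$ there is at most one $j\in\textup{Hom}(E,E^{\prime})\otimes\mathbb{Z}_{(p)}$ with $\eta^{\ast}(j)=j^{p}$ and $\textup{deg}(j)=d$. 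Thus $\mathcal{Z}^{\sharp}(d,\tilde{\varphi}^{p})_{U}$ is the disjoint union, over this finite index set, of the loci $\mathcal{Z}^{\sharp}(j^{p})\subset U$ over which the fixed datum $j^{p}$ extends to such a $j$; each $\mathcal{Z}^{\sharp}(j^{p})$ is open and closed in $\mathcal{Z}^{\sharp}(d,\tilde{\varphi}^{p})_{U}$, hence finite over $U$, and is a monomorphism by the uniqueness of $j$, hence a closed immersion. It remains to check that each $\mathcal{Z}^{\sharp}(j^{p})\hookrightarrow U$ is an effective Cartier divisor.

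For this I would work in the completed local ring $R=\widehat{\mathcal{O}}_{\mathcal{H}_{(p)},z}$ at a geometric point $z=(E_{0},E_{0}^{\prime},j_{0})$ of $\mathcal{Z}^{\sharp}(j^{p})$. By Serre--Tate theory the deformation functor of the pair $(E_{0},E_{0}^{\prime})$ is the product of those of $E_{0}$ and of $E_{0}^{\prime}$, so $R$ is a regular local ring, in particular a domain. By Grothendieck--Messing / crystalline deformation theory, the locus over which the quasi-homomorphism $j_{0}$ lifts is cut out by the vanishing of the composite
\[
\textup{Fil}(E)\hookrightarrow H(E)\xrightarrow{\ j_{0}\ }H(E^{\prime})\twoheadrightarrow H(E^{\prime})/\textup{Fil}(E^{\prime}),
\]
where $H(\cdot)$ denotes the de Rham realization (a rank-$2$ bundle over $R$) and $\textup{Fil}(\cdot)\subset H(\cdot)$ the Hodge filtration (a rank-$1$ sub-bundle). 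This composite is a homomorphism between the two invertible $R$-modules $\textup{Fil}(E)$ and $H(E^{\prime})/\textup{Fil}(E^{\prime})$, so after choosing trivializations it is a single element $f_{j_{0}}\in R$, and $\mathcal{Z}^{\sharp}(j^{p})$ agrees near $z$ with $\textup{Spf}\,R/(f_{j_{0}})$. Finally $f_{j_{0}}\neq 0$: the stack $\mathcal{H}=\mathcal{E}ll\times_{\mathbb{Z}}\mathcal{E}ll$ is smooth and connected, hence irreducible, and its generic geometric point is a pair of non-isogenous elliptic curves, which admits no nonzero homomorphism between them; therefore $\mathcal{Z}^{\sharp}(j^{p})$ cannot contain a component of $\mathcal{H}_{(p)}$, so $f_{j_{0}}$ is a nonzerodivisor in the domain $R$. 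Glueing these local equations over $U$ and assembling the finitely many disjoint pieces proves the proposition.

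I expect the deformation-theoretic step to be the main obstacle: one has to set up the Grothendieck--Messing comparison carefully and uniformly in $p$ (including $p=2$, which is harmless here because $\tilde{L}=\textup{M}_{2}(\mathbb{Z})$ is self-dual and $\mathcal{H}$ is smooth over all of $\mathbb{Z}$, so the Dieudonné-crystal formalism, or Zink's windows, applies without extra hypotheses), identify the obstruction as living in an invertible module so that a single defining equation genuinely appears, and then separately establish the nonzerodivisor property rather than merely the regularity of the quotient ring. An essentially equivalent and quicker route, which I would use at least as a consistency check, is to observe via Lemma~\ref{cliffordal} that $\mathcal{Z}^{\sharp}(d,\tilde{\varphi}^{p})$ is the restriction to $\mathcal{H}_{(p)}$ of a Kudla--Rapoport special divisor on the integral model of the GSpin Shimura variety attached to the self-dual lattice $\textup{M}_{2}(\mathbb{Z})$, for which the generalized-Cartier-divisor property is established in the work of Howard and Madapusi \cite{HM22}.
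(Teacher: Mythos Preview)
Your proposal is correct and essentially coincides with the paper's approach: the paper's proof is simply a citation to Howard--Madapusi \cite[Proposition 6.5.2]{HM20} and \cite[Proposition 2.4.3]{HM22}, which is exactly the reference you invoke at the end. The Grothendieck--Messing sketch you give (obstruction as a map of line bundles, hence a single equation, nonvanishing via the generic point) is the argument carried out in those references, so you have unpacked what the paper leaves as a black box.
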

\begin{proof}
    This is proved by Howard and Madapusi in \cite[Proposition 6.5.2]{HM20} (see also \cite[Proposition 2.4.3]{HM22}).
\end{proof}
Now let's come to the special cycles on the stack $\mathcal{X}_{0}(N)_{(p)}$ and $\mathcal{Y}_{0}(N)_{(p)}$, we first introduce the notion of special morphisms for the moduli stack $\mathcal{Y}_{0}(N)_{(p)}$.
\begin{definition}
Let $S$ be a scheme over $\textup{Spec}\,\mathbb{Z}_{(p)}$, for an object $((E\stackrel{\pi}\longrightarrow E^{\prime}),\overline{(\eta^{p},\eta^{\prime p})})$ in $\mathcal{Y}_{0}(N)_{(p)}(S)$, a special morphism of this object is an element $j\in \textup{Hom}(E,E^{\prime})\otimes_{\mathbb{Z}}\mathbb{Z}_{(p)}$ satisfying
\begin{equation*}
    j\circ\pi^{\vee}+\pi\circ j^{\vee} = 0.
\end{equation*}
we denote this space by $S(E,\pi)$.
\label{spend}
\end{definition}
\begin{definition}
For every symmetric $n\times n$ matrix $T=(T_{ik})$. Let $\boldsymbol\varphi^{p}$ be the characteristic function of an open compact subset $\boldsymbol\omega^{p}$ of $(\mathbb{V}_{f}^{p})^{n}$ invariant under the action of $\Gamma_{0}(N)(\hat{\mathbb{Z}}^{p})$. We consider the stack $\mathcal{Z}(T,\boldsymbol{\varphi}^{p})$, whose fibered category over a $\mathbb{Z}_{(p)}$-scheme $S$ consists of the following objects,
\begin{equation*}
   ((E\stackrel{\pi}\longrightarrow E^{\prime}), \overline{(\eta^{p},\eta^{\prime p})}, \boldsymbol{j}),
\end{equation*}
where $((E\stackrel{\pi}\longrightarrow E^{\prime}), \overline{(\eta^{p},\eta^{\prime p})})$ is an object in $\mathcal{Y}_{0}(N)_{(p)}(S)$, $\boldsymbol{j}=(j_{1},j_{2},\cdot\cdot\cdot,j_{n})\in S(E,\pi)^{n}$ and $\eta^{\ast}(\boldsymbol{j})\coloneqq\eta^{\prime p}\circ  V^{p}(\boldsymbol{j})\circ(\eta^{p})^{-1}\in\boldsymbol{\omega}^{p}$. Moreover, 
\begin{equation*}
    T_{ik}=\frac{1}{2}\left(\textup{deg}(j_{i}+j_{k})-\textup{deg}(j_{i})-\textup{deg}(j_{k})\right)=\frac{1}{2}(j_{i}\circ j_{k}^{\vee}+j_{k}\circ j_{i}^{\vee}).
\end{equation*}
The special cycle $\mathcal{Z}(T,\boldsymbol{\varphi}^{p})$ may be empty.
\label{higherco}
\label{spe}
\end{definition}
For every symmetric $n\times n$ matrix $T$, we have a natural morphism $i_{n}: \mathcal{Z}(T,\boldsymbol{\varphi}^{p})\rightarrow \mathcal{Y}_{0}(N)_{(p)}$ by forgetting the special morphisms. 
\begin{remark}
    Let $T\in\textup{Sym}_{n}(\mathbb{Q})$, let $\tilde{\boldsymbol\varphi}^{p}$ be the characteristic function of an open compact subset $\tilde{\boldsymbol\omega}^{p}$ of $\textup{M}_{2}(\mathbb{A}_{f}^{p})^{n}$ invariant under the action of $\textup{GL}_{2}(\hat{\mathbb{Z}}^{p})\times \textup{GL}_{2}(\hat{\mathbb{Z}}^{p})$, and let $\boldsymbol\varphi^{p}$ be the restriction of $\tilde{\boldsymbol\varphi}^{p}$ to the subspace $(\mathbb{V}_{f}^{p})^{n}$ of $\textup{M}_{2}(\mathbb{A}_{f}^{p})^{n}$, then $\boldsymbol\varphi^{p}$ is the characteristic function of an open compact subset $\boldsymbol\omega^{p}$ of $(\mathbb{V}_{f}^{p})^{n}$ invariant under the action of $\Gamma_{0}(N)(\hat{\mathbb{Z}}^{p})$ by Lemma \ref{cliffordal}, then the following diagram is Cartesian by definition,
    \begin{equation*}
     \xymatrix{
    \mathcal{Z}(T,\boldsymbol{\varphi}^{p})\ar[d]\ar[r]^{i_{n}}&\mathcal{Y}_{0}(N)_{(p)}\ar[d]\\
    \mathcal{Z}^{\sharp}(T,\tilde{\boldsymbol{\varphi}}^{p})\ar[r]^{i^{\sharp}_{n}}&\mathcal{H}_{(p)}.}
 \end{equation*}
Therefore the morphisms $i_{n}: \mathcal{Z}(T,\boldsymbol{\varphi}^{p})\rightarrow \mathcal{Y}_{0}(N)_{(p)}$ are also finite unramified. In particular, when $n=1$, let $T=d\in\mathbb{Q}_{>0}$, the morphism $i_{1}: \mathcal{Z}(d,\boldsymbol{\varphi}^{p})\rightarrow \mathcal{Y}_{0}(N)_{(p)}$ is a generalized Cartier divisor by Proposition \ref{divisorH}.
\label{abasechange} 
\end{remark}
We will show next that the composite $\tilde{i}_{n}:\mathcal{Z}(T,\boldsymbol{\varphi}^{p})\stackrel{i_{n}}\rightarrow\mathcal{Y}_{0}(N)_{(p)}\rightarrow\mathcal{X}_{0}(N)_{(p)}$ is also finite unramified. We start with the case that $n=1$.
\begin{proposition}
Let $\varphi^{p}$ be the characteristic function of an open compact subset $\omega^{p}$ of $\mathbb{V}_{f}^{p}$ invariant under the action of $\Gamma_{0}(N)(\hat{\mathbb{Z}}^{p})$. For any positive number $d\in\mathbb{Q}$, the morphism $\tilde{i}_{1}:\mathcal{Z}(d,\varphi^{p})\rightarrow\mathcal{X}_{0}(N)_{(p)}$ is finite unramified, and $\mathcal{Z}(d,\varphi^{p})$ is a generalized Cartier divisor.
\label{nocusp}
\end{proposition}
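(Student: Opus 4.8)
The plan is to use that, by Remark~\ref{abasechange}, the morphism $i_{1}\colon\mathcal{Z}(d,\varphi^{p})\rightarrow\mathcal{Y}_{0}(N)_{(p)}$ is already known to be finite, unramified, and a generalized Cartier divisor, and that $\tilde{i}_{1}$ is the composition of $i_{1}$ with the open immersion $\mathcal{Y}_{0}(N)_{(p)}\hookrightarrow\mathcal{X}_{0}(N)_{(p)}$. Then $\tilde{i}_{1}$ is automatically unramified, separated and quasi-finite, so the only point is to show it is finite, equivalently universally closed. I would verify this through the valuative criterion of properness: given a discrete valuation ring $R$ over $\mathbb{Z}_{(p)}$ with fraction field $K$, a point $\textup{Spec}\,K\rightarrow\mathcal{Z}(d,\varphi^{p})$, and a compatible extension $\textup{Spec}\,R\rightarrow\mathcal{X}_{0}(N)_{(p)}$ of its image, one must produce a lift over all of $\textup{Spec}\,R$. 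If the closed point of $\textup{Spec}\,R$ maps into the open substack $\mathcal{Y}_{0}(N)_{(p)}$, the lift exists and is unique because $i_{1}$ is finite; so everything reduces to showing that this closed point cannot land in the cuspidal locus $\mathcal{X}_{0}(N)_{(p)}\setminus\mathcal{Y}_{0}(N)_{(p)}$, i.e.\ that special cycles of positive degree avoid the cusps.

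For that, suppose for contradiction that the closed point of $\textup{Spec}\,R$ maps to a cusp. Then the underlying generalized elliptic curve over $R$ has N\'eron-polygon special fibre, so its generic fibre $E_{K}$ has multiplicative reduction and hence $\textup{End}(E_{K})=\mathbb{Z}$; since the given $K$-point provides a cyclic $N$-isogeny $\pi\colon E_{K}\rightarrow E_{K}^{\prime}$, the isogenous curve $E_{K}^{\prime}$ also satisfies $\textup{End}(E_{K}^{\prime})\otimes_{\mathbb{Z}}\mathbb{Q}=\mathbb{Q}$, so $\textup{End}(E_{K}^{\prime})=\mathbb{Z}$. But the $K$-point also records a special morphism $j\in\textup{Hom}(E_{K},E_{K}^{\prime})\otimes_{\mathbb{Z}}\mathbb{Z}_{(p)}$ with $\textup{deg}(j)=d>0$, hence $j\neq0$, satisfying $j\circ\pi^{\vee}+\pi\circ j^{\vee}=0$ (Definition~\ref{spend}); writing $f\coloneqq j\circ\pi^{\vee}\in\textup{End}(E_{K}^{\prime})\otimes\mathbb{Q}=\mathbb{Q}$ and using $\pi\circ j^{\vee}=f^{\vee}=f$, the relation becomes $2f=0$, so $j\circ\pi^{\vee}=0$, and since $\pi^{\vee}$ is an isogeny this forces $j=0$, a contradiction. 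Therefore the closed point of $\textup{Spec}\,R$ lies in $\mathcal{Y}_{0}(N)_{(p)}$, the valuative criterion is verified, and $\tilde{i}_{1}$ is finite unramified.

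For the last assertion, $\mathcal{Z}(d,\varphi^{p})$ is now a closed substack of the regular stack $\mathcal{X}_{0}(N)_{(p)}$ which is disjoint from the cuspidal divisor and which restricts on the dense open $\mathcal{Y}_{0}(N)_{(p)}$ to a generalized Cartier divisor; so on an \'etale chart by a scheme its local defining ideals are invertible over the preimage of $\mathcal{Y}_{0}(N)_{(p)}$ and are the unit ideal near the cusps, hence invertible everywhere, and $\mathcal{Z}(d,\varphi^{p})$ is a generalized Cartier divisor on $\mathcal{X}_{0}(N)_{(p)}$. The hard part here is not a single deep step but the bookkeeping: setting up the valuative criterion correctly for Deligne--Mumford stacks of finite type over $\mathbb{Z}_{(p)}$, and making sure the relevant generic fibre genuinely lives in the interior $\mathcal{Y}_{0}(N)_{(p)}$ so that the moduli interpretation of $\mathcal{Z}(d,\varphi^{p})$ applies there. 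The geometric input—that a cusp forces multiplicative reduction, hence $\textup{End}=\mathbb{Z}$, hence no positive-degree special morphism—is elementary and characteristic-free.
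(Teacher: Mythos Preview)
Your argument is correct and takes a genuinely different route from the paper's. Both proofs rest on the same elementary obstruction---an elliptic curve $E$ with $\textup{End}(E)=\mathbb{Z}$ admits no nonzero special morphism---but they deploy it at different places. The paper first shows that $\mathcal{Z}(d,\varphi^{p})$ is flat over $\mathbb{Z}_{(p)}$: if the local equation were divisible by $p$, the cycle would contain an irreducible component of $\mathcal{Y}_{0}(N)_{\mathbb{F}_{p}}$, and at the generic point of such a component the $j$-invariant is transcendental over $\mathbb{F}_{p}$, so $\textup{End}(E)=\mathbb{Z}$, contradicting the existence of the special morphism. Flatness then forces $\mathcal{Z}(d,\varphi^{p})$ to coincide with the closure of its generic fibre, which is a finite set of closed points with number-field residue fields; hence $\mathcal{Z}(d,\varphi^{p})$ is finite over $\mathbb{Z}_{(p)}$, hence proper over the proper stack $\mathcal{X}_{0}(N)_{(p)}$, hence finite. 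You instead attack properness of $\tilde{i}_{1}$ directly via the valuative criterion, invoking the CM obstruction at generic points of discrete valuation rings whose closed point would land at a cusp (multiplicative reduction forces $\textup{End}(E_{K})=\mathbb{Z}$). Your path is shorter and avoids the flatness detour; the paper's path yields flatness over $\mathbb{Z}_{(p)}$ as a byproduct, though this is not used elsewhere. One small wording issue: $\tilde{i}_{1}$ is finite unramified, not a closed immersion, so ``$\mathcal{Z}(d,\varphi^{p})$ is now a closed substack'' in your last paragraph is imprecise---but you immediately pass to an \'etale chart where the components are closed immersions, which is the correct framework for the generalized Cartier divisor definition, so the argument goes through.
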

\begin{proof}
The morphism $\tilde{i}_{1}$ is unramified since $i_{1}$ is unramified and the open immersion $\mathcal{Y}_{0}(N)_{(p)}\rightarrow\mathcal{X}_{0}(N)_{(p)}$ is also unramified, therefore we only need to show the finiteness of $\tilde{i}_{1}$.
\par
We first prove that the stack $\mathcal{Z}(d,\varphi^{p})$ is flat over $\mathbb{Z}_{(p)}$, since the morphism $\mathcal{Z}(d,\varphi^{p})\mathcal{Y}_{0}(N)_{(p)}$ is a generalized Cartier divisor by Remark \ref{abasechange}, the flatness of $\mathcal{Z}(d,\varphi^{p})$ is equivalent to the fact that its local equation is not divisible by $p$ since the stack $\mathcal{Y}_{0}(N)_{(p)}$ is flat over $\mathbb{Z}_{(p)}$. We assume the converse and suppose that there exists a point $z\in\mathcal{Z}(d,\varphi^{p})(\overline{\mathbb{F}}_{p})$ such that the equation of $\mathcal{Z}(d,\varphi^{p})$ in the étale local ring $\mathcal{O}_{\mathcal{Y}_{0}(N),z}^{\textup{ét}}$ is divisible by $p$, then the stack $\mathcal{Z}(d,\varphi^{p})$ contains an irreducible component of $\mathcal{Y}_{0}(N)_{\mathbb{F}_{p}}$ in an étale neighbourhood of $z$, let $(E\stackrel{\pi}\rightarrow E^{\prime},\overline{(\eta^{p},\eta^{\prime p})})$ be the object corresponding to the generic point of this irreducible component, then $\textup{End}(E)\simeq\mathbb{Z}$ since the $j$-invariant of $E$ must be transcendental over $\mathbb{F}_{p}$ (by the description of the stack $\mathcal{Y}_{0}(N)_{\mathbb{F}_{p}}$ in \cite[Proposition 13.4.5, Theorem 13.4.7]{KM85}). There also exists an isogeny $j\in \textup{Hom}(E,E^{\prime})\otimes_{\mathbb{Z}}\mathbb{Z}_{(p)}$ such that $j^{\vee}\circ \pi+\pi^{\vee}\circ j = 0$. Let $\alpha=j^{-1}\circ\pi\in\textup{End}^{\circ}(E)\coloneqq \textup{End}(E)\otimes\mathbb{Q}\simeq \mathbb{Q}$, then $\alpha^{2}=-Nd^{-1}<0$, this contradicts to the fact that $\textup{End}^{\circ}(E)\simeq\mathbb{Q}$.
\par
Therefore the stack $\mathcal{Z}(d,\varphi^{p})$ is flat over $\mathbb{Z}_{(p)}$, hence equals to the flat closure of its generic fiber $\mathcal{Z}(d,\varphi^{p})_{\mathbb{Q}}\coloneqq\mathcal{Z}(d,\varphi^{p})\times_{\mathbb{Z}_{(p)}}\mathbb{Q}$. The stack $\mathcal{Z}(d,\varphi^{p})_{\mathbb{Q}}$ consists of finitely many points whose residue fields are finite extensions of $\mathbb{Q}$, therefore the structure sheaf $\mathcal{O}_{\mathcal{Z}(d,\varphi^{p})}$ of $\mathcal{Z}(d,\varphi^{p})$ is a finite product of subrings of the integer rings of these residue fields, hence the stack $\mathcal{Z}(d,\varphi^{p})$ is finite over $\mathbb{Z}_{(p)}$, then $\tilde{i}_{1}:\mathcal{Z}(d,\varphi^{p})\rightarrow\mathcal{X}_{0}(N)_{(p)}$ is proper since $\mathcal{X}_{0}(N)_{(p)}$ is proper over $\mathbb{Z}_{(p)}$. The morphism $\tilde{i}_{1}$ is obviously quasi-finite by the finiteness of $\mathcal{Z}(d,\varphi^{p})$ over $\mathbb{Z}_{(p)}$, hence $\tilde{i}_{1}$ is finite. 
\par
We already know that the morphism $\tilde{i}_{1}$ is a generalized Cartier divisor over the open substack $\mathcal{Y}_{0}(N)_{(p)}$ of $\mathcal{X}_{0}(N)_{(p)}$. Moreover, étale locally around a cusp point of $\mathcal{X}_{0}(N)_{(p)}$, the stack $\mathcal{Z}(d,\varphi^{p})$ is cut out by 1 since $\tilde{i}_{1}$ factors through the non-cuspidal locus $\mathcal{Y}_{0}(N)_{(p)}$, hence the finite unramified morphism $\tilde{i}_{1}:\mathcal{Z}(d,\varphi^{p})\rightarrow \mathcal{X}_{0}(N)_{(p)}$ is a generalized Cartier divisor on the stack $\mathcal{X}_{0}(N)_{(p)}$.
\end{proof}
\begin{corollary}
    Let $\boldsymbol\varphi^{p}=\prod\limits_{i=1}^{n}\varphi_{i}^{p}$ be the characteristic function of an open compact subset $\boldsymbol\omega^{p}$ of $(\mathbb{V}_{f}^{p})^{n}$ invariant under the action of $\Gamma_{0}(N)(\hat{\mathbb{Z}}^{p})$. For any matrix $T\in\textup{Sym}_{n}(\mathbb{Q})_{>0}$, the morphism $\tilde{i}_{n}:\mathcal{Z}(T,\boldsymbol{\varphi}^{p})\rightarrow\mathcal{X}_{0}(N)_{(p)}$ is finite unramified.
    \label{nocusp2}
\end{corollary}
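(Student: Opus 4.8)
The plan is to reduce the statement to the one-variable case, Proposition \ref{nocusp}, by fibering $\mathcal{Z}(T,\boldsymbol\varphi^{p})$ over the generalized Cartier divisor $\mathcal{Z}(T_{11},\varphi_{1}^{p})$. Unramifiedness will be the easy half, and it does not use the positivity of $T$: the morphism $\tilde i_{n}$ factors as $\mathcal{Z}(T,\boldsymbol\varphi^{p})\stackrel{i_{n}}{\longrightarrow}\mathcal{Y}_{0}(N)_{(p)}\hookrightarrow\mathcal{X}_{0}(N)_{(p)}$, where $i_{n}$ is finite unramified by Remark \ref{abasechange} and the open immersion is unramified, so $\tilde i_{n}$ is unramified. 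It then remains to prove that $\tilde i_{n}$ is finite, and for this it is enough to exhibit $\tilde i_{n}$ as a composite of finite morphisms.

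First I would record that, writing $\varphi_{i}^{p}=\boldsymbol{1}_{\omega_{i}^{p}}$, the hypothesis $\boldsymbol\varphi^{p}=\prod_{i}\varphi_{i}^{p}$ gives $\boldsymbol\omega^{p}=\prod_{i}\omega_{i}^{p}$ with each $\varphi_{i}^{p}$ still $\Gamma_{0}(N)(\hat{\mathbb{Z}}^{p})$-invariant, and that $T_{ii}>0$ for all $i$ since $T$ is positive definite. Forgetting $j_{2},\dots,j_{n}$ then defines a morphism $q\colon\mathcal{Z}(T,\boldsymbol\varphi^{p})\to\mathcal{Z}(T_{11},\varphi_{1}^{p})$ with $\tilde i_{n}=\tilde i_{1}\circ q$, and since $\tilde i_{1}$ is finite by Proposition \ref{nocusp} I would only need $q$ to be finite. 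To see this, introduce the iterated fiber product over $\mathcal{Y}_{0}(N)_{(p)}$,
\[
    P\coloneqq\mathcal{Z}(T_{11},\varphi_{1}^{p})\times_{\mathcal{Y}_{0}(N)_{(p)}}\mathcal{Z}(T_{22},\varphi_{2}^{p})\times_{\mathcal{Y}_{0}(N)_{(p)}}\cdots\times_{\mathcal{Y}_{0}(N)_{(p)}}\mathcal{Z}(T_{nn},\varphi_{n}^{p}).
\]
Because $\boldsymbol\varphi^{p}$ factors as $\prod_{i}\varphi_{i}^{p}$ and every component $j_{i}$ is required to lie in the single space $S(E,\pi)$ of special morphisms, $P$ parametrizes exactly the data $((E\stackrel{\pi}{\longrightarrow}E^{\prime}),\overline{(\eta^{p},\eta^{\prime p})},\boldsymbol{j})$ defining an object of $\mathcal{Z}(T,\boldsymbol\varphi^{p})$ apart from the off-diagonal entries of $T$. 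Each $\mathcal{Z}(T_{ii},\varphi_{i}^{p})\to\mathcal{Y}_{0}(N)_{(p)}$ is finite by Remark \ref{abasechange}, so by stability of finiteness under base change and composition the first projection $P\to\mathcal{Z}(T_{11},\varphi_{1}^{p})$ is finite.

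The last point is that $\mathcal{Z}(T,\boldsymbol\varphi^{p})$ is the substack of $P$ cut out by the conditions $\textup{deg}(j_{i}+j_{k})=T_{ii}+T_{kk}+2T_{ik}$ for $i<k$. Since the degree of a homomorphism of elliptic schemes is locally constant in families — the relation $\varphi^{\vee}\circ\varphi=[\textup{deg}(\varphi)]$ in $\textup{End}$ together with the injectivity of the specialization map on $\textup{Hom}$-groups forces $\textup{deg}(\varphi)$ to be constant on each connected component — this defines a union of connected components of $P$, so $\mathcal{Z}(T,\boldsymbol\varphi^{p})\hookrightarrow P$ is a clopen immersion, hence finite. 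Composing with $P\to\mathcal{Z}(T_{11},\varphi_{1}^{p})$ shows $q$ is finite, and therefore $\tilde i_{n}=\tilde i_{1}\circ q$ is finite. I expect the only points requiring genuine care to be the precise identification of the moduli functor of $P$ (so that passing from $P$ to $\mathcal{Z}(T,\boldsymbol\varphi^{p})$ imposes only the off-diagonal equations) and the local constancy of $\textup{deg}(j_{i}+j_{k})$ over $P$; the rest is formal bookkeeping with finite morphisms and Proposition \ref{nocusp}.
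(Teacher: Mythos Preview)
Your proposal is correct and follows essentially the same route as the paper: form the fiber product $P$ of the one-variable cycles $\mathcal{Z}(T_{ii},\varphi_i^{p})$, use Proposition~\ref{nocusp} to get finiteness over $\mathcal{X}_{0}(N)_{(p)}$, and observe that $\mathcal{Z}(T,\boldsymbol\varphi^{p})$ sits inside $P$ as a union of connected components (via local constancy of degrees). The paper takes the fiber product over $\mathcal{X}_{0}(N)_{(p)}$ rather than $\mathcal{Y}_{0}(N)_{(p)}$ and concludes $P\to\mathcal{X}_{0}(N)_{(p)}$ is finite unramified directly, whereas you factor $\tilde i_{n}=\tilde i_{1}\circ q$ and handle unramifiedness separately; but since each $\mathcal{Z}(T_{ii},\varphi_i^{p})$ already lands in the open $\mathcal{Y}_{0}(N)_{(p)}$, these are the same argument up to bookkeeping.
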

\begin{proof}
Suppose the diagonal elements of $T$ are $d_{1},\cdot\cdot\cdot,d_{n}$. Proposition \ref{nocusp} implies that the morphism $\mathcal{Z}(d_{1},\varphi_{1}^{p})\times_{\mathcal{X}_{0}(N)_{(p)}}\times\cdot\cdot\cdot\times_{\mathcal{X}_{0}(N)_{(p)}}\mathcal{Z}(d_{n},\varphi_{n}^{p})\rightarrow\mathcal{X}_{0}(N)_{(p)}$ is finite unramified. The stack $\mathcal{Z}(T,\boldsymbol{\varphi}^{p})$ is a connected component of $\mathcal{Z}(d_{1},\varphi_{1}^{p})\times_{\mathcal{X}_{0}(N)_{(p)}}\times\cdot\cdot\cdot\times_{\mathcal{X}_{0}(N)_{(p)}}\mathcal{Z}(d_{n},\varphi_{n}^{p})$, hence the morphism $\tilde{i}_{n}:\mathcal{Z}(T,\boldsymbol{\varphi}^{p})\rightarrow\mathcal{X}_{0}(N)_{(p)}$ is finite unramified.
\end{proof}
We will mainly focus on the case that $T$ is a nonsingular $2\times2$ symmetric matrix with coefficients in $\mathbb{Q}$. For every such matrix $T$, recall that we have defined the difference set to be $\text{Diff}(T,\Delta(N)) = \{l\text{ is a finite prime}:\text{$T$ is not represented by $\Delta(N)\otimes\mathbb{Q}_{l}$.}\}$.
\begin{proposition}
Let $T\in\textup{Sym}_{2}(\mathbb{Q})$ be a nonsingular matrix. If $\mathcal{Z}(T,\boldsymbol{\varphi}^{p})(\overline{\mathbb{F}}_{p})\neq\varnothing$ for some prime p, then $T$ is positive definite, and
\begin{equation*}
    \textup{Diff}(T,\Delta(N)) = \{p\}.
\end{equation*}
Moreover, in this case, the special cycle $\mathcal{Z}(T,\boldsymbol{\varphi}^{p})$ is supported in the supersingular locus of the special fiber $\mathcal{Y}_{0}(N)_{\mathbb{F}_{p}}$.
\label{onlyone}
\end{proposition}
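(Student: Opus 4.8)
The plan is to fix an arbitrary point $z=((E\xrightarrow{\pi}E'),\overline{(\eta^{p},\eta'^{p})},\boldsymbol{j})$ of $\mathcal{Z}(T,\boldsymbol\varphi^{p})(\overline{\mathbb{F}}_{p})$ with $\boldsymbol{j}=(j_{1},j_{2})$, and to read off three constraints from its mere existence, using throughout that $2T$ is the Gram matrix of $j_{1},j_{2}$ for the degree bilinear form on $\textup{Hom}(E,E')$.

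\emph{Supersingularity.} First I would observe that the special condition $j_{i}\circ\pi^{\vee}+\pi\circ j_{i}^{\vee}=0$ of Definition \ref{spend}, after composing on suitable sides with $\pi$ and $\pi^{\vee}$ and using $\pi^{\vee}\pi=N$, is equivalent to $\pi^{\vee}\circ j_{i}+j_{i}^{\vee}\circ\pi=0$, i.e. to $j_{i}$ being orthogonal to $\pi$ for the degree bilinear form. Since $T$ is nonsingular the $j_{i}$ are nonzero and $\mathbb{Q}$-linearly independent, and since $\textup{deg}(\pi)=N>0$ the vector $\pi$ is anisotropic, so $\{\pi,j_{1},j_{2}\}$ is $\mathbb{Q}$-linearly independent in $\textup{Hom}(E,E')\otimes\mathbb{Q}$. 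Hence $\dim_{\mathbb{Q}}\textup{End}^{0}(E)\geq 3$, which over a field of characteristic $0$ is impossible and over $\overline{\mathbb{F}}_{p}$ forces $E$ (hence $E'$, being isogenous) to be supersingular, with $\textup{End}^{0}(E)\cong B$. In particular $\mathcal{Z}(T,\boldsymbol\varphi^{p})$ has empty generic fibre and is set-theoretically contained in the supersingular locus of $\mathcal{Y}_{0}(N)_{\mathbb{F}_{p}}$.

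\emph{Positive definiteness and computation of $\textup{Diff}$.} Since $E,E'$ are supersingular, $\textup{Hom}(E,E')\otimes\mathbb{R}$ with the degree form embeds isometrically into $B\otimes\mathbb{R}$, a definite quaternion algebra over $\mathbb{R}$ whose reduced norm is positive definite; hence the degree form on $\textup{Hom}(E,E')$ is positive definite, and so is the Gram matrix $2T$ of $j_{1},j_{2}$, i.e. $T$ is positive definite. Now let $V(E,\pi)=\{\pi\}^{\perp}\subset\textup{Hom}(E,E')\otimes\mathbb{Q}$, a ternary positive definite quadratic space over $\mathbb{Q}$ containing $j_{1},j_{2}$ with Gram matrix $2T$, so $T$ is represented by $V(E,\pi)$. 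For each finite prime $\ell\neq p$, the level structure $\overline{(\eta^{p},\eta'^{p})}$ identifies $\textup{Hom}(E,E')\otimes\mathbb{Q}_{\ell}$ with $\textup{M}_{2}(\mathbb{Q}_{\ell})$ (the degree form going to $\textup{det}$) and carries $\pi$ to $w_{N}$ by diagram (\ref{diagram}), whence $V(E,\pi)\otimes\mathbb{Q}_{\ell}\cong\{w_{N}\}^{\perp}=\delta_{\ell}(N)\otimes\mathbb{Q}_{\ell}=\mathbb{V}_{\ell}$; thus $T$ is represented by $\mathbb{V}_{\ell}$ and $\ell\notin\textup{Diff}(T,\Delta(N))$. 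This gives $\textup{Diff}(T,\Delta(N))\subseteq\{p\}$. For the reverse inclusion I would invoke the standard parity fact: for a nonsingular positive definite binary $T$, the number of finite places $v$ at which $T$ is not represented by $\mathbb{V}_{v}$ is odd — this follows from the local criterion (a binary form $Q$ of discriminant $d$ embeds in a ternary space $U$ over $\mathbb{Q}_{v}$ iff $\epsilon(U)=\epsilon(Q)\cdot(d,\textup{disc}(U)/d)_{v}$), Hilbert reciprocity for $\textup{disc}(T)$ together with the coherent family $\textup{disc}(\mathbb{V}_{v})\equiv -N$, and the incoherence relation $\prod_{v}\epsilon(\mathbb{V}_{v})=-1$. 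In particular $\textup{Diff}(T,\Delta(N))\neq\varnothing$, so $\textup{Diff}(T,\Delta(N))=\{p\}$.

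The main obstacle here is not conceptual but bookkeeping: one must carefully match the special-morphism condition with orthogonality to $\pi$, the degree form with the $\textup{det}$ form on $\textup{M}_{2}(\mathbb{Q}_{\ell})$ under the prime-to-$p$ level structure, and the incoherent Hasse invariant $\epsilon(\mathbb{V}_{p})=-\prod_{v\neq p}\epsilon(\mathbb{V}_{v})$ with the local representation criterion for a binary form inside a ternary space. Each of these steps is routine once the sign and discriminant conventions are fixed, but they all have to be tracked consistently.
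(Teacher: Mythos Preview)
Your argument is correct and follows the paper's proof almost verbatim: the supersingularity step (three independent vectors in $\textup{Hom}(E,E')\otimes\mathbb{Q}$), the positive definiteness (degree form on a definite quaternion algebra), and the exclusion of $\ell\neq p$ from $\textup{Diff}(T,\Delta(N))$ via the level structure identifying $\{\pi\}^{\perp}\otimes\mathbb{Q}_{\ell}$ with $\delta_{\ell}(N)\otimes\mathbb{Q}_{\ell}$ are exactly what the paper does.

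The one genuine difference is in showing $p\in\textup{Diff}(T,\Delta(N))$. The paper argues \emph{locally at $p$}: since $j_{1},j_{2}$ lie in $\{\pi\}^{\perp}\subset\textup{Hom}(E,E')\otimes\mathbb{Q}_{p}\simeq\mathbb{B}$, and this ternary space has discriminant $-N$ but Hasse invariant opposite to that of $\Delta(N)\otimes\mathbb{Q}_{p}$, the binary form $T$ is represented by the ``wrong'' one of the two ternary spaces with that discriminant, hence not by $\Delta(N)\otimes\mathbb{Q}_{p}$. You instead argue \emph{globally}: having shown $\textup{Diff}(T,\Delta(N))\subseteq\{p\}$, you invoke the incoherence $\prod_{v}\epsilon(\mathbb{V}_{v})=-1$ together with Hilbert reciprocity to force $\#\textup{Diff}(T,\Delta(N))$ to be odd, hence nonempty. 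Both routes are valid; the paper's is slightly more self-contained (it never needs the incoherence of $\mathbb{V}$), while yours avoids computing Hasse invariants inside $\mathbb{B}$ and makes transparent why the parity is what it is.
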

\begin{proof}
Since $\mathcal{Z}(T,\boldsymbol{\varphi}^{p})(\overline{\mathbb{F}}_{p})\neq\varnothing$, Corollary \ref{nocusp2} implies that there are two elliptic curves $\mathbb{E}$ and $\mathbb{E}^{\prime}$ over $\overline{\mathbb{F}}_{p}$, a cyclic isogeny $\pi\in \textup{Hom}(\mathbb{E},\mathbb{E}^{\prime})$, and two isogenies $x_{1}, x_{2}\in \textup{Hom}(\mathbb{E},\mathbb{E}^{\prime})_{(p)}$ such that
\begin{equation*}
    T = (\frac{1}{2}(x_{i},x_{j})),\,\,\textup{and}\,\,(x_{1},\pi) =(x_{2},\pi) = 0. 
\end{equation*}
therefore $T$ must be positive definite and both $\mathbb{E}$ and $\mathbb{E}^{\prime}$ are supersingular elliptic curves over $\overline{\mathbb{F}}_{p}$ since $\textup{dim}_{\mathbb{Q}}\,\textup{Hom}(\mathbb{E},\mathbb{E}^{\prime})\otimes\mathbb{Q}\geq 3$. The quadratic space $\textup{Hom}(\mathbb{E},\mathbb{E}^{\prime})\otimes\mathbb{Q}_{p}$ is isometric to the underlying quadratic space of the unique division quaternion algebra $\mathbb{B}$ over $\mathbb{Q}_{p}$.
\par
The isogenies $x_{1},x_{2}\in\{\pi\}^{\bot}\subset\textup{Hom}(\mathbb{E},\mathbb{E}^{\prime})\otimes\mathbb{Q}_{p}\simeq\mathbb{B}$ where $\pi^{\vee}\circ\pi=N$. However, $\{\pi\}^{\bot}$ and $\Delta(N)\otimes\mathbb{Q}_{p}$ have the same discriminant $-N$ but opposite Hasse invariants, therefore $p\in \textup{Diff}(T,\Delta(N))$. At the same time, by choosing some level structures on $\mathbb{E}$ and $\mathbb{E}^{\prime}$ away from $p$, we get that $T$ can be realized in $\Delta(N)\otimes\mathbb{Q}_{l}$ for any finite prime $l\neq p$. Therefore $p$ is the only prime in the set $\textup{Diff}(T,\Delta(N))$.
\end{proof}
\begin{remark}
    Proposition \ref{onlyone} implies that the special cycle $\mathcal{Z}(T,\boldsymbol{\varphi}^{p})$ is also finite unramified over the stack $\mathcal{X}_{0}(N)_{(p)}$ because the scheme-theoretic image $\tilde{\mathcal{Z}}(T,\boldsymbol{\varphi}^{p})$ of $\mathcal{Z}(T,\boldsymbol{\varphi}^{p})$ in $\mathcal{X}_{0}(N)_{(p)}$ is supported in the supersingular locus of the special fiber $\mathcal{X}_{0}(N)_{\mathbb{F}_{p}}$, which equals the supersingular locus of the special fiber $\mathcal{Y}_{0}(N)_{\mathbb{F}_{p}}$, hence $\tilde{\mathcal{Z}}(T,\boldsymbol{\varphi}^{p})$ is contained in $\mathcal{Y}_{0}(N)_{(p)}$, and therefore equals to the scheme-theoretic image of $\mathcal{Z}(T,\boldsymbol{\varphi}^{p})$ in $\mathcal{Y}_{0}(N)_{(p)}$, over which $\mathcal{Z}(T,\boldsymbol{\varphi}^{p})$ is finite unramified.
\end{remark}
\par
For any nonsingular $2\times2$ symmetric matrix $T\in\textup{Sym}_{2}(\mathbb{Q})$, we say a Schwartz function $\boldsymbol\varphi=\underset{v<\infty}{\bigotimes}\boldsymbol\varphi_{v}\in\mathscr{S}(\mathbb{V}_{f}^{2})$ is $T$-admissible if $\boldsymbol\varphi$ is invariant under the action of $\Gamma_{0}(N)(\hat{\mathbb{Z}})$, $\boldsymbol\varphi=\varphi_{1}\times\varphi_{2}$ where $\varphi_{i}\in\mathscr{S}(\mathbb{V}_{f})$ and\\
\indent$\bullet$\,\,$T$ is not positive definite, or\\
\indent$\bullet$\,\,$T$ is positive definite, and $\vert \textup{Diff}(T,\Delta(N))\vert\neq1$, or\\
\indent$\bullet$\,\,$T$ is positive definite, $\textup{Diff}(T,\Delta(N)) = \{p\}$ for some prime number $p$, and $\boldsymbol\varphi=\boldsymbol\varphi^{p}\otimes\boldsymbol\varphi_{p}$ where $\boldsymbol\varphi^{p}\in\mathscr{S}((\mathbb{V}_{f}^{p})^{2})$ and $\boldsymbol\varphi_{p}=c\cdot\boldsymbol{1}_{\delta_{p}(N)^{2}}$ for some $c\in\mathbb{C}$.
\begin{definition}
For a nonsingular $2\times2$ matrix $T\in\textup{Sym}_{2}(\mathbb{Q})$ and a $T$-admissible Schwartz function $\boldsymbol\varphi\in\mathscr{S}(\mathbb{V}_{f}^{2})$ which is also a characteristic function of a $\Gamma_{0}(N)(\hat{\mathbb{Z}})$-invariant open compact subset $\boldsymbol\omega$ of $\mathbb{V}_{f}^{2}$, we define a stack finite unramified over $\mathcal{X}_{0}(N)$ as follows,
\begin{equation*}
    \mathcal{Z}(T,\boldsymbol{\varphi}) \coloneqq \mathcal{Z}(T,\boldsymbol{\varphi}^{p})\rightarrow\mathcal{X}_{0}(N)_{(p)}\hookrightarrow\mathcal{X}_{0}(N),
\end{equation*}
where $p\in \textup{Diff}(T,\Delta(N))$. If $\vert\textup{Diff}(T,\Delta(N))\vert\neq1$, we define $ \mathcal{Z}(T,\boldsymbol{\varphi})=\varnothing$.
\label{codim22}
\end{definition}
\begin{remark}
By Proposition \ref{onlyone}, $\mathcal{Z}(T,\boldsymbol{\varphi})$ is nonempty only if $\vert \textup{Diff}(T,\Delta(N))\vert=1$, therefore the above definition makes sense.
\end{remark}
\begin{remark}
If we view $\mathcal{Z}(T,\boldsymbol\varphi)$ as an element in $\textup{CH}_{\mathbb{C}}^{2}(\mathcal{X}_{0}(N))$, we can drop the restrictions in Definition \ref{codim22} that the Schwartz functions $\boldsymbol\varphi$ is the characteristic function of an open compact subset of $\mathbb{V}_{f}^{2}$. Since any $T$-admissible Schwartz functions $\boldsymbol\varphi$ on $\mathbb{V}_{f}^{2}$ is a finite linear combination of $\Gamma_{0}(N)(\hat{\mathbb{Z}})$-invariant characteristic functions of some open compact subsets, we can define $\mathcal{Z}(T,\boldsymbol\varphi)$ as the corresponding linear combination of elements in $\textup{CH}_{\mathbb{C}}^{2}(\mathcal{X}_{0}(N))$.
\label{fuldef}
\end{remark}

\subsubsection{Comparison with \cite[$\S$2.2]{SSY22}}
Another kind of special cycles of $\mathcal{X}_{0}(N)$ is defined in \cite[$\S$2.2]{SSY22} as follows,
\begin{definition}
    For $m\in\mathbb{Z}$, let $\mathcal{Z}(m)$ denote the moduli stack whose $S$ points, for a base scheme $S$, are given by
    \begin{equation*}
        \mathcal{Z}(m)(S)\coloneqq\{(E\stackrel{\pi}\rightarrow E^{\prime},\alpha)\}
    \end{equation*}
    where $(E\stackrel{\pi}\rightarrow E^{\prime})\in\mathcal{Y}_{0}(N)(S)$ where $\alpha\in\textup{End}(E)$ satisfies the following conditions:\\
    (a)\,\,$\alpha^{\vee}\circ\alpha=mN$ and $\alpha^{\vee}+\alpha=0$;\\
    (b)\,\,$\alpha\circ \pi^{-1}\in\textup{Hom}(E^{\prime},E)$;\\
    (c)\,\,$\pi\circ\alpha\circ\pi^{-1}\in\textup{End}(E^{\prime})$.
\end{definition}
\begin{lemma}
    For every prime number $p$, let $\mathcal{Z}(m)_{(p)}\coloneqq\mathcal{Z}(m)\times_{\mathbb{Z}}\mathbb{Z}_{(p)}$, then we have an isomorphism of stacks as follows,
    \begin{align*}
        T: \mathcal{Z}(m)_{(p)}\stackrel{\sim}\longrightarrow & \mathcal{Z}(m,\mathbf{1}_{\Delta(N)\otimes\hat{\mathbb{Z}}^{p}}),\\
        (E\stackrel{\pi}\rightarrow E^{\prime},\alpha) \longmapsto &(E\stackrel{\pi}\rightarrow E^{\prime},\overline{(\eta^{p},\eta^{\prime p})},(\alpha\circ\pi^{-1})^{\vee}).
    \end{align*}
\end{lemma}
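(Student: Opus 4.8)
The plan is to write down an explicit inverse functor and to check that the two constructions are mutually inverse; everything then reduces to formal manipulations with isogenies and their duals over $\mathbb{Z}_{(p)}$. Throughout I invoke Lemma \ref{YYY}, which identifies $\mathcal{Y}_{0}(N)_{(p)}$ with $\mathcal{Y}_{0}(N)_{(p)}^{\prime}$, so that every object $(E\stackrel{\pi}{\rightarrow}E^{\prime})$ of $\mathcal{Y}_{0}(N)_{(p)}(S)$ carries a canonical $\Gamma_{0}(N)(\hat{\mathbb{Z}}^{p})$-orbit $\overline{(\eta^{p},\eta^{\prime p})}$, functorially in $S$; in particular the orbit appearing in the definition of $T$ is the canonical one and requires no separate check.

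First I would check that $T$ is a well-defined morphism of stacks, i.e. that for $(E\stackrel{\pi}{\rightarrow}E^{\prime},\alpha)\in\mathcal{Z}(m)_{(p)}(S)$ the quasi-isogeny $j\coloneqq(\alpha\circ\pi^{-1})^{\vee}$ defines an object of $\mathcal{Z}(m,\mathbf{1}_{\Delta(N)\otimes\hat{\mathbb{Z}}^{p}})(S)$. Condition (b) gives $\alpha\circ\pi^{-1}\in\textup{Hom}(E^{\prime},E)$, hence $j\in\textup{Hom}(E,E^{\prime})\subset S(E,\pi)$ is a genuine isogeny; the identity $\pi^{-1}=\tfrac{1}{N}\pi^{\vee}$ together with $\alpha^{\vee}=-\alpha$ yields $j\circ\pi^{\vee}+\pi\circ j^{\vee}=0$, so $j$ is a special morphism; and $\deg(j)=\deg(\alpha)\cdot\deg(\pi)^{-1}=mN/N=m$ by condition (a), so the associated $1\times1$ inner product matrix in the sense of Definition \ref{spe} equals $(m)$. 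Finally, since $j$ is a genuine isogeny orthogonal to $\pi$, the element $\eta^{*}(j)=\eta^{\prime p}\circ V^{p}(j)\circ(\eta^{p})^{-1}$ lies in $\textup{M}_{2}(\hat{\mathbb{Z}}^{p})\cap\big(\Delta(N)\otimes\mathbb{A}_{f}^{p}\big)=\Delta(N)\otimes\hat{\mathbb{Z}}^{p}$, so the support condition attached to $\mathbf{1}_{\Delta(N)\otimes\hat{\mathbb{Z}}^{p}}$ holds automatically. On morphisms the required identity $f^{\prime}\circ j_{1}=j_{2}\circ f$ follows at once from $f\circ\alpha_{1}=\alpha_{2}\circ f$, $f^{\prime}\circ\pi_{1}=\pi_{2}\circ f$ and $f^{\vee}=f^{-1}$, $(f^{\prime})^{\vee}=(f^{\prime})^{-1}$.

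Next I would construct the inverse functor by sending $(E\stackrel{\pi}{\rightarrow}E^{\prime},\overline{(\eta^{p},\eta^{\prime p})},j)$ to $(E\stackrel{\pi}{\rightarrow}E^{\prime},\alpha)$ with $\alpha\coloneqq j^{\vee}\circ\pi\in\textup{End}(E)$, a genuine endomorphism since $j$ is. One checks that $\alpha^{\vee}+\alpha=\pi^{\vee}\circ j+j^{\vee}\circ\pi$ vanishes, because $\pi\circ(\pi^{\vee}\circ j+j^{\vee}\circ\pi)=(j\circ\pi^{\vee}+\pi\circ j^{\vee})\circ\pi=0$ and composition with the isogeny $\pi$ is injective on $\textup{Hom}\otimes\mathbb{Q}$; that $\alpha^{\vee}\circ\alpha=\pi^{\vee}\circ(j\circ j^{\vee})\circ\pi=\deg(j)\cdot(\pi^{\vee}\circ\pi)=mN$; that $\alpha\circ\pi^{-1}=j^{\vee}\in\textup{Hom}(E^{\prime},E)$, giving condition (b); and that $\pi\circ\alpha\circ\pi^{-1}=\pi\circ j^{\vee}\in\textup{End}(E^{\prime})$, giving condition (c). Thus this functor lands in $\mathcal{Z}(m)_{(p)}(S)$, and its effect on morphisms is again formal. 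That the two functors are mutually inverse is the pair of identities $\big((\alpha\circ\pi^{-1})^{\vee}\big)^{\vee}\circ\pi=\alpha\circ\pi^{-1}\circ\pi=\alpha$ and $\big((j^{\vee}\circ\pi)\circ\pi^{-1}\big)^{\vee}=(j^{\vee})^{\vee}=j$, and these are compatible with the level structures because $T$ and its inverse are built from the two directions of the equivalence $G$ of Lemma \ref{YYY}. Since both functors are given by natural constructions functorial in $S$, this exhibits $T$ as an isomorphism of stacks over $\textup{Spec}\,\mathbb{Z}_{(p)}$.

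The routine-but-delicate part will be the duality bookkeeping — keeping straight $\pi^{-1}=\tfrac{1}{N}\pi^{\vee}$ inside $\textup{Hom}\otimes\mathbb{Z}_{(p)}$, and especially the fact that the two orthogonality conditions $\alpha^{\vee}+\alpha=0$ and $j\circ\pi^{\vee}+\pi\circ j^{\vee}=0$ correspond under $\alpha\leftrightarrow j$, which is exactly where injectivity of $\pi\circ(-)$ enters. The one point that is not purely formal is that the $w_{N}$-conjugation in the prime-to-$p$ level structures of $\mathcal{Y}_{0}(N)_{(p)}^{\prime}$ matches the passage from $\textup{M}_{2}$ to $\Delta(N)=\{w_{N}\}^{\perp}$, so that $\eta^{*}(j)$ lands in $\Delta(N)\otimes\hat{\mathbb{Z}}^{p}$ on the nose; this is essentially Lemma \ref{cliffordal} together with the commutative square (\ref{diagram}).
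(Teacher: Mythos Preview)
Your approach is essentially the paper's: build the explicit inverse $R$ sending $j\mapsto\alpha\coloneqq j^{\vee}\circ\pi$, verify conditions (a)--(c), and check that $T$ and $R$ are mutually inverse. The duality bookkeeping and the verification that $\eta^{*}(j)\in\Delta(N)\otimes\hat{\mathbb{Z}}^{p}$ in the forward direction are handled the same way.

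There is one genuine omission. In constructing the inverse you write ``$\alpha\coloneqq j^{\vee}\circ\pi\in\textup{End}(E)$, a genuine endomorphism since $j$ is,'' but by Definition~\ref{spe} a special morphism lies only in $\textup{Hom}(E,E^{\prime})\otimes_{\mathbb{Z}}\mathbb{Z}_{(p)}$, so $j$ is \emph{not} a priori a genuine isogeny. What forces integrality is precisely the Schwartz-function condition you have not yet used on this side: $\eta^{*}(j)\in\Delta(N)\otimes\hat{\mathbb{Z}}^{p}\subset\textup{M}_{2}(\hat{\mathbb{Z}}^{p})$ means $V^{p}(j)$ preserves the integral prime-to-$p$ Tate modules, and together with $j\in\textup{Hom}\otimes\mathbb{Z}_{(p)}$ this yields $j\in\textup{Hom}(E,E^{\prime})$. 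The paper singles out exactly this step (``the fact that $\eta^{\prime p}\circ V^{p}(j)\circ(\eta^{p})^{-1}\in\Delta(N)\otimes\hat{\mathbb{Z}}^{p}$ implies that $j\in\textup{Hom}(E,E^{\prime})$''); without it your verifications of $\alpha\in\textup{End}(E)$ and of condition~(b) are not justified.
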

\begin{proof}
We first prove that $T$ is well-defined. For any connected $\mathbb{Z}_{(p)}$-scheme $S$, let $\overline{s}$ be a geometric point of $S$, we can choose trivilizations $\eta^{p}:V^{p}(E_{\overline{s}})\stackrel{\sim}\rightarrow (\mathbb{A}_{f}^{p})^{2}$ and $\eta^{\prime p}: V^{p}(E_{\overline{s}}^{\prime})\stackrel{\sim}\rightarrow (\mathbb{A}_{f}^{p})^{2}$ such that $T^{p}(E_{\overline{s}})$ and $T^{p}(E_{\overline{s}}^{\prime})$ are mapped isomorphically to $(\hat{\mathbb{Z}}^{p})^{2}$, and $\eta^{\prime p}\circ V^{p}(\pi)\circ (\eta^{p})^{-1}=w_{N}$ by the cyclicity of $\pi$. Moreover,
\begin{align*}
    (\alpha\circ\pi^{-1})^{\vee}\circ \pi+\pi^{\vee}\circ (\alpha\circ\pi^{-1})&=\frac{1}{N}\pi^{\vee}\circ\alpha^{\vee}\circ\pi+\frac{1}{N}\pi^{\vee}\circ\alpha\circ\pi\\
    &= \frac{1}{N}\pi^{\vee}\circ(\alpha^{\vee}+\alpha)\circ\pi=0.
\end{align*}
hence $(\alpha\circ\pi^{-1})^{\vee}\in S(E,\pi)$, then (b) implies that $\eta^{\prime p}\circ V^{p}((\alpha\circ\pi^{-1})^{\vee})\circ (\eta^{p})^{-1}\in \Delta(N)\otimes\hat{\mathbb{Z}}^{p}\subset\mathbb{V}_{f}^{p}$, therefore $(E\stackrel{\pi}\rightarrow E^{\prime},\overline{(\eta^{p},\eta^{\prime p})},(\alpha\circ\pi^{-1})^{\vee})\in \mathcal{Z}(m,\mathbf{1}_{\Delta(N)\otimes\hat{\mathbb{Z}}^{p}})(S)$.
\par
We define the following morphism,
\begin{align*}
        R: \mathcal{Z}(m,\mathbf{1}_{\Delta(N)\otimes\hat{\mathbb{Z}}^{p}})\longrightarrow &\mathcal{Z}(m)_{(p)},\\
        (E\stackrel{\pi}\rightarrow E^{\prime},\overline{(\eta^{p},\eta^{\prime p})},j)\longmapsto &(E\stackrel{\pi}\rightarrow E^{\prime},j^{\vee}\circ\pi).
    \end{align*}
We show that $R$ is well-defined: For any connected $\mathbb{Z}_{(p)}$-scheme $S$, an object $(E\stackrel{\pi}\rightarrow E^{\prime},\overline{(\eta^{p},\eta^{\prime p})},j)\in \mathcal{Z}(m,\mathbf{1}_{\Delta(N)\otimes\hat{\mathbb{Z}}^{p}})(S)$ means that $j\in\textup{Hom}(E,E^{\prime})\otimes\mathbb{Z}_{(p)}$ and $j^{\vee}\circ\pi+\pi^{\vee}\circ j=0$, the fact that $\eta^{\prime p}\circ V^{p}(j)\circ (\eta^{p})^{-1}\in \Delta(N)\otimes\hat{\mathbb{Z}}^{p}$ implies that $j\in\textup{Hom}(E,E^{\prime})$, then $j^{\vee}\circ\pi\in\textup{End}(E)$, $(j^{\vee}\circ\pi)^{\vee}\circ(j^{\vee}\circ\pi)=\pi^{\vee}\circ j\circ j^{\vee}\circ\pi=mN$ and $(j^{\vee}\circ\pi)^{\vee}+j^{\vee}\circ\pi=\pi^{\vee}\circ j+j^{\vee}\circ\pi=0$, which is exactly (a). Moreover, (b) and (c) are easily verified, hence $(E\stackrel{\pi}\rightarrow E^{\prime},j^{\vee}\circ\pi)\in \mathcal{Z}(m)(S)$, then the morphism $R$ is well-defined. It's easy to see that $T$ and $R$ are inverse to each other, therefore the lemma is proved.
\end{proof}

\subsubsection{Arithmetic special cycles on $\mathcal{X}_{0}(N)$}
We apply the arithmetic intersection theory developed by Henri Gillet in \cite{Gil82} and \cite{Gil09} to the regular proper flat Deligne-Mumford stack $\mathcal{X}_{0}(N)$. We obtain the following arithmetic Chow ring of $\mathcal{X}_{0}(N)$,
\begin{equation*}
    \widehat{\textup{CH}}^{\bullet}_{\mathbb{C}}(\mathcal{X}_{0}(N)) = \bigoplus\limits_{n=0}^{2}\widehat{\textup{CH}}^{n}_{\mathbb{C}}(\mathcal{X}_{0}(N)).
\end{equation*}
Roughly speaking, a class in $\widehat{\textup{CH}}^{n}_{\mathbb{C}}(\mathcal{X}_{0}(N))$ is represented by an arithmetic cycle $(\mathcal{Z},g_{\mathcal{Z}})$, where $\mathcal{Z}$ is a codimension $n$ closed substack of $\mathcal{X}_{0}(N)$, with $\mathbb{C}$-coefficients, and $g_{\mathcal{Z}}$ is a Green current for $\mathcal{Z}(\mathbb{C})$, i.e., $g_{\mathcal{Z}}$ is a current on the proper smooth complex curve $\mathcal{X}_{0}(N)_{\mathbb{C}}$ of degree $(n-1,n-1)$ for which there exists a smooth $\omega$ such that
\begin{equation*}
    dd^{c}(g) + \delta_{\zeta} =[\omega].
\end{equation*}
holds; here $[\omega]$ is the current defined by integration against the smooth form $\omega$. The rational arithmetic cycles are those of the form $\widehat{\textup{div}}(f)=(\textup{div}(f), \iota_{\ast}[-\textup{log}(\vert\tilde{f}\vert^{2})])$, where $f\in \kappa(\mathcal{Z})^{\times}$ is a rational function on a codimension $n-1$ integral substack $\iota:\mathcal{Z}\hookrightarrow \mathcal{X}_{0}(N)$, together with classes of the form $(0,\partial\eta+\overline{\partial}\eta^{\prime})$. By definition, the arithmetic Chow group $\textup{CH}^{n}_{\mathbb{C}}(\mathcal{X}_{0}(N))$ is the quotient of the space of arithmetic cycles by the $\mathbb{C}$-subspace spanned by those rational cycles.
\par
Let $\mathcal{Z}$ be an irreducible codimension $2$ cycle on $\mathcal{X}_{0}(N)$, then $\mathcal{Z}$ is a Deligne-Mumford stack over $\mathbb{F}_{p}$ for some prime number $p$ and the groupoid $\mathcal{Z}(\overline{\mathbb{F}}_{p})$ would be a singleton with a finite automorphism group $\textup{Aut}(\mathcal{Z})$, the rational function field $\kappa(\mathcal{Z})$ of $\mathcal{Z}$ is a finite extension of $\mathbb{F}_{p}$. Clearly $\delta_{\mathcal{Z}}=0$ because $\mathcal{Z}(\mathbb{C})=\varnothing$.
\par
Let $(\mathcal{Z},g)=(\sum\limits_{i}n_{i}[\mathcal{Z}_{i}],g)$ be an arithmetic cycle of codimension $2$ where each $\mathcal{Z}_{i}$ is an irreducible codimension $2$ cycle on $\mathcal{X}_{0}(N)$. We define the degree map as follows,
\begin{align}
    \widehat{\textup{deg}}: \widehat{\textup{CH}}^{2}_{\mathbb{C}}(\mathcal{X}_{0}(N))&\longrightarrow \mathbb{C},\label{degreemap}\\
    [(\mathcal{Z},g)]&\longmapsto \sum\limits_{i}n_{i}\cdot\frac{\textup{log}\,\vert \kappa(\mathcal{Z}_{i})\vert}{\vert\textup{Aut}(\mathcal{Z}_{i})\vert}+\frac{1}{2}\int\limits_{\mathcal{X}_{0}(N)(\mathbb{C})}g.\notag
\end{align}
here the integration $\int\limits_{\mathcal{X}_{0}(N)_\mathbb{C}}g$ is the integration of the constant function $1$ on $\mathcal{X}_{0}(N)_{\mathbb{C}}$ against the $(1,1)$-current $g$. It is a finite number since the stack $\mathcal{X}_{0}(N)$ is proper. This number is independent of the choice of representing element $(\mathcal{Z},g)$ as a consequence of the product formula (cf. \cite[3.4.3]{Gil82}).\\
\par
Now we are going to construct Green currents for the special cycle $\mathcal{Z}(T,\boldsymbol\varphi)$. Let $\mathbb{D}=\{z\in\Delta(N)\otimes_{\mathbb{Z}}\mathbb{C}:(z,z)=0,\,(z,\overline{z})<0\}\,/\,\mathbb{C}^{\ast}\subset\mathbb{P}(\Delta(N)\otimes\mathbb{C})$, we have the following $\textup{GL}_{2}(\mathbb{R})$-equivariant identification,
\begin{align*}
    \mathbb{H}_{1}^{\pm}&\stackrel{\sim}\longrightarrow \mathbb{D},\\
    \tau&\longmapsto \textup{span}_{\mathbb{C}}\left\{\begin{pmatrix}
    -N\tau & -N\tau^{2} \\
    1 & \tau
    \end{pmatrix}\right\}.
\end{align*}
\par
Next, we will associate to any nonsingular $T\in\textup{Sym}_{2}(\mathbb{Q})$ and a $T$-admissible Schwartz function $\boldsymbol\varphi\in\mathscr{S}(\mathbb{V}_{f}^{2})$ an element in $\widehat{\textup{CH}}^{2}_{\mathbb{C}}(\mathcal{X}_{0}(N))$. Let $\mathsf{y}={^{t}a}\cdot a\in\textup{Sym}_{2}(\mathbb{R})$ be a positive definite matrix, where $a\in\textup{GL}_{2}(\mathbb{R})$.\\
$\bullet$ For a positive definite $T$ and a $T$-admissible Schwartz function $\boldsymbol\varphi$, we consider the following element
\begin{equation*}
    \widehat{\mathcal{Z}}(T,\mathsf{y},\boldsymbol{\varphi})=(\mathcal{Z}(T,\boldsymbol{\varphi}),0)\in\widehat{\textup{CH}}^{2}_{\mathbb{C}}(\mathcal{X}_{0}(N)).
\end{equation*}
$\bullet$ For other nonsingular $T$ which is not positive definite, we apply the general machine developed by Garcia and Sankaran in \cite{GS19} which is made explicit in \cite{SSY22}. For any $x\in\mathbb{V}_{\infty}$ and $[z]\in\mathbb{D}$, let $\textup{R}(x,[z])=-\vert(x,z)\vert^{2}\cdot(z,\overline{z})^{-1}$. We define an element in $\mathscr{S}(\mathbb{V}_{\infty}^{2})\otimes\mathcal{A}^{1,1}(\mathbb{H}_{1}^{\pm})$ by the following description: for $\boldsymbol{x}=(x_{1},x_{2})\in\mathbb{V}_{\infty}^{2}$ and $[z]\in\mathbb{D}$,
\begin{equation*}
    \nu(\boldsymbol{x},[z])=\left(-\pi^{-1}+2\sum\limits_{i=1}^{2}(\textup{R}(x_{i},[z])+(x_{i},x_{i}))\right)e^{-2\pi(\sum\limits_{i=1}^{2}(\textup{R}(x_{i},[z])+\frac{1}{2}(x_{i},x_{i})))}\cdot\frac{dx\wedge dy}{y^{2}}.
\end{equation*}
then we define a smooth $(1,1)$-form $\mathfrak{g}(T,\mathsf{y},\boldsymbol{\varphi})$ on $\mathbb{D}$ as follows, its value at the point $[z]\in\mathbb{D}$ is
\begin{equation*}
    \mathfrak{g}(T,\mathsf{y},\boldsymbol{\varphi})([z])=\sum\limits_{\boldsymbol{x}\in(\Delta(N)\otimes\mathbb{Q})^{2}\atop T(\boldsymbol{x})=T}\boldsymbol\varphi(\boldsymbol{x})\cdot\int\limits_{1}^{\infty} \nu(t^{\frac{1}{2}}\boldsymbol{x}\cdot {^{t}a},[z])\cdot\frac{\textup{d}t}{t}.
\end{equation*}
the sum converges absolutely, and descends to a smooth $(1,1)$-form on the modular curve $\mathcal{Y}_{0}(N)_{\mathbb{C}}$. \cite{SSY22} proves the following
\begin{lemma}
For nonsingular $T\in\textup{Sym}_{2}(\mathbb{R})$ which is not positive definite. The form $\mathfrak{g}(T,\mathsf{y},\boldsymbol{\varphi})$ is absolutely integrable on $\mathcal{X}_{0}(N)_{\mathbb{C}}$, hence $\mathfrak{g}(T,\mathsf{y},\boldsymbol{\varphi})$ defines a $(1,1)$-current on $\mathcal{X}_{0}(N)_{\mathbb{C}}$.
\end{lemma}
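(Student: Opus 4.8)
The plan is to reduce the statement to a local growth estimate at each of the finitely many cusps of $\mathcal{X}_{0}(N)_{\mathbb{C}}$. By construction $\mathfrak{g}(T,\mathsf{y},\boldsymbol\varphi)$ is a smooth $(1,1)$-form on the open curve $\mathcal{Y}_{0}(N)_{\mathbb{C}}$, so after removing small neighbourhoods of the cusps it is integrable on the remaining compact set. Writing $\mathfrak{g}(T,\mathsf{y},\boldsymbol\varphi)=G(\tau)\,\frac{dx\wedge dy}{y^{2}}$ in a local coordinate $\tau=x+iy$ around a cusp, which by the $\mathrm{GL}_{2}(\mathbb{R})$-equivariance of the whole construction we may take to be $\tau=i\infty$, and noting that the cusp neighbourhood $\{y>Y_{0},\ 0\le x<w\}$ has finite measure $\int\frac{dx\,dy}{y^{2}}=w/Y_{0}<\infty$, it suffices to show that $G(\tau)$ stays bounded — in fact I expect it to decay rapidly — as $y\to\infty$.

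First I would make the archimedean integrand explicit. For $x=\begin{pmatrix}-Na & b\\ c & a\end{pmatrix}\in\Delta(N)\otimes\mathbb{R}$ and $[z_{\tau}]\in\mathbb{D}$ the point attached to $\tau$, one computes with the bilinear form $(u,v)=\det(u+v)-\det u-\det v$ that $(x,z_{\tau})=Nc\tau^{2}-2Na\tau-b$ and $(z_{\tau},\overline{z_{\tau}})=-4Ny^{2}$, whence (say at $\tau=iy$, with an analogous formula and the same leading behaviour for general $\tau$) $\mathrm{R}(x,[z_{\tau}])+\tfrac12(x,x)=\dfrac{(Ncy^{2}-b)^{2}}{4Ny^{2}}\ge 0$. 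So for $x$ in the ``parabolic plane'' $P=\{c=0\}=\mathrm{span}\{\begin{pmatrix}-N&0\\0&1\end{pmatrix},\begin{pmatrix}0&1\\0&0\end{pmatrix}\}$ this quantity stays bounded as $y\to\infty$, while for $x$ with $c\ne 0$ it grows like $\tfrac{Nc^{2}}{4}y^{2}$; the same dichotomy holds for the polynomial prefactor $-\pi^{-1}+2\sum_{i}(\mathrm{R}(x_{i},[z_{\tau}])+(x_{i},x_{i}))$, which is at worst $O(y^{2})$.

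The key observation is that the restriction of the quadratic form to $P$ has rank one, its radical being the isotropic line $\mathbb{R}\cdot\begin{pmatrix}0&1\\0&0\end{pmatrix}$ fixed by the cusp. Hence, if both components of a pair $\boldsymbol x=(x_{1},x_{2})$ with $T(\boldsymbol x)=T$ lay in $P$, the Gram matrix $T(\boldsymbol x)=(\tfrac12(x_{i},x_{j}))$ would be the Gram matrix of two vectors in a rank-one quadratic space, hence singular — contradicting the nonsingularity of $T$. Therefore every $\boldsymbol x$ contributing to $\mathfrak{g}(T,\mathsf{y},\boldsymbol\varphi)$ has at least one component transverse to $P$, so $\sum_{j}\bigl(\mathrm{R}(x_{j},[z_{\tau}])+\tfrac12(x_{j},x_{j})\bigr)\gtrsim y^{2}$; carrying out the $t$-integral $\int_{1}^{\infty}\nu(t^{1/2}\boldsymbol x\,{}^{t}a,[z_{\tau}])\,\tfrac{dt}{t}$ then bounds the $\boldsymbol x$-term, for $y\ge Y_{0}$, by $C\,P(y)\,e^{-c\,y^{2}}$ with $P$ a polynomial and $c>0$.

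The remaining — and main — difficulty is uniformity in the infinite sum: one must show $\sum_{\boldsymbol x}\boldsymbol\varphi(\boldsymbol x)\,\bigl|\int_{1}^{\infty}\nu(t^{1/2}\boldsymbol x\,{}^{t}a,[z_{\tau}])\,\tfrac{dt}{t}\bigr|$ converges to a bounded, indeed rapidly decreasing, function of $y$ on $y\ge Y_{0}$. I would split the sum into the finitely many $\boldsymbol x$ of bounded size in the support of $\boldsymbol\varphi$ with $T(\boldsymbol x)=T$, each of whose terms is $O(P(y)e^{-cy^{2}})$ by the previous step, and a tail over large $\boldsymbol x$, for which the rapid decay of the Gaussian factor $e^{-2\pi\sum_{i}(\mathrm{R}(x_{i},[z_{\tau}])+\frac12(x_{i},x_{i}))}$ against the polynomial prefactor dominates; this is exactly the estimate already guaranteeing that the defining sum converges to a smooth form on $\mathcal{Y}_{0}(N)_{\mathbb{C}}$, now improved by the uniform lower bound $y\ge Y_{0}$. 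Combining, $G(\tau)$ is bounded near every cusp, so $\mathfrak{g}(T,\mathsf{y},\boldsymbol\varphi)$ is absolutely integrable on the proper curve $\mathcal{X}_{0}(N)_{\mathbb{C}}$, and pairing it with smooth functions then defines the asserted $(1,1)$-current. (This growth analysis is carried out, in greater generality, by Garcia--Sankaran \cite{GS19}, and in the present situation in \cite{SSY22}.)
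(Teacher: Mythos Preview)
The paper does not give its own argument here: it simply cites \cite[Lemma 2.9]{SSY22}. Your proposal goes further and sketches the actual proof, and the sketch is correct. The key point --- that the restriction of the quadratic form $\det$ to the ``parabolic plane'' $\{c=0\}$ has rank one, so a nonsingular $T$ forces at least one component of every contributing $\boldsymbol{x}$ to have $c\neq 0$, yielding Gaussian decay $\sim e^{-cy^{2}}$ at the cusp $i\infty$ --- is exactly the mechanism used in \cite{SSY22} (and, in greater generality, in \cite{GS19}). Your explicit computations of $(x,z_{\tau})$, $(z_{\tau},\overline{z_{\tau}})$, and $\mathrm{R}(x,[z_{\tau}])+\tfrac{1}{2}(x,x)=\dfrac{(Ncy^{2}-b)^{2}}{4Ny^{2}}$ at $\tau=iy$ are all correct.

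The only place where the exposition could be tightened is the ``tail'' estimate: the Schwartz function $\boldsymbol\varphi$ is compactly supported on $\mathbb{V}_{f}^{2}$, so the sum over $\boldsymbol{x}$ with $T(\boldsymbol x)=T$ and $\boldsymbol\varphi(\boldsymbol x)\neq 0$ runs over a lattice coset inside a single $\mathrm{O}(T)$-orbit, and the uniformity in $\boldsymbol{x}$ is obtained by parametrizing this orbit and bounding the resulting series by a convergent lattice sum with Gaussian weight; this is routine once the $c\neq 0$ lower bound is in place. Since you already acknowledge that the full analysis is in \cite{GS19} and \cite{SSY22}, this is not a gap so much as a point where a reader might want one more sentence.
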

\begin{proof}
This is proved in \cite[Lemma 2.9]{SSY22}.
\end{proof}
\par
To sum up, let $T\in \textup{Sym}_{2}(\mathbb{Q})$ be a nonsingular matrix, $\mathsf{y}\in\textup{Sym}_{2}(\mathbb{R})_{>0}$, $\boldsymbol\varphi\in\mathscr{S}(\mathbb{V}_{f}^{2})$ is a $T$-admissible Schwartz function, we define
\begin{equation}
     \widehat{\mathcal{Z}}(T,\mathsf{y},\boldsymbol{\varphi}) = \begin{cases}
   ([\mathcal{Z}(T,\boldsymbol{\varphi})],0), & \textup{when $T$ is positive definite;}\\
    (0,\mathfrak{g}(T,\mathsf{y},\boldsymbol{\varphi})), &\textup{when $T$ is not positive definite.}\\
    \end{cases}
    \label{codim2}
\end{equation}
it is an element in $\widehat{\textup{CH}}^{2}_{\mathbb{C}}(\mathcal{X}_{0}(N))$.
\subsection{Arithmetic Siegel-Weil formula on $\mathcal{X}_{0}(N)$}
Now we can state the main theorem of this article, which proves an identity between arithmetic intersection numbers on $\mathcal{X}_{0}(N)$ and derivatives of Fourier coefficients of Eisenstein series,
\begin{theorem}
Let $N$ be a positive integer. Let $T\in \textup{Sym}_{2}(\mathbb{Q})$ be a nonsingular symmetric matrix. Let $\boldsymbol{\varphi}\in\mathscr{S}(\mathbb{V}_{f}^{2})$ be a $T$-admissible Schwartz function, then
\begin{equation*}
     \widehat{\textup{deg}}(\widehat{\mathcal{Z}}(T,\mathsf{y},\boldsymbol{\varphi}))q^{T}= \frac{\psi(N)}{24}\cdot\partial\textup{Eis}_{T}(\mathsf{z},\boldsymbol\varphi),
\end{equation*}
for any $\mathsf{z}=\mathsf{x}+i\mathsf{y}\in\mathbb{H}_{2}$. Here $\psi(N)= N\cdot\prod\limits_{l\vert N}(1+l^{-1})$, $q^{T}=e^{2\pi i \,\textup{tr}\,(T\mathsf{z})}$.
\label{main}
\end{theorem}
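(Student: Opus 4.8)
The plan is to compare the two sides Fourier coefficient by Fourier coefficient, i.e. to establish $\widehat{\textup{deg}}(\widehat{\mathcal{Z}}(T,\mathsf{y},\boldsymbol\varphi))\,q^{T}=\frac{\psi(N)}{24}\,\partial\textup{Eis}_{T}(\mathsf{z},\boldsymbol\varphi)$ for each nonsingular $T\in\textup{Sym}_{2}(\mathbb{Q})$ separately; since the archimedean factor below produces exactly $q^{T}$, it suffices to verify the identity at one $\mathsf{z}\in\mathbb{H}_{2}$. By the definition of $T$-admissibility three cases occur. If $T$ is not positive definite, then $\widehat{\mathcal{Z}}(T,\mathsf{y},\boldsymbol\varphi)=(0,\mathfrak{g}(T,\mathsf{y},\boldsymbol\varphi))$ by (\ref{codim2}), so $\widehat{\textup{deg}}$ is just $\tfrac12\int_{\mathcal{X}_{0}(N)(\mathbb{C})}\mathfrak{g}(T,\mathsf{y},\boldsymbol\varphi)$; on the analytic side $\textup{Diff}(T,\Delta(N))$ is empty or equals $\{\infty\}$, so $\partial\textup{Eis}_{T}$ is the single archimedean term $W_{T,\infty}'(g_{\mathsf{z}},0,\boldsymbol\varphi_{\infty})\cdot\prod_{v<\infty}W_{T,v}(1,0,\boldsymbol\varphi_{v})$. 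The coincidence of these two archimedean quantities, including the constant $\psi(N)/24$, is exactly \cite[Theorem 4.10]{SSY22}, built on the Kudla--Millson and Garcia--Sankaran machinery; here I would only check that the normalizations of the current $\mathfrak{g}$ in \S\ref{3} and of the classical Eisenstein series in \S\ref{inco} agree with theirs.

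The substantive case is $T$ positive definite with $\textup{Diff}(T,\Delta(N))=\{p\}$ for a single prime $p$. Then $\widehat{\mathcal{Z}}(T,\mathsf{y},\boldsymbol\varphi)=([\mathcal{Z}(T,\boldsymbol\varphi)],0)$, and by Proposition \ref{onlyone} the stack $\mathcal{Z}(T,\boldsymbol\varphi)$ is $0$-dimensional and concentrated in the supersingular locus of $\mathcal{X}_{0}(N)_{\mathbb{F}_{p}}$, so $\widehat{\textup{deg}}(\widehat{\mathcal{Z}}(T,\mathsf{y},\boldsymbol\varphi))=\sum_{i}n_{i}\,\log|\kappa(\mathcal{Z}_{i})|/|\textup{Aut}(\mathcal{Z}_{i})|$. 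Writing $t_{1},t_{2}$ for the diagonal entries of $T$ and $\boldsymbol\varphi=\varphi_{1}\times\varphi_{2}$, I would first show that the two generalized Cartier divisors of Proposition \ref{nocusp} meet $0$-dimensionally, so Serre's $\textup{Tor}$-formula identifies their Euler characteristic with the total length, giving
\begin{equation*}
\widehat{\textup{deg}}(\widehat{\mathcal{Z}}(T,\mathsf{y},\boldsymbol\varphi))=\chi\big(\mathcal{Z}(T,\boldsymbol\varphi),\ \mathcal{O}_{\mathcal{Z}(t_{1},\varphi_{1})}\otimes^{\mathbb{L}}\mathcal{O}_{\mathcal{Z}(t_{2},\varphi_{2})}\big)\cdot\log p.
\end{equation*}
Applying the formal uniformization of $\mathcal{Z}(T,\boldsymbol\varphi)$ along its supersingular locus (Corollary \ref{uniZ}) turns this Euler characteristic into a finite weighted sum over $B^{\times}(\mathbb{Q})_{0}$-orbits of pairs $\boldsymbol{x}\in(\Delta(N)^{(p)})^{2}$ with $T(\boldsymbol{x})=T$ and over $g\in B^{\times}_{\boldsymbol{x}}(\mathbb{Q})_{0}\backslash\textup{GL}_{2}(\mathbb{A}_{f}^{p})/\Gamma_{0}(N)(\hat{\mathbb{Z}}^{p})$, with weights $\boldsymbol\varphi^{p}(g^{-1}\boldsymbol{x})$, automorphism factors, and the local intersection numbers $\textup{Int}_{\mathcal{N}_{0}(N)}(\mathbb{Z}_{p}x_{1}\obot\mathbb{Z}_{p}x_{2})$. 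By Theorem \ref{MainLocal} each such local intersection number equals $\partial\textup{Den}^{+}_{\Delta(N)}(\mathbb{Z}_{p}x_{1}\obot\mathbb{Z}_{p}x_{2})$.

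On the analytic side, since $p$ is the unique place at which the Whittaker function vanishes, $\partial\textup{Eis}_{T}(\mathsf{z},\boldsymbol\varphi)=W_{T,p}'(1,0,\boldsymbol\varphi_{p})\cdot\prod_{v\neq p}W_{T,v}(g_{v},0,\boldsymbol\varphi_{v})$. The non-vanishing local Whittaker functions at finite $v\neq p$ are, by Proposition \ref{non-homo} together with the classical local Siegel--Weil identity, precisely the weighted representation numbers that reconstruct the orbital sum $\sum_{g}\boldsymbol\varphi^{p}(g^{-1}\boldsymbol{x})$ over $\textup{GL}_{2}(\mathbb{A}_{f}^{p})/\Gamma_{0}(N)(\hat{\mathbb{Z}}^{p})$; the archimedean Whittaker function at the Gaussian contributes $q^{-T}$ times an explicit positive constant; and the derivative $W_{T,p}'(1,0,c\cdot\boldsymbol{1}_{\delta_{p}(N)^{2}})$, obtained by differentiating the identity of Proposition \ref{non-homo} and dividing off the normalizing factor $\textup{Nor}$, is proportional to $\partial\textup{Den}^{+}_{\Delta(N)}(\mathbb{Z}_{p}x_{1}\obot\mathbb{Z}_{p}x_{2})$. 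Matching the geometric orbital weights with the product of local Whittaker functions, and in particular tracking that the global volume/mass constant of $\textup{GSpin}(\Delta(N))$ together with the degree $\psi(N)$ of $s:\mathcal{X}_{0}(N)\to\mathcal{X}$ collapses to $\psi(N)/24$, is the \emph{main obstacle}; once the constants are pinned down, the remaining argument is a formal assembly of the results of \S\ref{2} and \S\ref{5}.
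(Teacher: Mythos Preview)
Your proposal is correct and follows essentially the same route as the paper: the non--positive-definite case is deferred to \cite[\S4.2]{SSY22}, and in the positive-definite case with $\textup{Diff}(T,\Delta(N))=\{p\}$ you reduce $\widehat{\textup{deg}}$ to an Euler characteristic (the paper's Proposition \ref{intdeg}), apply the formal uniformization of Corollary \ref{puni}, invoke the local identity Theorem \ref{INT=DER} (your Theorem \ref{MainLocal}) at $p$, and match the remaining finite places via the classical local Siegel--Weil formula, with the constant $\psi(N)/24$ emerging from the explicit volume computations as in \cite[Lemma 5.3.9, Proposition 5.3.3]{KRY06}. The only cosmetic slip is that $\textup{Diff}(T,\Delta(N))$ is defined in the paper to range over finite primes only, so in the indefinite case one should say instead that $W_{T,\infty}(g_{\mathsf{z}},0,\boldsymbol\varphi_{\infty})=0$ forces the sole surviving term in $\partial\textup{Eis}_{T}$ to be the archimedean one.
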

The article \cite{SSY22} proves this formula in the case that $T$ is not positive definite without any restrictions on the the level $N$, and the case that $T$ is positive definite but with the restriction that $N$ is odd and square-free. We will give a proof of the case that $T$ is positive definite and $N$ is arbitrary in $\S$\ref{proof}.

\section{Rapoport-Zink spaces and special cycles}
\label{4}
\subsection{$\Gamma_{0}(N)$-structures on $p$-divisible groups}
Let $p$ be a prime number. Let $\mathbb{F}$ be the algebraic closure of $\mathbb{F}_{p}$. Let $W$ be the completion of the maximal unramified extension of $\mathbb{Q}_{p}$. Let $\textup{Nilp}_{W}$ be the category of schemes $S$ over $\textup{Spec}\,W$ such that $p$ is locally nilpotent on $S$, let $\overline{S}$ be the closed subscheme of $S$ defined by the ideal sheaf $p\mathcal{O}_{S}$. For a $p$-divisible group $X$, we use $X^{\vee}$ to denote the dual $p$-divisible group. We will introduce two Rapoport-Zink spaces in this chapter, they are essentially isomorphic to the completed local rings of supersingular points in characteristic $p$ of the moduli stacks $\mathcal{H}$ and $\mathcal{X}_{0}(N)$.
\par
Let $\mathbb{X}$ be a $p$-divisible group over $\mathbb{F}$ of dimension 1 and height 2, the associated filtered isocrystal $\mathbb{D}(\mathbb{X})_{\mathbb{Q}}$ has pure slope $\frac{1}{2}$, e.g., we can take $\mathbb{X}$ to be $\mathbb{E}[p^{\infty}]$ where $\mathbb{E}$ is a supersingular elliptic curve over $\mathbb{F}$. Let $\lambda_{0}:\mathbb{X}\stackrel{\sim}\rightarrow\mathbb{X}^{\vee}$ be a principal polarization. We consider the following functor $\mathcal{N}$ on the category $\textup{Nilp}_{W}$: for any $S\in \textup{Nilp}_{W}$, the set $\mathcal{N}(S)$ is the isomorphism classes of tuples $((X,\rho,\lambda),(X^{\prime},\rho^{\prime},\lambda^{\prime}))$, where\\
\par
(1) $X$ and $X^{\prime}$ are two $p$-divisible group over $S$, $\rho$ and $\rho^{\prime}$ are two quasi-isogenies between $p$-divisible groups $\rho: \mathbb{X}\times_{\mathbb{F}}\overline{S}\rightarrow X\times_{S}\overline{S}$,  $\rho^{\prime}: \mathbb{X}\times_{\mathbb{F}}\overline{S}\rightarrow X^{\prime}\times_{S}\overline{S}$.\\
\par
(2) $\lambda: X\rightarrow X^{\vee}$, $\lambda^{\prime}: X^{\prime}\rightarrow X^{\prime\vee}$ are two principal polarizations, such that Zariski locally on $\overline{S}$, we have 
\begin{equation*}
    \rho^{\vee}\circ\lambda\circ\rho = c(\rho)\cdot\lambda_{0},\,\,\,\,\,\rho^{\prime\vee}\circ\lambda\circ\rho^{\prime} = c(\rho^{\prime})\cdot\lambda_{0}.
\end{equation*}
for some $c(\rho), c(\rho^{\prime})\in\mathbb{Z}_{p}^{\times}$.\\
\par
\begin{proposition}
The functor $\mathcal{N}$ is represented by the formal scheme $\textup{Spf}\,W[[t_{1},t_{2}]]$ over $\textup{Spf}\,W$.
\label{H1}
\end{proposition}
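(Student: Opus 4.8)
The plan is to decouple the two tuples and reduce to a single Rapoport--Zink space. Since the data $((X,\rho,\lambda),(X',\rho',\lambda'))$ defining a point of $\mathcal{N}(S)$ involves no compatibility between the primed and unprimed objects, $\mathcal{N}$ is the fibre product over $\textup{Spf}\,W$ of two copies of the functor $\mathcal{N}_{1}$ parametrising a single tuple $(X,\rho,\lambda)$, and since $W[[t_{1}]]\,\widehat{\otimes}_{W}\,W[[t_{2}]]\cong W[[t_{1},t_{2}]]$ it suffices to prove $\mathcal{N}_{1}\cong\textup{Spf}\,W[[t]]$. Here $\mathcal{N}_{1}$ is the Rapoport--Zink space attached to the isoclinic principally polarized $p$-divisible group $(\mathbb{X},\lambda_{0})$ of dimension $1$ and height $2$ at hyperspecial level; its representability by a formal scheme formally locally of finite type over $W$ is a special case of the representability theorem of Rapoport and Zink, and alternatively follows from the rigidity of quasi-isogenies combined with Grothendieck--Messing deformation theory, which is the route that directly produces the deformation ring.

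The first step is to identify the reduced special fibre $\mathcal{N}_{1,\textup{red}}$ with a single reduced point. A point of $\mathcal{N}_{1}(S)$ restricts to a point over $\overline{S}$, and the polarization compatibility $\rho^{\vee}\circ\lambda\circ\rho=c(\rho)\cdot\lambda_{0}$ with $c(\rho)\in\mathbb{Z}_{p}^{\times}$ forces the height of $\rho$ to vanish: taking heights gives $2\,\textup{ht}(\rho)=\textup{ht}(c(\rho)\lambda_{0})=0$, since $\lambda_{0}$ is principal and multiplication by the unit $c(\rho)$ is an automorphism. Now $\textup{End}^{0}(\mathbb{X})\cong\mathbb{B}$ is the quaternion division algebra over $\mathbb{Q}_{p}$, the height of a quasi-isogeny $g\in\mathbb{B}^{\times}$ equals $2\,v_{p}(\textup{Nrd}\,g)$, and $\mathbb{B}^{\times}=\mathcal{O}_{\mathbb{B}}^{\times}\cdot\Pi^{\mathbb{Z}}$ for a uniformizer $\Pi$; hence over $\mathbb{F}$ a height-$0$ quasi-isogeny of $\mathbb{X}$ lies in $\textup{Aut}(\mathbb{X})=\mathcal{O}_{\mathbb{B}}^{\times}$, so the framing $(\mathbb{X},\rho)$ over $\mathbb{F}$ is isomorphic to the standard one $(\mathbb{X},\textup{id})$, while any principal polarization with unit similitude differs from $\lambda_{0}$ by a Rosati-fixed automorphism, i.e.\ by a scalar in $\mathbb{Z}_{p}^{\times}$, which is absorbed into $c(\rho)$. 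Together with the well-known fact that for a $1$-dimensional height-$2$ $p$-divisible group the reduced Rapoport--Zink scheme is discrete, this yields $\mathcal{N}_{1,\textup{red}}=\textup{Spec}\,\mathbb{F}$.

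The second step is to compute the formal completion of $\mathcal{N}_{1}$ at this point, namely the universal deformation space of $(\mathbb{X},\lambda_{0})$ over $W$. By Grothendieck--Messing theory, deforming $\mathbb{X}$ along an Artinian thickening $R\twoheadrightarrow\mathbb{F}$ amounts to lifting the Hodge filtration, an unobstructed problem whose tangent space $\textup{Hom}_{\mathbb{F}}(\omega,\mathbb{D}(\mathbb{X})/\omega)$ is one-dimensional because $\dim\mathbb{X}=\textup{ht}\,\mathbb{X}-\dim\mathbb{X}=1$; this is the Lubin--Tate computation and gives unpolarized deformation ring $W[[t]]$. Finally the polarization is automatic: the Hodge filtration of a $1$-dimensional $p$-divisible group is a line, hence isotropic for the alternating pairing induced by $\lambda_{0}$, so $\lambda_{0}$ lifts to a principal polarization on every deformation of $\mathbb{X}$, unique up to the scalar ambiguity already accounted for. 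Thus the forgetful map from $\mathcal{N}_{1}$ to the height-$0$ component of the unpolarized framed deformation space is an isomorphism, $\mathcal{N}_{1}\cong\textup{Spf}\,W[[t]]$, and therefore $\mathcal{N}\cong\textup{Spf}\,W[[t_{1},t_{2}]]$.

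I expect the delicate point to be the interaction between the framing quasi-isogeny and the polarization in the first step: one must check carefully that the unit-similitude normalization collapses the otherwise $\mathbb{Z}$-indexed reduced Rapoport--Zink scheme to a single reduced point, and that the resulting polarized deformation functor agrees on the nose with the Lubin--Tate deformation functor rather than being a thickening, an \'etale cover, or a nontrivial twist of it. The deformation-theoretic computation of $W[[t]]$, by contrast, is entirely standard.
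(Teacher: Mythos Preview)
Your argument is correct and follows essentially the same overall shape as the paper's: reduce to a single factor $\mathcal{N}_{1}$ via the obvious product decomposition, and then identify $\mathcal{N}_{1}$ with the universal deformation space $\textup{Spf}\,W[[t]]$ of the framed $p$-divisible group $\mathbb{X}$.

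The difference lies in how each of you justifies $\mathcal{N}_{1}\cong\textup{Spf}\,W[[t]]$. The paper simply invokes the Serre--Tate theorem to identify the polarized deformation functor of $\mathbb{X}$ with the deformation functor of a supersingular elliptic curve $\mathbb{E}$ over $\mathbb{F}$, and then cites the classical fact that the latter is pro-represented by $W[[t]]$; this bypasses any explicit analysis of the reduced special fibre or of the polarization datum. Your route stays entirely on the $p$-divisible side: you first argue directly that the unit-similitude condition forces $\textup{ht}(\rho)=0$ and hence collapses the reduced locus to a single point, and then compute the completed local ring via Lubin--Tate/Grothendieck--Messing, checking that the polarization lifts automatically because a line in a symplectic plane is isotropic. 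Your approach is more self-contained and makes transparent exactly where the normalization $c(\rho)\in\mathbb{Z}_{p}^{\times}$ is used, at the cost of being longer; the paper's approach is shorter but hides the polarization bookkeeping inside the Serre--Tate equivalence. One small slip: for $g\in\mathbb{B}^{\times}$ acting on a height-$2$ group one has $\textup{ht}(g)=v_{p}(\textup{Nrd}\,g)$, not $2\,v_{p}(\textup{Nrd}\,g)$ (check on $g=p$), though this does not affect your conclusion that height-$0$ quasi-isogenies are exactly $\mathcal{O}_{\mathbb{B}}^{\times}$.
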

\begin{proof}
When $p$ is odd, this is explained in \cite[Example 4.5.3 (ii)]{LZ22}. In general, the deformation space of the supersingular elliptic curve $\mathbb{E}$ is isomorphic to $\textup{Spf}\,W[[t]]$. By Serre-Tate theorem, this is also the deformation space of the $p$-divisible group $\mathbb{X}$ with certain restrictions on the polarization as in the definition of the deformation functor $\mathcal{N}$. Therefore $\mathcal{N}\simeq\textup{Spf}\,W[[t_{1}]]\times_{\textup{Spf}\,W}\textup{Spf}\,W[[t_{2}]]\simeq\textup{Spf}\,W[[t_{1},t_{2}]]$. 
\end{proof}
Let $((X^{\textup{univ}}, \rho^{\textup{univ}},\lambda^{\textup{univ}}),(X^{\prime\textup{univ}},\rho^{\prime\textup{univ}},\lambda^{\prime\textup{univ}}))$ be the universal $p$-divisible group over $\linebreak\mathcal{N}=\textup{Spf}\,W[[t_{1},t_{2}]]$. By Lemma \ref{equivalence} below, the category of $p$-divisible groups over $\textup{Spf}\,W[[t_{1},t_{2}]]$ is equivalent to the category of $p$-divisible groups over $\textup{Spec}\,W[[t_{1},t_{2}]]$, we still use $\linebreak((X^{\textup{univ}}, \rho^{\textup{univ}},\lambda^{\textup{univ}}),(X^{\prime\textup{univ}},\rho^{\prime\textup{univ}},\lambda^{\prime\textup{univ}}))$ to denote the corresponding $p$-divisible group over $\linebreak\textup{Spec}\,W[[t_{1},t_{2}]]$.
\par
Next we fix a $N$-isogeny $x_{0}:\mathbb{X}\rightarrow\mathbb{X}$, i.e., $x_{0}\circ x_{0}^{\vee} = N$. $\mathcal{N}_{0}(N)$ is a contravariant set-valued functor defined over $\textup{Nilp}_{W}$, for every $S\in \textup{Nilp}_{W}$, the set $\mathcal{N}_{0}(N)(S)$ consists of the isomorphism classes of elements of the following form $(X\stackrel{x}\rightarrow X^{\prime},(\rho,\rho^{\prime}),(\lambda,\lambda^{\prime}))$, where\\
\par
(1) $X$ and $X^{\prime}$ are two $p$-divisible group over $S$, $\rho$ and $\rho^{\prime}$ are two quasi-isogenies between $p$-divisible groups $\rho: \mathbb{X}\times_{\mathbb{F}}\overline{S}\rightarrow X\times_{S}\overline{S}$,  $\rho^{\prime}: \mathbb{X}\times_{\mathbb{F}}\overline{S}\rightarrow X^{\prime}\times_{S}\overline{S}$.\\
\par
(2) $\lambda: X\rightarrow X^{\vee}$, $\lambda^{\prime}: X^{\prime}\rightarrow X^{\prime\vee}$ are two principal polarizations, such that Zariski locally on $\overline{S}$, we have 
\begin{equation*}
    \rho^{\vee}\circ\lambda\circ\rho = c(\rho)\cdot\lambda_{0},\,\,\,\,\,\rho^{\prime\vee}\circ\lambda\circ\rho^{\prime} = c(\rho^{\prime})\cdot\lambda_{0}.
\end{equation*}
for some $c(\rho), c(\rho^{\prime})\in\mathbb{Z}_{p}^{\times}$.\\
\par
(3) $x:X\rightarrow X^{\prime}$ is a cyclic isogeny (i.e., $\textup{ker}(f)$ is a cyclic group scheme over $S$) lifting $\rho^{\prime}\circ x_{0}\circ\rho^{-1}$.\\
\par
We will prove it later that the functor $\mathcal{N}_{0}(N)$ is represented by a closed formal subscheme of $\textup{Spf}\,W[[t_{1},t_{2}]]$ cut out by a single equation (cf. Theorem \ref{keydecomgeo}).
\label{locdeform}

\subsection{Special cycles on $\mathcal{N}$ and $\mathcal{N}_{0}(N)$}
Now we give the definition of special cycles on the formal schemes $\mathcal{N}$ and $\mathcal{N}_{0}(N)$. Recall that $((X^{\textup{univ}}, \rho^{\textup{univ}},\lambda^{\textup{univ}}),(X^{\prime\textup{univ}},\rho^{\prime\textup{univ}},\lambda^{\prime\textup{univ}}))$ is the universal $p$-divisible group over $\mathcal{N}$, and $\mathbb{B}\simeq \textup{End}^{0}(\mathbb{X})$ is the unique division quaternion algebra over $\mathbb{Q}_{p}$, whose Hasse invariant as a quadratic space is $-1$. 
\begin{definition}
For any subset $L\subset\mathbb{B}$, define the special cycle $\mathcal{Z}^{\sharp}(L)\subset\mathcal{N}$ to be the closed formal subscheme cut out by the condition,
\begin{equation*}
    \rho^{\prime\textup{univ}}\circ x \circ (\rho^{\textup{univ}})^{-1} \in \textup{Hom}(X^{\textup{univ}}, X^{\prime\textup{univ}}).
\end{equation*}
for all $x\in L$.
\label{specialcycle}
\end{definition}
\begin{remark}
The special cycle $\mathcal{Z}^{\sharp}(L)$ only depends on the $\mathbb{Z}_{p}$-linear span of $L$ in $\mathbb{B}$, and is nonempty only when this span is an integral quadratic $\mathbb{Z}_{p}$-lattice in $\mathbb{B}$.
\end{remark}
\begin{proposition}
Let $x\in\mathbb{B}$ be a nonzero and integral element, i.e., $0\leq\nu_{p}(x^{\vee}\circ x)<\infty$. Then $\mathcal{Z}^{\sharp}(x)$ is a Cartier divisor on $\mathcal{N}$, i.e., it is defined by a single nonzero element $f_{x}\in W[[t_{1},t_{2}]]$. Moreover, $\mathcal{Z}^{\sharp}(x)$ is also flat over $\textup{Spf}\,W$, i.e., $p\nmid f_{x}$.
\label{equation}
\end{proposition}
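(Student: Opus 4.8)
The plan is to read $\mathcal{Z}^{\sharp}(x)$ off from Grothendieck--Messing deformation theory, and then to verify non-triviality and $W$-flatness by exhibiting points of $\mathcal{N}$, resp. of its special fibre, that do not lie on $\mathcal{Z}^{\sharp}(x)$. Throughout write $R=W[[t_1,t_2]]$, let $\mathcal{M}=\textup{Spf}\,W[[t]]$ be the (framed, polarised) deformation space of $\mathbb{X}$ carrying the universal $p$-divisible group $\mathcal{X}$, and recall from Proposition~\ref{H1} that $\mathcal{N}\simeq\mathcal{M}\times_{\textup{Spf}\,W}\mathcal{M}$; thus the two universal groups on $\mathcal{N}$ are $X^{\textup{univ}}=\textup{pr}_1^{*}\mathcal{X}$ and $X^{\prime\textup{univ}}=\textup{pr}_2^{*}\mathcal{X}$ for the two projections $\textup{pr}_1,\textup{pr}_2\colon\mathcal{N}\to\mathcal{M}$.

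\emph{Producing the equation.} Because $x$ is integral and the framings $\rho^{\textup{univ}},\rho^{\prime\textup{univ}}$ are normalised by the $\mathbb{Z}_p^{\times}$-polarisation condition, the quasi-isogeny $y:=\rho^{\prime\textup{univ}}\circ x\circ(\rho^{\textup{univ}})^{-1}$ has crystalline realisation a genuine $R$-linear map $\mathbb{D}(y)\colon\mathbb{D}(X^{\prime\textup{univ}})_{R}\to\mathbb{D}(X^{\textup{univ}})_{R}$ between rank-$2$ free modules. By Grothendieck--Messing theory, $y$ is induced by a homomorphism of $p$-divisible groups after base change along $S\to\mathcal{N}$ exactly when the composite
\[
  \textup{Fil}^{1}\mathbb{D}(X^{\prime\textup{univ}})\xrightarrow{\mathbb{D}(y)}\mathbb{D}(X^{\textup{univ}})\twoheadrightarrow\textup{Lie}(X^{\textup{univ}})
\]
vanishes over $S$. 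Source and target here are line bundles, so this composite is a section of a line bundle on $\mathcal{N}=\textup{Spf}\,R$; as $R$ is local the bundle is trivial, and a trivialisation produces $f_x\in R$ with $\mathcal{Z}^{\sharp}(x)=\textup{Spf}\,(R/f_x)$. This is the standard mechanism for special divisors on Rapoport--Zink spaces (cf.\ \cite{LZ22}).

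\emph{Non-triviality and flatness.} It then remains to check $f_x\neq0$ and $p\nmid f_x$. Since $R$ is a domain, $f_x\neq0$ is equivalent to $\mathcal{Z}^{\sharp}(x)\neq\mathcal{N}$: over the rigid generic fibre a point $(s_1,s_2)$ lies on $\mathcal{Z}^{\sharp}(x)$ only if, transported by the framings, $x$ is an honest isogeny $\mathcal{X}_{s_1}\to\mathcal{X}_{s_2}$, and for a fixed $\mathcal{X}_{s_1}$ there are only finitely many such targets (the kernel is a subgroup scheme of $\mathcal{X}_{s_1}[p^{\nu_p(q(x))}]$, which is étale in characteristic $0$), so this locus is a proper closed subset of the $2$-dimensional $\mathcal{N}^{\textup{rig}}$ and any point off it witnesses $f_x\neq0$. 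Similarly, $p\nmid f_x$ is equivalent to $\mathcal{Z}^{\sharp}(x)\not\supseteq\mathcal{N}_{\mathbb{F}}=\mathcal{M}_{\mathbb{F}}\times_{\mathbb{F}}\mathcal{M}_{\mathbb{F}}$: were $\mathcal{N}_{\mathbb{F}}\subseteq\mathcal{Z}^{\sharp}(x)$, restriction to the slice $\{t_2=0\}$ would yield an isogeny over $\mathbb{F}[[t_1]]$ from $X^{\textup{univ}}|_{t_2=0}=\mathcal{X}_{\mathbb{F}}$, the versal deformation of $\mathbb{X}$, onto $X^{\prime\textup{univ}}|_{t_2=0}=\mathbb{X}_{\mathbb{F}[[t_1]]}$, the constant family, forcing the generic fibre of the versal deformation to be supersingular --- contradicting that it is ordinary, since the supersingular locus of the deformation space is its closed point. (Incidentally the closed point of $\mathcal{N}$ does lie on $\mathcal{Z}^{\sharp}(x)$, as $x$ itself is an isogeny $\mathbb{X}\to\mathbb{X}$, so $f_x$ is moreover a non-unit.) Hence $\mathcal{Z}^{\sharp}(x)$ is a Cartier divisor, flat over $\textup{Spf}\,W$.

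The main obstacle is the first step: establishing, uniformly in $p$ (including $p=2$), that the lifting condition is cut out by a \emph{single} element, which is precisely the Grothendieck--Messing bookkeeping together with the integrality of $\mathbb{D}(y)$. Once that is in place, $f_x\neq0$ is immediate from the irreducibility of $\mathcal{N}$, and $p\nmid f_x$ follows from the ``constant versus versal family'' dichotomy on the $\{t_2=0\}$-slice; both of these are soft.
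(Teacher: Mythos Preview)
Your argument is essentially correct and is, in substance, the standard proof that the paper defers to its citations: the paper simply invokes \cite[Proposition~4.10.1]{LZ22} for $p$ odd and \cite[Theorem~6.8.1]{KM85} in general, whereas you actually sketch the mechanism. A few remarks on the comparison and on one soft spot:

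\textbf{Step 1 (single equation).} Your line ``because $x$ is integral and the framings are normalised by the $\mathbb{Z}_p^\times$-polarisation condition, $\mathbb{D}(y)$ is a genuine $R$-linear map'' is the only place where the justification is slightly off. The height-$0$ condition on the framings does \emph{not} by itself make $\rho^{\textup{univ}}$ an isomorphism over $\overline{R}$ (indeed the constant family $\mathbb{X}_{\mathbb{F}[[t]]}$ and the versal family $\mathcal{X}_{\mathbb{F}}$ are not isogenous over $\mathbb{F}[[t]]$, as you yourself use in Step~3). What does give the integral map is the crystal property: parallel transport along the Gauss--Manin connection identifies $\mathbb{D}(X^{\textup{univ}})_R$ with $\mathbb{D}(\mathbb{X})\otimes_W R$ integrally, and under this identification the ``crystalline realisation of $y$'' is literally $\mathbb{D}(x)\otimes 1$, integral because $x$ is. This is exactly how \cite{LZ22} (via \cite{HP17}) proceeds, so your approach coincides with theirs once this point is stated correctly. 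For $p=2$ the same argument goes through via Zink's windows (which the paper uses explicitly in the next subsection) rather than the PD-thickening $R\to R/p$; the paper sidesteps this by citing \cite{KM85}, which works with elliptic curves and Serre--Tate rather than with $p$-divisible groups directly.

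\textbf{Steps 2 and 3.} Your non-triviality argument via the rigid generic fibre (finitely many finite flat subgroup schemes of $\mathcal{X}_{s_1}[p^n]$, hence finitely many targets, hence a proper closed subset) is correct, if a touch informal. Your flatness argument via the slice $\{t_2=0\}$ is clean and correct: the compatible system of isogenies over $\mathbb{F}[t_1]/(t_1^n)$ algebraises to an isogeny over $\mathbb{F}[[t_1]]$ by Lemma~\ref{equivalence}, and then the Newton-polygon mismatch at the generic point of $\textup{Spec}\,\mathbb{F}[[t_1]]$ gives the contradiction. Neither of the paper's references is invoked for these two steps, so here you are supplying arguments the paper omits entirely.
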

\begin{proof}
When $p$ is odd, the formal scheme $\mathcal{N}$ is an example of GSpin Rapoport-Zink space (cf. \cite[Example 4.5.3 (ii)]{LZ22}), and the proposition has been proved for every GSpin Rapoport-Zink space in \cite[Proposition 4.10.1]{LZ22}. In general, this is proved in \cite[Theorem 6.8.1]{KM85}.
\end{proof}
Now let's come to the special cycles on $\mathcal{N}_{0}(N)$. Firstly, we give the definition of the space of special quasi-isogenies, recall that we have fixed a $N$-isogeny $x_{0}$ when we define the formal scheme $\mathcal{N}_{0}(N)$.
\begin{definition}
We call a quasi-isogeny $x\in \mathbb{B}=\textup{End}^{0}(\mathbb{X})$ is special to $x_{0}$ if the following condition holds,
\begin{equation*}
   x\circ x_{0}^{\vee}+x_{0}\circ x^{\vee} = 0.
\end{equation*}
\label{esp1}
\end{definition}
By definition, the space of quasi-isogenies special to $x_{0}$ is just the quadratic space $\mathbb{W}=\{x_{0}\}^{\bot}\subset\mathbb{B}$. By Witt's theorem, it is a 3-dimensional quadratic space over $\mathbb{Q}_{p}$ whose isometric class is independent of the choice of the $N$-isogeny $x_{0}$.
\begin{definition}
Let $(\Breve{X}\stackrel{\Breve{x}}\rightarrow \Breve{X}^{\prime},(\Breve{\rho},\Breve{\rho}^{\prime}),(\Breve{\lambda},\Breve{\lambda}^{\prime}))$ be the universal object over $\mathcal{N}_{0}(N)$. For any subset $M\subset \mathbb{W}$, define the special cycle $\mathcal{Z}(M)\subset \mathcal{N}_{0}(N)$ to be the closed formal subscheme cut out by the following conditions,
\begin{align*}
    \Breve{\rho}^{\prime}\circ x \circ \Breve{\rho}^{-1} \in \textup{Hom}(\Breve{X},\Breve{X}^{\prime}).
\end{align*}
for any $x\in M$.
\label{esp2}
\end{definition}
For any subset $M\subset \mathbb{W}\subset \mathbb{B}$, we have the following Cartesion diagram,
\begin{equation*}
     \xymatrix{
    \mathcal{Z}(M)\ar[d]\ar[r]&\mathcal{N}_{0}(N)\ar[d]\\
    \mathcal{Z}^{\sharp}(M)\ar[r]&\mathcal{N}.}
\end{equation*}

\subsection{Formal uniformization of $\mathcal{X}_{0}(N)$ and the special cycle $\mathcal{Z}(T,\boldsymbol{\varphi})$}
Let $B$ be the unique quaternion algebra over $\mathbb{Q}$ ramified exactly at $p$ and $\infty$. Then $B\otimes_{\mathbb{Q}}\mathbb{Q}_{p}\simeq\mathbb{B}$ is the unique division quaternion algebra over $\mathbb{Q}_{p}$. Let $\mathbb{E}$ be a supersingular elliptic curve over $\mathbb{F}$ and $\mathbb{X}=\mathbb{E}[p^{\infty}]$ is the $p$-divisible group of $\mathbb{E}$. Then $B\simeq\textup{End}^{0}(\mathbb{E})$ and $\mathbb{B}\simeq\textup{End}^{0}(\mathbb{X})$. Suppose $x_{0}\in\mathbb{B}$ comes from a cyclic $N$-isogeny of $\mathbb{E}$ under the above isomorphism $\textup{End}^{0}(\mathbb{E})\otimes_{\mathbb{Q}}\mathbb{Q}_{p}\simeq\mathbb{B}$.
\par
We first state and explain the formal uniformization theorem of the supersingular locus $\mathcal{H}_{\mathbb{F}_{p}}^{ss}$ of $\mathcal{H}_{\mathbb{F}_{p}}$. We use $\hat{\mathcal{H}}/_{(\mathcal{H}_{\mathbb{F}_{p}}^{ss})}$ to denote the completion of $\mathcal{H}$ along the closed substack $\mathcal{H}_{\mathbb{F}_{p}}^{ss}$.
\begin{theorem}
There is an isomorphism of formal stacks over $W$
\begin{equation}
    \hat{\mathcal{H}}/_{(\mathcal{H}_{\mathbb{F}_{p}}^{ss})}\underset{\sim}{\stackrel{\Theta_{\mathcal{H}}}\longrightarrow} B^{\times}(\mathbb{Q})_{0}^{2}\backslash[\mathcal{N}\times \textup{GL}_{2}(\mathbb{A}_{f}^{p})^{2}/\textup{GL}_{2}(\hat{\mathbb{Z}}^{p})^{2}],
    \label{HDI}
\end{equation}
where $B^{\times}(\mathbb{Q})_{0}$ is the subgroup of $B^{\times}(\mathbb{Q})$ consisting of elements whose norm has $p$-adic valuation 0.
\label{H}
\end{theorem}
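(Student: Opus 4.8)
The strategy is to reduce the statement to the analogous uniformization of the supersingular locus of the modular curve $\mathcal{E}ll_{\mathbb{F}_{p}}$, which is classical, and then take a product. Since $\mathcal{H}=\mathcal{E}ll\times_{\mathbb{Z}}\mathcal{E}ll$ and a pair of elliptic curves in characteristic $p$ lies in the basic locus exactly when both members are supersingular, we have $\mathcal{H}_{\mathbb{F}_{p}}^{ss}=\mathcal{E}ll_{\mathbb{F}_{p}}^{ss}\times_{\mathbb{F}_{p}}\mathcal{E}ll_{\mathbb{F}_{p}}^{ss}$; and since the formal completion of a product of Deligne--Mumford stacks along a product of closed substacks is the completed fibre product of the two completions, $\hat{\mathcal{H}}/_{(\mathcal{H}_{\mathbb{F}_{p}}^{ss})}\simeq\hat{\mathcal{E}ll}/_{(\mathcal{E}ll_{\mathbb{F}_{p}}^{ss})}\,\widehat{\times}_{\textup{Spf}\,W}\,\hat{\mathcal{E}ll}/_{(\mathcal{E}ll_{\mathbb{F}_{p}}^{ss})}$. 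On the other side, by the Serre--Tate theorem and Proposition \ref{H1} the functor $\mathcal{N}$ factors as $\mathcal{M}\,\widehat{\times}_{\textup{Spf}\,W}\,\mathcal{M}$, where $\mathcal{M}$ parametrizes a single polarized deformation $(X,\rho,\lambda)$ of $(\mathbb{X},\lambda_{0})$ with $c(\rho)\in\mathbb{Z}_{p}^{\times}$ (so $\mathcal{M}\simeq\textup{Spf}\,W[[t]]$), and a quotient of a product by a product of groups is the product of the quotients. Hence it suffices to construct a $W$-isomorphism of formal stacks
\begin{equation*}
    \hat{\mathcal{E}ll}/_{(\mathcal{E}ll_{\mathbb{F}_{p}}^{ss})}\;\underset{\sim}{\stackrel{\Theta_{\mathcal{E}ll}}\longrightarrow}\; B^{\times}(\mathbb{Q})_{0}\backslash[\mathcal{M}\times\textup{GL}_{2}(\mathbb{A}_{f}^{p})/\textup{GL}_{2}(\hat{\mathbb{Z}}^{p})],
\end{equation*}
after which (\ref{HDI}) follows by taking the $W$-fibre product of two copies.

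To build $\Theta_{\mathcal{E}ll}$, fix a supersingular elliptic curve $\mathbb{E}/\mathbb{F}$ with $\mathbb{X}=\mathbb{E}[p^{\infty}]$, so $B\simeq\textup{End}^{0}(\mathbb{E})$ and $\mathbb{B}\simeq\textup{End}^{0}(\mathbb{X})$ compatibly. Given $S\in\textup{Nilp}_{W}$, an element $g\in\textup{GL}_{2}(\mathbb{A}_{f}^{p})$ and a point $y=(X,\rho,\lambda)\in\mathcal{M}(S)$, one first produces from $g$ a supersingular elliptic curve $\mathbb{E}^{(g)}/\mathbb{F}$ that is prime-to-$p$ isogenous to $\mathbb{E}$, with $\mathbb{E}^{(g)}[p^{\infty}]=\mathbb{X}$ and a prime-to-$p$ trivialization carrying the $g$-action; then the quasi-isogeny $\rho\colon\mathbb{X}\times_{\mathbb{F}}\overline{S}\to X\times_{S}\overline{S}$ together with $\mathbb{E}^{(g)}$ determines, via Serre--Tate, a deformation $E/S$ of $\mathbb{E}^{(g)}$ with $E[p^{\infty}]=X$. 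The principal polarization of $E$ is recovered from $\lambda$ and the canonical polarization of $\mathbb{E}^{(g)}$, and the normalization $c(\rho)\in\mathbb{Z}_{p}^{\times}$ is exactly the condition that this datum really is the $p$-divisible group of an elliptic curve (equivalently, that $\rho$ has degree of $p$-adic valuation $0$); the away-from-$p$ level structure extends uniquely since all $\ell\neq p$ are invertible on $S$. This defines $\Theta_{\mathcal{E}ll}$, and it is $B^{\times}(\mathbb{Q})_{0}$-invariant: an element $b\in B^{\times}(\mathbb{Q})_{0}$, acting diagonally by $\rho\mapsto\rho\circ b^{-1}$ on $\mathcal{M}$ and by left translation on $\textup{GL}_{2}(\mathbb{A}_{f}^{p})$, leaves $E$ unchanged, and the condition $\nu_{p}(\textup{Nm}(b))=0$ is precisely what preserves $c(\rho)\in\mathbb{Z}_{p}^{\times}$.

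It remains to check $\Theta_{\mathcal{E}ll}$ is an isomorphism. It is surjective on $\mathbb{F}$-points because every supersingular elliptic curve over $\mathbb{F}$ is isogenous to $\mathbb{E}$, hence its $p$-divisible group is quasi-isogenous to $\mathbb{X}$; it is injective up to the $B^{\times}(\mathbb{Q})_{0}$-action and identifies automorphism groups because two choices of quasi-isogeny to $\mathbb{X}$ and of prime-to-$p$ trivialization of a given $(E,\overline{\eta^{p}})$ differ by an element of $\textup{Aut}^{0}(\mathbb{E})=B^{\times}(\mathbb{Q})$ constrained, by the polarization normalization, to lie in $B^{\times}(\mathbb{Q})_{0}$, while the stabilizer of a point of the target is the finite group $B^{\times}(\mathbb{Q})_{0}\cap g\,\textup{GL}_{2}(\hat{\mathbb{Z}}^{p})\,g^{-1}$, which equals $\textup{Aut}(E,\overline{\eta^{p}})$; and it is a local isomorphism on completed local rings by Serre--Tate, since the completion of $\mathcal{E}ll$ at a supersingular point is canonically the formal deformation space of the associated polarized $p$-divisible group, namely $\mathcal{M}$. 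A formally étale morphism of locally noetherian formal Deligne--Mumford stacks that is bijective on geometric points and matches automorphism groups is an isomorphism, which proves the case of $\mathcal{E}ll$; taking the $W$-fibre product of two copies yields (\ref{HDI}).

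\emph{The main difficulty} is the effectivity step hidden in the construction of $\Theta_{\mathcal{E}ll}$ — that the $p$-divisible group $X/S$ equipped with a quasi-isogeny to $\mathbb{X}_{\overline{S}}$, together with the fixed curve $\mathbb{E}^{(g)}$ and the away-from-$p$ data, actually algebraizes to an elliptic curve over $S$ — and the careful bookkeeping needed to see that the induced morphism of quotient stacks is an isomorphism, not merely an equivalence after completion. For $\mathcal{E}ll$ this is the one-dimensional, principally polarized case of the Rapoport--Zink uniformization theorem, which goes back to Deuring's theory of supersingular elliptic curves (cf.\ the treatment for modular curves in \cite{KRY06}); the remaining substance is the stack-theoretic packaging and the identification of the normalization $c(\rho)\in\mathbb{Z}_{p}^{\times}$ with the passage from $B^{\times}(\mathbb{Q})$ to $B^{\times}(\mathbb{Q})_{0}$. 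Once $\mathcal{E}ll$ is settled, the passage to $\mathcal{H}=\mathcal{E}ll\times_{\mathbb{Z}}\mathcal{E}ll$ is formal, and it is this product structure that makes Theorem \ref{H} much softer than the $\mathcal{X}_{0}(N)$ uniformization, where the cyclic-isogeny condition has to be tracked through the Rapoport--Zink space.
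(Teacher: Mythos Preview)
Your approach is correct, but note that the paper does not actually prove this theorem: it simply cites Rapoport--Zink \cite[Theorem 6.24]{RZ96} and then describes the map $\Theta_{\mathcal{H}}$ and the group action explicitly (fixing a lift $\tilde{\mathbb{E}}/W$ of $\mathbb{E}$ and using the induced quasi-isogeny $\tilde{\rho}\colon\tilde{\mathbb{X}}_{S}\to X$ to produce $E/S$). Your product decomposition $\mathcal{H}=\mathcal{E}ll\times_{\mathbb{Z}}\mathcal{E}ll$, $\mathcal{N}=\mathcal{M}\,\widehat{\times}_{W}\,\mathcal{M}$ followed by the one-factor uniformization of $\mathcal{E}ll$ is exactly the right way to make that citation concrete in this PEL case, and it is what the general Rapoport--Zink argument collapses to here. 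The only difference worth flagging is in the construction of the map: the paper (following \cite{RZ96}) algebraizes by first lifting the framing object $\mathbb{E}$ to $W$ and then transporting along $\tilde{\rho}$, whereas you build an auxiliary $\mathbb{E}^{(g)}/\mathbb{F}$ from the level structure and invoke Serre--Tate directly; these are equivalent, but your description of how $g$ determines $\mathbb{E}^{(g)}$ up to isomorphism (rather than just up to prime-to-$p$ isogeny) and how the away-from-$p$ level structure is carried along is a little imprecise---the paper's formulation via $(E,\overline{g^{-1}\eta_{0}^{p}\circ V^{p}(\rho_{E}^{-1})})$ makes this bookkeeping cleaner.
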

Theorem \ref{H} is proved by Rapoport and Zink \cite[Theorem 6.24]{RZ96}. Here we only describe the isomorphism, especially the group action on the right hand side of (\ref{HDI}). Let $\eta_{0}^{p}: V^{p}(\mathbb{E})\stackrel{\sim}\rightarrow(\mathbb{A}_{f}^{p})^{2}$ be a prime-to-$p$ level structure of $\mathbb{E}$. Let $\tilde{\mathbb{E}}$ be a deformation of $\mathbb{E}$ to $W$, and let $\tilde{\mathbb{X}}\coloneqq\tilde{\mathbb{E}}[p^{\infty}]$ be the corresponding deformation of $\mathbb{X}$ to $W$. For some object $S\in \textup{Nilp}_{W}$, we pick an object $((X,\rho,\lambda),(X^{\prime},\rho^{\prime},\lambda^{\prime}),(g,g^{\prime}))\in \mathcal{N}(S)\times\textup{GL}_{2}(\mathbb{A}_{f}^{p})^{2}$. The quasi-isogeny $\rho$ (resp. $\rho^{\prime}$) gives rise to a quasi-isogeny $\tilde{\rho}: \tilde{\mathbb{X}}_{S}\rightarrow X$ (resp. $\tilde{\rho^{\prime}}: \tilde{\mathbb{X}}_{S}\rightarrow X^{\prime}$). Then there exists an elliptic curve $E$ (resp. $E^{\prime}$) up to prime-to-$p$ isogeny over $S$ and a quasi-isogeny $\rho_{E}: \tilde{\mathbb{E}}_{S}\rightarrow E$ (resp. $\rho_{E^{\prime}}: \tilde{\mathbb{E}}_{S}\rightarrow E^{\prime}$), such that $E[p^{\infty}]\simeq X$ (resp. $E^{\prime}[p^{\infty}]\simeq X^{\prime}$) and $\rho_{E}$ (resp. $\rho_{E^{\prime}}$) induces $\tilde{\rho}$ (resp. $\tilde{\rho^{\prime}}$) under this isomorphism. The object $((X,\rho,\lambda),(X^{\prime},\rho^{\prime},\lambda^{\prime}),(g,g^{\prime}))$ is mapped to 
\begin{equation*}
    ((E,E^{\prime}),(\overline{g^{-1}\eta_{0}^{p}\circ V^{p}(\rho_{E}^{-1})},\overline{g^{\prime-1}\eta_{0}^{p}\circ V^{p}(\rho_{E^{\prime}}^{-1})}))\in \mathcal{H}(S).
\end{equation*}
\par
The group action is given as follows, for a pair of elements $(b,b^{\prime})\in B^{\times}(\mathbb{Q})_{0}\times B^{\times}(\mathbb{Q})_{0}$, by the following map,
\begin{equation*}
    B(\mathbb{Q})\rightarrow B(\mathbb{Q}_{p})\simeq \textup{End}^{0}(\mathbb{X})\stackrel{\rho^{*}}\longrightarrow \textup{End}^{0}(X)\textup{ (resp. $\stackrel{\rho^{\prime*}}\longrightarrow \textup{End}^{0}(X^{\prime})$)},
\end{equation*}
and a fixed isomorphism $B(\mathbb{A}_{f}^{p})\simeq \textup{GL}_{2}(\mathbb{A}_{f}^{p})$. We obtain another triple
\begin{equation*}
    (b,b^{\prime})_{\ast}(((X,\rho,\lambda),(X^{\prime},\rho^{\prime},\lambda^{\prime}),(g,g^{\prime})))\coloneqq((X,\rho\circ b^{-1},\lambda),(X^{\prime},\rho^{\prime}\circ b^{\prime -1},\lambda^{\prime}),(bg,b^{\prime}g^{\prime})).
\end{equation*}
\par
Now let $\mathcal{X}_{0}(N)_{\mathbb{F}_{p}}^{ss}$ (resp. $\mathcal{Y}_{0}(N)_{\mathbb{F}_{p}}^{ss}$) be the supersingular locus of $\mathcal{X}_{0}(N)_{\mathbb{F}_{p}}$ (resp. $\mathcal{Y}_{0}(N)_{\mathbb{F}_{p}}$). Let $\hat{\mathcal{X}}_{0}(N)/_{(\mathcal{X}_{0}(N)_{\mathbb{F}_{p}}^{ss})}$ (resp. $\hat{\mathcal{Y}}_{0}(N)/_{(\mathcal{Y}_{0}(N)_{\mathbb{F}_{p}}^{ss})}$) be the completion of $\mathcal{X}_{0}(N)$ (resp. $\mathcal{Y}_{0}(N)$) along the closed substack $\mathcal{X}_{0}(N)_{\mathbb{F}_{p}}^{ss}$ (resp. $\mathcal{Y}_{0}(N)_{\mathbb{F}_{p}}^{ss}$). By the definition of $\mathcal{X}_{0}(N)$, we have $\mathcal{X}_{0}(N)_{\mathbb{F}_{p}}^{ss}=\mathcal{Y}_{0}(N)_{\mathbb{F}_{p}}^{ss}$ and therefore $\hat{\mathcal{X}}_{0}(N)/_{(\mathcal{X}_{0}(N)_{\mathbb{F}_{p}}^{ss})}\simeq\hat{\mathcal{Y}}_{0}(N)/_{(\mathcal{Y}_{0}(N)_{\mathbb{F}_{p}}^{ss})}$.
\begin{proposition}
There is an isomorphism of formal stacks over $W$,
\begin{equation}
    \hat{\mathcal{X}}_{0}(N)/_{(\mathcal{X}_{0}(N)_{\mathbb{F}_{p}}^{ss})}\underset{\sim}{\stackrel{\Theta_{\mathcal{X}_{0}(N)}}\longrightarrow} B^{\times}(\mathbb{Q})_{0}\backslash[\mathcal{N}_{0}(N)\times  \textup{GL}_{2}(\mathbb{A}_{f}^{p})/\Gamma_{0}(N)(\hat{\mathbb{Z}}^{p})],
    \label{modular}
\end{equation}
where $B^{\times}(\mathbb{Q})_{0}$ is the subgroup of $B^{\times}(\mathbb{Q})$ consisting of elements whose norm has $p$-adic valuation 0.
\label{uniformization}
\end{proposition}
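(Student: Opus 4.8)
The plan is to deduce the statement from the already-established formal uniformization of $\mathcal{H}$ (Theorem \ref{H}), extending the arguments of Kim and Oki to arbitrary $N$. First, since $\mathcal{Y}_{0}(N)$ is an open substack of $\mathcal{X}_{0}(N)$ and $\mathcal{X}_{0}(N)_{\mathbb{F}_{p}}^{ss}=\mathcal{Y}_{0}(N)_{\mathbb{F}_{p}}^{ss}$, one has $\hat{\mathcal{X}}_{0}(N)/_{(\mathcal{X}_{0}(N)_{\mathbb{F}_{p}}^{ss})}\simeq\hat{\mathcal{Y}}_{0}(N)/_{(\mathcal{Y}_{0}(N)_{\mathbb{F}_{p}}^{ss})}$, so it suffices to uniformize $\mathcal{Y}_{0}(N)$ along its supersingular locus. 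Recall from (\ref{keymap}) the forgetful morphism $\mathcal{Y}_{0}(N)_{(p)}\to\mathcal{H}_{(p)}$ sending $(E\stackrel{\pi}\to E^{\prime},\overline{(\eta^{p},\eta^{\prime p})})$ to $((E,E^{\prime}),(\overline{\eta^{p}},\overline{\eta^{\prime p}}))$; the extra datum on the source is precisely a cyclic $N$-isogeny $\pi$ whose prime-to-$p$ part is $w_{N}$-twisted with respect to the chosen trivializations (diagram (\ref{diagram})). On the Rapoport--Zink side, $\mathcal{N}_{0}(N)$ is a closed formal subscheme of $\mathcal{N}$ (indeed the difference divisor $\mathcal{D}(x_{0})$, cf. Theorem \ref{keydecomgeo}), and the extra datum is a cyclic isogeny $x\colon X\to X^{\prime}$ deforming $x_{0}$. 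I will exhibit $\Theta_{\mathcal{X}_{0}(N)}$ together with a commutative square relating it to $\Theta_{\mathcal{H}}$, and then prove that square is $2$-Cartesian.

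Concretely, for $S\in\textup{Nilp}_{W}$ and a point $(X\stackrel{x}\to X^{\prime},(\rho,\rho^{\prime}),(\lambda,\lambda^{\prime}),g)$ of $\mathcal{N}_{0}(N)\times\textup{GL}_{2}(\mathbb{A}_{f}^{p})$, forget $x$ and apply $\Theta_{\mathcal{H}}$ to the tuple $((X,\rho,\lambda),(X^{\prime},\rho^{\prime},\lambda^{\prime}),(g,w_{N}gw_{N}^{-1}))$ to obtain a point $((E,E^{\prime}),(\overline{\eta^{p}},\overline{\eta^{\prime p}}))\in\mathcal{H}(S)$ together with quasi-isogenies $\rho_{E},\rho_{E^{\prime}}$ identifying $E[p^{\infty}]\simeq X$ and $E^{\prime}[p^{\infty}]\simeq X^{\prime}$. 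The isogeny $x$ on $p$-divisible groups and the prime-to-$p$ homomorphism $(\eta^{\prime p})^{-1}\circ w_{N}\circ\eta^{p}$ then glue, via the decomposition $\textup{Hom}(E,E^{\prime})\otimes\hat{\mathbb{Z}}\simeq\textup{Hom}(E[p^{\infty}],E^{\prime}[p^{\infty}])\times\prod_{\ell\neq p}\textup{Hom}(T_{\ell}E,T_{\ell}E^{\prime})$ (which applies since $E,E^{\prime}$ are supersingular), to a unique isogeny $\pi\colon E\to E^{\prime}$; it is a cyclic $N$-isogeny because $\ker\pi$ is the product of the cyclic group scheme $\ker x$ of order $p^{\nu_{p}(N)}$ and a cyclic prime-to-$p$ group scheme of coprime order $N/p^{\nu_{p}(N)}$ (governed by $w_{N}=\textup{diag}(N,1)$), and $\pi^{\vee}\circ\pi=N$ follows from $x^{\vee}\circ x=N$ together with the degree of the prime-to-$p$ part. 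This yields a point of $\hat{\mathcal{Y}}_{0}(N)/_{(\mathcal{Y}_{0}(N)_{\mathbb{F}_{p}}^{ss})}(S)$. One then checks this assignment is $B^{\times}(\mathbb{Q})_{0}$- and $\Gamma_{0}(N)(\hat{\mathbb{Z}}^{p})$-equivariant, once $B^{\times}(\mathbb{Q})_{0}$ is identified with the stabilizer of $x_{0}$ inside $B^{\times}(\mathbb{Q})_{0}^{2}$ via $b\mapsto(b,x_{0}bx_{0}^{-1})$ and $\Gamma_{0}(N)(\hat{\mathbb{Z}}^{p})=\textup{GL}_{2}(\hat{\mathbb{Z}}^{p})\cap w_{N}^{-1}\textup{GL}_{2}(\hat{\mathbb{Z}}^{p})w_{N}$ via $g\mapsto(g,w_{N}gw_{N}^{-1})$, compatibly with the action in (\ref{action}) and the action on $\mathcal{N}$ described after Theorem \ref{H}. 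Thus $\Theta_{\mathcal{X}_{0}(N)}$ descends to a morphism of quotient formal stacks fitting into a commutative square with $\Theta_{\mathcal{H}}$.

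It remains to show this square is $2$-Cartesian; since $\Theta_{\mathcal{H}}$ is an isomorphism, so is $\Theta_{\mathcal{X}_{0}(N)}$. Over a point of $\hat{\mathcal{H}}/_{(\mathcal{H}_{\mathbb{F}_{p}}^{ss})}$ corresponding under $\Theta_{\mathcal{H}}$ to data on $\mathcal{N}\times\textup{GL}_{2}(\mathbb{A}_{f}^{p})^{2}$, a lift to $\hat{\mathcal{Y}}_{0}(N)/_{(\cdots)}$ is a cyclic $N$-isogeny $E\to E^{\prime}$ obeying the prime-to-$p$ compatibility, while a lift to $\mathcal{N}_{0}(N)\times\textup{GL}_{2}(\mathbb{A}_{f}^{p})$ over $\mathcal{N}\times\textup{GL}_{2}(\mathbb{A}_{f}^{p})^{2}$ is a cyclic isogeny $x\colon X\to X^{\prime}$ deforming $x_{0}$ together with the constraint $(g,g^{\prime})=(g,w_{N}gw_{N}^{-1})$. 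The decomposition $\pi\leftrightarrow(x=\pi[p^{\infty}],\ \text{prime-to-}p\text{ part})$ sets up an equivalence between these two sets of lifts, functorial in $S$ and compatible with automorphism groups, which is exactly the Cartesian property; essential surjectivity inside this comparison uses that the supersingular locus forms a single quasi-isogeny class, so every supersingular cyclic $N$-isogeny arises, after a prime-to-$p$ quasi-isogeny in $B^{\times}(\mathbb{Q})_{0}$, from the fixed $x_{0}$. I expect the main obstacle to be establishing this Cartesian property at the level of groupoids with full control of cyclicity: one must invoke the Katz--Mazur theory of cyclic group schemes (Definition \ref{cycdef} and the results of \cite{KM85}) to see that cyclicity of $\pi$ passes to and from cyclicity of $x$ on the $p$-divisible group --- this is delicate precisely when $p\mid N$, where the $p$-part of $\ker\pi$ is a non-\'etale cyclic group scheme of order $p^{\nu_{p}(N)}$ --- and to verify that the reconstructed $\pi$ always satisfies $\pi^{\vee}\circ\pi=N$ with the correct effective Cartier divisor structure on $\ker\pi$, matching automorphisms on both sides.
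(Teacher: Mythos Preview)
Your proposal is correct and follows essentially the same route as the paper: both reduce to the known uniformization $\Theta_{\mathcal{H}}$, observe that a point of $\hat{\mathcal{Y}}_{0}(N)/_{(\mathcal{Y}_{0}(N)_{\mathbb{F}_{p}}^{ss})}$ corresponds under $\Theta_{\mathcal{H}}$ to data $(z,(g,g'))$ with $z\in\mathcal{N}_{0}(N)$ and $g'$ determined by $g$ via the $w_{N}$-compatibility of level structures, and then identify the equivalence relation on such pairs $(z,g)$ as exactly the $B^{\times}(\mathbb{Q})_{0}\times\Gamma_{0}(N)(\hat{\mathbb{Z}}^{p})$-action. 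The paper phrases the last step as a direct computation (two points $(z_{i},g_{i})$ have the same image iff they differ by $(b,k)$ with $k\in\Gamma_{0}(N)(\hat{\mathbb{Z}}^{p})=\textup{GL}_{2}(\hat{\mathbb{Z}}^{p})\cap w_{N}^{-1}\textup{GL}_{2}(\hat{\mathbb{Z}}^{p})w_{N}$), whereas you package the same content as a $2$-Cartesian square; these are equivalent. Your explicit concern about transferring cyclicity between $\pi$ and $x=\pi[p^{\infty}]$ is well placed---the paper handles this step tersely (``then clearly $z\in\mathcal{N}_{0}(N)(S)$''), relying implicitly on the Katz--Mazur factorization of a cyclic $N$-isogeny into its $p$-part and prime-to-$p$ part.
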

\begin{proof}
The following diagram is Cartesian with all arrows closed immersions. 
\begin{equation*}
     \xymatrix{
    \mathcal{Y}_{0}(N)_{\mathbb{F}_{p}}^{ss}\ar[d]\ar[r]&\mathcal{H}_{\mathbb{F}_{p}}^{ss}\ar[d]\\
    \mathcal{Y}_{0}(N)\ar[r]&\mathcal{H}.}
\end{equation*}
this diagram gives a closed immersion $i:\hat{\mathcal{X}}_{0}(N)/_{(\mathcal{X}_{0}(N)_{\mathbb{F}_{p}}^{ss})}\simeq\hat{\mathcal{Y}}_{0}(N)/_{(\mathcal{Y}_{0}(N)_{\mathbb{F}_{p}}^{ss})}\rightarrow \hat{\mathcal{H}}/_{(\mathcal{H}_{\mathbb{F}_{p}}^{ss})}$. 
\par
Recall that we have the following isomorphism,
\begin{equation*}
    \hat{\mathcal{H}}/_{(\mathcal{H}_{\mathbb{F}_{p}}^{ss})}\underset{\sim}{\stackrel{\Theta_{\mathcal{H}}}\longrightarrow} B^{\times}(\mathbb{Q})_{0}^{2}\backslash[\mathcal{N}\times \textup{GL}_{2}(\mathbb{A}_{f}^{p})^{2}/\textup{GL}_{2}(\hat{\mathbb{Z}}^{p})^{2}].
\end{equation*}
Let $S$ be an object in $\textup{Nilp}_{W}$, let $(z,(g,g^{\prime}))\in \mathcal{N}(S)\times \textup{GL}_{2}(\mathbb{A}_{f}^{p})^{2}$ be a point in the closed formal substack $\mathcal{Y}_{0}(N)_{\mathbb{F}_{p}}^{ss}$, then clearly $z\in\mathcal{N}_{0}(N)(S)$. Suppose $z$ corresponds to a cyclic isogeny $E\stackrel{\pi}\longrightarrow E^{\prime}$ by our description of the isomorphism $\Theta_{\mathcal{H}}$, then $g^{\prime}$ is determined by $g$ by the following diagram,
\begin{equation}
    \begin{split}    
    \xymatrix{
    V^{p}(E_{\overline{s}})\ar[d]^{V^{p}(\pi)}\ar[rrr]^{g^{-1}\eta_{0}^{p}\circ V^{p}(\rho_{E}^{-1})}&&&(\mathbb{A}_{f}^{p})^{2}\ar[d]^{w_{N}}\\
    V^{p}(E_{\overline{s}}^{\prime})\ar[rrr]^{g^{\prime-1}\eta_{0}^{p}\circ V^{p}(\rho_{E^{\prime}}^{-1})}&&&(\mathbb{A}_{f}^{p})^{2}.}
    \end{split}
    \label{disa}
\end{equation}
thus we only focus on the pair $(z,g)\in\mathcal{N}_{0}(N)(S)\times \textup{GL}_{2}(\mathbb{A}_{f}^{p})$. Consider the following morphism
\begin{align*}
    \Theta: \mathcal{N}_{0}(N)\times \textup{GL}_{2}(\mathbb{A}_{f}^{p})&\longrightarrow \hat{\mathcal{H}}/_{(\mathcal{H}_{\mathbb{F}_{p}}^{ss})},\\
    (z,g)&\longmapsto \Theta_{\mathcal{H}}^{-1}(z,(g,g^{\prime})).
\end{align*}
the image of $\Theta$ lies in the closed formal substack $\hat{\mathcal{X}}_{0}(N)/_{(\mathcal{X}_{0}(N)_{\mathbb{F}_{p}}^{ss})}$.
\par
Let $(z_{1},g_{1})$, $(z_{2},g_{2})\in\mathcal{N}_{0}(N)(S)\times\textup{GL}_{2}(\mathbb{A}_{f}^{p})$ be two points, then $\Theta(z_{1},g_{1})=\Theta(z_{2},g_{2})$ if and only if there exists $b,b^{\prime}\in B^{\times}(\mathbb{Q})_{0}$ and $k_{1},k_{1}^{\prime}\in \textup{GL}_{2}(\hat{\mathbb{Z}}^{p})$ such that
$(z_{2},(g_{2},g_{2}^{\prime}))=((b,b^{\prime})_{\ast}z_{1},(bg_{1}k_{1},b^{\prime}g_{1}^{\prime}k_{1}^{\prime}))$. We still use $E\stackrel{\pi}\longrightarrow E^{\prime}$ to denote the corresponding point of $z_{2}$ under $\Theta_{\mathcal{H}}$. Notice that $(z_{2},(g_{2},g_{2}^{\prime}))=(z_{2},(bg_{1},b^{\prime}g_{1}^{\prime}))$ in the quotient stack $[\mathcal{N}\times\textup{GL}_{2}(\mathbb{A}_{f}^{p})^{2}/\textup{GL}_{2}(\hat{\mathbb{Z}}^{p})^{2}]$, therefore $\Theta_{\mathcal{H}}(z_{2},(g_{2},g_{2}^{\prime}))=$ $\linebreak\Theta_{\mathcal{H}}(z_{2},(bg_{1},b^{\prime}g_{1}^{\prime}))$ $\in \hat{\mathcal{X}}_{0}(N)/_{(\mathcal{X}_{0}(N)_{\mathbb{F}_{p}}^{ss})}(S)$, hence both $(g_{2}=bg_{1}k_{1},g_{2}^{\prime}=b^{\prime}g_{1}^{\prime}k_{1}^{\prime})$ and $(bg_{1},b^{\prime}g_{1}^{\prime})$ satisfy the commutative diagram (\ref{disa}), then
\begin{equation*}
    k_{1}^{\prime} = w_{N}k_{1}w_{N}^{-1},
\end{equation*}
since both $k_{1}$ and $k_{1}^{\prime}$ belongs to $\textup{GL}_{2}(\hat{\mathbb{Z}}^{p})\coloneqq\prod\limits_{v\neq\infty,p}\textup{GL}_{2}(\mathbb{Z}_{v})$, there exists $a,b,c,d\in\hat{\mathbb{Z}}^{p}$ such that
\begin{equation*}
    k_{1}=\begin{pmatrix}
    a & b\\
    Nc & d
    \end{pmatrix}\in \Gamma_{0}(N)(\hat{\mathbb{Z}}^{p}).
\end{equation*}
Moreover, the element $b^{\prime}$ is also determined by $b$ by the diagram (\ref{disa}). Therefore $\Theta(z_{1},g_{1})=\Theta(z_{2},g_{2})$ if and only if there exists $b\in B^{\times}(\mathbb{Q})_{0}$ and $k\in \Gamma_{0}(N)(\hat{\mathbb{Z}}^{p})$ such that $(z_{2},g_{2})=(b_{\ast}z_{1},bg_{1}k)$.
\end{proof}
Let $\hat{\mathcal{Z}}^{ss}(T,\boldsymbol{\varphi})$ be the completion of $\mathcal{Z}(T,\boldsymbol{\varphi})$ along its supersingular locus $\mathcal{Z}^{ss}(T,\boldsymbol{\varphi})\coloneqq$$\linebreak\mathcal{Z}(T,\boldsymbol{\varphi})\times_{\mathcal{X}_{0}(N)}\mathcal{X}_{0}(N)_{\mathbb{F}_{p}}^{ss}$. Let $\Delta(N)^{(p)}$ be the unique quadratic space over $\mathbb{Q}$ (up to isometry) such that: 1. It is positive definite at $\infty$; 2. For finite prime $l\neq p$, $\Delta(N)^{(p)}\otimes\mathbb{Q}_{l}$ is isometric to $\delta_{l}(N)\otimes\mathbb{Q}_{l}$; 3. $\Delta(N)^{(p)}\otimes\mathbb{Q}_{p}$ is isometric to $\mathbb{W}$. As a corollary of the formal uniformization of the supersingular locus of $\mathcal{X}_{0}(N)$ (cf. Proposition \ref{uniformization}), we have the following formal uniformization of the special cycles on $\mathcal{X}_{0}(N)$.
\begin{corollary}
Let $T\in\textup{Sym}_{2}(\mathbb{Q})$ be a nonsingular symmetric matrix, and $\textup{Diff}(T,\Delta(N))=\{p\}$. Let $\boldsymbol{\varphi}\in\mathscr{S}(\mathbb{V}_{f}^{2})$ be a $T$-admissible Schwartz function. Let $K_{0}^{\prime}(\hat{\mathcal{X}}_{0}(N)/_{(\mathcal{X}_{0}(N)_{\mathbb{F}_{p}}^{ss})})$ be the Grothendieck group of coherent sheaves of $\mathcal{O}_{\hat{\mathcal{X}}_{0}(N)/_{(\mathcal{X}_{0}(N)_{\mathbb{F}_{p}}^{ss})}}$-modules. Then we have the following identity in $K_{0}^{\prime}(\hat{\mathcal{X}}_{0}(N)/_{(\mathcal{X}_{0}(N)_{\mathbb{F}_{p}}^{ss})})$,
\begin{equation*}
    \hat{\mathcal{Z}}^{ss}(T,\boldsymbol{\varphi}) = \sum\limits_{\substack{\boldsymbol{x}\in B^{\times}(\mathbb{Q})_{0}\backslash (\Delta(N)^{(p)})^{2}\\ T(\boldsymbol{x})= T}}\sum\limits_{g\in B^{\times}_{\boldsymbol{x}}(\mathbb{Q})_{0}\backslash \textup{GL}_{2}(\mathbb{A}_{f}^{p})/\Gamma_{0}(N)(\hat{\mathbb{Z}}^{p})}\boldsymbol\varphi(g^{-1}\boldsymbol{x})\cdot \Theta_{\mathcal{X}_{0}(N)}^{-1}(\mathcal{Z}(\boldsymbol{x}),g),
\end{equation*}
where $B^{\times}_{\boldsymbol{x}}\subset B^{\times}$ is the stabilizer of $\boldsymbol{x}\in(\Delta(N)^{(p)})^{2}$.
\label{puni}
\end{corollary}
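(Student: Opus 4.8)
The plan is to pull the formal cycle $\hat{\mathcal{Z}}^{ss}(T,\boldsymbol{\varphi})$ back through the uniformization isomorphism $\Theta_{\mathcal{X}_{0}(N)}$ of Proposition \ref{uniformization} and to read off the right-hand side from the moduli description of $\mathcal{Z}(T,\boldsymbol{\varphi})$ in Definition \ref{spe}. Since $\textup{Diff}(T,\Delta(N))=\{p\}$, Proposition \ref{onlyone} shows that $T$ is positive definite and that $\mathcal{Z}(T,\boldsymbol{\varphi})$ is supported in the supersingular locus of $\mathcal{X}_{0}(N)_{\mathbb{F}_{p}}$, so the passage to $\hat{\mathcal{Z}}^{ss}(T,\boldsymbol{\varphi})$ loses nothing; by Corollary \ref{nocusp2} the structure morphism $\mathcal{Z}^{ss}(T,\boldsymbol{\varphi})\to\mathcal{X}_{0}(N)$ is finite and unramified, so it suffices to exhibit $\mathcal{Z}^{ss}(T,\boldsymbol{\varphi})$ itself as the sum of the pieces on the right and then push forward structure sheaves to land in $K_{0}^{\prime}$.

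First I would describe, in the uniformizing coordinates, a supersingular point of $\mathcal{Z}(T,\boldsymbol{\varphi})$ over $S\in\textup{Nilp}_{W}$ together with its formal neighbourhood. Such a point is a tuple $(E\stackrel{\pi}\longrightarrow E',\overline{(\eta^{p},\eta^{\prime p})},\boldsymbol{j})$ with $E,E'$ supersingular, $\boldsymbol{j}=(j_{1},j_{2})$ a pair of special morphisms with $T(\boldsymbol{j})=T$ and $\eta^{\ast}(\boldsymbol{j})\in\boldsymbol{\omega}^{p}$. By Proposition \ref{uniformization} the underlying $(E\stackrel{\pi}\longrightarrow E',\overline{(\eta^{p},\eta^{\prime p})})$ corresponds to a pair $(z,g)\in\mathcal{N}_{0}(N)(S)\times\textup{GL}_{2}(\mathbb{A}_{f}^{p})$, well-defined modulo the action of $B^{\times}(\mathbb{Q})_{0}\times\Gamma_{0}(N)(\hat{\mathbb{Z}}^{p})$. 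Transporting $\boldsymbol{j}$ through the quasi-isogenies entering the construction of $\Theta_{\mathcal{X}_{0}(N)}$ (equivalently of $\Theta_{\mathcal{H}}$ in Theorem \ref{H}), each $j_{i}$ becomes an element of $\textup{End}^{0}(\mathbb{X})\simeq\mathbb{B}$; the orthogonality relation $j_{i}\circ\pi^{\vee}+\pi\circ j_{i}^{\vee}=0$ places it in $\mathbb{W}=\{x_{0}\}^{\bot}$, and since it arises from honest homomorphisms it in fact lies in the rational quadratic space $\{x_{0}\}^{\bot}\cap B$. That space is positive definite at $\infty$, isometric to $\delta_{l}(N)\otimes\mathbb{Q}_{l}$ for every $l\ne p$ (this is exactly the local assertion used in the proof of Proposition \ref{onlyone}), and isometric to $\mathbb{W}$ at $p$; hence it equals $\Delta(N)^{(p)}$, by the uniqueness of a positive definite ternary $\mathbb{Q}$-quadratic space with prescribed local invariants. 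Therefore $\boldsymbol{j}$ yields a pair $\boldsymbol{x}=(x_{1},x_{2})\in(\Delta(N)^{(p)})^{2}$ with $T(\boldsymbol{x})=T$, well-defined modulo the $B^{\times}(\mathbb{Q})_{0}$-action, and the level condition $\eta^{\ast}(\boldsymbol{j})\in\boldsymbol{\omega}^{p}$, transported through the identification $\Delta(N)^{(p)}\otimes\mathbb{A}_{f}^{p}\simeq\mathbb{V}_{f}^{p}$ and the $\textup{GL}_{2}$-action of Lemma \ref{cliffordal}, produces the coefficient $\boldsymbol{\varphi}(g^{-1}\boldsymbol{x})$ (the $p$-away part giving the factor $\boldsymbol{\varphi}^{p}$ and the $p$-component $\boldsymbol{\varphi}_{p}=c\cdot\boldsymbol{1}_{\delta_{p}(N)^{2}}$ contributing the constant $c$ together with the integrality condition on $\boldsymbol{x}_{p}$ under which $\mathcal{Z}(\boldsymbol{x})$ is nonempty).

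For a fixed $\boldsymbol{x}$ and $g$, the formal subscheme of $\mathcal{N}_{0}(N)\times\{g\}$ over which $\boldsymbol{x}$ lifts to a pair of honest homomorphisms is precisely $\mathcal{Z}(\boldsymbol{x})\times\{g\}$ in the sense of Definition \ref{esp2}: by construction $\Theta_{\mathcal{H}}$ identifies the condition that a homomorphism lift with the condition that its associated quasi-isogeny in $\mathbb{B}$ lift, i.e.\ with the condition defining $\mathcal{Z}^{\sharp}(\boldsymbol{x})\subset\mathcal{N}$, and $\mathcal{Z}(\boldsymbol{x})$ is by definition the pullback of $\mathcal{Z}^{\sharp}(\boldsymbol{x})$ along $\mathcal{N}_{0}(N)\to\mathcal{N}$ (the Cartesian square following Definition \ref{esp2}). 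It then remains to reorganize the global quotient $B^{\times}(\mathbb{Q})_{0}\backslash[\mathcal{N}_{0}(N)\times\textup{GL}_{2}(\mathbb{A}_{f}^{p})/\Gamma_{0}(N)(\hat{\mathbb{Z}}^{p})]$: running over the finitely many contributing $B^{\times}(\mathbb{Q})_{0}$-orbits of $\boldsymbol{x}$ with $T(\boldsymbol{x})=T$, the residual symmetry of a fixed $\boldsymbol{x}$ is its stabilizer $B^{\times}_{\boldsymbol{x}}(\mathbb{Q})_{0}$, which acts on the remaining datum $g$ from the left while $\Gamma_{0}(N)(\hat{\mathbb{Z}}^{p})$ acts from the right, producing the inner sum over $B^{\times}_{\boldsymbol{x}}(\mathbb{Q})_{0}\backslash\textup{GL}_{2}(\mathbb{A}_{f}^{p})/\Gamma_{0}(N)(\hat{\mathbb{Z}}^{p})$ with weight $\boldsymbol{\varphi}(g^{-1}\boldsymbol{x})$. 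Assembling these contributions and pushing forward to $\hat{\mathcal{X}}_{0}(N)/_{(\mathcal{X}_{0}(N)_{\mathbb{F}_{p}}^{ss})}$ gives the claimed identity.

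I expect the main obstacle to be this last, essentially group-theoretic, step: one must check that the double sums are locally finite (using that $\boldsymbol{\varphi}^{p}$ has compact support and that only finitely many $B^{\times}(\mathbb{Q})_{0}$-orbits of $\boldsymbol{x}$ contribute), that the stabilizers $B^{\times}_{\boldsymbol{x}}$ are finite modulo the centre (which holds because $T$ is nonsingular, so the $\mathbb{Q}$-span of $\boldsymbol{x}$ is a nondegenerate plane), and that no spurious multiplicities are introduced. This is the familiar unfolding argument for special cycles on quaternionic Shimura curves as carried out in \cite{KRY06}; the only additional care required here is to keep track of the $\Gamma_{0}(N)$-level structure and of the cyclicity constraint on the isogeny $\pi$ throughout.
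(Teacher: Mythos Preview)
Your proposal is correct and follows essentially the same route as the paper's proof: pull a point of $\hat{\mathcal{Z}}^{ss}(T,\boldsymbol{\varphi})$ through the uniformization $\Theta_{\mathcal{X}_{0}(N)}$ of Proposition~\ref{uniformization}, transport the special morphisms $\boldsymbol{j}$ via the framing quasi-isogenies to obtain a pair $\boldsymbol{x}\in(\Delta(N)^{(p)})^{2}$ with $T(\boldsymbol{x})=T$, and verify that the moduli conditions become exactly $z\in\mathcal{Z}(\boldsymbol{x})$ together with $g^{-1}\boldsymbol{x}\in\boldsymbol{\omega}^{p}$. The paper's argument is in fact terser than yours on the final unfolding of the double quotient and the finiteness issues; your treatment of those points is more explicit but not different in substance.
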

\begin{proof}
We only need to prove the corollary when $\boldsymbol{\varphi}$ is the characteristic function of some open compact subset $\boldsymbol{\omega}$ of $\mathbb{V}_{f}^{2}$. Let $S$ be an object in $\textup{Nilp}_{W}$. Suppose $\Theta_{\mathcal{X}_{0}(N)}^{-1}(z,g)\in \hat{\mathcal{Z}}^{ss}(T,\boldsymbol{\varphi})(S)$ for some $z\in \mathcal{N}_{0}(N)(S)$, then $z$ gives rise to a cyclic isogeny $E\stackrel{\pi}\longrightarrow E^{\prime}$, along with two isogenies $x_{1}, x_{2}\in \textup{Hom}(E,E^{\prime})_{(p)}$ such that
\begin{equation*}
    T = (\frac{1}{2}(x_{i},x_{j})),\,\,\textup{and}\,\,(x_{1},\pi) =(x_{2},\pi) = 0.
\end{equation*}
then $x_{1},x_{2}$ and $\pi$ induce endomorphisms of the correponding $p$-divisible groups, and hence endomorphims of $\mathbb{X}$. We still use $x_{1},x_{2}$ to denote the endomorphisms of $\mathbb{X}$, let $T(\boldsymbol{x})\coloneqq(\frac{1}{2}(x_{i},x_{j}))$ be the inner product matrix of $\boldsymbol{x}=(x_{1},x_{2}))$, we have
\begin{equation*}
    T = T(\boldsymbol{x}),\,\,\textup{and}\,\,(x_{1},x_{0}) =(x_{2},x_{0}) = 0.
\end{equation*}
i.e., $x_{1},x_{2}\in \{x_{0}\}^{\bot}=\mathbb{W}\simeq \Delta(N)^{(p)}\otimes_{\mathbb{Q}}\mathbb{Q}_{p}$. We can also identify $x_{1}$ and $x_{2}$ as elements in $\linebreak\Delta(N)^{(p)}\otimes\mathbb{A}_{f}^{p}$ via the level structures $\eta_{0}^{p}\circ V^{p}(\rho_{E}^{-1})$ and $\eta_{0}^{p}\circ V^{p}(\rho_{E^{\prime}}^{-1})$ of $E$ and $E^{\prime}$. The positivity assumption on $T$ makes it embeddable into $\Delta(N)^{(p)}\otimes_{\mathbb{Q}}\mathbb{R}$. By carefully choosing the isometry $\mathbb{W}\simeq \Delta(N)^{(p)}\otimes_{\mathbb{Q}}\mathbb{Q}_{p}$, we can find $\boldsymbol{x}\in (\Delta(N)^{(p)})^{2}$ which induces $x_{1}$ and $x_{2}$ locally at every place of $\mathbb{Q}$.
\par
Then the condition $\Theta_{\mathcal{X}_{0}(N)}^{-1}(z,g)\in \hat{\mathcal{Z}}^{ss}(T,\boldsymbol{\varphi})(S)$ implies that
\begin{equation*}
    z\in\mathcal{Z}(\boldsymbol{x})\,\,\textup{and}\,\, g^{-1}\boldsymbol{x}\in\boldsymbol{\omega} \,\,(\textup{here}\,\,g\in \textup{GL}_{2}(\mathbb{A}_{f})\,\,\textup{with}\,\,g_{p}=1).
\end{equation*}
and this is exactly the meaning of the identity in the theorem.
\end{proof}

\section{Difference formula at the geometric side}
\label{6}
\subsection{$p$-divisible groups over adic noetherian rings}
\begin{definition}
A topological ring $R$ is an adic noetherian ring if it is noetherian as a ring and it has a topological basis consisting of all translations of the neighborhoods of zero of the form $I^{n}$ $(n>0)$ where $I\subset R$ is a fixed ideal of $R$, and $R$ is Hausdorff and complete in that topology. A choice of such an ideal is said to be the defining ideal of the topological ring $R$.
\end{definition}
\begin{lemma}
Let $A$ be an adic noetherian local ring whose defining ideal is the maximal ideal $\mathfrak{m}$, then any ideal $I\subset A$ is complete in the topological ring $A$, i.e.,
\begin{equation*}
    I=\bigcap\limits_{n}(I+\mathfrak{m}^{n}).
\end{equation*}
Moreover, $A/I$ is an adic noetherian ring with defining ideal $\mathfrak{m}/I$.
\label{complete}
\end{lemma}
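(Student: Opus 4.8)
The plan is to deduce both assertions from the Artin--Rees lemma and its standard consequences (Krull's intersection theorem and the exactness of $\mathfrak{m}$-adic completion on finitely generated modules over a noetherian ring). No novel input is needed; the statement is classical commutative algebra, so the work is really one of organizing the known implications.

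First I would establish the identity $I=\bigcap_{n}(I+\mathfrak{m}^{n})$. The inclusion $I\subseteq\bigcap_{n}(I+\mathfrak{m}^{n})$ is immediate. For the reverse inclusion, pass to the quotient ring $A/I$, which is noetherian local with maximal ideal $\mathfrak{m}/I$. Since $(I+\mathfrak{m}^{n})/I=\mathfrak{m}^{n}\cdot(A/I)=(\mathfrak{m}/I)^{n}$, the desired inclusion is equivalent to $\bigcap_{n}(\mathfrak{m}/I)^{n}=0$, which is precisely the Krull intersection theorem applied to the noetherian local ring $A/I$ (itself a consequence of Artin--Rees together with Nakayama's lemma). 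Equivalently, this says that $I$ is closed in the $\mathfrak{m}$-adic topology on $A$.

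Next I would verify that $A/I$ is an adic noetherian ring with defining ideal $\mathfrak{m}/I$. It is noetherian, being a quotient of the noetherian ring $A$. The quotient topology on $A/I$ induced from the $\mathfrak{m}$-adic topology on $A$ admits the images $(\mathfrak{m}^{n}+I)/I=(\mathfrak{m}/I)^{n}$ as a neighborhood basis of $0$, so it coincides with the $\mathfrak{m}/I$-adic topology; and it is Hausdorff by the first part, since $\bigcap_{n}(\mathfrak{m}/I)^{n}=0$. For completeness, I would invoke that $A$ is $\mathfrak{m}$-adically complete and noetherian, so that for the finitely generated $A$-module $A/I$ the canonical map $A/I\to\widehat{A/I}$ to the $\mathfrak{m}$-adic completion is an isomorphism; combined with the identification of the $\mathfrak{m}$-adic topology on $A/I$ with the $\mathfrak{m}/I$-adic one, this gives that $A/I$ is complete with defining ideal $\mathfrak{m}/I$. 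The only place where the noetherian hypothesis is genuinely used — and hence the only point requiring care — is the appeal to Artin--Rees, both through Krull's theorem in the first part and through exactness of completion in the second; beyond that I anticipate no obstacle.
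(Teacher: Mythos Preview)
Your proof is correct and uses the same classical inputs as the paper (Artin--Rees/Nakayama/Krull intersection and exactness of $\mathfrak{m}$-adic completion on finitely generated modules over a noetherian ring). The only difference is organizational: the paper first argues that the ideal $I$ itself is $\mathfrak{m}$-adically complete (via $\bigcap_n \mathfrak{m}^n I = 0$ and \cite[Lemma 031B]{Stacks Project}), then applies exactness of completion to the short exact sequence $0\to I\to A\to A/I\to 0$ to conclude $A/I\simeq\widehat{A/I}$, and finally reads off $I=\bigcap_n(I+\mathfrak{m}^n)$ from that isomorphism; you instead establish the intersection formula first by Krull's theorem in $A/I$ and then deduce completeness of $A/I$ directly as a finitely generated module over the complete ring $A$. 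Your ordering is arguably more streamlined, since it avoids the detour through completing $I$ as a submodule, but both routes are standard and neither requires anything beyond what you have written.
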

\begin{proof}
Nakayama lemma implies that $\bigcap\limits_{n}\mathfrak{m}^{n}I=0$, then we can apply \cite[Lemma 031B]{Stacks Project} to conclude that $I$ is $\mathfrak{m}$-adically complete, i.e., $I\simeq\hat{I}\coloneqq\varprojlim\limits_{n}I/\mathfrak{m}^{n}I$.
\par
We have the following exact sequence,
\begin{equation*}
    0\longrightarrow I\longrightarrow A\longrightarrow A/I\longrightarrow 0.
\end{equation*}
since $A$ is noetherian, after taking completion with respect to the maximal ideal $\mathfrak{m}$, we get
\begin{equation*}
    0\longrightarrow \hat{I}\longrightarrow \hat{A}\longrightarrow \widehat{A/I}\longrightarrow 0.
\end{equation*}
However, $A=\hat{A}$ and $I=\hat{I}$, hence $\widehat{A/I}\simeq A/I$. We conclude $A/I$ is an adic noetherian ring.
\par
By definition, $\widehat{A/I}=\varprojlim\limits_{n}A/(I+\mathfrak{m}^{n})$, hence $\widehat{A/I}\simeq A/I$ implies that $I=\bigcap\limits_{n}(I+\mathfrak{m}^{n})$.
\end{proof}
\begin{lemma}
Let $A$ be an adic noetherian ring whose defining ideal is $I$, then the following functor
\begin{align*}
    \left\{\textup{Category of $p$-divisible groups over}\,\,\textup{Spec}\,A\right\}&\rightarrow\left\{\textup{Category of $p$-divisible groups over}\,\, \textup{Spf}(A)\right\},\\
    G=(G_{n}/A)&\mapsto (G_{k}=(G_{k}(n)=G(n)\times_{A}A/I^{k}))_{k\geq1}.
\end{align*}
is an equivalence.
\label{equivalence}
\end{lemma}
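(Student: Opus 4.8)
The plan is to identify the functor in the statement with the $I$-adic completion functor and to reduce the equivalence to Grothendieck's formal existence theorem for finite morphisms, applied level by level to the finite flat group schemes $G[p^{n}]$. Throughout write $A_{k}=A/I^{k}$, so that the affine truncations of $\textup{Spf}(A)$ are the $\textup{Spec}\,A_{k}$ and a $p$-divisible group over $\textup{Spf}(A)$ is precisely a compatible system $(G_{k})_{k\geq1}$ of $p$-divisible groups over the $\textup{Spec}\,A_{k}$, the comparison isomorphisms being $G_{k+1}\times_{A_{k+1}}A_{k}\simeq G_{k}$. Since $A$ is Noetherian and $I$-adically complete, $A=\varprojlim_{k}A_{k}$ and every finite $A$-module is $I$-adically complete.

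\textbf{Full faithfulness.} For finite $A$-modules $M,N$ one has $\textup{Hom}_{A}(M,N)=\varprojlim_{k}\textup{Hom}_{A_{k}}(M/I^{k}M,N/I^{k}N)$; this follows from $N=\varprojlim_{k}N/I^{k}N$ together with the fact that $M$ is finitely presented (and Artin--Rees, to see that the natural map is also injective). Applying this to the finite locally free coordinate rings of the truncations $G(n)$ and $H(n)$ of two $p$-divisible groups over $\textup{Spec}\,A$ gives $\textup{Hom}(G(n),H(n))=\varprojlim_{k}\textup{Hom}(G(n)_{k},H(n)_{k})$, and this identification is compatible with the transition maps in $n$. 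Since a homomorphism of $p$-divisible groups over $\textup{Spec}\,A$ (resp.\ over $\textup{Spf}(A)$) is the same datum as a system of homomorphisms of the finite group schemes $G(n)$ compatible with the inclusions $G(n)\hookrightarrow G(n+1)$ (resp.\ such a system over both indices $n$ and $k$), passing to the limit over $n$ shows that completion induces a bijection $\textup{Hom}(G,H)\xrightarrow{\ \sim\ }\textup{Hom}\bigl((G_{k}),(H_{k})\bigr)$.

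\textbf{Essential surjectivity.} Let $(G_{k})$ be a $p$-divisible group over $\textup{Spf}(A)$. For each fixed $n$, the truncations $(G_{k}[p^{n}])_{k}$ form a compatible system of finite locally free commutative group schemes of constant order $p^{nh}$ over the $A_{k}$, i.e.\ their coordinate rings form a compatible system of finite flat $A_{k}$-algebras. By the affine case of Grothendieck's formal existence theorem (here $\textup{Spec}\,A\to\textup{Spec}\,A$ is finite, hence proper, and $A$ is $I$-adically complete Noetherian; see EGA III, \S5.1, or the statement for finite algebras in EGA I, \S10.11) such a system algebraizes uniquely to a finite $A$-algebra whose reduction modulo $I^{k}$ is the $k$-th term, and this algebra is again $A$-flat by the local criterion of flatness. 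Thus $(G_{k}[p^{n}])_{k}$ is the completion of a finite locally free group scheme $\mathcal{G}_{n}$ over $\textup{Spec}\,A$: the comultiplication, counit and antipode algebraize by the full faithfulness just proved, and the group-scheme axioms hold because they hold modulo $I^{k}$ for every $k$ while the coordinate ring of $\mathcal{G}_{n}$, being finite over $A$, is $I$-adically separated. The closed immersions $G_{k}[p^{n}]\hookrightarrow G_{k}[p^{n+1}]$ algebraize to closed immersions $\mathcal{G}_{n}\hookrightarrow\mathcal{G}_{n+1}$, and the equality $\mathcal{G}_{n}=\mathcal{G}_{n+1}[p^{n}]$ holds since it can be checked modulo $I^{k}$ for each $k$ (both are closed subschemes of $\mathcal{G}_{n+1}=\textup{Spec}\,R$ with $R$ finite over $A$, hence $I$-adically complete and separated). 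Therefore $\mathcal{G}:=\varinjlim_{n}\mathcal{G}_{n}$ is a $p$-divisible group over $\textup{Spec}\,A$, and by the uniqueness part of the existence theorem its image under the completion functor is canonically isomorphic to $(G_{k})$.

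\textbf{Main point of care.} There is no essential difficulty beyond invoking formal GAGA for finite morphisms; the only steps needing (routine) attention are that algebraization sends finite flat modules to finite flat modules (local flatness criterion) and that the Hopf-algebra structure as well as the tower relations $\mathcal{G}_{n}=\mathcal{G}_{n+1}[p^{n}]$ survive algebraization, both of which reduce to the $I$-adic separatedness of finite $A$-algebras and the full faithfulness of completion established above. This equivalence is classical and is often quoted as a black box; compare Messing's work on the crystals of $p$-divisible groups.
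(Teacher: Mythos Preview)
Your argument is correct and is essentially the standard proof: algebraize each finite flat level $G_{k}[p^{n}]$ via formal GAGA for finite algebras over a complete Noetherian ring, then check that the Hopf structure and the tower relations survive by full faithfulness and $I$-adic separatedness. The paper does not prove this lemma at all; it simply cites de Jong \cite[Lemma 2.4.4]{deJ95}, whose proof proceeds along exactly the lines you wrote out. So your proposal is not a different approach but rather an unpacking of the black-box citation.
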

\begin{proof}
This is proved by de Jong in \cite[Lemma 2.4.4]{deJ95}.
\end{proof}

\subsection{Difference Divisors on $\mathcal{N}$}
Recall that for every nonzero integral $x\in\mathbb{B}$, we define the special divisor $\mathcal{Z}^{\sharp}(x)$ on $\mathcal{N}$ as the closed formal subscheme of $\mathcal{N}$ over where $x$ lifts to an isogeny (cf. Definition \ref{specialcycle} and Proposition \ref{equation}). It is cut out by an element $f_{x}\in W[[t_{1},t_{2}]]$.
\par
For any nonzero $x\in\mathbb{B}$ such that $\nu_{p}(x^{\vee}\circ x)\geq2$, there is a closed immersion:
\begin{align*}
    i: \mathcal{Z}^{\sharp}(p^{-1}x)&\longrightarrow\mathcal{Z}^{\sharp}(x),
\end{align*}
by composing every deformation of $p^{-1}x$ with the multiplication-by-$p$ morphism. Since $W[[t_{1},t_{2}]]$ is a unique factorization domain, we get $f_{p^{-1}x}\vert f_{x}$. Define $d_{x}=f_{x}/f_{p^{-1}x}\in W[[t_{1},t_{2}]]$ when $\nu_{p}(x^{\vee}\circ x)\geq2$, $d_{x}=f_{x}$ when $\nu_{p}(x^{\vee}\circ x)=0$ or $1$.
\begin{definition}
Let $x\in\mathbb{B}$ be a nonzero and integral element. The difference divisor associated to $x$ to be
\begin{equation*}
    \mathcal{D}(x)=\textup{Spf}\,W[[t_{1},t_{2}]]/d_{x}.
\end{equation*}
\label{diff}
\end{definition}
The notion of difference divisor is first introduced by Terstiege in \cite{Ter11}. Proposition \ref{equation} implies that $p\nmid f_{x}$, hence $p\nmid d_{x}$, therefore the difference divisor $\mathcal{D}(x)$ is flat over $\textup{Spf}\,W$. The following theorem asserts that $\mathcal{D}(x)$ is regular.
\begin{theorem}
Let $x\in\mathbb{B}$ be a nonzero and integral element, $\mathfrak{m}=(p,t_{1},t_{2})$ is the maximal ideal of $W[[t_{1},t_{2}]]$, then $d_{x}\in\mathfrak{m}\backslash\mathfrak{m}^{2}$, i.e., the difference divisor $\mathcal{D}(x)$ is regular. Moreover, for any $i\geq1$, $d_{x}$ and $d_{p^{-i}x}$ are coprime to each other if $p^{-i}x$ is also integral.
\label{regularity}
\end{theorem}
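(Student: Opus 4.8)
Throughout write $v=\nu_{p}(x^{\vee}\circ x)$. Let $\textup{mult}_{\mathfrak{m}}(g)$ denote, for nonzero $g\in W[[t_{1},t_{2}]]$, the largest integer $e$ with $g\in\mathfrak{m}^{e}$; since $\textup{gr}_{\mathfrak{m}}W[[t_{1},t_{2}]]$ is the polynomial ring $\mathbb{F}[\overline{p},\overline{t}_{1},\overline{t}_{2}]$, hence a domain, $\textup{mult}_{\mathfrak{m}}$ is additive: $\textup{mult}_{\mathfrak{m}}(gh)=\textup{mult}_{\mathfrak{m}}(g)+\textup{mult}_{\mathfrak{m}}(h)$. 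By definition $d_{y}=f_{y}/f_{p^{-1}y}$ when $\nu_{p}(y^{\vee}\circ y)\geq 2$ and $d_{y}=f_{y}$ otherwise, and $f_{p^{-1}y}\mid f_{y}$ in that range; iterating gives
\begin{equation*}
 f_{x}=\prod_{j=0}^{\lfloor v/2\rfloor}d_{p^{-j}x},\qquad\text{equivalently}\qquad\mathcal{Z}^{\sharp}(x)=\sum_{j=0}^{\lfloor v/2\rfloor}\mathcal{D}(p^{-j}x)
\end{equation*}
as effective Cartier divisors. Each $\mathcal{D}(p^{-j}x)$ is a nonzero divisor on the local formal scheme $\mathcal{N}$ (as one sees from its generic fibre), hence passes through the closed point, so $\textup{mult}_{\mathfrak{m}}(d_{p^{-j}x})\geq 1$ and therefore $\textup{mult}_{\mathfrak{m}}(\mathcal{Z}^{\sharp}(x))\geq\lfloor v/2\rfloor+1$, with equality if and only if every $d_{p^{-j}x}$ lies in $\mathfrak{m}\setminus\mathfrak{m}^{2}$. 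Since a nonzero non-unit $g\in\mathfrak{m}$ has $W[[t_{1},t_{2}]]/(g)$ regular precisely when $g\notin\mathfrak{m}^{2}$, the first assertion of the theorem is equivalent to the single equality $\textup{mult}_{\mathfrak{m}}(\mathcal{Z}^{\sharp}(x))=\lfloor v/2\rfloor+1$.

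To prove this equality I would intersect $\mathcal{Z}^{\sharp}(x)$ with a well-chosen regular curve through the closed point. Choose $y_{1},y_{2}\in\mathbb{B}$ spanning a self-dual (unimodular) rank-$2$ lattice orthogonal to $x$ --- or, when no such exists, $p$-adically nearly orthogonal to $x$, in the sense that $(x,y_{i})\in p^{\lceil(v+1)/2\rceil}\mathbb{Z}_{p}$ (see the last paragraph). Since the unimodular lattice $\langle y_{1},y_{2}\rangle$ splits off as an orthogonal direct summand, $\langle x,y_{1},y_{2}\rangle=\langle y_{1},y_{2}\rangle\perp\langle w\rangle$ with $\nu_{p}(q(w))=v$, so its Gross--Keating invariants are $(0,0,v)$; and $\mathcal{C}:=\mathcal{Z}^{\sharp}(y_{1})\cap\mathcal{Z}^{\sharp}(y_{2})=\mathcal{Z}^{\sharp}(\langle y_{1},y_{2}\rangle)\simeq\textup{Spf}\,W$ is a regular $1$-dimensional formal subscheme through the closed point of $\mathcal{N}$. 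As $x,y_{1},y_{2}$ are linearly independent, $\mathcal{Z}^{\sharp}(x)\cap\mathcal{C}$ is $0$-dimensional; in the Cohen--Macaulay ring $W[[t_{1},t_{2}]]$ the three local equations then form a regular sequence, so there are no higher Tor's and
\begin{equation*}
 \textup{length}\bigl(\mathcal{O}_{\mathcal{Z}^{\sharp}(x)}\otimes_{\mathcal{O}_{\mathcal{N}}}\mathcal{O}_{\mathcal{C}}\bigr)=\chi\bigl(\mathcal{N},\mathcal{O}_{\mathcal{Z}^{\sharp}(x)}\otimes^{\mathbb{L}}\mathcal{O}_{\mathcal{Z}^{\sharp}(y_{1})}\otimes^{\mathbb{L}}\mathcal{O}_{\mathcal{Z}^{\sharp}(y_{2})}\bigr)=\textup{Int}^{\sharp}(\langle x,y_{1},y_{2}\rangle).
\end{equation*}
By the Gross--Keating formula for the arithmetic intersection number of a ternary lattice (\cite[Proposition 5.4]{GK93}, \cite[\S 2.16]{Wed07}, \cite[Theorem 1.1]{Rap07}, \cite[Theorem 1.2.1]{LZ22}) this number equals $\lfloor v/2\rfloor+1$ for invariants $(0,0,v)$. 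On the other hand, since $\mathfrak{m}\mathcal{O}_{\mathcal{C}}$ lies in the maximal ideal of the discrete valuation ring $\mathcal{O}_{\mathcal{C}}$, the same length equals $\textup{ord}_{\mathcal{C}}(f_{x})\geq\textup{mult}_{\mathfrak{m}}(\mathcal{Z}^{\sharp}(x))$. Combining this with the lower bound from the first paragraph forces $\textup{mult}_{\mathfrak{m}}(\mathcal{Z}^{\sharp}(x))=\lfloor v/2\rfloor+1$, hence $\textup{mult}_{\mathfrak{m}}(d_{p^{-j}x})=1$ for all $j$; in particular $d_{x}\in\mathfrak{m}\setminus\mathfrak{m}^{2}$, i.e. $\mathcal{D}(x)$ is regular.

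For the coprimality, each $d_{p^{-j}x}\in\mathfrak{m}\setminus\mathfrak{m}^{2}$ is irreducible in the unique factorization domain $W[[t_{1},t_{2}]]$ (a product of two non-units lies in $\mathfrak{m}^{2}$), so it remains to see that $\mathcal{D}(x)$ and $\mathcal{D}(p^{-i}x)$ are distinct prime divisors for $1\leq i\leq\lfloor v/2\rfloor$. This is visible on the rigid-analytic generic fibre: a $\overline{\mathbb{Q}}_{p}$-point of $\mathcal{N}$ is a pair of $p$-divisible groups that is either endomorphism-free or a quasi-canonical lifting in the sense of Gross of a well-defined conductor, the cycle $\mathcal{Z}^{\sharp}(y)^{\textup{rig}}$ is supported on the quasi-canonical lifting points whose conductor is controlled by $\nu_{p}(y^{\vee}\circ y)$, and $\mathcal{D}(p^{-j}x)^{\textup{rig}}$ is supported exactly on the ``new'' such points appearing at step $j$; different $j$ give disjoint supports. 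Hence $\mathcal{D}(x)\neq\mathcal{D}(p^{-i}x)$, so $d_{x}$ and $d_{p^{-i}x}$ are non-associate irreducibles, so coprime.

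The main obstacle I anticipate is the existence of $y_{1},y_{2}$ as above --- that the anisotropic quaternion algebra $\mathbb{B}$ contains a unimodular plane sufficiently orthogonal to $x$ so that $\langle x,y_{1},y_{2}\rangle$ has Gross--Keating invariants exactly $(0,0,v)$ --- which requires a short case analysis on the ternary quadratic space $\{x\}^{\perp}\subset\mathbb{B}$, with a separate argument (choosing $y_{1},y_{2}$ non-orthogonal to $x$ and recomputing the invariants, or a mild ramified base change) in the few exceptional cases, together with a clean reference for the Gross--Keating length in the needed generality. An alternative route that avoids Gross--Keating is to post-compose the framing of the second $p$-divisible group by a self-quasi-isogeny $u\in\mathbb{B}^{\times}$ with $\nu_{p}(u^{\vee}u)=-1$: after re-choosing a principal polarization this is an isomorphism of $\mathcal{N}$ with an auxiliary Rapoport--Zink space in the same isogeny class (all of which are isomorphic), carrying $\mathcal{Z}^{\sharp}(y)$ to $\mathcal{Z}^{\sharp}(uy)$ and hence $\mathcal{D}(x)$ to $\mathcal{D}(x')$ with $\nu_{p}((x')^{\vee}x')\leq 1$; then $\mathcal{D}(x')=\mathcal{Z}^{\sharp}(x')$ is a diagonal copy of $\textup{Spf}\,W[[t]]$ inside $\mathcal{N}$ when $\nu_{p}((x')^{\vee}x')=0$, and is the supersingular formal completion of the Hecke correspondence $\mathcal{Y}_{0}(p)$, namely $\textup{Spf}\,W[[u_{1},u_{2}]]/(u_{1}u_{2}-p)$, when $\nu_{p}((x')^{\vee}x')=1$ --- both visibly regular, whence again $d_{x}\notin\mathfrak{m}^{2}$. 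In this approach the content is instead the polarization bookkeeping of the $p$-modification and the precise identification of the $\nu_{p}=1$ base case (that the equation is $u_{1}u_{2}-p$, not $u_{1}u_{2}-p^{k}$ with $k\geq 2$), which is exactly the explicit Dieudonn\'e-module / Zink-window computation used in the Supplement and, for $p$ odd, in \cite{LZ22}, and in general in \cite{KM85}.
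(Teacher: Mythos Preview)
Your approach is genuinely different from the paper's. The paper intersects $\mathcal{Z}^{\sharp}(x)$ with a \emph{single} auxiliary divisor $\mathcal{Z}^{\sharp}(x')$ where $x'\perp x$ has unit norm (such an $x'$ always exists in the three-dimensional anisotropic space $\{x\}^{\perp}\subset\mathbb{B}$). This cuts to a smooth surface $\mathcal{Z}^{\sharp}(x')\simeq\textup{Spf}\,W[[t]]$, on which the image of $f_{x}$ factors, by the Gross--Keating quasi-canonical lifting decomposition \cite[(5.10)]{GK93}, into exactly $\lfloor v/2\rfloor+1$ pairwise distinct regular pieces; a short induction on $v$ then matches these pieces one-to-one with the images $\tilde d_{p^{-j}x}$, giving regularity and coprimality simultaneously. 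Your plan instead cuts by a unimodular \emph{plane} to a curve $\textup{Spf}\,W$ and uses only the numerical Gross--Keating length together with additivity of $\textup{mult}_{\mathfrak m}$ --- a pleasant simplification when it works, and it isolates cleanly why the number $\lfloor v/2\rfloor+1$ appears.

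The gap is exactly the existence step you flag, and it is sharper than you anticipate. For $v$ even there is no unimodular plane $P\subset\mathbb{B}$ orthogonal to $x$: every such $P$ is a copy of $H_{2}^{-}$, whose orthogonal complement in $\mathbb{B}$ is isometric to $p\cdot H_{2}^{-}$, so every nonzero vector in $P^{\perp}$ has norm of \emph{odd} valuation. Worse, your ``nearly orthogonal'' condition is unachievable: writing $x=x_{P}+x'$ with $x_{P}\in P$, $x'\in P^{\perp}$, the parity forces $\nu_{p}(q(x_{P}))=v$ whenever $v$ is even, hence $(x,P)=p^{v/2}\mathbb{Z}_{p}$ for \emph{every} unimodular plane, never $p^{\lceil(v+1)/2\rceil}\mathbb{Z}_{p}=p^{v/2+1}\mathbb{Z}_{p}$. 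Consequently the Gross--Keating invariants of $\langle x,y_{1},y_{2}\rangle=\langle y_{1},y_{2}\rangle\perp\langle x'\rangle$ are $(0,0,w)$ with $w=\nu_{p}(q(x'))\geq v+1$ odd, not $(0,0,v)$ as you assert. Your argument is salvageable --- using that the isomorphism class of $\mathcal{Z}^{\sharp}(x)$ depends only on $v$ (via the $\mathcal{O}_{\mathbb{B}}^{\times}\times\mathcal{O}_{\mathbb{B}}^{\times}$-action on $\mathcal{N}$) one can arrange $w=v+1$, and then $\lfloor w/2\rfloor+1=\lfloor v/2\rfloor+1$ still --- but this requires redoing the lattice bookkeeping and verifying the Gross--Keating number for $(0,0,w)$, none of which is in the write-up. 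The paper's route, needing only one unit-norm vector orthogonal to $x$, sidesteps this parity obstruction entirely.
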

\begin{proof}
Let $n\geq 0$ be the $p$-adic valuation of $x^{\vee}\circ x$. We first prove this result when $n=0$, in this case the result follows from the work of Li and Zhu \cite[Lemma 3.2.2]{LZ18} ($p$ odd) and \cite[Lemma 3.1]{Rap07} ($p=2$), and $W[[t_{1},t_{2}]]/f_{x}\simeq W[[t]]$ is even smooth over $W$.
\par
Now we suppose $n\geq 1$, we can always find an element $x^{\prime}\in\mathbb{B}$ such that $x^{\prime\vee}\circ x^{\prime}$ has $p$-adic valuation 0, and $(x,x^{\prime})=0$. We consider the formal closed subscheme $\mathcal{Z}^{\sharp}(x)\times_{\mathcal{N}}\mathcal{Z}^{\sharp}(x^{\prime})$, it is cut out by the ideal $(f_{x},f_{x^{\prime}})\subset\mathfrak{m}$; it is also a formal closed subscheme of $\mathcal{Z}^{\sharp}(x^{\prime})\simeq\textup{Spf}\,W[[t]]$ cut out by the image $\tilde{f}_{x}$ of $f_{x}$ under the surjective map $A\rightarrow W[[t]]$. By \cite[(5.10)]{GK93} (see also \cite[5.1]{LZ22}), we have the following decomposition of $\mathcal{Z}^{\sharp}(x)\times_{\mathcal{N}}\mathcal{Z}^{\sharp}(x^{\prime})$ into Cartier divisors on $\mathcal{Z}^{\sharp}(x^{\prime})$,
\begin{equation}
    \mathcal{Z}^{\sharp}(x)\times_{\mathcal{N}}\mathcal{Z}^{\sharp}(x^{\prime}) = \sum\limits_{i=0}^{[n/2]}\mathcal{Z}_{i},
    \label{formalcommul}
\end{equation}
each $\mathcal{Z}_{i}\simeq\textup{Spf}\,\mathcal{O}_{\Breve{K},i}$, where $\mathcal{O}_{\Breve{K},i}$ is the ring of integers of some nonarchimedean local field, hence it is a regular local ring, and they are different from each other. Let $d_{i}\in W[[t]]$ be the function defining the divisor $\mathcal{Z}_{i}$ on $\mathcal{Z}^{\sharp}(x^{\prime})$. Then we have the following identity,
\begin{equation}
    \tilde{f}_{x} = (\textup{unit})\times\prod\limits_{i=0}^{[n/2]}d_{i}.
    \label{de1}
\end{equation}
the regularity of $\mathcal{O}_{\Breve{K},i}$ implies that $d_{i}\in (p,t)\backslash (p,t)^{2}$.
\par
Let $\tilde{d}_{p^{-i}x}$ be the image of $d_{p^{-i}x}$ under the surjective map $A\rightarrow A/(f_{x^{\prime}})\simeq W[[t]]$. By definition we have $f_{x} = (\textup{unit})\times\prod\limits_{i=0}^{[n/2]}d_{p^{-i}x}$, therefore,
\begin{equation}
    \tilde{f}_{x} = (\textup{unit})\times\prod\limits_{i=0}^{[n/2]}\tilde{d}_{p^{-i}x}.
    \label{de2}
\end{equation}
\par
We induct on $n$ to conclude that $\tilde{d}_{x}=(\textup{unit})\times d_{[n/2]}\in (p,t)\backslash (p,t)^{2}$. When $n=1$, we simply get $\tilde{d}_{x}=(\textup{unit})\times d_{0}\in (p,t)\backslash (p,t)^{2}$. Let's assume the claim is true for $n<m$ for some $m\geq2$, we will prove the result for $n=m$. For this, we just need to compare (\ref{de1}) and (\ref{de2}) for $p^{-1}x$ and $x$.
\par
Therefore we have proved that $A/(d_{x},f_{x^{\prime}})$ is a regular local ring, hence we conclude that $d_{x}\in\mathfrak{m}\backslash\mathfrak{m}^{2}$, and $\mathcal{D}(x)\simeq\textup{Spf}\,A/(d_{x})$ is regular. Moreover, since every piece on the right hand side of (\ref{formalcommul}) is different from each other, we conclude that $d_{x}$ and $d_{p^{-i}x}$ are coprime to each other.
\end{proof}
Fix a $N$-isogeny $x_{0}\in\mathbb{B}$, recall that we have defined the deformation functor $\mathcal{N}_{0}(N)$ in $\S$\ref{locdeform}. Compare the moduli intepretations of $\mathcal{N}_{0}(N)$ and $\mathcal{Z}^{\sharp}(x_{0})$, we have a natural functor,
\begin{align*}
    i: \mathcal{N}_{0}(N) &\longrightarrow \mathcal{Z}^{\sharp}(x_{0}),\\
    (X\stackrel{x\,\,\textup{cyclic}}\longrightarrow X^{\prime},(\rho,\rho^{\prime}),(\lambda,\lambda^{\prime}))&\longmapsto(X\stackrel{x}\rightarrow X^{\prime},(\rho,\rho^{\prime}),(\lambda,\lambda^{\prime})).
\end{align*}
\begin{theorem}
The natural functor $i$ is a closed immersion, and induces an isomorphism:
\begin{equation*}
    \mathcal{N}_{0}(N)\stackrel{\sim}\longrightarrow \mathcal{D}(x_{0}).
\end{equation*}
\label{keydecomgeo}
\end{theorem}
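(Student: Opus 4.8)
The plan is to produce a morphism $j\colon\mathcal{D}(x_{0})\to\mathcal{N}_{0}(N)$, to check that $i$ and $j$ are closed immersions, and finally to show that $\mathcal{N}_{0}(N)$ is small enough — a two-dimensional regular local ring — that a closed immersion into it from $\mathcal{D}(x_{0})$ is forced to be an isomorphism, with $i$ and $j$ then mutually inverse.

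\emph{Step 1: $i$ is a closed immersion.} Comparing the moduli descriptions in \S\ref{locdeform} and Definition \ref{specialcycle}, the functor $\mathcal{N}_{0}(N)$ is exactly the subfunctor of $\mathcal{Z}^{\sharp}(x_{0})$ cut out by requiring the universal isogeny $x_{0}^{\textup{univ}}\colon X^{\textup{univ}}\to X'^{\textup{univ}}$ to have cyclic kernel. I would pass from formal to ordinary schemes by Lemma \ref{equivalence}, then algebraize $X^{\textup{univ}},X'^{\textup{univ}}$ to elliptic curves via Serre--Tate deformation theory (as in the proof of Proposition \ref{H1}); Proposition \ref{cyclo} then shows that ``$\ker(x_{0}^{\textup{univ}})$ is cyclic'' is representable by a closed subscheme. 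Hence $i$ is a closed immersion and $\mathcal{N}_{0}(N)$ is a closed formal subscheme of $\mathcal{N}=\textup{Spf}\,W[[t_{1},t_{2}]]$.

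\emph{Step 2: cyclicity over $\mathcal{D}(x_{0})$ and the morphism $j$.} By Theorem \ref{regularity}, $R\coloneqq W[[t_{1},t_{2}]]/(d_{x_{0}})$ is a two-dimensional regular local ring, hence a domain, and $p\nmid d_{x_{0}}$ so $\mathcal{D}(x_{0})=\textup{Spf}\,R$ is flat over $W$. Restricting $x_{0}^{\textup{univ}}$ to $\mathcal{D}(x_{0})$ and algebraizing as above gives an isogeny $\mathcal{E}\to\mathcal{E}'$ of elliptic curves over $\textup{Spec}\,R$ with kernel $G$ of $p$-power order. To invoke Corollary \ref{corcyc} it remains to check that $\mathcal{E}\to\mathcal{E}/G$ does not factor through $[p]_{\mathcal{E}}$ at the generic point $\xi$ of $\textup{Spec}\,R$; such a factorization would put $\xi$ in $\mathcal{Z}^{\sharp}(p^{-1}x_{0})=V(f_{p^{-1}x_{0}})$, but telescoping the definition of the difference divisors gives $f_{p^{-1}x_{0}}=\prod_{i\geq 1}d_{p^{-i}x_{0}}$, and Theorem \ref{regularity} says each $d_{p^{-i}x_{0}}$ is coprime to the prime element $d_{x_{0}}$, so $f_{p^{-1}x_{0}}\notin(d_{x_{0}})$ and hence $f_{p^{-1}x_{0}}(\xi)\neq 0$. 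Corollary \ref{corcyc} then shows $G$ is cyclic, producing $j\colon\mathcal{D}(x_{0})\to\mathcal{N}_{0}(N)$. Since $i\circ j$ is the tautological closed immersion $\mathcal{D}(x_{0})\hookrightarrow\mathcal{Z}^{\sharp}(x_{0})$, the surjection $\mathcal{O}_{\mathcal{Z}^{\sharp}(x_{0})}\twoheadrightarrow\mathcal{O}_{\mathcal{D}(x_{0})}$ factors through $\mathcal{O}_{\mathcal{N}_{0}(N)}$, so $\mathcal{O}_{\mathcal{N}_{0}(N)}\twoheadrightarrow\mathcal{O}_{\mathcal{D}(x_{0})}$ and $j$ is a closed immersion; equivalently $(f_{x_{0}})\subseteq\mathcal{I}(\mathcal{N}_{0}(N))\subseteq(d_{x_{0}})$ inside $W[[t_{1},t_{2}]]$.

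\emph{Step 3: $j$ is an isomorphism.} It suffices to show $\mathcal{N}_{0}(N)$ is represented by a two-dimensional regular local ring: then $\mathcal{O}_{\mathcal{N}_{0}(N)}$ is a two-dimensional local domain surjecting onto the two-dimensional local domain $R$, so the kernel is a prime ideal whose quotient has dimension $2$, forcing it to be zero, and $j$ is an isomorphism. The regularity of $\mathcal{N}_{0}(N)$ is the heart of the matter. From Step 2 its ideal lies in $(d_{x_{0}})$, so it is already $W$-flat of relative dimension one; what must be shown is that the cyclicity condition cuts out inside $\mathcal{Z}^{\sharp}(x_{0})$ exactly an effective Cartier divisor with equation $d_{x_{0}}$, rather than a thickening of it or a strictly smaller closed subscheme. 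I expect this to follow from the Katz--Mazur theory of cyclic subgroups — the locus where $\ker(x_{0}^{\textup{univ}})$ is cyclic and the locus where it contains $\ker[p]$, namely $\mathcal{Z}^{\sharp}(p^{-1}x_{0})$, being complementary effective Cartier divisors on $\mathcal{Z}^{\sharp}(x_{0})$ — together with the coprimality in Theorem \ref{regularity}; alternatively $\mathcal{N}_{0}(N)$ can be computed directly by Zink's windows theory. This pinning down of the scheme structure of the cyclicity locus, not merely its support, is the step I expect to be the main obstacle; everything else is a formal consequence of the representability of the special cycles and of Theorem \ref{regularity}.
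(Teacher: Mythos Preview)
Your approach is essentially the same as the paper's: both establish the closed immersion $i$ via algebraization and Proposition~\ref{cyclo}, produce $j$ using Corollary~\ref{corcyc} together with the coprimality of $d_{x_{0}}$ and $f_{p^{-1}x_{0}}$ from Theorem~\ref{regularity}, obtain the chain $(f_{x_{0}})\subseteq\mathcal{I}^{cyc}(x_{0})\subseteq(d_{x_{0}})$, and finish by comparing two-dimensional regular local rings. For your Step~3 the paper does not argue via complementary Cartier divisors or windows but simply invokes \cite[Theorem~6.6.1]{KM85} (together with Case~II of 5.3.2.1 there) for the regularity of $W[[t_{1},t_{2}]]/\mathcal{I}^{cyc}(x_{0})$; this is indeed the Katz--Mazur input you anticipated, and it resolves exactly the obstacle you flagged.
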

\begin{proof}
By Proposition \ref{equation}, $\mathcal{Z}^{\sharp}(x_{0})$ is represented by $\textup{Spf}\,W[[t_{1},t_{2}]]/f_{x_{0}}$. We consider the maximal ideal $\mathfrak{m}=(p,t_{1},t_{2})$ of $W[[t_{1},t_{2}]]$ and a projective system of rings $\varprojlim_{n} R_{n}$ where $R_{n}=W[[t_{1},t_{2}]]/(f_{x_{0}}+\mathfrak{m}^{n})$. We use $(X_{n}\stackrel{x_{n}}\rightarrow X^{\prime}_{n},(\rho_{n},\rho^{\prime}_{n}),(\lambda_{n},\lambda^{\prime}_{n}))$ to denote the corresponding object in $\mathcal{Z}^{\sharp}(x_{0})(R_{n})$ by the natural morphism $W[[t_{1},t_{2}]]/f_{x_{0}}\rightarrow R_{n}$, which is essentially the base change from $\mathcal{Z}^{\sharp}(x_{0})$ to $\textup{Spec}\,R_{n}$ of the universal object $(X^{\textup{univ}}\stackrel{x_{0}^{\textup{univ}}}\rightarrow X^{\prime\textup{univ}},(\rho^{\textup{univ}},\rho^{\prime\textup{univ}}),(\lambda^{\textup{univ}},\lambda^{\prime\textup{univ}}))$. The following diagram is commutative,
\begin{equation*}
     \xymatrix{
    X_{n}\ar[d]^{x_{n}}\ar[r]&X_{n+1}\ar[d]^{x_{n+1}}\\
    X_{n}^{\prime}\ar[r]&X_{n+1}^{\prime}.}
\end{equation*}
By \cite[Lemma 2.4.4]{deJ95}, ${x_{n}}$ fits together to be an isogeny of $p$-divisible groups $x_{0}^{\textup{univ}}:X^{\textup{univ}}\rightarrow X^{\prime\textup{univ}}$ over $\textup{Spec}\,W[[t_{1},t_{2}]]/f_{x_{0}}$.
\par
Now we apply Serre-Tate theorem (cf. \cite{CS64}) to the projective system $\varprojlim_{n} R_{n}$, we obtain a direct system of elliptic curves $E_{n},E^{\prime}_{n}$ over $\textup{Spec}\,R_{n}$ and $\tilde{x}_{n}\in\textup{End}_{R_{n}}(E_{n},E^{\prime}_{n})$ such that,\\
(i) There exist isomorphisms $i_{n}:E_{n}[p^{\infty}]\simeq X_{n}$ and $i_{n}^{\prime}:E_{n}^{\prime}[p^{\infty}]\simeq X_{n}^{\prime}$,\\
(ii) $x_{n} = i_{n}^{\prime}\circ \tilde{x}_{n}[p^{\infty}]\circ i_{n}^{-1}$.
\par
Since every elliptic curve is equipped with a canonical ample line bundle given by the unit section, we can apply Grothendieck's algebraization theorem (cf. \cite[Theorem 089A, Lemma 0A42]{Stacks Project}) to obtain a triple $(E^{\textup{univ}}\stackrel{\tilde{x}_{0}^{\textup{univ}}}\rightarrow E^{\prime\textup{univ}},(\rho^{\textup{univ}},\rho^{\prime\textup{univ}}),(\lambda^{\textup{univ}},\lambda^{\prime\textup{univ}}))$ where $E^{\textup{univ}}$ and $E^{\prime\textup{univ}}$ are two elliptic curves over $\textup{Spec}\,W[[t_{1},t_{2}]]/f_{x_{0}}$ with the following isomorphisms
\begin{equation*}
    i^{\textup{univ}}: E^{\textup{univ}}[p^{\infty}]\simeq X^{\textup{univ}},\,\,i^{\prime\textup{univ}}: E^{\prime\textup{univ}}[p^{\infty}]\simeq X^{\prime\textup{univ}}.
\end{equation*}
and $x_{0}^{\textup{univ}} = i^{\prime\textup{univ}}\circ \tilde{x}_{0}^{\textup{univ}}[p^{\infty}]\circ (i^{\textup{univ}})^{-1}$. Then we have
\begin{equation*}
    \textup{ker}(x_{0}^{\textup{univ}})\simeq\textup{ker}(\tilde{x}^{\textup{univ}}[p^{\infty}])=\textup{ker}(\tilde{x}_{0}^{\textup{univ}})[p^{\infty}]\hookrightarrow E^{\textup{univ}}.
\end{equation*}
Where $\textup{ker}(\tilde{x}_{0}^{\textup{univ}})[p^{\infty}]$ is the $p$-torsion subgroup scheme of the finite locally free group scheme $\textup{ker}(\tilde{x}_{0}^{\textup{univ}})$. Therefore the universal kernel $\textup{ker}(x_{0}^{\textup{univ}})$ is embedded into an elliptic curve, we can apply Proposition \ref{cyclo} and conclude that there is an ideal $\mathcal{I}^{cyc}(x_{0})\subset W[[t_{1},t_{2}]]$ containing $f_{x_{0}}$ such that for an object $(X\stackrel{x}\rightarrow X^{\prime},(\rho,\rho^{\prime}),(\lambda,\lambda^{\prime}))\in \mathcal{Z}^{\sharp}(x_{0})(S)$ where $S\in\textup{Nilp}_{W}$, the isogeny $x$ is a cyclic isogeny if and only if the morphism $S\rightarrow\textup{Spf}\,W[[t_{1},t_{2}]]/f_{x_{0}}$ factors through $\textup{Spf}\,W[[t_{1},t_{2}]]/\mathcal{I}^{cyc}(x_{0})$. We conclude from here that $\mathcal{N}_{0}(N)$ is represented by the formal scheme $\textup{Spf}\,W[[t_{1},t_{2}]]/\mathcal{I}^{cyc}(x_{0})$ and the natural functor $i$ is a closed immersion. 
\par
Recall that we use $d_{x_{0}}$ to denote the equation that cuts out the difference divisor $\mathcal{D}(x_{0})$. In the following we use $\mathcal{D}$ to denote the difference divisor $\mathcal{D}(x_{0})$. Let $x_{\mathcal{D}}:X_{\mathcal{D}}\rightarrow X^{\prime}_{\mathcal{D}}$ be the base change of $x_{0}^{\textup{univ}}: X^{\textup{univ}}\rightarrow X^{\prime\textup{univ}}$ to $\mathcal{D}$. We first show that $x_{\mathcal{D}}$ doesn't factor through the multiplication-by-$p$ morphism of $X_{\mathcal{D}}$. Let's assume the converse, i.e., $x_{\mathcal{D}}=p\circ x^{\prime}_{\mathcal{D}}$ where $x^{\prime}_{\mathcal{D}}:X_{\mathcal{D}}\rightarrow X^{\prime}_{\mathcal{D}}$ is an isogeny. Let $\mathcal{D}_{n}=\textup{Spec}\,W[[t_{1},t_{2}]]/(d_{x_{0}}+\mathfrak{m}^{n})$, the base change of $x^{\prime}_{\mathcal{D}}$ from $\mathcal{D}$ to $\mathcal{D}_{n}$ is a deformation of $p^{-1}x_{0}$, hence the natural morphism $\mathcal{D}_{n}\rightarrow\mathcal{Z}^{\sharp}(x_{0})$ factors through $\mathcal{Z}^{\sharp}(p^{-1}x_{0})\simeq\textup{Spf}\,W[[t_{1},t_{2}]]/(f_{p^{-1}x_{0}})$. Since $W[[t_{1},t_{2}]]/(d_{x_{0}})\simeq\varprojlim\limits_{n}W[[t_{1},t_{2}]]/(d_{x_{0}}+\mathfrak{m}^{n})$ by Lemma \ref{complete}, we get a ring homomorphism $W[[t_{1},t_{2}]]/(f_{p^{-1}x_{0}})\rightarrow W[[t_{1},t_{2}]]/(d_{x_{0}})$. However, $d_{x_{0}}$ is coprime to $f_{p^{-1}x_{0}}$ by Theorem \ref{regularity}, this is a contradiction. Hence $x_{\mathcal{D}}$ doesn't factor through the multiplication-by-$p$ morphism of $X_{\mathcal{D}}$.
\par
Lemma \ref{cyclemma} and Corollary \ref{corcyc} imply that $\textup{ker}(x_{\mathcal{D}})$ is a cyclic group scheme since $\mathcal{D}$ is an integral noetherian scheme which is also separated and flat over $W$, hence there exists a natural morphism from $\textup{Spec}\,W[[t_{1},t_{2}]]/d_{x_{0}}$ to $\textup{Spec}\,W[[t_{1},t_{2}]]/\mathcal{I}^{cyc}(x_{0})$. Therefore we conclude that $\mathcal{I}^{cyc}(x_{0})\subset(d_{x_{0}})\subset W[[t_{1},t_{2}]]$. This shows that the closed immersion $\mathcal{D}(x_{0})\rightarrow \mathcal{Z}^{\sharp}(x_{0})$ decomposes in the following way:
\begin{equation*}
    \mathcal{D}(x_{0})\rightarrow \mathcal{N}_{0}(N)\rightarrow \mathcal{Z}^{\sharp}(x_{0}).
\end{equation*}
\par
Therefore we have an inclusion of ideals, $(f_{x_{0}})\subset\mathcal{I}^{cyc}(x_{0})\subset (d_{x_{0}})\in W[[t_{1},t_{2}]]$. \cite[Theorem 6.6.1]{KM85} (see also Case II of 5.3.2.1 of \textit{loc.cit}) asserts that $W[[t_{1},t_{2}]]/\mathcal{I}^{cyc}(x_{0})$ is a 2-dimensional regular local ring. Recall that we have already proved in Theorem \ref{regularity} that $W[[t_{1},t_{2}]]/d_{x_{0}}$ is also a regular local ring, hence we must have $\mathcal{I}^{cyc}(x_{0})=(d_{x_{0}})$, i.e., $\mathcal{D}(x_{0})\simeq\mathcal{N}_{0}(N)$. 
\end{proof}
\subsubsection{Special Fibers}
In this part we use the identification $\mathcal{N}_{0}(N)\stackrel{\sim}\longrightarrow\mathcal{D}(x_{0})$ to explicitly describe the special fiber of the local ring $\mathcal{N}_{0}(N)$. The main results of this part will not be used in the following calculations, readers can skip on first reading.
\par
Let $\mathfrak{a}=(t_{1},t_{2})\subset W[[t_{1},t_{2}]]$. Let $\overline{\mathfrak{a}}$ be the image of $\mathfrak{a}$ in $\mathbb{F}[[t_{1},t_{2}]]$. Let $A_{n}=W[[t_{1},t_{2}]]/\mathfrak{a}^{n}$ and $R_{n}=\mathbb{F}[[t_{1},t_{2}]]/\overline{\mathfrak{a}}^{n}$. Let $A_{0}=W[[t_{1},t_{2}]]$ and $R_{0}=\mathbb{F}[[t_{1},t_{2}]]$. Equip each $A_{n}$ with a morphism $\sigma$ which extends the Frobenius morphism on $W$ and maps $t_{1}$ to $t_{1}^{p}$, $t_{2}$ to $t_{2}^{p}$. Then $A_{n}$ is a frame for $R_{n}$ in the sense of \cite[Definition 1]{Zin01}. For any $n\geq0$, let  $(M,M_{1},\Phi)$ be an $A_{n}$-window in the sense of \cite[Definition 2]{Zin01}. Since $\Phi(M_{1})\subset p\cdot M$ and $p$ is not a zero-divisor in $A_{n}$, we define $\Phi_{1}: M_{1}\rightarrow M$ to be $p^{-1}\Phi$. The morphism $\Phi_{1}$ is $\sigma$-linear and induces an isomorphism $\Phi_{1}^{\sigma}: M_{1}^{\sigma}\rightarrow M$ because both sides are free $A_{n}$-module of the same rank and $\Phi_{1}^{\sigma}$ is surjective by the definition of windows (\cite[Definition 2(ii)]{Zin01}). Let $\alpha$ be the following injective $A_{n}$-morphism,
\begin{equation*}
    \alpha: M_{1}\hookrightarrow M\stackrel{(\Phi_{1}^{\sigma})^{-1}}\longrightarrow M_{1}^{\sigma}.
\end{equation*}
\begin{theorem}
For any $n\geq0$, we have the following category equivalences,
\begin{equation*}
    \left\{\textup{$A_{n}$-window $(M,M_{1},\Phi)$}\right\}\stackrel{\sim}\longleftrightarrow\left\{\textup{formal $p$-divisible groups over $R_{n}$}\right\}
\end{equation*}
Moreover, both these two categories are equivalent to the following category.
\begin{equation*}
    \{\textup{pairs $(M_{1},\alpha: M_{1}\rightarrow M_{1}^{\sigma})$,}\,\,\textup{such that Coker($\alpha$) is a free $R_{n}$-module.}\}
\end{equation*}
where the functor from $A_{n}$-window $(M,M_{1},\Phi)$ to pairs $(M_{1},\alpha: M_{1}\rightarrow M_{1}^{\sigma})$ is given by the constructions above.
\label{window}
\end{theorem}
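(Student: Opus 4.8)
The plan is to prove the two asserted equivalences in Theorem~\ref{window} separately: the first is an instance of Zink's window theory, and the second is a formal linear-algebra reformulation of it.

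\emph{From $A_{n}$-windows to formal $p$-divisible groups over $R_{n}$.} The first step is to record that $(A_{n},pA_{n},R_{n},\sigma,\dot\sigma)$, with $\dot\sigma$ determined by $\dot\sigma(pa)=\sigma(a)$, is a frame for $R_{n}$ in the sense of \cite[Definition~1]{Zin01}. This amounts to checking that $A_{n}$ is a Noetherian, $p$-adically complete and separated local ring with $A_{n}/pA_{n}=R_{n}$; that $\sigma$ lifts the Frobenius of $R_{n}$, which is immediate from $t_{i}\mapsto t_{i}^{p}$ together with the Frobenius lift on $W$; and that $\dot\sigma$ is a well-defined $\sigma$-linear map with $\sigma(a)=p\dot\sigma(a)$ on $pA_{n}$, which uses that $p$ is a nonzerodivisor in the $W$-flat ring $A_{n}$. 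Granting this, Zink's classification of formal $p$-divisible groups by windows over a frame (i.e. his display theory together with the window comparison of \cite{Zin01}) yields the desired equivalence for each $n\geq 1$, where $R_{n}$ is Artinian; the case $n=0$ is handled either by applying the same theory directly to the adic Noetherian local ring $A_{0}=W[[t_{1},t_{2}]]$, or by passing to the limit over $n$ via $R_{0}=\varprojlim_{n}R_{n}$ and Lemma~\ref{equivalence} to move between the formal and spectral pictures.

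\emph{From $A_{n}$-windows to pairs $(M_{1},\alpha)$.} The second step is purely formal. Given a window $(M,M_{1},\Phi)$, the linearization $\Phi_{1}^{\sigma}\colon M_{1}^{\sigma}\stackrel{\sim}{\longrightarrow}M$ is an isomorphism by the window axioms, so transporting the inclusion $M_{1}\hookrightarrow M$ through $(\Phi_{1}^{\sigma})^{-1}$ produces the $A_{n}$-linear map $\alpha\colon M_{1}\to M_{1}^{\sigma}$ of the statement, and $\textup{Coker}(\alpha)\cong M/M_{1}$ is free over $R_{n}$, again by the window axioms; this is functorial, since a morphism of windows is an $A_{n}$-linear map commuting with $\Phi$ and respecting $M_{1}$, which is exactly the same datum as an $A_{n}$-linear map of the $M_{1}$'s commuting with the $\alpha$'s. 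For a quasi-inverse, given $(M_{1},\alpha)$ with $\textup{Coker}(\alpha)$ free over $R_{n}$ one sets $M\coloneqq M_{1}^{\sigma}$, uses $\alpha$ to realize $M_{1}$ as a submodule of $M$ (here one checks, using freeness of $\textup{Coker}(\alpha)$ over $R_{n}=A_{n}/pA_{n}$, that $\alpha$ is injective and $pM\subset \alpha(M_{1})$), takes $\Phi_{1}\colon M_{1}\to M_{1}^{\sigma}$ to be the canonical $\sigma$-linear map $m\mapsto 1\otimes m$, and defines $\Phi$ by $\Phi(x)=\Phi_{1}(px)$. Then $(M,M_{1},\Phi)$ is an $A_{n}$-window ($pM\subset M_{1}$ and $M/M_{1}$ free both follow from $\textup{Coker}(\alpha)$ being $R_{n}$-free), its associated pair is $(M_{1},\alpha)$ since $\Phi_{1}^{\sigma}=\textup{id}_{M_{1}^{\sigma}}$, and conversely, for a window $(M,M_{1},\Phi)$ the isomorphism $\Phi_{1}^{\sigma}$ identifies it with the window rebuilt from its pair; so the two functors are mutually quasi-inverse.

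\emph{The main obstacle.} The second equivalence is essentially bookkeeping, modulo the small point that one wants $\alpha$ injective and $\textup{Coker}(\alpha)$ a direct summand, which must be extracted from the $R_{n}$-freeness over the non-reduced ring $A_{n}$. The substantive content is the first equivalence: one must be certain that Zink's window classification genuinely applies to the frames $(A_{n},pA_{n},R_{n})$ here, the delicate case being $n=0$, where $A_{0}=W[[t_{1},t_{2}]]$ is not $p$-nilpotent, so one needs the version of Zink's theory valid for adic Noetherian local rings (or the reduction to the finite levels by the limit argument above). One must also confirm that the window conditions isolated in \cite[Definition~2]{Zin01} — which in the reformulation become exactly the freeness of $\textup{Coker}(\alpha)$ — correspond on the geometric side to \emph{formal} $p$-divisible groups; this is legitimate here because $R_{n}$ is complete local with residue field $\mathbb{F}$, so formality (connectedness) can be tested on the special fibre, and in the situation of interest all the $p$-divisible groups in sight are deformations of the formal group $\mathbb{X}$. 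Everything else reduces to verifying the frame axioms and to standard manipulations with linearizations of $\sigma$-linear maps.
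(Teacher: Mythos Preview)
Your proposal is correct and follows essentially the same route as the paper: the paper's own proof is a single-line citation to \cite[Theorem~4]{Zin01}, so your argument simply unpacks that reference by verifying the frame axioms for $(A_{n},pA_{n},R_{n},\sigma)$ and spelling out the elementary linear-algebra equivalence between windows and pairs $(M_{1},\alpha)$. Your treatment is considerably more detailed than the paper's, but there is no substantive difference in strategy.
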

\begin{proof}
This is proved in \cite[Theorem 4]{Zin01}. 
\end{proof}
Let $((\overline{X},\overline{\rho},\overline{\lambda}),(\overline{X}^{\prime},\overline{\rho}^{\prime},\overline{\lambda}^{\prime}))$ be the universal object in $\mathcal{N}(\mathbb{F}[[t_{1},t_{2}]])$, i.e., the base change of the universal object $((X^{\textup{univ}}, \rho^{\textup{univ}},\lambda^{\textup{univ}}),(X^{\prime\textup{univ}},\rho^{\prime\textup{univ}},\lambda^{\prime\textup{univ}}))$ over $W[[t_{1},t_{2}]]$ to $\mathbb{F}[[t_{1},t_{2}]]$. The corresponding window can be described as follows, let $\mathbb{D}=W\cdot e+W\cdot f$ be the Dieudonne module of $\mathbb{X}$, where $Fe=Ve=f$, $Ff=Vf=p\cdot e$ ($F$, $V$ are the Frobenius and Verschiebung morphisms on $\mathbb{D}$). Then we let $M=\mathbb{D}\otimes_{W}W[[t]]$, and $M_{1}=(W\cdot f+pW\cdot e)\otimes_{W}W[[t]]$. We still use $\sigma$ to denote the Frobenius action on $W[[t]]$ which sends $t$ to $t^{p}$ and extends the Frobenius morphism on $W$. Let $\Phi$ be the $\sigma$-linear map from $M$ to $M$ such that $\Phi(e\otimes1)=t\cdot(e\otimes1)+f\otimes1,\Phi(f\otimes1)=p\cdot (e\otimes1)$. Then $(M,M_{1},\Phi)$ is the $W[[t]]$-window corresponding to the universal deformation of $\mathbb{X}$ over $\mathbb{F}[[t]]$ (cf. \cite[(86)]{Zin02}). Let $(M^{\prime},M_{1}^{\prime},\Phi^{\prime})$ be the correpsonding window for $\mathbb{X}^{\prime}$, then the $W[[t_{1},t_{2}]]$-window corresponding to the universal deformation of $\mathbb{X}\times_{\mathbb{F}}\mathbb{X}^{\prime}$ over $\mathbb{F}[[t_{1},t_{2}]]$ is given by $(M\oplus M^{\prime},M_{1}\oplus M_{1}^{\prime},\Phi\oplus\Phi^{\prime})$, or $(M_{1}\oplus M_{1}^{\prime},\alpha)$ where under the basis $\{p\cdot(e\otimes1),f\otimes1,p(e^{\prime}\otimes1),f^{\prime}\otimes1\}$, the map $\alpha$ is given by the following matrix
\begin{equation*}
   \alpha = \begin{pmatrix}
      & 1 &  & \\
    p & -t_{1} & & \\
     & & & 1\\
     & & p & -t_{2}
    \end{pmatrix}.
\end{equation*}
Any quasi-isogeny $x\in\mathbb{B}$ induces the following endomorphism of the window $M_{1}\oplus M_{1}^{\prime}$ of $\mathbb{X}\times_{\mathbb{F}}\mathbb{X}^{\prime}$ under the basis $\{p\cdot e,f,p\cdot e^{\prime},f^{\prime}\}$,
\begin{equation*}
   \mathbb{D}(x) = \begin{pmatrix}
      &  & \sigma(a) & -\sigma(b)\\
     & & -p\cdot b & a\\
    a & \sigma(b) & & \\
    p\cdot b & \sigma(a) &  &
    \end{pmatrix},
\end{equation*}
where $a,b\in\mathbb{Q}_{p^{2}}$.
\par
Let $M_{1}(n)=M_{1}\otimes_{A_{0}}A_{n}$, $M_{1}^{\prime}(n)=M_{1}^{\prime}\otimes_{A_{0}}A_{n}$, $\alpha(n)=\alpha\otimes_{A_{0}}A_{n}$. By Theorem \ref{window}, a quasi-isogeny $x$ lifts to an isogeny over $R_{n}$ if and only if there exists $x(n)\in \textup{End}((M_{1}(n)\oplus M_{1}^{\prime}(n),\alpha(n)))$ such that $x(1)=\mathbb{D}(x)$ and the following diagram commutes
\begin{equation*}
    \xymatrix{
    M_{1}(n)\oplus M_{1}^{\prime}(n)\ar[d]^{x(n)}\ar[r]^{\alpha(n)}&M_{1}(n)^{\sigma}\oplus M_{1}^{\prime}(n)^{\sigma}\ar[d]^{\sigma(x(n))}\\
    M_{1}(n)\oplus M_{1}^{\prime}(n)\ar[r]^{\alpha(n)}&M_{1}(n)^{\sigma}\oplus M_{1}^{\prime}(n)^{\sigma}.}
\end{equation*}
Under the basis $\{p\cdot(e\otimes1),f\otimes1,p(e^{\prime}\otimes1),f^{\prime}\otimes1\}$, the morphism $x(n)$ has the following form
\begin{equation*}
    x(n)=\begin{pmatrix}
     A(n) & Y(n)\\
     X(n) & B(n)
    \end{pmatrix}.
\end{equation*}
where $X(n),Y(n), A(n), B(n)\in\textup{M}_{2}(A_{n})$ satisfy the following equations,
\begin{align*}
    X(n)&=p^{-1}U^{\prime}(t_{2})\cdot\sigma(X(n))\cdot U(t_{1}),\,\,\,Y(n)=p^{-1}U^{\prime}(t_{1})\cdot\sigma(Y(n))\cdot U(t_{2});\\
    A(n)&=p^{-1}U^{\prime}(t_{1})\cdot\sigma(A(n))\cdot U(t_{1}),\,\,\,B(n)=p^{-1}U^{\prime}(t_{2})\cdot\sigma(B(n))\cdot U(t_{2}).
\end{align*}
where $U(t)=\begin{pmatrix}
     & 1\\
     p & -t
    \end{pmatrix}$ and $U^{\prime}(t)=\begin{pmatrix}
     t & 1\\
     p & 
    \end{pmatrix}$. Since $A(1) = B(1) =0$, we conclude (by comparing degrees of $t_{1}$ and $t_{2}$) that $A(n)=B(n)=0$.
\par
For any $A\in \textup{M}_{2}(A_{n}\otimes_{\mathbb{Z}}\mathbb{Q})$, the matrix $\sigma(A)$ is a well-defined element in $\textup{M}_{2}(A_{pn}\otimes_{\mathbb{Z}}\mathbb{Q})$. Therefore, starting from $X(1)$ and $Y(1)$, we can define successively 
\begin{equation}
    X(p^{l+1})=p^{-1}U^{\prime}(t_{2})\cdot\sigma(X(p^{l}))\cdot U(t_{1}),\,\,\,Y(p^{l+1})=p^{-1}U^{\prime}(t_{1})\cdot\sigma(Y(p^{l}))\cdot U(t_{2}).
    \label{recursion}
\end{equation}
Since the local ring $\mathcal{O}_{\mathcal{Z}(x)}$ only depends (up to noncanonical isomorphisms) on the valuation of $x$, we will take the following specific choice of $x$ and $\mathbb{D}(x)$ in the following computations.\\
$\bullet$ When $\textup{ord}_{p}(x^{\vee}\circ x)=2k$ for some $k\geq0$, we take 
\begin{equation*}
    X(1)=Y(1)=\begin{pmatrix}
     p^{k} & \\
      &p^{k}
    \end{pmatrix}.
\end{equation*}
By computation based on the recursion formula (\ref{recursion}), it turns out that for any $l\geq 1$,
\begin{align*}
    X(p^{l})&=\frac{1}{p^{l-k}}\left(\begin{pmatrix}
     0 & (-1)^{l-1}(t_{1}t_{2})^{\frac{p^{l-1}-1}{p-1}}(t_{2}^{p^{l-1}}-t_{1}^{p^{l-1}})\\
     0 &0
    \end{pmatrix}+p\cdot C\right);\\
    Y(p^{l})&=\frac{1}{p^{l-k}}\left(\begin{pmatrix}
     0 & (-1)^{l-1}(t_{1}t_{2})^{\frac{p^{l-1}-1}{p-1}}(t_{2}^{p^{l-1}}-t_{1}^{p^{l-1}})\\
     0 &0
    \end{pmatrix}+p\cdot D\right).
\end{align*}
for some matrices $C,D\in\textup{M}_{2}(A_{p^{l}})$.\\\\
$\bullet$ When $\textup{ord}_{p}(x^{\vee}\circ x)=2k+1$ for some $k\geq0$, we take 
\begin{equation*}
    X(1)=-Y(1)=\begin{pmatrix}
      & p^{k}\\
     p^{k+1} &
    \end{pmatrix}.
\end{equation*}
By computation based on the recursion formula (\ref{recursion}), it turns out that for any $l\geq 1$, 
\begin{align*}
    X(p^{l})&=\frac{1}{p^{l-k}}\left(\begin{pmatrix}
    0  & (-1)^{l}(t_{1}t_{2})^{\frac{p^{l}-1}{p-1}}\\
    0  &0
    \end{pmatrix}+p\cdot C^{\prime}\right);\\
    Y(p^{l})&=\frac{1}{p^{l-k}}\left(\begin{pmatrix}
     0 & (-1)^{l-1}(t_{1}t_{2})^{\frac{p^{l}-1}{p-1}}\\
     0 &0
    \end{pmatrix}+p\cdot D^{\prime}\right).
\end{align*}
for some matrices $C^{\prime},D^{\prime}\in\textup{M}_{2}(A_{p^{l}})$.\\
\begin{proposition}
Let $x\in\mathbb{B}$ be an integral nonzero element which has valuation $n$ and induces $X(1)$ and $Y(1)$ as described above, let $f_{x}\in W[[t_{1},t_{2}]]$ be the element cutting out $\mathcal{Z}(x)$, then
\begin{equation}
    \overline{f}_{x}\coloneqq f_{x}\,\,\,\textup{mod $p$}=(\textup{unit})\times\begin{cases}
    (t_{1}t_{2})^{\frac{p^{n/2}-1}{p-1}}(t_{2}^{p^{n/2}}-t_{1}^{p^{n/2}}) \,\,\,\textup{mod $(t_{1},t_{2})^{p^{n/2+1}}$}& \textup{when $n$ is even;}\\
    (t_{1}t_{2})^{\frac{p^{(n+1)/2}-1}{p-1}} \,\,\,\,\,\,\,\textup{mod $(t_{1},t_{2})^{p^{(n+1)/2}}$}& \textup{when $n$ is odd.}
    \end{cases}
    \label{modpdec}
\end{equation}
\end{proposition}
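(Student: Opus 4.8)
The plan is to extract $\overline f_x$ directly from the explicit window matrices $X(p^l),Y(p^l)$ computed above. Since $\mathcal Z(x)$ is flat over $\textup{Spf}\,W$ by Proposition \ref{equation}, its reduction $\overline f_x\in\mathbb F[[t_1,t_2]]$ cuts out the special fibre $\mathcal Z(x)_p=\mathcal Z(x)\times_{\textup{Spf}\,W}\textup{Spf}\,\mathbb F$ inside $\textup{Spf}\,\mathbb F[[t_1,t_2]]$. Set $k=\lfloor n/2\rfloor$ and $m=p^{k+1}$, so that $m=p^{n/2+1}$ for $n$ even and $m=p^{(n+1)/2}$ for $n$ odd, in agreement with the two cases of the statement, and let $R_m=\mathbb F[[t_1,t_2]]/(t_1,t_2)^m$. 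It is enough to compute the ideal generated by $\overline f_x$ in $R_m$, i.e.\ the closed subscheme $\mathcal Z(x)_p\cap\textup{Spec}\,R_m$. By Theorem \ref{window} together with the discussion preceding the proposition, a deformation of $\mathbb X\times_{\mathbb F}\mathbb X'$ over a quotient $R'$ of $R_m$ admits a lift of the quasi-isogeny $x$ if and only if the window endomorphism $x(\cdot)$ with $x(1)=\mathbb D(x)$ exists; this forces the diagonal blocks to vanish and the off-diagonal blocks to be the matrices $X(p^l),Y(p^l)$ produced by the recursion $(\ref{recursion})$, so $x$ lifts over $R'$ exactly when all these matrices — which a priori carry $p$-adic denominators — become integral over the frame of $R'$.

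Next I would localise the obstruction to integrality. Over $R_m$ only the $(t_1,t_2)$-degrees below $m=p^{k+1}$ intervene, hence only the matrices $X(p^l),Y(p^l)$ with $l\le k+1$. For $l\le k$ the displayed formulas exhibit $X(p^l),Y(p^l)$ as integral matrices, so these levels impose no condition and $x$ lifts over $R_{p^k}$ unconditionally. The first obstruction occurs at level $l=k+1$: there the formulas give $X(p^{k+1})=\tfrac1p\,\star_X\cdot\left(\begin{smallmatrix}0&1\\0&0\end{smallmatrix}\right)+(\textup{integral})$ and $Y(p^{k+1})=\tfrac1p\,\star_Y\cdot\left(\begin{smallmatrix}0&1\\0&0\end{smallmatrix}\right)+(\textup{integral})$, where, up to $p$-adic units, $\star_X=\star_Y=(t_1t_2)^{\frac{p^{n/2}-1}{p-1}}(t_2^{p^{n/2}}-t_1^{p^{n/2}})$ when $n$ is even and $\star_X=-\star_Y=(t_1t_2)^{\frac{p^{(n+1)/2}-1}{p-1}}$ when $n$ is odd. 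Since the coefficients of $\star_X,\star_Y$ are $p$-adic units, $X(p^{k+1})$ and $Y(p^{k+1})$ become integral over a quotient $R'=R_m/J$ precisely when $\overline{\star_X},\overline{\star_Y}\in J$. Hence the ideal of $\mathcal Z(x)_p$ in $R_m$ equals $(\overline{\star_X},\overline{\star_Y})$, which is $(\overline{\star_X})$ since $\star_X$ and $\star_Y$ agree up to a unit in both parity cases.

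Finally, $\overline f_x$ generates this same principal ideal in $R_m$, and two generators of a principal ideal differ by a unit, so $\overline f_x\equiv(\textup{unit})\cdot\overline{\star_X}\pmod{(t_1,t_2)^m}$, which is precisely $(\ref{modpdec})$. I expect the genuinely delicate part to be the input to the middle paragraph: one must check that the recursion $(\ref{recursion})$ really outputs $X(p^l),Y(p^l)$ of the stated shape — an explicit leading term plus an error term that is integral after division by the indicated power of $p$ — so that at level $k+1$ the only surviving $p$-adic pole is the one with residue $\star_X$ (respectively $\star_Y$); this, together with the bookkeeping needed to make sense of integrality over the frame of an arbitrary quotient of $R_m$, is the technical heart of the argument. (For $p\ge 3$ the monomial in $(\ref{modpdec})$ has $(t_1,t_2)$-degree strictly below $m$, so the congruence has content; for $p=2$ it is degenerate but still correct.)
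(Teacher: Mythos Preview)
Your proposal is correct and follows essentially the same approach as the paper: read off the equation $\overline{f}_x \pmod{(t_1,t_2)^{p^{k+1}}}$ from the explicit window matrices $X(p^l), Y(p^l)$, using that these are integral for $l \le k$ and that the first $p$-adic pole appears at $l=k+1$ with residue exactly $\star_X$. The paper's own proof is two sentences to the same effect; your version is simply more explicit about passing to quotients $R_m/J$ and correctly flags the frame bookkeeping as the main technical wrinkle.
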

\begin{proof}
By the above formula for $X(p^{l})$ and $Y(p^{l})$, we can conclude that $x$ can be lifted to an isogeny over $R_{p^{[n/2]}}$ but not over $R_{p^{[n/2]+1}}$. Then the formula for $X(p^{[n/2]+1})$ and $Y(p^{[n/2]+1})$ imply the equation above.
\end{proof}
\begin{theorem}
Let $x\in\mathbb{B}$ be an integral nonzero element which has valuation $n$ and induces $X(1)$ and $Y(1)$ as described above, let $f_{x}\in W[[t_{1},t_{2}]]$ be the element cutting out $\mathcal{Z}(x)$, then $\overline{f}_{x}$ is divisible by 
\begin{equation*}
    t_{1}-t_{2}^{p^{a}},\,\,\, t_{1}^{p^{a}}-t_{2}\,\,\,\textup{for $0\leq a\leq n$ and $a\equiv n$ mod $2$}.
\end{equation*}
Moreover, $\overline{f}_{x}$ has no other irreducible factors and the multiplicity of $t_{1}-t_{2}^{p^{a}}, t_{1}^{p^{a}}-t_{2}$ in $\overline{f}_{x}$ is $p^{(n-a)/2}$.
\label{spci}
\end{theorem}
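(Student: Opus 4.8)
The plan is to pin down the Cartier divisor $V(\overline{f}_x)\subset\textup{Spf}\,\mathbb{F}[[t_1,t_2]]$: first to constrain its reduced support, then to compute the multiplicity of each component, using that $\overline{f}_x$ depends (up to a noncanonical automorphism of $\textup{Spf}\,\mathbb{F}[[t_1,t_2]]$) only on $n$, that its lowest-degree part is already recorded in equation (\ref{modpdec}) of the preceding Proposition, and that in characteristic $p$ the asserted factorization is equivalently $\overline{f}_x=(\textup{unit})\cdot\prod_{a+b=n}(t_1^{p^a}-t_2^{p^b})$ (since $t_2^{p^m}-t_1^{p^m}=(t_2-t_1)^{p^m}$).

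First I would constrain the reduced support by a deformation-theoretic analysis. The one-dimensional components of $\mathcal{Z}(x)_{\mathbb{F}}$ are the formal curves through the supersingular point along which the canonical extension of the quasi-isogeny $x$ becomes a genuine isogeny generically. Over the punctured formal disks the universal deformations $X$ (in $t_1$) and $X'$ (in $t_2$) of $\mathbb{X}$ become ordinary, so a genuine isogeny between them respects connected–étale filtrations and is, up to units, assembled from iterated relative Frobenius and Verschiebung; since the $a$-fold Frobenius twist of the universal deformation is its base change along $t\mapsto t^{p^a}$, the locus where $X'\simeq X^{(p^a)}$ is exactly $V(t_1^{p^a}-t_2)$ and the locus where $X\simeq X'^{(p^a)}$ is exactly $V(t_1-t_2^{p^a})$, on which an isogeny of valuation $a$ at the origin is available. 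Matching the fixed element $x$, whose valuation is $n$, against such a map composed with a generic endomorphism forces $0\le a\le n$ and $a\equiv n\pmod2$, so the reduced support of $\mathcal{Z}(x)_{\mathbb{F}}$ is contained in the union of the $n+1$ curves indexed by the pairs $(c,d)$ with $c+d=n$.

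Next I would show each of these curves occurs with the asserted multiplicity, by pushing the window computation of Theorem \ref{window} and the recursion (\ref{recursion}) further than the preceding Proposition did. Reduced modulo $p$ and suitably renormalized, the recursion becomes a closed product formula for $X(p^l)$ and $Y(p^l)$ in terms of $\sigma$-twists of $\overline{U(t_1)}$ and $\overline{U'(t_2)}$; restricting this formula to a candidate curve $C_{(c,d)}$ — say $t_2=t_1^{p^a}$ with $a=c-d\ge0$, where $\sigma$ degenerates to a twisted Frobenius of $\mathbb{F}[[t_1]]$ — one reads off directly that $x$ lifts over the thickening $V\big((t_1^{p^a}-t_2)^m\big)$ precisely for $m<p^{\min(c,d)}=p^{(n-a)/2}$ and obstructs at order $p^{(n-a)/2}$. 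This simultaneously gives the divisibility $(t_1^{p^a}-t_2)^{p^{(n-a)/2}}\mid\overline{f}_x$ (and its mirror) and the matching upper bound, hence the exact multiplicity. As an independent check on the total one may intersect $\mathcal{Z}(x)$ with the smooth curve $\mathcal{Z}(x')$ for an $x'\perp x$ with $\nu_p((x')^\vee\circ x')=0$ and combine the Gross–Keating decomposition (\ref{formalcommul}) with the coprimality statement of Theorem \ref{regularity} to recover $\textup{ord}\big(\overline{f}_x|_{\mathcal{Z}(x')_{\mathbb{F}}}\big)$, which must agree with the predicted weighted sum $\sum_a p^{(n-a)/2}\cdot\big(C_{(c,d)}\cdot\mathcal{Z}(x')_{\mathbb{F}}\big)$.

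The genuinely hard part is this last step — showing the multiplicity is exactly, not merely at least, $p^{(n-a)/2}$ and that no other factor intervenes. When $p$ is large relative to $n$ this follows already from (\ref{modpdec}), whose displayed term is then literally the lowest-degree part of $\overline{f}_x$ and forces equality with the product; but for small $p$, and in particular $p=2$, that term sits in degree $\ge p^{n/2+1}$ and carries no information, so one must carry the mod-$p$ recursion through to all orders along every component — a lengthy but essentially mechanical computation once the twisted-Frobenius simplification is in place — or run the length count of the previous paragraph uniformly in $p$. Once $\overline{f}_x$ is determined in general, transporting the formula through the isomorphism $\mathcal{N}_0(N)\simeq\mathcal{D}(x_0)$ of Theorem \ref{keydecomgeo} (applied to $x_0$ with $n=\nu_p(N)$) yields the companion description of $\mathcal{N}_0(N)_p$ quoted in the Supplement; these special-fibre descriptions are, of course, also accessible by the rather different Drinfeld-level-structure methods of Katz–Mazur.
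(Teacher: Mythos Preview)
Your plan is workable but considerably more elaborate than the paper's. The paper has just two moves. First --- and this is exactly your middle step --- it restricts the window recursion (\ref{recursion}) modulo each $(t_1^{p^{k_1}}-t_2^{p^{k_2}})$ with $k_1+k_2=n$: the identity $U'(s)U(s)=pI_2$ telescopes the product expression for $X(p^l)$ and $Y(p^l)$ to a fixed integral matrix independent of $l$ once $l$ is large, so $x$ lifts along the entire reduced curve and that factor divides $\overline{f}_x$. But the paper extracts only this \emph{divisibility}; it never computes the exact order of vanishing along any component. Second --- and this is where the paper diverges from you --- having shown that $P:=\prod_{c+d=n}(t_1^{p^c}-t_2^{p^d})$ divides $\overline{f}_x$, it simply computes the initial (lowest-degree) form of $P$ by hand, observes that it agrees with the initial form of $\overline{f}_x$ recorded in (\ref{modpdec}), and concludes that $\overline{f}_x/P$ is a unit.

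So your Step~1 (bounding the reduced support by a generic-ordinary Frobenius analysis), your exact-multiplicity computation along each thickened curve, and your Gross--Keating cross-check are all unnecessary: divisibility by $P$ plus one initial-form comparison already pins down $\overline{f}_x$ completely. What the paper's shortcut buys is that one never has to analyze thickenings of the components or control the support by a separate argument; what your route would buy, at the cost of a much longer computation, is a direct deformation-theoretic picture of how each multiplicity arises. Your worry that (\ref{modpdec}) may be ``vacuous'' for small $p$ is a fair quibble with the stated cutoff exponent in that Proposition, but it does not justify the all-orders recursion you envisage: the operative content of (\ref{modpdec}) in the paper's argument is the identification of the initial form of $\overline{f}_x$, and comparing initial forms of $P$ and $\overline{f}_x$ is the right mechanism at every prime. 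Any repair belongs in the preceding Proposition, not in a wholesale replacement of the strategy.
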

\begin{proof}
We first prove that $t_{1}^{p^{k_{1}}}-t_{2}^{p^{k_{2}}}$ divides $\overline{f}_{x}$, where $k_{1},k_{2}\geq0$ and $k_{1}+k_{2}=n$. We will prove this by showing that $X(p^{l}),Y(p^{l})\,\,\textup{mod}\,(t_{1}^{p^{k_{1}}}-t_{2}^{p^{k_{2}}})\in\textup{M}_{2}(A_{p^{l}}/(t_{1}^{p^{k_{1}}}-t_{2}^{p^{k_{2}}}))$ for any $l\geq 0$.\\
$\bullet$ When $n=2k$ is even, the recursion formula (\ref{recursion}) implies that,
\begin{align*}
    X(p^{l}) & = p^{k-l}U^{\prime}(t_{2})U^{\prime}(t_{2}^{p})\cdot\cdot\cdot U^{\prime}(t_{2}^{p^{l-1}})U(t_{1}^{p^{l-1}})\cdot\cdot\cdot U(t_{1}^{p})U(t_{1});\\
    Y(p^{l}) & = p^{k-l}U^{\prime}(t_{1})U^{\prime}(t_{1}^{p})\cdot\cdot\cdot U^{\prime}(t_{1}^{p^{l-1}})U(t_{2}^{p^{l-1}})\cdot\cdot\cdot U(t_{2}^{p})U(t_{2}).
\end{align*}
Let's assume $k_{1}\leq k_{2}$. For any $0\leq t\leq l-k_{1}$, we have the relation $t_{2}^{p^{l-t}}=t_{1}^{p^{k_{2}-k_{1}+l-t}}$, hence $U^{\prime}(t_{2}^{p^{l-t}})U(t_{1}^{p^{k_{2}-k_{1}+l-t}})=p\cdot I_{2}$. Moreover, when $1\leq t\leq k_{2}-k_{1}$, $t_{2}^{p^{l-t}}=t_{1}^{p^{k_{2}-k_{1}+l-t}}=0$, hence $U(t_{2}^{p^{l-t}})=U(0)$.
\begin{align*}
    X(p^{l}) & = U^{\prime}(t_{2})U^{\prime}(t_{2}^{p})\cdot\cdot\cdot U^{\prime}(t_{2}^{p^{k_{2}-1}})U(t_{1}^{p^{k_{1}-1}})\cdot\cdot\cdot U(t_{1}^{p})U(t_{1})\in\textup{M}_{2}(A_{p^{l}}/(t_{1}^{p^{k_{1}}}-t_{2}^{p^{k_{2}}}));\\
    Y(p^{l}) & = U^{\prime}(t_{1})U^{\prime}(t_{1}^{p})\cdot\cdot\cdot U^{\prime}(t_{1}^{p^{k_{1}-1}})U(t_{2}^{p^{k_{2}-1}})\cdot\cdot\cdot U(t_{2}^{p})U(t_{2})\in\textup{M}_{2}(A_{p^{l}}/(t_{1}^{p^{k_{1}}}-t_{2}^{p^{k_{2}}})).
\end{align*}
The proof of the case $k_{1}>k_{2}$ is similar and we get the same formula for $X(p^{l})$ and $Y(p^{l})$ as above, therefore we conclude that $t_{1}^{p^{k_{1}}}-t_{2}^{p^{k_{2}}}$ divides $\overline{f}_{x}$ when $k_{1},k_{2}\geq0$ and $k_{1}+k_{2}=2k$ by Theorem \ref{window}, hence $\overline{f}_{x}$ is divisible by the following polynomial,
\begin{equation*}
    (t_{1}-t_{2})^{p^{k}}\cdot\prod\limits_{a=1}^{k}\left((t_{1}-t_{2}^{p^{2a}})(t_{2}-t_{1}^{p^{2a}})\right)^{p^{k-a}}.
\end{equation*}
We also know that $(t_{1}-t_{2})^{p^{k}}\cdot\prod\limits_{a=1}^{k}\left((t_{1}-t_{2}^{p^{2a}})(t_{2}-t_{1}^{p^{2a}})\right)^{p^{k-a}}\equiv (t_{1}t_{2})^{\frac{p^{k}-1}{p-1}}\cdot(t_{2}^{p^{k}}-t_{1}^{p^{k}})$ mod $(t_{1},t_{2})^{p^{k+1}}$, the lemma follows by comparing this formula with (\ref{modpdec}).\\
$\bullet$ When $n=2k+1$ is odd, the recursion formula (\ref{recursion}) implies that,
\begin{align*}
    X(p^{l}) & = p^{k-l}U^{\prime}(t_{2})U^{\prime}(t_{2}^{p})\cdot\cdot\cdot U^{\prime}(t_{2}^{p^{l-1}})\begin{pmatrix}0 & 1\\p & 0\end{pmatrix} U(t_{1}^{p^{l-1}})\cdot\cdot\cdot U(t_{1}^{p})U(t_{1});\\
    Y(p^{l}) & = p^{k-l}U^{\prime}(t_{1})U^{\prime}(t_{1}^{p})\cdot\cdot\cdot U^{\prime}(t_{1}^{p^{l-1}})\begin{pmatrix}0 & 1\\p & 0\end{pmatrix} U(t_{2}^{p^{l-1}})\cdot\cdot\cdot U(t_{2}^{p})U(t_{2}).
\end{align*}
Let's assume $k_{1}< k_{2}$. For any $0\leq t\leq l-k_{1}$, we have the relation $t_{2}^{p^{l-t}}=t_{1}^{p^{k_{2}-k_{1}+l-t}}$, hence $U^{\prime}(t_{2}^{p^{l-t}})U(t_{1}^{p^{k_{2}-k_{1}+l-t}})=p\cdot I_{2}$. Moreover, when $1\leq t\leq k_{2}-k_{1}$, $t_{2}^{p^{l-t}}=t_{1}^{p^{k_{2}-k_{1}+l-t}}=0$, hence $U(t_{2}^{p^{l-t}})=U(0)$.
\begin{align*}
    X(p^{l}) & = U^{\prime}(t_{2})U^{\prime}(t_{2}^{p})\cdot\cdot\cdot U^{\prime}(t_{2}^{p^{k_{2}-1}})U(t_{1}^{p^{k_{1}-1}})\cdot\cdot\cdot U(t_{1}^{p})U(t_{1})\in\textup{M}_{2}(A_{p^{l}}/(t_{1}^{p^{k_{1}}}-t_{2}^{p^{k_{2}}}));\\
    Y(p^{l}) & = U^{\prime}(t_{1})U^{\prime}(t_{1}^{p})\cdot\cdot\cdot U^{\prime}(t_{1}^{p^{k_{1}-1}})U(t_{2}^{p^{k_{2}-1}})\cdot\cdot\cdot U(t_{2}^{p})U(t_{2})\in\textup{M}_{2}(A_{p^{l}}/(t_{1}^{p^{k_{1}}}-t_{2}^{p^{k_{2}}})).
\end{align*}
therefore we conclude that $t_{1}^{p^{k_{1}}}-t_{2}^{p^{k_{2}}}$ divides $\overline{f}_{x}$ when $k_{1},k_{2}\geq0$ and $k_{1}+k_{2}=2k+1$ by Theorem \ref{window}, hence $\overline{f}_{x}$ is divisible by the following polynomial,
\begin{equation*}
    \prod\limits_{a=0}^{k}\left((t_{1}-t_{2}^{p^{2a+1}})(t_{2}-t_{1}^{p^{2a+1}})\right)^{p^{k-a}}.
\end{equation*}
We also know that $ \prod\limits_{a=0}^{k}\left((t_{1}-t_{2}^{p^{2a+1}})(t_{2}-t_{1}^{p^{2a+1}})\right)^{p^{k-a}}\equiv (t_{1}t_{2})^{\frac{p^{k+1}-1}{p-1}}$ mod $(t_{1},t_{2})^{p^{k+1}}$, the lemma follows by comparing this formula with (\ref{modpdec}).
\end{proof}
\begin{corollary}
Let $x\in\mathbb{B}$ be an integral nonzero element which has valuation $n\geq1$. Let $\mathcal{Z}(x)_{p}$ be special fiber of $\mathcal{Z}(x)$, then
\begin{equation*}
    \mathcal{Z}(x)_{p}\simeq\textup{Spf}\,\mathbb{F}[[t_{1},t_{2}]]/\left(\prod\limits_{a+b=n\atop a,b\geq0}(t_{1}^{p^{a}}-t_{2}^{p^{b}})\right).
\end{equation*}
Let $\mathcal{D}(x)_{p}$ (resp. $\mathcal{N}_{0}(N)_{p}$) be the base change of $\mathcal{D}(x)$ (resp. $\mathcal{N}_{0}(N)$) to $\mathbb{F}[[t_{1},t_{2}]]$, then
\begin{equation*}
    \mathcal{N}_{0}(N)_{p}\simeq\mathcal{D}(x)_{p}\simeq\textup{Spf}\,\mathbb{F}[[t_{1},t_{2}]]/\left((t_{1}-t_{2}^{p^{n}})\cdot(t_{2}-t_{1}^{p^{n}})\cdot\prod\limits_{a+b=n\atop a,b\geq1}(t_{1}^{p^{a-1}}-t_{2}^{p^{b-1}})^{p-1}\right).
\end{equation*}
\label{spcci}
\end{corollary}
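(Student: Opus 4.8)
The plan is to reduce the corollary entirely to Theorem \ref{spci}, Theorem \ref{regularity} and Theorem \ref{keydecomgeo}; the only genuine work is a Frobenius reindexing of products in $\mathbb{F}[[t_{1},t_{2}]]$. First I would note that, by Proposition \ref{equation}, $\mathcal{Z}(x)=\textup{Spf}\,W[[t_{1},t_{2}]]/(f_{x})$ with $p\nmid f_{x}$, so base change to $\mathbb{F}$ gives $\mathcal{Z}(x)_{p}\simeq\textup{Spf}\,\mathbb{F}[[t_{1},t_{2}]]/(\overline{f}_{x})$ with $\overline{f}_{x}=f_{x}\bmod p$; moreover I may assume $x$ is the specific element inducing the matrices $X(1),Y(1)$ of \S\ref{6}, since $\mathcal{O}_{\mathcal{Z}(x)}$ depends up to noncanonical isomorphism only on $n=\nu_{p}(x^{\vee}\circ x)$. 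Then, using the characteristic $p$ identity $(t_{1}^{p^{i}}-t_{2}^{p^{j}})^{p}=t_{1}^{p^{i+1}}-t_{2}^{p^{j+1}}$ and writing $c=a-b$ for the pairs $(a,b)$ with $a+b=n$ and $a\geq b\geq0$, one checks
\begin{equation*}
\prod\limits_{\substack{a+b=n\\ a,b\geq0}}\big(t_{1}^{p^{a}}-t_{2}^{p^{b}}\big)=(\textup{unit})\times\prod\limits_{\substack{0\leq c\leq n\\ c\equiv n\,(2)}}\big(t_{1}^{p^{c}}-t_{2}\big)^{p^{(n-c)/2}}\big(t_{1}-t_{2}^{p^{c}}\big)^{p^{(n-c)/2}},
\end{equation*}
with the convention that for $c=0$ (possible only when $n$ is even) the two factors coincide and $t_{1}-t_{2}$ occurs with multiplicity $p^{n/2}$. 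By Theorem \ref{spci} the right-hand side is exactly the factorization of $\overline{f}_{x}$ (that theorem lists all irreducible factors and their multiplicities), so $\overline{f}_{x}$ equals $\prod_{a+b=n}(t_{1}^{p^{a}}-t_{2}^{p^{b}})$ up to a unit, which is the first claimed isomorphism.

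For the second isomorphism I would compute $\overline{d}_{x}$. When $n\geq2$, the map $(a,b)\mapsto(a-1,b-1)$ is a bijection from $\{a+b=n,\,a,b\geq1\}$ onto $\{a'+b'=n-2,\,a',b'\geq0\}$ and $t_{1}^{p^{a}}-t_{2}^{p^{b}}=(t_{1}^{p^{a-1}}-t_{2}^{p^{b-1}})^{p}$ for $a,b\geq1$; isolating in the product for $\overline{f}_{x}$ the two factors with $a=0$ and $b=0$ and comparing with Theorem \ref{spci} applied both to $x$ and to $p^{-1}x$ (which has valuation $n-2$) yields
\begin{equation*}
\overline{f}_{x}=(\textup{unit})\times\big(t_{1}-t_{2}^{p^{n}}\big)\big(t_{2}-t_{1}^{p^{n}}\big)\,\overline{f}_{p^{-1}x}^{\,p},\qquad \overline{f}_{p^{-1}x}=(\textup{unit})\times\prod\limits_{\substack{a+b=n\\ a,b\geq1}}\big(t_{1}^{p^{a-1}}-t_{2}^{p^{b-1}}\big).
\end{equation*}
Since $d_{x}=f_{x}/f_{p^{-1}x}$, $d_{x}\mid f_{x}$ and $p\nmid f_{p^{-1}x}$ by Proposition \ref{equation}, dividing gives $\overline{d}_{x}=(\textup{unit})\times(t_{1}-t_{2}^{p^{n}})(t_{2}-t_{1}^{p^{n}})\prod_{a+b=n,\,a,b\geq1}(t_{1}^{p^{a-1}}-t_{2}^{p^{b-1}})^{p-1}$; the case $n=1$ is immediate because then $d_{x}=f_{x}$ and the last product is empty. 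As $p\nmid d_{x}$, the divisor $\mathcal{D}(x)$ is flat over $\textup{Spf}\,W$ and reduction mod $p$ gives $\mathcal{D}(x)_{p}\simeq\textup{Spf}\,\mathbb{F}[[t_{1},t_{2}]]/(\overline{d}_{x})$, which is the claimed ring. Finally Theorem \ref{keydecomgeo} gives $\mathcal{N}_{0}(N)\simeq\mathcal{D}(x_{0})$ with $\nu_{p}(x_{0}^{\vee}\circ x_{0})=\nu_{p}(N)$, and since (taking $n=\nu_{p}(N)$) $\mathcal{O}_{\mathcal{D}(x)}$ again depends only on $n$, we obtain $\mathcal{N}_{0}(N)_{p}\simeq\mathcal{D}(x_{0})_{p}\simeq\mathcal{D}(x)_{p}$.

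The whole argument is routine once Theorem \ref{spci} is available; the only place requiring care is the combinatorial translation of its ``multiplicity'' presentation of $\overline{f}_{x}$ into the ``additive'' product $\prod_{a+b=n}(t_{1}^{p^{a}}-t_{2}^{p^{b}})$, keeping track of the degenerate indices $a=0$, $b=0$ and of the base case $n=1$. I do not expect any serious obstacle beyond this bookkeeping.
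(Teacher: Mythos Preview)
Your proposal is correct and follows essentially the same approach as the paper, which proves the corollary in two sentences by appealing to Theorem \ref{spci} for $\mathcal{Z}(x)_{p}$ and to the definition of difference divisors for $\mathcal{D}(x)_{p}$. You simply make explicit the Frobenius reindexing $(t_{1}^{p^{i}}-t_{2}^{p^{j}})^{p}=t_{1}^{p^{i+1}}-t_{2}^{p^{j+1}}$ that converts the multiplicity description of Theorem \ref{spci} into the product $\prod_{a+b=n}(t_{1}^{p^{a}}-t_{2}^{p^{b}})$ and then carry out the division $\overline{d}_{x}=\overline{f}_{x}/\overline{f}_{p^{-1}x}$, which the paper leaves implicit.
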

\begin{proof}
The statement for $\mathcal{Z}(x)_{p}$ follows from Theorem \ref{spci}. The statement for $\mathcal{D}(x)_{p}$ follows from the definition of difference divisors.
\end{proof}
\begin{remark}
The same formula has been proved in \cite[Theorem 13.4.6, Theorem 13.4.7]{KM85} by a totally different method.
\end{remark}

\subsection{Local arithmetic intersection numbers}
Now we give the definition of the local arithmetic intersection numbers.
\begin{definition}
For any rank $3$ lattice $L\subset\mathbb{B}$, we choose a $\mathbb{Z}_{p}$-basis $\{x_{1},x_{2},x_{3}\}$ of $L$. Let $\mathcal{O}_{\mathcal{Z}^{\sharp}(x_{i})}$ be the structure sheaf of the special cycle $\mathcal{Z}^{\sharp}(x_{i})$. Let $\mathcal{O}_{\mathcal{N}}$ be the structure sheaf of the formal scheme $\mathcal{N}$. Let $-\otimes^{\mathbb{L}}_{\mathcal{O}_{\mathcal{N}}}-$ be the derived tensor product functor in the derived category of coherent sheaves on $\mathcal{N}$. Define the local arithmetic intersection number of $L$ on $\mathcal{N}$ to be
\begin{equation*}
    \textup{Int}^{\sharp}(L) = \chi(\mathcal{N},\mathcal{O}_{\mathcal{Z}^{\sharp}(x_{1})}\otimes^{\mathbb{L}}_{\mathcal{O}_{\mathcal{N}}}\mathcal{O}_{\mathcal{Z}^{\sharp}(x_{2})}\otimes^{\mathbb{L}}_{\mathcal{O}_{\mathcal{N}}}\mathcal{O}_{\mathcal{Z}^{\sharp}(x_{3})}).
\end{equation*}
\end{definition}
\par
This number is finite and independent of the choice of the basis $\{x_{i}\}_{i=1}^{3}$ of $L$ because of the following result.
\begin{lemma}
Let $x,y\in\mathbb{B}$ be two linearly independent elements, then the tor sheaves $\linebreak\underline{\textup{Tor}}_{i}^{\mathcal{O}_{\mathcal{N}}}(\mathcal{O}_{\mathcal{Z}^{\sharp}(x)},\mathcal{O}_{\mathcal{Z}^{\sharp}(y)})$ vanish for all $i\geq 1$. In particular,
\begin{equation*}
    \mathcal{O}_{\mathcal{Z}^{\sharp}(x)}\otimes^{\mathbb{L}}_{\mathcal{O}_{\mathcal{N}}}\mathcal{O}_{\mathcal{Z}^{\sharp}(y)}=\mathcal{O}_{\mathcal{Z}^{\sharp}(x)}\otimes_{\mathcal{O}_{\mathcal{N}}}\mathcal{O}_{\mathcal{Z}^{\sharp}(y)}.
\end{equation*}
Moreover, the same formula holds if $\mathcal{Z}^{\sharp}(x)$ or $\mathcal{Z}^{\sharp}(y)$ or both are replaced by $\mathcal{D}(x)$ resp. $\mathcal{D}(y)$.\par
Let $L\subset\mathbb{B}$ be an integral quadratic lattice of rank 3 over $\mathbb{Z}_{p}$ with basis $\{x_{1},x_{2},x_{3}\}$, then the derived tensor product $\mathcal{O}_{\mathcal{Z}^{\sharp}(x_{1})}\otimes^{\mathbb{L}}_{\mathcal{O}_{\mathcal{N}}}\mathcal{O}_{\mathcal{Z}^{\sharp}(x_{2})}\otimes^{\mathbb{L}}_{\mathcal{O}_{\mathcal{N}}}\mathcal{O}_{\mathcal{Z}^{\sharp}(x_{3})}$ is independent of the choice of the basis.
\label{coprime}
\end{lemma}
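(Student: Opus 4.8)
The plan is to reduce everything to two facts about the $3$-dimensional regular local ring $\mathcal{O}_{\mathcal{N}}=W[[t_1,t_2]]$ plus one imported geometric input: that special cycles attached to \emph{rank $3$} lattices in $\mathbb{B}$ are $0$-dimensional of finite length, which is precisely the content of Gross--Keating \cite[Proposition 5.4]{GK93}, Wedhorn \cite{Wed07}, Rapoport \cite[Theorem 1.1]{Rap07} and Li--Zhang \cite[Theorem 1.2.1]{LZ22}. For the first assertion we may assume $x,y$ are nonzero and integral, since otherwise $\mathcal{Z}^{\sharp}(x)$ or $\mathcal{Z}^{\sharp}(y)$ is empty and every $\underline{\textup{Tor}}$ vanishes trivially. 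By Proposition \ref{equation} the divisor $\mathcal{Z}^{\sharp}(x)$ is cut out by a single nonzero $f_x\in\mathfrak{m}\subset W[[t_1,t_2]]$ (and $\mathcal{D}(x)$ by $d_x\mid f_x$, similarly for $y$). The two-term free resolution $0\to\mathcal{O}_{\mathcal{N}}\xrightarrow{f_x}\mathcal{O}_{\mathcal{N}}\to\mathcal{O}_{\mathcal{Z}^{\sharp}(x)}\to0$ shows at once that $\underline{\textup{Tor}}_i^{\mathcal{O}_{\mathcal{N}}}(\mathcal{O}_{\mathcal{Z}^{\sharp}(x)},\mathcal{O}_{\mathcal{Z}^{\sharp}(y)})=0$ for $i\geq 2$, while $\underline{\textup{Tor}}_1$ is the $f_x$-torsion submodule of $W[[t_1,t_2]]/(f_y)$; since $W[[t_1,t_2]]$ is a UFD this submodule vanishes exactly when $f_x$ and $f_y$ are coprime, i.e. when $\mathcal{Z}^{\sharp}(x)$ and $\mathcal{Z}^{\sharp}(y)$ share no irreducible component, i.e. when the scheme-theoretic intersection $\mathcal{Z}^{\sharp}(x)\times_{\mathcal{N}}\mathcal{Z}^{\sharp}(y)$ has dimension $\leq 1$. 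Running the same argument with $d_x$ (resp. $d_y$) in place of $f_x$ (resp. $f_y$) settles all four combinations of $\mathcal{Z}^{\sharp}$ and $\mathcal{D}$ at once, because the divisibilities $d_x\mid f_x$, $d_y\mid f_y$ propagate coprimality.

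Thus the core point is the dimension bound $\dim\big(\mathcal{Z}^{\sharp}(x)\times_{\mathcal{N}}\mathcal{Z}^{\sharp}(y)\big)\leq 1$ for linearly independent $x,y$. Here I would argue as follows. The lattice $M=\mathbb{Z}_p x+\mathbb{Z}_p y$ has rank $2$, and (the intersection being empty otherwise) $M$ is integral, hence contained in the unique maximal order $\mathcal{O}_{\mathbb{B}}$ of the division quaternion algebra $\mathbb{B}$. Since $\dim_{\mathbb{Q}_p}\mathbb{B}=4$ we may pick $z\in\mathcal{O}_{\mathbb{B}}$ outside $M\otimes\mathbb{Q}_p$, so that $L=M+\mathbb{Z}_p z\subset\mathcal{O}_{\mathbb{B}}$ is an integral lattice of rank $3$. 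Then $\mathcal{Z}^{\sharp}(x)\times_{\mathcal{N}}\mathcal{Z}^{\sharp}(y)\times_{\mathcal{N}}\mathcal{Z}^{\sharp}(z)=\mathcal{Z}^{\sharp}(L)$ is $0$-dimensional of finite length by the results cited above, and since $\mathcal{Z}^{\sharp}(z)$ is an effective Cartier divisor (Proposition \ref{equation}), Krull's principal ideal theorem gives
\[
\dim\big(\mathcal{Z}^{\sharp}(x)\times_{\mathcal{N}}\mathcal{Z}^{\sharp}(y)\big)\leq\dim\mathcal{Z}^{\sharp}(L)+1=1 .
\]
(When $p$ is odd one may instead invoke directly the linear-invariance statement for GSpin Rapoport--Zink spaces from \cite{LZ22}, of which $\mathcal{N}$ is an instance.) Combined with the first paragraph this proves the Tor-vanishing for $\mathcal{Z}^{\sharp}(x),\mathcal{Z}^{\sharp}(y)$ and, because $\mathcal{D}(x)\subseteq\mathcal{Z}^{\sharp}(x)$ and $\mathcal{D}(y)\subseteq\mathcal{Z}^{\sharp}(y)$ as closed formal subschemes, also in the remaining cases.

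For the basis-independence of the triple derived tensor product, first note the scheme-theoretic additivity of special cycles: for $x,x'\in\mathbb{B}$ the closed formal subschemes $\mathcal{Z}^{\sharp}(x)\times_{\mathcal{N}}\mathcal{Z}^{\sharp}(x')$ and $\mathcal{Z}^{\sharp}(x+x')\times_{\mathcal{N}}\mathcal{Z}^{\sharp}(x')$ of $\mathcal{N}$ coincide, since both represent the subfunctor ``$x$ and $x'$ both lift to isogenies'' $=$ ``$x+x'$ and $x'$ both lift'' (Definition \ref{specialcycle}); likewise $\mathcal{Z}^{\sharp}(ux)=\mathcal{Z}^{\sharp}(x)$ for $u\in\mathbb{Z}_p^{\times}$. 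Hence the ideal $I(L)=(f_{x_1},f_{x_2},f_{x_3})\subset W[[t_1,t_2]]$ is invariant under elementary row operations on the basis, so it depends only on the lattice $L$. By the dimension bound $\mathcal{Z}^{\sharp}(L)=\textup{Spf}\,W[[t_1,t_2]]/I(L)$ is $0$-dimensional (or empty), so $I(L)$ is $\mathfrak{m}$-primary (or the unit ideal); as $W[[t_1,t_2]]$ is a $3$-dimensional regular, hence Cohen--Macaulay, local ring, $f_{x_1},f_{x_2},f_{x_3}$ is therefore a regular sequence. Consequently the derived tensor product $\mathcal{O}_{\mathcal{Z}^{\sharp}(x_1)}\otimes^{\mathbb{L}}_{\mathcal{O}_{\mathcal{N}}}\mathcal{O}_{\mathcal{Z}^{\sharp}(x_2)}\otimes^{\mathbb{L}}_{\mathcal{O}_{\mathcal{N}}}\mathcal{O}_{\mathcal{Z}^{\sharp}(x_3)}$, which is the Koszul complex on $f_{x_1},f_{x_2},f_{x_3}$, is quasi-isomorphic to $W[[t_1,t_2]]/I(L)$, an object depending only on $L$; in particular $\textup{Int}^{\sharp}(L)$ is well defined.

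The only non-formal ingredient is the dimension bound, and there the whole difficulty is outsourced to the finiteness of $\mathcal{Z}^{\sharp}(L)$ for rank-$3$ lattices; I expect that to be the main obstacle in the sense that the present lemma cannot be proved without it, while everything else is Koszul/UFD commutative algebra together with the elementary scheme-theoretic additivity of the special cycles.
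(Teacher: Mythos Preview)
Your argument is correct and is essentially the proof that the paper outsources to Terstiege \cite[Lemma 4.1, Proposition 4.2]{Ter11}: the paper gives no argument of its own here, only the citation. Your write-up unpacks that citation faithfully—reduce Tor-vanishing to coprimality of the defining equations in the UFD $W[[t_1,t_2]]$, bound the dimension of the pairwise intersection by cutting with a third special divisor and invoking the $0$-dimensionality of $\mathcal{Z}^{\sharp}(L)$ for rank-$3$ lattices (Gross--Keating), and then deduce basis-independence from the scheme-theoretic additivity $(f_{x},f_{x'})=(f_{x+x'},f_{x'})$ together with the Cohen--Macaulay/Koszul fact that three generators of an $\mathfrak{m}$-primary ideal in a $3$-dimensional regular local ring form a regular sequence. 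One small remark: your parenthetical ``$M$ is integral, hence contained in $\mathcal{O}_{\mathbb{B}}$'' is justified because in the division algebra $\mathbb{B}$ the normalized valuation satisfies $\nu_p(x^{\vee}x)=2v(x)$, so norm-integrality already forces $x\in\mathcal{O}_{\mathbb{B}}$; you might make that explicit, since the paper's definition of ``integral'' only mentions the norm.
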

\begin{proof}
This is proved in \cite[Lemma 4.1, Proposition 4.2]{Ter11}.
\end{proof}
\par
Now let's come to the local arithmetic intersection numbers on $\mathcal{N}_{0}(N)$. For a fixed $N$-isogeny $x_{0}$ of $\mathbb{X}$, recall that we have defined the space of quasi-isogenies of $\mathbb{X}$ special to $x_{0}$ (cf. Definition \ref{esp1}) to be those $x\in\mathbb{B}$ such that
\begin{equation*}
   x\circ x_{0}^{\vee}+x_{0}\circ x^{\vee} = 0.
\end{equation*}
Recall that we use $\mathbb{W}$ to denote this space.
\begin{definition}
For any rank $2$ lattice $M\subset\mathbb{W}$, we choose a $\mathbb{Z}_{p}$-basis $\{x_{1},x_{2}\}$ of $M$. Let $\mathcal{O}_{\mathcal{Z}(x_{i})}$ be the structure sheaf of the special cycle $\mathcal{Z}(x_{i})$. Let $\mathcal{O}_{\mathcal{N}_{0}(N)}$ be the structure sheaf of the formal scheme $\mathcal{N}_{0}(N)$. Let $-\otimes^{\mathbb{L}}_{\mathcal{O}_{\mathcal{N}_{0}(N)}}-$ be the derived tensor product functor in the derived category of coherent sheaves on $\mathcal{N}_{0}(N)$. Define the local arithmetic intersection number of $M$ on $\mathcal{N}_{0}(N)$ to be
\begin{equation*}
    \textup{Int}_{\mathcal{N}_{0}(N)}(M) = \chi(\mathcal{N}_{0}(N),\mathcal{O}_{\mathcal{Z}(x_{1})}\otimes^{\mathbb{L}}_{\mathcal{O}_{\mathcal{N}_{0}(N)}}\mathcal{O}_{\mathcal{Z}(x_{2})}).
\end{equation*}
\end{definition}
This number is independent of the choice of the basis $\{x_{1},x_{2}\}$ of $M$ because of Lemma \ref{coprime} and Theorem \ref{keydecomgeo}, we will relate it to the derivative of the local density of the quadratic lattice $M$ with level $N$. The following theorem is the starting point of our calculation.
\begin{theorem}
For any prime number $p$, let $L\subset\mathbb{B}$ be a $\mathbb{Z}_{p}$-lattice of rank 3, then
\begin{equation*}
\textup{Int}^{\sharp}(L)  = \partial\textup{Den}^{+}(L).
\end{equation*}
\label{mainlz22}
\end{theorem}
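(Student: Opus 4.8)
The goal is the local arithmetic Siegel–Weil identity $\textup{Int}^{\sharp}(L)=\partial\textup{Den}^{+}(L)$ for an arbitrary rank $3$ lattice $L\subset\mathbb{B}$, where $\mathbb{B}$ is the division quaternion algebra over $\mathbb{Q}_p$ and $\mathcal{N}\simeq\textup{Spf}\,W[[t_1,t_2]]$ is the product Rapoport–Zink space. The plan is to reduce both sides to known computations and then invoke the existing literature. First I would observe that the formal scheme $\mathcal{N}$ is exactly the GSpin Rapoport–Zink space attached to the self-dual lattice $\tilde L=\mathrm{M}_2(\mathbb{Z}_p)$ of rank $4$ (signature considerations aside, this is the ``hyperspecial level'' case), and the special divisors $\mathcal{Z}^{\sharp}(x)$ are the GSpin special divisors. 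Under this identification the geometric side $\textup{Int}^{\sharp}(L)$ is precisely the arithmetic intersection number of three special divisors on a GSpin RZ space of dimension $2$, and the analytic side $\partial\textup{Den}^{+}(L)$ is the derived local density as normalized in Lemma \ref{poly}. Thus the theorem is literally the statement of the local arithmetic Siegel–Weil formula in the self-dual / hyperspecial setting.

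The key steps, in order, are: (1) match normalizations—check that the polynomial $\textup{Den}^{+}(X,L)$ defined here via $\textup{Den}^{+}(X,L)|_{X=p^{-k}}=\textup{Den}(H_{2k+4}^{+},L)/\textup{Nor}^{+}(p^{-k},3)$ agrees with the representation-density polynomial appearing in \cite{LZ22} and \cite{Wed07} (this uses Proposition \ref{non-homo} relating Whittaker functions to $\textup{Den}(M\obot H^{+}_{2k},L)$, together with the definition of $\textup{Nor}^{\varepsilon}$ in Definition \ref{normal1}); (2) verify that the special cycles $\mathcal{Z}^{\sharp}(x)$ of Definition \ref{specialcycle}, cut out by a single $f_x\in\mathfrak{m}\setminus\mathfrak{m}^2$ (Proposition \ref{equation}), coincide with the Kudla–Rapoport/GSpin special divisors, so that $\textup{Int}^{\sharp}(L)$ equals the intersection number treated by Gross–Keating \cite[Proposition 5.4]{GK93} (for $p$ odd via Li–Zhang \cite[Theorem 1.2.1]{LZ22}, and for $p=2$ via Wedhorn \cite[\S2.16]{Wed07} and Rapoport \cite[Theorem 1.1]{Rap07}); (3) quote the cited theorems to conclude the identity of the two integers. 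Since Lemma \ref{coprime} already guarantees $\textup{Int}^{\sharp}(L)$ is finite and basis-independent, and since $L$ cannot embed isometrically into $H_4^{+}$ (so $\partial\textup{Den}^{+}(L)$ is a well-defined integer), the only content is the precise matching of both sides with the literature's formulation.

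The main obstacle will be step (1), the bookkeeping of normalizing factors and Weil-index constants: the references \cite{GK93}, \cite{Wed07}, \cite{Rap07}, \cite{LZ22} each use slightly different conventions for local densities and for the Eisenstein-series normalization, and one must carefully track the factors $\gamma(\mathbb{V}_v)$, $|2|_v$, $|M^{\vee}/M|_v$ in Proposition \ref{non-homo} to see that the combination collapses to exactly $\textup{Den}^{+}(X,L)$ as defined here. The $p=2$ case requires extra care since the classification of quadratic lattices is more subtle, but Lemma \ref{poly} already provides the existence of $\textup{Den}^{+}(X,L)$ for $n=3$ odd at $p=2$, and \cite{Wed07} handles the geometric side uniformly in $p$. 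Once normalizations are aligned, the theorem is immediate from the cited results; no new geometric or analytic input is needed here, which is why the excerpt states it as a black box at the end of Section \ref{4}.
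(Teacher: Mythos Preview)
Your proposal is correct and takes essentially the same approach as the paper: both treat the theorem as a known result in the literature and defer to \cite{GK93}, \cite{Wed07}, \cite{Rap07}, and \cite{LZ22}. The paper's proof is slightly more specific in naming the Gross--Keating invariants $(a_1,a_2,a_3)$ as the common parametrization through which both $\textup{Int}^{\sharp}(L)$ and $\partial\textup{Den}^{+}(L)$ are computed and compared (in \cite[\S2.16]{Wed07}), whereas you frame the matching more abstractly as a normalization check; but the substance is identical.
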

\begin{proof}
In \cite[$\S$4]{GK93}, the Gross-Keating invariants $(a_{1},a_{2},a_{3})$ of the rank 3 quadratic lattice $L$ is defined, then the local arithmetic intersection number $\textup{Int}^{\sharp}(L)$ is computed explicitly in terms of these invariants (see also \cite[Theorem 1.1]{Rap07}). In \cite[$\S$2.11]{Wed07}, the local density $\textup{Den}^{+}(X,L)$ is also expressed explicitly in terms of the Gross-Keating invariants $(a_{1},a_{2},a_{3})$, hence the derived local density $\partial\textup{Den}^{+}(L)$, the theorem is proved by comparing the expressions of both sides in terms of $(a_{1},a_{2},a_{3})$ (see \cite[$\S$2.16]{Wed07}). The readers can also see \cite{LZ22} for a recent new proof when $p$ is odd.
\end{proof}

\subsection{Difference formula of the local arithmetic intersection numbers}
Fix a $N$-isogeny $x_{0}\in \textup{End}(\mathbb{X})$, recall that $\mathbb{W}= \{x_{0}\}^{\bot}\rightarrow\mathbb{B}$.
\begin{theorem}
For any rank 2 lattice $M\subset\mathbb{W}$, the following identity holds,
\begin{equation*}
    \textup{Int}_{\mathcal{N}_{0}(N)}(M) = 
    \textup{Int}^{\sharp}(M\obot\mathbb{Z}_{p}\cdot x_{0})-\textup{Int}^{\sharp}(M\obot\mathbb{Z}_{p}\cdot p^{-1}x_{0}).
\end{equation*}
\label{geodiff}
\end{theorem}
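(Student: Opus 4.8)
The plan is to reduce the identity to the geometric identification $\mathcal{N}_{0}(N)\simeq\mathcal{D}(x_{0})$ of Theorem \ref{keydecomgeo} together with the factorization $f_{x_{0}}=d_{x_{0}}\cdot f_{p^{-1}x_{0}}$ of defining equations on $\mathcal{N}=\textup{Spf}\,A$, $A=W[[t_{1},t_{2}]]$. Fix a $\mathbb{Z}_{p}$-basis $\{x_{1},x_{2}\}$ of $M$. Since $M\subset\mathbb{W}=\{x_{0}\}^{\bot}$, the vectors $x_{1},x_{2}$ are orthogonal to $x_{0}$, so $L\coloneqq M\obot\mathbb{Z}_{p}x_{0}$ and $L^{\prime}\coloneqq M\obot\mathbb{Z}_{p}p^{-1}x_{0}$ are genuine orthogonal rank $3$ lattices with $\mathbb{Z}_{p}$-bases $\{x_{1},x_{2},x_{0}\}$ and $\{x_{1},x_{2},p^{-1}x_{0}\}$ respectively. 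Write $f_{i}=f_{x_{i}}$, $f_{0}=f_{x_{0}}$, $f_{0}^{\prime}=f_{p^{-1}x_{0}}$, $d_{0}=d_{x_{0}}$ for the elements of $A$ cutting out the corresponding special and difference divisors. If $\nu_{p}(N)\leq 1$ then $p^{-1}x_{0}$ is not integral, hence $\mathcal{Z}^{\sharp}(p^{-1}x_{0})=\varnothing$ and $\textup{Int}^{\sharp}(L^{\prime})=0$, while $d_{0}=f_{0}$ and $\mathcal{N}_{0}(N)\simeq\mathcal{D}(x_{0})=\mathcal{Z}^{\sharp}(x_{0})$; the asserted formula then reads $\textup{Int}_{\mathcal{N}_{0}(N)}(M)=\textup{Int}^{\sharp}(L)$, which holds because each $\mathcal{Z}(x_{i})$ is the base change of $\mathcal{Z}^{\sharp}(x_{i})$ along $\mathcal{N}_{0}(N)=\mathcal{Z}^{\sharp}(x_{0})\hookrightarrow\mathcal{N}$. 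So I may assume $\nu_{p}(N)\geq 2$, whence $f_{0}=d_{0}f_{0}^{\prime}$ in the unique factorization domain $A$.

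The second step is to record the short exact sequence of $\mathcal{O}_{\mathcal{N}}$-modules
\begin{equation*}
    0\longrightarrow \mathcal{O}_{\mathcal{D}(x_{0})}\stackrel{\cdot f_{0}^{\prime}}{\longrightarrow} \mathcal{O}_{\mathcal{Z}^{\sharp}(x_{0})}\longrightarrow \mathcal{O}_{\mathcal{Z}^{\sharp}(p^{-1}x_{0})}\longrightarrow 0,
\end{equation*}
that is, $0\to A/d_{0}\stackrel{\cdot f_{0}^{\prime}}{\to} A/f_{0}\to A/f_{0}^{\prime}\to 0$; exactness is elementary given $f_{0}=d_{0}f_{0}^{\prime}$ and that $A$ is a domain. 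Regarding this as a distinguished triangle in $D^{b}(\textup{Coh}\,\mathcal{N})$ and applying the triangulated functor $-\otimes^{\mathbb{L}}_{\mathcal{O}_{\mathcal{N}}}\mathcal{O}_{\mathcal{Z}^{\sharp}(x_{1})}\otimes^{\mathbb{L}}_{\mathcal{O}_{\mathcal{N}}}\mathcal{O}_{\mathcal{Z}^{\sharp}(x_{2})}$ gives another distinguished triangle whose three vertices have finite-length cohomology; since $\chi(\mathcal{N},-)$ is additive on distinguished triangles, this yields
\begin{equation*}
    \chi\big(\mathcal{N},\mathcal{O}_{\mathcal{D}(x_{0})}\otimes^{\mathbb{L}}_{\mathcal{O}_{\mathcal{N}}}\mathcal{O}_{\mathcal{Z}^{\sharp}(x_{1})}\otimes^{\mathbb{L}}_{\mathcal{O}_{\mathcal{N}}}\mathcal{O}_{\mathcal{Z}^{\sharp}(x_{2})}\big)=\textup{Int}^{\sharp}(L)-\textup{Int}^{\sharp}(L^{\prime}),
\end{equation*}
by the very definition of $\textup{Int}^{\sharp}$ for $L$ and $L^{\prime}$ relative to the bases above.

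The last step is to recognize the left-hand side as $\textup{Int}_{\mathcal{N}_{0}(N)}(M)$. Let $j:\mathcal{N}_{0}(N)\simeq\mathcal{D}(x_{0})\hookrightarrow\mathcal{N}$ be the finite closed immersion. The Cartesian square following Definition \ref{esp2} identifies $\mathcal{O}_{\mathcal{Z}(x_{i})}$ with $Lj^{*}\mathcal{O}_{\mathcal{Z}^{\sharp}(x_{i})}$ once one knows that the relevant $\underline{\textup{Tor}}$'s vanish, which is Lemma \ref{coprime} (applicable since $x_{i}$ and $x_{0}$ are linearly independent, and $x_{1}$ and $x_{2}$ are); the same lemma, together with the monoidality of $Lj^{*}$, gives $\mathcal{O}_{\mathcal{Z}(x_{1})}\otimes^{\mathbb{L}}_{\mathcal{O}_{\mathcal{N}_{0}(N)}}\mathcal{O}_{\mathcal{Z}(x_{2})}=Lj^{*}\big(\mathcal{O}_{\mathcal{Z}^{\sharp}(x_{1})}\otimes^{\mathbb{L}}_{\mathcal{O}_{\mathcal{N}}}\mathcal{O}_{\mathcal{Z}^{\sharp}(x_{2})}\big)$. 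Combining $\chi(\mathcal{N}_{0}(N),-)=\chi(\mathcal{N},j_{*}-)$ with the projection formula $j_{*}Lj^{*}\mathcal{F}\simeq\mathcal{F}\otimes^{\mathbb{L}}_{\mathcal{O}_{\mathcal{N}}}j_{*}\mathcal{O}_{\mathcal{D}(x_{0})}$ and $j_{*}\mathcal{O}_{\mathcal{D}(x_{0})}=\mathcal{O}_{\mathcal{D}(x_{0})}$ then turns the displayed left-hand side into $\textup{Int}_{\mathcal{N}_{0}(N)}(M)$, finishing the proof. I expect the only real work to be this bookkeeping: keeping track of which derived tensor products are concentrated in degree $0$ and confirming that every instance of $\underline{\textup{Tor}}$-vanishing needed is furnished by the linear-independence hypotheses in Lemma \ref{coprime}. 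The substantive geometric input, namely the identification $\mathcal{N}_{0}(N)\simeq\mathcal{D}(x_{0})$, is already in hand, so there is no essential obstacle beyond this.
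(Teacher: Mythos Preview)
Your proof is correct and follows essentially the same route as the paper: case split on $\nu_{p}(N)$, the short exact sequence coming from the factorization $f_{x_{0}}=d_{x_{0}}f_{p^{-1}x_{0}}$, additivity of $\chi$ on the resulting triangle, and identification of the $\mathcal{D}(x_{0})$-term with $\textup{Int}_{\mathcal{N}_{0}(N)}(M)$ via Theorem \ref{keydecomgeo} and Lemma \ref{coprime}. The only cosmetic difference is that the paper uses the sequence $0\to\mathcal{O}_{\mathcal{Z}^{\sharp}(p^{-1}x_{0})}\stackrel{\times d_{x_{0}}}{\to}\mathcal{O}_{\mathcal{Z}^{\sharp}(x_{0})}\to\mathcal{O}_{\mathcal{D}(x_{0})}\to 0$ rather than your rotated version $0\to\mathcal{O}_{\mathcal{D}(x_{0})}\stackrel{\cdot f_{p^{-1}x_{0}}}{\to}\mathcal{O}_{\mathcal{Z}^{\sharp}(x_{0})}\to\mathcal{O}_{\mathcal{Z}^{\sharp}(p^{-1}x_{0})}\to 0$, which of course yields the same Euler-characteristic identity.
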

\begin{proof}
Let $\{x_{1},x_{2}\}$ be a basis of $M$. By Lemma \ref{coprime} and the isomorphism $\mathcal{D}(x_{0})\simeq\mathcal{N}_{0}(N)$, we have the following isomorphism as complexes of coherent sheaves on $\mathcal{N}$,
\begin{align*}
    \mathcal{O}_{\mathcal{N}_{0}(N)}\otimes^{\mathbb{L}}_{\mathcal{O}_{\mathcal{N}}}\mathcal{O}_{\mathcal{Z}^{\sharp}(x_{1})}\otimes^{\mathbb{L}}_{\mathcal{O}_{\mathcal{N}}}\mathcal{O}_{\mathcal{Z}^{\sharp}(x_{2})} & \simeq \mathcal{O}_{\mathcal{Z}(x_{1})}\otimes^{\mathbb{L}}_{\mathcal{O}_{\mathcal{N}}}\mathcal{O}_{\mathcal{Z}^{\sharp}(x_{2})}\\
    & \simeq\mathcal{O}_{\mathcal{Z}(x_{1})}\otimes_{\mathcal{O}_{\mathcal{N}_{0}(N)}}^{\mathbb{L}}\mathcal{O}_{\mathcal{D}(x_{0})}\otimes^{\mathbb{L}}_{\mathcal{O}_{\mathcal{N}}}\mathcal{O}_{\mathcal{Z}^{\sharp}(x_{2})}\\
    & \simeq\mathcal{O}_{\mathcal{Z}(x_{1})}\otimes_{\mathcal{O}_{\mathcal{N}_{0}(N)}}^{\mathbb{L}}\mathcal{O}_{\mathcal{Z}(x_{2})}.
\end{align*}
When $\nu_{p}(N)=0$ or $1$, the difference divisor $\mathcal{D}(x_{0})$ is just $\mathcal{Z}^{\sharp}(x_{0})$, hence $\textup{Int}_{\mathcal{N}_{0}(N)}(M) = \textup{Int}^{\sharp}(M\obot\mathbb{Z}_{p}\cdot x_{0})$ and $\textup{Int}^{\sharp}(M\obot\mathbb{Z}_{p}\cdot p^{-1}x_{0})=0$ since $p^{-1}x_{0}$ is not integral, therefore $\textup{Int}_{\mathcal{N}_{0}(N)}(M) = \textup{Int}^{\sharp}(M\obot\mathbb{Z}_{p}\cdot x_{0})-\textup{Int}^{\sharp}(M\obot\mathbb{Z}_{p}\cdot p^{-1}x_{0})$.
\par
When $\nu_{p}(N)\geq2$, we have the following exact sequence,
\begin{equation*}
    0\longrightarrow \mathcal{O}_{\mathcal{Z}^{\sharp}(p^{-1}x_{0})}\stackrel{\times d_{x_{0}}}\longrightarrow \mathcal{O}_{\mathcal{Z}^{\sharp}(x_{0})}\longrightarrow \mathcal{O}_{\mathcal{D}(x_{0})}\simeq\mathcal{O}_{\mathcal{N}_{0}(N)}\longrightarrow 0.
\end{equation*}
Tensoring the above exact sequence with the complex $\mathcal{O}_{\mathcal{Z}^{\sharp}(x_{1})}\otimes^{\mathbb{L}}_{\mathcal{O}_{\mathcal{N}}}\mathcal{O}_{\mathcal{Z}^{\sharp}(x_{2})}$ in the derived category of coherent sheaves on $\mathcal{N}$, we get an exact triangle
\begin{align*}
    \mathcal{O}_{\mathcal{Z}^{\sharp}(p^{-1}x_{0})}\otimes^{\mathbb{L}}_{\mathcal{O}_{\mathcal{N}}}\mathcal{O}_{\mathcal{Z}^{\sharp}(x_{1})}\otimes^{\mathbb{L}}_{\mathcal{O}_{\mathcal{N}}}\mathcal{O}_{\mathcal{Z}^{\sharp}(x_{2})}&\rightarrow \mathcal{O}_{\mathcal{Z}^{\sharp}(x_{0})}\otimes^{\mathbb{L}}_{\mathcal{O}_{\mathcal{N}}}\mathcal{O}_{\mathcal{Z}^{\sharp}(x_{1})}\otimes^{\mathbb{L}}_{\mathcal{O}_{\mathcal{N}}}\mathcal{O}_{\mathcal{Z}^{\sharp}(x_{2})}\\
    &\rightarrow \mathcal{O}_{\mathcal{N}_{0}(N)}\otimes^{\mathbb{L}}_{\mathcal{O}_{\mathcal{N}}}\mathcal{O}_{\mathcal{Z}^{\sharp}(x_{1})}\otimes^{\mathbb{L}}_{\mathcal{O}_{\mathcal{N}}}\mathcal{O}_{\mathcal{Z}^{\sharp}(x_{2})}\rightarrow,
\end{align*}
hence the following identity,
\begin{align*}
    \chi(\mathcal{O}_{\mathcal{Z}^{\sharp}(x_{0})}\otimes^{\mathbb{L}}_{\mathcal{O}_{\mathcal{N}}}\mathcal{O}_{\mathcal{Z}^{\sharp}(x_{1})}\otimes^{\mathbb{L}}_{\mathcal{O}_{\mathcal{N}}}\mathcal{O}_{\mathcal{Z}^{\sharp}(x_{2})}) &=  \chi(\mathcal{O}_{\mathcal{Z}^{\sharp}(p^{-1}x_{0})}\otimes^{\mathbb{L}}_{\mathcal{O}_{\mathcal{N}}}\mathcal{O}_{\mathcal{Z}^{\sharp}(x_{1})}\otimes^{\mathbb{L}}_{\mathcal{O}_{\mathcal{N}}}\mathcal{O}_{\mathcal{Z}^{\sharp}(x_{2})})\\
    &+\chi(\mathcal{O}_{\mathcal{N}_{0}(N)}\otimes^{\mathbb{L}}_{\mathcal{O}_{\mathcal{N}}}\mathcal{O}_{\mathcal{Z}^{\sharp}(x_{1})}\otimes^{\mathbb{L}}_{\mathcal{O}_{\mathcal{N}}}\mathcal{O}_{\mathcal{Z}^{\sharp}(x_{2})}).
\end{align*}
We already know that $\mathcal{O}_{\mathcal{N}_{0}(N)}\otimes^{\mathbb{L}}_{\mathcal{O}_{\mathcal{N}}}\mathcal{O}_{\mathcal{Z}^{\sharp}(x_{1})}\otimes^{\mathbb{L}}_{\mathcal{O}_{\mathcal{N}}}\mathcal{O}_{\mathcal{Z}^{\sharp}(x_{2})}\simeq \mathcal{O}_{\mathcal{Z}(x_{1})}\otimes_{\mathcal{O}_{\mathcal{N}_{0}(N)}}^{\mathbb{L}}\mathcal{O}_{\mathcal{Z}(x_{2})}$ since $\mathcal{N}_{0}(N)\simeq\mathcal{D}(x_{0})$, hence
\begin{align*}
    \textup{Int}_{\mathcal{N}_{0}(N)}(M)&=\chi(\mathcal{O}_{\mathcal{Z}(x_{1})}\otimes_{\mathcal{O}_{\mathcal{D}(x_{0})}}^{\mathbb{L}}\mathcal{O}_{\mathcal{Z}(x_{2})})=\chi(\mathcal{O}_{\mathcal{N}_{0}(N)}\otimes^{\mathbb{L}}_{\mathcal{O}_{\mathcal{N}}}\mathcal{O}_{\mathcal{Z}^{\sharp}(x_{1})}\otimes^{\mathbb{L}}_{\mathcal{O}_{\mathcal{N}}}\mathcal{O}_{\mathcal{Z}^{\sharp}(x_{2})})\\
    &=\chi(\mathcal{O}_{\mathcal{Z}^{\sharp}(x_{0})}\otimes^{\mathbb{L}}_{\mathcal{O}_{\mathcal{N}}}\mathcal{O}_{\mathcal{Z}^{\sharp}(x_{1})}\otimes^{\mathbb{L}}_{\mathcal{O}_{\mathcal{N}}}\mathcal{O}_{\mathcal{Z}^{\sharp}(x_{2})})-\chi(\mathcal{O}_{\mathcal{Z}^{\sharp}(p^{-1}x_{0})}\otimes^{\mathbb{L}}_{\mathcal{O}_{\mathcal{N}}}\mathcal{O}_{\mathcal{Z}^{\sharp}(x_{1})}\otimes^{\mathbb{L}}_{\mathcal{O}_{\mathcal{N}}}\mathcal{O}_{\mathcal{Z}^{\sharp}(x_{2})})\\
    &=\textup{Int}^{\sharp}(M\obot\mathbb{Z}_{p}\cdot x_{0})-\textup{Int}^{\sharp}(M\obot\mathbb{Z}_{p}\cdot p^{-1}x_{0}).
\end{align*}
\end{proof}

\section{Difference formula at the analytic side}
\label{7}
Let $p$ be a prime number. Let $F$ be a nonarchimedean local field of residue characteristic $p$, with ring of integers $\mathcal{O}_{F}$, residue field $\kappa = \mathbb{F}_{q}$ of size $q$, and uniformizer $\pi$.
\subsection{Primitive decomposition}
Let $N\in F$, recall that we use $(\langle N\rangle,q_{\langle N\rangle})$ to denote the rank 1 quadratic lattice over $\mathcal{O}_{F}$ with a $\mathcal{O}_{F}$-generator $l_{N}$ such that $q_{\langle N\rangle}(l_{N})=N$, then $\langle N\rangle$ is an integral quadratic lattice if and only if $N\in\mathcal{O}_{F}$. Let $n=\nu_{\pi}(N)$, all the rank 1 integral quadratic lattice $L^{\prime}$ containing $\langle N\rangle$ has the following form,
\begin{equation*}
    L^{\prime} = \pi^{-i}\langle N\rangle\simeq \langle \pi^{-2i}N\rangle,\,\,\textup{for $i=0,1,\cdot\cdot\cdot,[\frac{n}{2}]$}.
\end{equation*}
Let $H$ be a self-dual quadratic $\mathcal{O}_{F}$-lattice of finite rank. Since $q_{H}(x)\in\mathcal{O}_{F}$ for every $x\in H$, Lemma \ref{stratification} gives the following decomposition, 
\begin{align*}
    \textup{Rep}_{H,\langle N\rangle}(\mathcal{O}_{F}) & = \bigsqcup_{i=0}^{[n/2]}\textup{PRep}_{H,\langle \pi^{-2i}N\rangle}(\mathcal{O}_{F}).
\end{align*}
Now for every $0\leq i\leq [\frac{n}{2}]$, we pick an arbitrary $\phi\in \textup{PRep}_{H,\langle \pi^{-2i}N\rangle}(\mathcal{O}_{F})$ and consider the following sub-lattice of $H$,
\begin{equation*}
    H(\phi) \coloneqq \{x\in H:(x,\phi(l_{N}))=0\}.
\end{equation*}
\begin{lemma}
The isometric class of the quadratic lattice $H(\phi)$ is independent of the choice of $\phi\in \textup{PRep}_{H,\langle \pi^{-2i}N\rangle}(\mathcal{O}_{F})$.
\label{codim1}
\end{lemma}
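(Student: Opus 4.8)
The plan is to deduce the lemma from the transitivity of the orthogonal group $\textup{O}(H)$ on the set of primitive vectors of $H$ of a fixed norm. Concretely, given $\phi,\phi'\in\textup{PRep}_{H,\langle\pi^{-2i}N\rangle}(\mathcal{O}_F)$, put $v=\phi(l_N)$ and $v'=\phi'(l_N)$; these are primitive vectors of $H$ with $q_H(v)=q_H(v')=m\coloneqq\pi^{-2i}N$. If one can produce $g\in\textup{O}(H)$ with $g(v)=v'$, then $g$ carries $H(\phi)=\{x\in H:(x,v)=0\}$ isometrically onto $\{x\in H:(x,v')=0\}=H(\phi')$, which is exactly the assertion of the lemma. (If $\textup{rank}(H)\le 1$ the complement is the zero lattice and there is nothing to prove, so one assumes $\textup{rank}(H)\ge 2$.) Thus everything reduces to a Witt-type extension statement for the self-dual lattice $H$.

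The basic mechanism to produce $g$ is reflection. For $w\in H$ with $q_H(w)\in\mathcal{O}_F^\times$, the map $s_w\colon x\mapsto x-\dfrac{(x,w)}{q_H(w)}\,w$ is an isometry of $H$: it preserves $H$ because $(x,w)\in\mathcal{O}_F$ for $x\in H$ (as $H$ is self-dual) and $q_H(w)$ is a unit. From $q_H(v)=q_H(v')=m$ one computes $q_H(v-v')=2m-(v,v')=(v,v-v')$, so $s_{v-v'}(v)=v'$ whenever $q_H(v-v')\in\mathcal{O}_F^\times$; likewise $(-\textup{id})\circ s_{v+v'}$ sends $v$ to $v'$ whenever $q_H(v+v')=2m+(v,v')\in\mathcal{O}_F^\times$. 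Since $q_H(v-v')+q_H(v+v')=4m$, at least one of these is a unit when $\nu_\pi(m)=0$ (and $p$ is odd), which settles the index $i$ with $\nu_\pi(\pi^{-2i}N)=0$. For the remaining values of $i$, where $\nu_\pi(m)>0$ and $(v,v')$ may also fail to be a unit, one interpolates: choose an auxiliary primitive vector $v''\in H$ of norm $m$ for which both $q_H(v-v'')$ and $q_H(v''-v')$ are units, and apply the previous step twice.

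The main obstacle is precisely the existence of such an interpolating $v''$, i.e.\ Witt extension in the ``degenerate'' range $\nu_\pi(m)>0$. Over the residue field this is a genericity statement about $H/\pi H$ (one seeks $u\in H$ with $q_H(\bar u)\ne 0$, $(v,u)=-q_H(u)$, and $q_H(\overline{(v-v')+u})\ne 0$), and it goes through once $\textup{rank}(H)$ is large enough relative to $|\kappa|$; the delicate cases are small residue fields and, above all, the dyadic case $p=2$, where the reflections themselves must be handled with care. This, however, is exactly the classical content of Witt's extension theorem for unimodular lattices over local rings: for the lattices arising here — $H=H_k^{\varepsilon}$ when $p$ is odd and $H=H_{2k}^{+}$ when $p=2$, each an orthogonal sum of hyperbolic planes (together with at most one anisotropic plane) — $\textup{O}(H)$ is transitive on primitive vectors of a given norm, and one may simply invoke this. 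For $p$ odd there is also a computational alternative bypassing transitivity: from $H\otimes F=Fv\obot(H(\phi)\otimes F)$ the rank, discriminant and Hasse invariant of $H(\phi)\otimes F$ are forced by those of $H\otimes F$ and by $m$, while the shape of a Jordan splitting of $H(\phi)$ depends only on $\textup{rank}(H)$ and $\nu_\pi(m)$ (one embeds $v$ in a unimodular rank-$2$ block of $H$ and splits it off), so $H(\phi)$ is pinned down up to isometry.
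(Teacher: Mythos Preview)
Your proposal is correct and follows essentially the same approach as the paper: both reduce the statement to the transitivity of $\textup{O}(H)$ on primitive vectors of a given norm in a self-dual lattice. The paper proceeds by noting that primitivity plus self-duality give $(x,H)=(x',H)=\mathcal{O}_F$ and then invokes Morin-Strom's Witt-type theorem for modular lattices \cite[Theorem 5.3]{Mor79} to produce the isometry; you instead sketch the reflection argument in the easy range before appealing to the same classical extension theorem for the remaining cases, and also outline an invariant-counting alternative for odd $p$. One notational slip: $\phi$ is defined on $\langle\pi^{-2i}N\rangle$, so the vector should be $\phi(l_{\pi^{-2i}N})$ rather than $\phi(l_N)$.
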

\begin{proof} Let $\phi^{\prime}\in \textup{PRep}_{H,\langle \pi^{-2i}N\rangle}(\mathcal{O}_{F})$ be another element. The homomorphims $\phi$ and $\phi^{\prime}$ are totally determined by $x\coloneqq\phi(l_{\pi^{-2i}N})$ and $x^{\prime}\coloneqq\phi^{\prime}(l_{\pi^{-2i}N})$. The fact that $\phi$ and $\phi^{\prime}$ are primitive implies that $x\notin \pi\cdot H$ and $x^{\prime}\notin \pi\cdot H$. Therefore,
\begin{equation*}
    (x,H) =\mathcal{O}_{F},\,\,\,\,\,(x^{\prime},H) =\mathcal{O}_{F}.
\end{equation*}
where we use $(\cdot,\cdot)$ to denote the associated bilinear form on $H$. Since $H$ is self-dual, then by the work of Morin-Strom \cite[Theorem 5.3]{Mor79}, there exists $\varphi\in \textup{O}(H)(\mathcal{O}_{F})$ such that $\varphi(x)=x^{\prime}$. The homomorphism $\varphi$ also induces an isometric between $H(\phi)$ and $H(\phi^{\prime})$ because $H(\phi)=x^{\bot}\cap H$ and $H(\phi^{\prime})=x^{\prime\bot}\cap H$.
\end{proof}
Let $N\in\mathcal{O}_{F}$ be an element of valuation $n$, for every $0\leq i\leq [\frac{n}{2}]$ and $\phi\in \textup{PRep}_{H,\langle \pi^{-2i}N\rangle}(\mathcal{O}_{F})$, we use $H(N,i)$ to denote the quadratic lattice $H(\phi)$. 
\begin{example}
Let $N\in\mathcal{O}_{F}$ has valuation $n$. When $k>4$, we have an orthogonal decomposition,
\begin{equation*}
    H_{k}^{\varepsilon} \simeq H_{4}^{+}\,\obot\,H_{k-4}^{\varepsilon}.
\end{equation*}
Recall that the symbol $H_{k}^{\varepsilon}$ is understood in the following way: when $p$ is odd, $k$ can be any positive integer, and $\varepsilon\in\{\pm1\}$ is arbitrary; when $p=2$, $k$ is even and $\varepsilon=+1$. The lattice $\textup{M}_{2}(\mathcal{O}_{F})$ is equipped with the quadratic form induced by the determinant, it is self-dual and $\chi_{F}(\textup{M}_{2}(\mathcal{O}_{F}))=1$, hence we can view $\textup{M}_{2}(\mathcal{O}_{F})$ as a model lattice for $H_{4}^{+}$.
\par
Let's consider the following element $\phi\in \textup{PRep}_{H_{k}^{\varepsilon},\langle \pi^{-2i}N\rangle}(\mathcal{O}_{F})$
\begin{align*}
    \phi_{i}: \langle \pi^{-2i}N\rangle&\longrightarrow \textup{M}_{2}(\mathcal{O}_{F})\simeq H_{4}^{+}\hookrightarrow H_{k}^{\varepsilon},\\
    l_{\pi^{-2i}N}&\longmapsto \begin{pmatrix} \pi^{-2i}N & 0\\0 & 1  \end{pmatrix}.
\end{align*}
the corresponding element in $\textup{Rep}_{H_{k}^{\varepsilon},\langle N\rangle}(\mathcal{O}_{F})$ sends $l_{N}$ to $\begin{pmatrix} \pi^{-i}N & 0\\0 & \pi^{i} \end{pmatrix}$.
\par
Lemma \ref{codim1} implies that the following quadratic lattices are isometric,
\begin{equation*}
    H_{k}^{\varepsilon}(N,i)=H_{k}^{\varepsilon}(\phi_{i})\simeq \phi_{i}(l_{\pi^{-2i}N})^{\bot}\,\obot\,H_{k-4}^{\varepsilon},
\end{equation*}
where $\phi_{i}(l_{\pi^{-2i}N})^{\bot}$ is the space of elements in $\textup{M}_{2}(\mathcal{O}_{F})$ that are orthogonal to $\phi_{i}(l_{\pi^{-2i}N})$, it can be described explicitly as follows,
\begin{equation*}
    \phi_{i}(l_{\pi^{-2i}N})^{\bot} = \left\{x=\begin{pmatrix}
    -\pi^{-2i}Na & b\\
    c & a
    \end{pmatrix}:\,\, a,b,c\in\mathcal{O}_{F}\right\}.
\end{equation*}
It is exactly the lattice $\delta_{F}(\pi^{-2i}N)$ we defined in Example \ref{lambda}, therefore $H_{k}^{\varepsilon}(N,i)\simeq \delta_{F}(\pi^{-2i}N)\,\obot\, H_{k-4}^{\varepsilon}$.
\label{explictchoice}
\end{example}

\subsection{Difference formula of local densities}
\begin{theorem}
Let $H$ be a self-dual quadratic $\mathcal{O}_{F}$-lattice of finite rank $k$. Let $M$ be an integral quadratic $\mathcal{O}_{F}$-lattice of finite rank $r$. Let $N\in\mathcal{O}_{F}$ be an element of valuation $n$, then 
\begin{equation*}
    \textup{Den}(H,M\obot\langle N\rangle) = \sum\limits_{i=0}^{[n/2]}q^{(2-k+r)i}\cdot\textup{Pden}(H, \langle \pi^{-2i}N\rangle)\cdot\textup{Den}(H(N,i), M).
\end{equation*}
\label{anadecom}
\end{theorem}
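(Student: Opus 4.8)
The plan is to compute $\textup{Den}(H,M\obot\langle N\rangle)$ by recording, for a representation $\phi\colon M\obot\langle N\rangle\to H$, the image $x\coloneqq\phi(l_{N})$ of the distinguished rank-one summand, and then stratifying by how imprimitive $x$ is in $H$; this is the mechanism used in \cite{CY20} and \cite[\S3]{LZ22} to peel off a rank-one orthogonal summand. Since the bilinear form on $M\obot\langle N\rangle$ makes $M$ orthogonal to $l_{N}$, a homomorphism $\phi$ amounts to a pair $(x,\phi|_{M})$ with $x\in H$, $q_{H}(x)=N$ and $\phi|_{M}\colon M\to x^{\perp_{H}}$, where $x^{\perp_{H}}=\{h\in H:(h,x)=0\}$ carries the restricted quadratic form and no further compatibility is imposed. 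Thus restriction to $\langle N\rangle$ exhibits $\textup{Rep}_{H,M\obot\langle N\rangle}$ as fibered over $\textup{Rep}_{H,\langle N\rangle}$, the fiber over $x$ being $\textup{Rep}_{x^{\perp_{H}},M}$.

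Next I would stratify the base by imprimitivity: every $x$ with $q_{H}(x)=N$ of valuation $n$ is uniquely of the form $x=\pi^{i}x'$ with $x'$ primitive and $q_{H}(x')=\pi^{-2i}N$, for a unique $0\le i\le[n/2]$ --- this is exactly Lemma~\ref{stratification} for the rank-one lattice $\langle N\rangle$, whose integral overlattices are precisely the $\langle\pi^{-2i}N\rangle$. For such $x$ the orthogonal complement is unchanged, $x^{\perp_{H}}=(x')^{\perp_{H}}$, and this is the lattice $H(\phi')$ attached to the primitive representation $\phi'\colon\langle\pi^{-2i}N\rangle\to H$, $l_{\pi^{-2i}N}\mapsto x'$; by Lemma~\ref{codim1} its isometry class is independent of the chosen $x'$ and equals $H(N,i)$, and by Example~\ref{explictchoice} one has $H(N,i)\simeq\delta_{F}(\pi^{-2i}N)\obot H_{k-4}^{\varepsilon}$ when $H=H_{k}^{\varepsilon}$, which is the shape needed in the applications. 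Assembling the two stratifications gives a bijection
\begin{equation*}
\textup{Rep}_{H,M\obot\langle N\rangle}(\mathcal{O}_{F})=\bigsqcup_{i=0}^{[n/2]}\ \bigsqcup_{x'\in\textup{PRep}_{H,\langle\pi^{-2i}N\rangle}(\mathcal{O}_{F})}\textup{Rep}_{(x')^{\perp_{H}},M}(\mathcal{O}_{F}),
\end{equation*}
where $\phi$ corresponds to $(i,x',\phi|_{M})$ via $\phi(l_{N})=\pi^{i}x'$, and each fiber $\textup{Rep}_{(x')^{\perp_{H}},M}$ has the same point count as $\textup{Rep}_{H(N,i),M}$ by Lemma~\ref{codim1}.

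Finally I would upgrade this set-theoretic decomposition to the asserted identity of local densities by evaluating on $\mathcal{O}_{F}/\pi^{d}$-points for $d$ large and tracking powers of $q$ in each stratum. Three effects enter for the stratum indexed by $i$: the substitution $x=\pi^{i}x'$ forces $x'$ to be recorded only modulo $\pi^{d-i}$ (losing $q^{(k-1)i}$, since $\dim(\textup{Rep}_{H,\langle N\rangle})_{F}=k-1$); the equation $q_{H}(x)=N$ becomes, after division by $\pi^{2i}$, a congruence on $q_{H}(x')$ only modulo $\pi^{d-2i}$ (gaining $q^{i}$); and the orthogonality $\pi^{i}(\phi(m),x')=0$ in $\mathcal{O}_{F}/\pi^{d}$ is only the congruence $(\phi(m),x')\equiv 0\pmod{\pi^{d-i}}$, which enlarges the space of admissible $\phi|_{M}$ over the exact orthogonal complement by a copy of $\textup{Hom}_{\mathcal{O}_{F}}(M,\mathcal{O}_{F}/\pi^{i})$, i.e., by $q^{ri}$ (the quadratic constraints on $\phi|_{M}$ are perturbed only by $\pi^{d-i}$-small terms, so the leading count is unaffected). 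Since $q^{-(k-1)i}\cdot q^{i}\cdot q^{ri}=q^{(2-k+r)i}$ and the dimensions add, $(k-1)+\dim(\textup{Rep}_{H(N,i),M})_{F}=\dim(\textup{Rep}_{H,M\obot\langle N\rangle})_{F}$ (because $H(N,i)$ has rank $k-1$), the normalizing powers of $q$ recombine exactly, and the $i$-th stratum contributes $q^{(2-k+r)i}\,\textup{Pden}(H,\langle\pi^{-2i}N\rangle)\,\textup{Den}(H(N,i),M)$, which is the theorem. The main obstacle is precisely this last step: the imprimitivity strata are not cut out by vanishing of fixed coordinates modulo a fixed power of $\pi$, so passing from the $\mathcal{O}_{F}$-point decomposition to a density identity while keeping every power of $q$ honest and uniform in $d$ requires care; one arranges this by taking $d$ large relative to $n$ so that the strata stabilize, or equivalently by writing each local density as the $p$-adic volume (the integral of the canonical gauge form) of $\textup{Rep}(\mathcal{O}_{F})$ and decomposing the domain of integration along the strata.
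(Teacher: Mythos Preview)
Your proposal is correct and follows essentially the same approach as the paper: stratify $\textup{Rep}_{H,M\obot\langle N\rangle}$ by the imprimitivity index $i$ of the image of $l_N$, identify the fiber with representations of $M$ into $H(N,i)$, and track the powers of $q$. Your three-effect bookkeeping $q^{-(k-1)i}\cdot q^{i}\cdot q^{ri}=q^{(2-k+r)i}$ is exactly the computation the paper carries out.

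The only place where the paper is more careful than your sketch is the justification of the $q^{ri}$ factor. Your phrase ``perturbed only by $\pi^{d-i}$-small terms'' is not quite the mechanism: the extra slack in $\phi|_M$ lives in the $L$-direction (spanned by $l_N'$) and is not itself $\pi^{d-i}$-small, but its \emph{norm} and its pairings with the $H(N,i)$-components vanish modulo $\pi^d$ because the decomposition $H_F=L_F\oplus H(N,i)_F$ is orthogonal and the $L$-component is forced to be highly $\pi$-divisible. The paper formalizes this as Lemma~\ref{fiberr}: it works with the inclusion $\theta\colon L\oplus H(N,i)\hookrightarrow H$ (which has cokernel $L^\vee/L$ of size $q^{n-2i}$), shows that the preimage of the fiber under $\theta^{\times r}$ is exactly $(\pi^{d+i-n}L/\pi^{d}L)^{r}\times\textup{Rep}_{H(N,i),M}(\mathcal{O}_{F}/\pi^{d})$, and divides by the kernel size $q^{r(n-2i)}$ to obtain $q^{ri}$. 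The $q^{i}$ factor you attribute to the slackened congruence $q_H(x')\equiv\pi^{-2i}N\pmod{\pi^{d-2i}}$ is packaged in the paper as a disjoint union over $\overline{x}\in\mathcal{O}_F/\pi^{i}$ indexing the possible values $\pi^{-2i}N+\pi^{d-2i}x$ (Lemma~\ref{decomcodim1}), together with the observation that $\#\textup{PRep}_{H,\langle\pi^{-2i}N+\pi^{d-2i}x\rangle}(\mathcal{O}_F/\pi^{d-i})$ is independent of $x$ for $d$ large. So the paper executes your plan via two concrete mod-$\pi^d$ lemmas rather than a heuristic or a $p$-adic volume argument, but the route is the same.
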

The proof of this theorem is based on the following lemmas.
\begin{lemma}
Let $H$ be a self-dual quadratic $\mathcal{O}_{F}$-lattice. Let $N\in\mathcal{O}_{F}$ be an element of valuation $n$, then there is a bijective map $D$ between the following sets when the positive integer $d$ is large enough, 
\begin{equation*}
    D: \textup{Rep}_{H,\langle N\rangle}(\mathcal{O}_{F}/\pi^{d}) \stackrel{\sim}\longrightarrow \bigsqcup_{i=0}^{[n/2]}\bigsqcup_{\overline{x}\in\mathcal{O}_{F}/\pi^{i}}\,\textup{PRep}_{H,\langle\pi^{-2i}N+\pi^{d-2i}x\rangle}(\mathcal{O}_{F}/\pi^{d-i}).
\end{equation*}
\label{decomcodim1}
\end{lemma}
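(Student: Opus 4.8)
The plan is to construct the bijection $D$ by stripping off the ``$\pi$-content'' of a representation, since this is exactly the elementary combinatorial input behind Theorem \ref{anadecom}. First I would identify $\textup{Rep}_{H,\langle N\rangle}(\mathcal{O}_{F}/\pi^{d})$ with the set of vectors $v\in H\otimes_{\mathcal{O}_{F}}\mathcal{O}_{F}/\pi^{d}$ with $q_{H}(v)\equiv N\pmod{\pi^{d}}$ (via $\phi\leftrightarrow v=\phi(l_{N})$), and likewise $\textup{PRep}$ with those $v$ that are moreover unimodular, i.e. $v\notin\pi\,(H\otimes\mathcal{O}_{F}/\pi^{d})$, using the Remark following the definition of the primitive local density. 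Given such a $v$, let $i=i(v)\ge 0$ be the largest integer with $v\in\pi^{i}(H\otimes\mathcal{O}_{F}/\pi^{d})$, and write $v=\pi^{i}w$ with $w$ unimodular and well defined in $H\otimes\mathcal{O}_{F}/\pi^{d-i}$. The first point to verify is that, once $d>n$, one necessarily has $i\le[n/2]$: writing $N=\pi^{n}u$ with $u\in\mathcal{O}_{F}^{\times}$, if $2i>n$ then $\pi^{2i}q_{H}(w)\equiv\pi^{n}u\pmod{\pi^{d}}$ forces $u$ to be congruent modulo $\pi^{d-n}$ to a nonunit, which is impossible.

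Next I would record $q_{H}(w)$ as an honest element of $\mathcal{O}_{F}/\pi^{d-i}$: the relation $\pi^{2i}q_{H}(w)\equiv N\pmod{\pi^{d}}$ gives $q_{H}(w)\equiv\pi^{-2i}N\pmod{\pi^{d-2i}}$, hence $q_{H}(w)=\pi^{-2i}N+\pi^{d-2i}x$ for a unique $\overline{x}\in\mathcal{O}_{F}/\pi^{i}$. Since $d>n$ one has $\nu_{\pi}(\pi^{-2i}N+\pi^{d-2i}x)=n-2i\ge 0$, so $\langle\pi^{-2i}N+\pi^{d-2i}x\rangle$ is an integral rank one lattice and $w$, being unimodular, defines an element $\psi\in\textup{PRep}_{H,\langle\pi^{-2i}N+\pi^{d-2i}x\rangle}(\mathcal{O}_{F}/\pi^{d-i})$. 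I set $D(\phi)=(i,\overline{x},\psi)$. For the inverse, given $(i,\overline{x},\psi)$ corresponding to a unimodular $w\in H\otimes\mathcal{O}_{F}/\pi^{d-i}$ with $q_{H}(w)=\pi^{-2i}N+\pi^{d-2i}x$, I would set $v=\pi^{i}w\in H\otimes\mathcal{O}_{F}/\pi^{d}$; this is well defined because $w$ is defined modulo $\pi^{d-i}$, and $q_{H}(v)=\pi^{2i}q_{H}(w)=N+\pi^{d}x\equiv N\pmod{\pi^{d}}$, so $v$ lies in $\textup{Rep}_{H,\langle N\rangle}(\mathcal{O}_{F}/\pi^{d})$.

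That the two maps are mutually inverse reduces to one elementary fact: if $w$ is unimodular in $H\otimes\mathcal{O}_{F}/\pi^{d-i}$ then $i(\pi^{i}w)=i$ exactly, since $\pi^{i}w=\pi^{i+1}w'$ in $H\otimes\mathcal{O}_{F}/\pi^{d}$ would give $w\equiv\pi w'\pmod{\pi^{d-i}}$, contradicting unimodularity; and then the decomposition $v=\pi^{i(v)}w$ pins down $w$ modulo exactly $\pi^{d-i(v)}$ and $\overline{x}$ modulo exactly $\pi^{i(v)}$, which is what makes the cardinalities match. The \emph{main obstacle} I anticipate is precisely this bookkeeping of residues at the three different moduli $\pi^{d}$, $\pi^{d-i}$ and $\pi^{d-2i}$ simultaneously, together with the range bound $i\le[n/2]$ that renders the index set on the right finite; everything uses only the module structure of $\mathcal{O}_{F}/\pi^{d}$ and the hypothesis $d>n$, so no deformation-theoretic or density-theoretic input is required beyond elementary commutative algebra.
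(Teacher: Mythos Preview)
Your proposal is correct and follows essentially the same approach as the paper's own proof: identify a representation with the image vector $v$, extract its $\pi$-content $i$, write $v=\pi^{i}w$ with $w$ primitive and well defined modulo $\pi^{d-i}$, read off the residue class $\overline{x}\in\mathcal{O}_{F}/\pi^{i}$ from $q_{H}(w)$, and invert by multiplying back by $\pi^{i}$. The only cosmetic difference is that the paper lifts $w$ to an actual element $l\in H$ and checks well-definedness of $\overline{x}$ by a direct expansion $q(l+\pi^{d-i}\delta)=q(l)+\pi^{d-i}(l,\delta)+\pi^{2d-2i}q(\delta)$, whereas you phrase the same computation as ``$q_{H}(w)$ is an honest element of $\mathcal{O}_{F}/\pi^{d-i}$''; your explicit bound $d>n$ for the inequality $i\le[n/2]$ is also exactly what the paper's ``$d$ large enough'' amounts to at this step.
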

\begin{proof}
Let $l_{N}$ be a generator of the rank 1 $\mathcal{O}_{F}$-module $\langle N\rangle$ such that $q_{\langle N\rangle}(l_{N})=N$. Any $f\in \textup{Rep}_{H, \langle N\rangle}(\mathcal{O}_{F}/\pi^{d})$ is determined by $f(\overline{l_{N}})\in H/\pi^{d}H$. There is a natural filtration on $H/\pi^{d}H$ given as follows,
\begin{equation*}
    0\subset \pi^{d-1}H/\pi^{d}H\subset \pi^{d-2}H/\pi^{d}H\subset\cdot\cdot\cdot\subset
    \pi^{2}H/\pi^{d}H\subset \pi H/\pi^{d}H\subset H/\pi^{d}H.
\end{equation*}
\par
Let $i$ be the minimal integer such that $f(\overline{l_{N}})\in \pi^{i}H/\pi^{d}H$, then $0\leq i\leq [\frac{n}{2}]$ since $\nu_{\pi}(N)=n$. Then there exists $l\in H$ such that $f(\overline{l_{N}})=\overline{\pi^{i}l
}\in \pi^{i}H/\pi^{d}H$, the image of $l$ in $H/\pi^{d-i}H$ is uniquely determined by $f$. Let $q$ be the quadratic form on $H$, then
\begin{equation*}
    N\,\,\textup{mod}\,\,\pi^{d} = \overline{q_{\langle N\rangle}}(\overline{l_{N}}) = \overline{q}(f(\overline{l_{N}})) = \pi^{2i}\overline{q}(\overline{l})=\overline{\pi^{2i}q(l)},
\end{equation*}
hence $\overline{\pi^{2i}q(l)} \equiv \pi^{-2i}N\,\,\textup{mod $\pi^{d-2i}$}$. Therefore there exists $x\in\mathcal{O}_{F}$ such that $q(l)=\pi^{-2i}N+\pi^{d-2i}x$. Next we show that $\overline{x}\in\mathcal{O}_{F}/\pi^{i}$ is independent of the choice of $l\in H$ when $d$ is large enough. Suppose $l^{\prime}$ is another element of $H$ such that $f(\overline{l_{N}})=\overline{\pi^{i}l^{\prime}
}$, then there exists $\delta\in H$ such that $l^{\prime}-l=\pi^{d-i}\delta$. Therefore when $d$ is large enough,
\begin{equation*}
    q(l^{\prime}) = q(l+\pi^{d-i}\delta) = q(l) + \pi^{d-i}(l,\delta)+\pi^{2d-2i}q(\delta) \equiv q(l)\,\,\textup{mod $\pi^{d-i}$}.
\end{equation*}
Suppose $q(l^{\prime})=\pi^{-2i}N+\pi^{d-2i}x^{\prime}$ for some $x^{\prime}\in\mathcal{O}_{F}$, the above congruence formula between $q(l)$ and $q(l^{\prime})$ implies that $x^{\prime}\equiv x\,\,\textup{mod $\pi^{i}$}$. The above construction gives the following element $D(f)$ in $\linebreak\textup{PRep}_{H,\langle\pi^{-2i}N+\pi^{d-2i}x\rangle}(\mathcal{O}_{F}/\pi^{d-i})$: the homomrphism $D(f)$ sends the generator $\overline{l_{\pi^{-2i}N+\pi^{d-2i}x}}$ of $\linebreak\langle\pi^{-2i}N+\pi^{d-2i}x\rangle/\pi^{d-i}\langle\pi^{-2i}N+\pi^{d-2i}x\rangle$
to $\overline{l}\in H/\pi^{d-i} H$.
\par
Now for any element $\varphi\in \textup{PRep}_{H,\langle\pi^{-2i}N+\pi^{d-2i}x\rangle}(\mathcal{O}_{F}/\pi^{d-i})$, we consider the following morphism,
\begin{align*}
    \tilde{\varphi}:\langle N\rangle/\pi^{d}\langle N\rangle & \longrightarrow H/\pi^{d} H,\\
    \overline{l_{N}}&\longmapsto \pi^{i}\varphi(\overline{l_{\pi^{-2i}N+\pi^{d-2i}x}}).
\end{align*}
then $\tilde{\varphi}\in \textup{Rep}_{H,\langle N\rangle}(\mathcal{O}_{F}/\pi^{d})$ because $\overline{q}(\tilde{\varphi}(\overline{l_{N}}))=\overline{\pi^{2i}(\pi^{-2i}N+\pi^{d-2i}x)}=N\,\,\textup{mod $\pi^{d}$}$. This construction gives the inverse map of $D$.
\end{proof}
Let $M$ be an integral quadratic $\mathcal{O}_{F}$-lattice of finite rank. Let $N\in\mathcal{O}_{F}$ be an element of valuation $n$. Let $M^{\sharp}=M\obot\langle N\rangle$ be a quadratic $\mathcal{O}_{F}$-lattice of 1 rank higher than $M$, there is a natural restriction map as follows for any positive integer $d$ and any self-dual quadratic $\mathcal{O}_{F}$-lattice $H$,
\begin{equation*}
    r: \textup{Rep}_{H,M^{\sharp}}(\mathcal{O}_{F}/\pi^{d})\rightarrow\textup{Rep}_{H,\langle N\rangle}(\mathcal{O}_{F}/\pi^{d}),
\end{equation*}
given by composing any element in the set $\textup{Rep}_{H,M^{\sharp}}(\mathcal{O}_{F}/\pi^{d})$ with the natural inclusion $\langle N\rangle/\pi^{d}\langle N\rangle\hookrightarrow M^{\sharp}/\pi^{d}M^{\sharp}$. The next lemma describes the fiber of the map $D\circ r$.
\begin{lemma}
Let $H$ be a self-dual quadratic $\mathcal{O}_{F}$-lattice. Let $M$ be an integral quadratic $\mathcal{O}_{F}$-lattice of finite rank $r$. Let $N\in\mathcal{O}_{F}$ be an element of valuation $n$. Let $M^{\sharp}=M\obot\langle N\rangle$ be a quadratic $\mathcal{O}_{F}$-lattice of rank $r+1$. Let $0\leq i\leq[\frac{n}{2}]$ be an integer. Let $\varphi\in \textup{PRep}_{H,\langle\pi^{-2i}N+\pi^{d-2i}x\rangle}(\mathcal{O}_{F}/\pi^{d-i})$, then for $d$ large enough,
\begin{equation*}
    \#(D\circ r)^{-1}(\varphi) = q^{ir}\cdot \#\textup{Rep}_{H(N,i),M}(\mathcal{O}_{F}/\pi^{d}).
\end{equation*}
\label{fiberr}
\end{lemma}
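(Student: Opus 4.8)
The plan is to strip off the restriction of $\psi$ to $\langle N\rangle$ and reduce to a representation count into an orthogonal complement inside $H/\pi^{d}H$. Fix $\varphi\in\textup{PRep}_{H,\langle\pi^{-2i}N+\pi^{d-2i}x\rangle}(\mathcal{O}_{F}/\pi^{d-i})$ and let $\tilde\varphi=D^{-1}(\varphi)\in\textup{Rep}_{H,\langle N\rangle}(\mathcal{O}_{F}/\pi^{d})$ be its preimage under the bijection $D$ of Lemma \ref{decomcodim1}; by the construction of $D$ we may write $\tilde\varphi(l_{N})=\pi^{i}\tilde v_{0}$ with $\tilde v_{0}\in H$ primitive. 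Since $M^{\sharp}=M\obot\langle N\rangle$ is an orthogonal direct sum, an element $\psi\in\textup{Rep}_{H,M^{\sharp}}(\mathcal{O}_{F}/\pi^{d})$ lies in $(D\circ r)^{-1}(\varphi)$ exactly when $\psi|_{\langle N\rangle}=\tilde\varphi$ and $\psi|_{M}$ is a quadratic homomorphism $M/\pi^{d}\to H/\pi^{d}$ whose image is orthogonal to $\tilde\varphi(l_{N})$. Hence $\#(D\circ r)^{-1}(\varphi)$ equals the number of quadratic homomorphisms $g\colon M/\pi^{d}\to W$, where $W\coloneqq\{\,y\in H/\pi^{d}H:(y,\tilde v_{0})\equiv 0\bmod\pi^{d-i}\,\}$ is the orthogonal complement of $\pi^{i}\tilde v_{0}$ in $H/\pi^{d}H$.

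The next step is to analyze $W$ as a quadratic $\mathcal{O}_{F}/\pi^{d}$-module. As $H$ is self-dual and $\tilde v_{0}$ is primitive, choose $u\in H$ with $(u,\tilde v_{0})=1$; then $W=(\tilde v_{0}^{\bot}\cap H)\otimes\mathcal{O}_{F}/\pi^{d}\ \oplus\ (\mathcal{O}_{F}/\pi^{i})\cdot\pi^{d-i}u$ as $\mathcal{O}_{F}/\pi^{d}$-modules, and for $d\ge 2i$ the quadratic form satisfies $q_{H}(y+c\pi^{d-i}u)\equiv q_{\tilde v_{0}^{\bot}\cap H}(y)+\pi^{d-i}c\,(y,u)\bmod\pi^{d}$, so the $\pi^{d-i}u$-direction is ``deep'' and enters only through $\pi^{d-i}$-divisible cross terms. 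I then identify $\tilde v_{0}^{\bot}\cap H$ with $H(N,i)$: the norm of $\tilde v_{0}$ is $\pi^{-2i}N+\pi^{d-2i}x$, which for $d$ large equals $\pi^{-2i}N$ times the square of a $1$-unit by Hensel's lemma, so after rescaling $\tilde v_{0}$ we may assume its norm is exactly $\pi^{-2i}N$ without changing $\tilde v_{0}^{\bot}$, and Lemma \ref{codim1} then gives $\tilde v_{0}^{\bot}\cap H\cong H(N,i)$. It will be convenient to split off a hyperbolic plane through $\tilde v_{0}$ (or, when $\pi^{-2i}N$ is non-unital, a suitable unimodular plane), writing $H(N,i)\cong\langle-\pi^{-2i}N\rangle\obot H''$ with $u$ chosen isotropic and orthogonal to $H''$, so that the cross term simplifies to $\pi^{d-i}c\,\alpha$, where $\alpha$ is the $\langle-\pi^{-2i}N\rangle$-coordinate of $g$; compare Example \ref{explictchoice}.

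With this normal form, a quadratic homomorphism $g\colon M/\pi^{d}\to W$ is a triple $(\alpha,\mu,c)$ of module homomorphisms $\alpha\colon M/\pi^{d}\to\mathcal{O}_{F}/\pi^{d}$, $\mu\colon M/\pi^{d}\to H''/\pi^{d}H''$, $c\colon M/\pi^{d}\to\mathcal{O}_{F}/\pi^{i}$ subject to $-\beta\alpha(m)^{2}+q_{H''}(\mu(m))+\pi^{d-i}c(m)\alpha(m)\equiv q_{M}(m)\bmod\pi^{d}$ for all $m$, together with the polarized bilinear identity; here $\beta=\pi^{-2i}N$ up to a square unit. The key points are then: the polarized identity is automatically implied by the quadratic one (its bilinear form is the polarization of the quadratic obstruction); reading the quadratic condition modulo $\pi^{d-i}$ forces $(\alpha,\mu)\bmod\pi^{d-i}\in\textup{Rep}_{H(N,i),M}(\mathcal{O}_{F}/\pi^{d-i})$; and, after the substitution $c\mapsto c-2\beta a$ that absorbs the change of $\alpha$ along the $\pi^{d-i}$-direction when one varies the lift, the total over the $q^{ir}$ many lifts above a fixed $\overline{(\alpha,\mu)}$ of the number of solutions $c$ of the linear equation $c(m)\alpha(m)\equiv A_{(\alpha,\mu)}(m)\bmod\pi^{i}$ equals $q^{ir}$ times the number of lifts $(\alpha,\mu)$ of $\overline{(\alpha,\mu)}$ for which $A_{(\alpha,\mu)}$ vanishes, i.e. which are genuine representations modulo $\pi^{d}$. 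Summing over all $\overline{(\alpha,\mu)}\in\textup{Rep}_{H(N,i),M}(\mathcal{O}_{F}/\pi^{d-i})$ yields $\#(D\circ r)^{-1}(\varphi)=q^{ir}\cdot\#\textup{Rep}_{H(N,i),M}(\mathcal{O}_{F}/\pi^{d})$.

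The hard part will be making this last piece of bookkeeping uniform. The obstruction $A_{(\alpha,\mu)}$ is exactly the defect of $(\alpha,\mu)$ being a representation modulo $\pi^{d}$, and the comparison above works provided the linear map $c\mapsto(m\mapsto c(m)\alpha(m))$ interacts correctly with the reduction $c\mapsto c-2\beta a$; this requires an invertible element, which is $\beta$ when $\pi^{-2i}N$ is a unit but, when $\pi^{-2i}N$ has positive valuation, must instead be supplied by a carefully chosen unimodular plane inside $H(N,i)$, and when $p=2$ one must in addition keep track of the factor $2$ in the polarization and choose the auxiliary plane so the relevant coefficient remains a unit. All of this, together with the Hensel step identifying $\tilde v_{0}^{\bot}\cap H$ with $H(N,i)$ and the fact that $\alpha\bmod\pi^{i}$ is independent of the chosen lift, is what the hypothesis ``$d$ large enough'' is used for.
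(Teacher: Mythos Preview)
Your opening move—identifying the fiber with quadratic maps $M/\pi^{d}\to W$ where $W=\{y\in H/\pi^{d}H:(y,\pi^{i}\tilde v_{0})\equiv 0\}$, and decomposing $W$ as $H(N,i)/\pi^{d}$ plus a ``deep'' rank-one piece of length $i$—is correct and is the same structural observation the paper makes. But the executions diverge, and yours does not close.

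The paper does \emph{not} use your non-orthogonal module splitting $H=H(N,i)\oplus\mathcal{O}_{F}u$. It works instead with the \emph{orthogonal} inclusion $\theta\colon L\oplus H(N,i)\hookrightarrow H$, where $L=\mathcal{O}_{F}\tilde v_{0}$. This map is not surjective, but orthogonality means the quadratic form restricts without cross terms: writing $x_{j}=x_{j}'+x_{j}''$ with $x_{j}'\in L/\pi^{d}$ and $x_{j}''\in H(N,i)/\pi^{d}$, the orthogonality constraint forces $x_{j}'$ into $\pi^{d+i-n}L/\pi^{d}L$, and on that range $q(x_{j}')\equiv 0\bmod\pi^{d}$ for $d$ large, so the quadratic and bilinear conditions see only the $H(N,i)$-components. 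Thus $\Theta^{-1}\bigl((D\circ r)^{-1}(\varphi)\bigr)=(\pi^{d+i-n}L/\pi^{d}L)^{r}\times\textup{Rep}_{H(N,i),M}(\mathcal{O}_{F}/\pi^{d})$, the surjectivity of $\bar\theta$ onto $K$ is a two-line check, and $\#\ker(\bar\theta)=\#(L^{\vee}/L)$ drops out of the tensored exact sequence. The factor $q^{ir}$ is then a ratio of two powers of $q$. No splitting of $H(N,i)$, no case analysis on $\nu_{\pi}(\pi^{-2i}N)$, no separate treatment of $p=2$.

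Your route, by contrast, carries the cross term $\pi^{d-i}c(m)\alpha(m)$ and then must argue that summing the number of solutions $c$ of $c(m)\alpha(m)\equiv A_{(\alpha,\mu)}(m)\bmod\pi^{i}$ over lifts of $(\alpha,\mu)$ equals $q^{ir}$ times the count of lifts with $A\equiv 0$. This is where the gap is. First, the phrase ``the $q^{ir}$ many lifts above a fixed $\overline{(\alpha,\mu)}$'' is already unclear: module-map lifts of $(\alpha,\mu)$ from $\bmod\,\pi^{d-i}$ to $\bmod\,\pi^{d}$ number $q^{ir\cdot\textup{rank}\,H(N,i)}$, not $q^{ir}$. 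Second, the substitution $c\mapsto c-2\beta a$ trades the $c$-variable against only the $\langle-\pi^{-2i}N\rangle$-coordinate of the lift, and you have not explained why the remaining $H''$-directions and the $\mu$-lifts decouple to produce the claimed factorization. Third, the splitting $H(N,i)\cong\langle-\pi^{-2i}N\rangle\obot H''$ on which the whole coordinate scheme rests exists only when $\pi^{-2i}N$ is a unit; you flag this and the $p=2$ issue yourself in the last paragraph but do not resolve either. The orthogonal-inclusion argument bypasses all of this in one stroke, because once the $L$-component is deep enough it simply disappears from every quadratic and bilinear constraint.
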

\begin{proof}
Let $f$ be an element in $\textup{Rep}_{H,M^{\sharp}}(\mathcal{O}_{F}/\pi^{d})$ such that $D\circ r(f)=\varphi$, by the proof of Lemma \ref{decomcodim1}, there exists $l_{N}^{\prime}\in H\backslash\pi H$ such that $f(\overline{l_{N}})=\overline{\pi^{i}l^{\prime}_{N}}$, and $q(l_{N}^{\prime})=\pi^{-2i}N$ when $d$ is large enough. 
\par
Let $\{e_{i}\}_{i=1}^{r}$ be an $\mathcal{O}_{F}$-basis of $M$, then $f$ is determined by $\{x_{i}\coloneqq f(\overline{e_{i}})\in H/\pi^{d}H\}_{i=1}^{r}$. Therefore $(D\circ r)^{-1}(\varphi)$ can be described by the following set,
\begin{align}
    (D\circ r)^{-1}(\varphi)=\big\{(x_{1},\cdot\cdot\cdot,x_{r})\in (H/\pi^{d}H)^{r}:(x_{i},\overline{\pi^{i}l^{\prime}_{N}})=0&,\,\,(x_{i},x_{j})=(\overline{e_{i}},\overline{e_{j}})\,\,\text{for $i\neq j$,}\label{des}\\ &\text{and\,\,\,}\overline{q}(x_{i})=\overline{q_{M}}(\overline{e_{i}})\,\,\text{for every $i$.}\big\}\notag.    
\end{align}
\par
Let $L$ be the rank 1 sub-lattice of $H$ generated by $l_{N}^{\prime}$. We have the following exact sequence,
\begin{equation*}
    0\longrightarrow L\oplus H(N,i)\stackrel{\theta}\longrightarrow H\longrightarrow Q\coloneqq H/L\oplus H(N,i)\longrightarrow 0.
\end{equation*}
where $\theta$ is the natural inclusion map. After tensoring the above exact sequence with $\mathcal{O}_{F}/\pi^{d}$, we get the following exact sequence by the flatness of $H$ over $\mathcal{O}_{F}$,
\begin{equation}
    0\longrightarrow \textup{Tor}_{\mathcal{O}_{F}}^{1}(Q,\mathcal{O}_{F}/\pi^{d})\longrightarrow L/\pi^{d}L\,\oplus\, H(N,i)/\pi^{d}H(N,i)\stackrel{\overline{\theta}}\longrightarrow H/\pi^{d}H\longrightarrow Q/\pi^{d}Q\longrightarrow 0.
    \label{exactsequence}
\end{equation}
Claim: Let $K = \left\{x\in H/\pi^{d}H:(x,\overline{\pi^{i}l^{\prime}_{N}})=0 \right\}$. When $d$ is large enough, for every $\overline{x}\in K$, there exists $x^{\prime}\in L$ and $x^{\prime\prime}\in H(N,i)$ such that the image of $\overline{x^{\prime}}+\overline{x^{\prime\prime}}\in L/\pi^{d}L\,\oplus\, H(N,i)/\pi^{d}H(N,i)$ under $\overline{\theta}$ in $H/\pi^{d}H$ is $\overline{x}$.
\par\textit{Proof of the claim}: We have the following decomposition in the quadratic space $V=H\otimes_{\mathcal{O}_{F}}F$,
\begin{equation*}
    x = x^{\prime} + x^{\prime\prime},
\end{equation*}
where $x^{\prime}\in L_{F}\coloneqq L\otimes_{\mathcal{O}_{F}}F$ and $x^{\prime\prime}\in (L_{F})^{\bot}\subset V$.
\par
The fact that $\overline{x}\in K$ implies that $(x^{\prime},l_{N}) = (x,l_{N})\in (\pi^{d})$, therefore $x^{\prime}\in (\pi^{d-n})\cdot l_{N}\in L_{F}$. It turns out that $x^{\prime}\in L\subset H$ when $d$ is large enough, hence $x^{\prime}=x-x^{\prime}\in H\cap\{l_{N}\}^{\bot}=H(N,i)$.
\\\par
We get the following description of the inverse image of the set $(D\circ r)^{-1}(\varphi)$ under $\Theta\coloneqq\overline{\theta}\times\cdot\cdot\cdot\times\overline{\theta}$ by (\ref{des})
\begin{equation}
    \Theta^{-1}((D\circ r)^{-1}(\varphi)) =(\pi^{d+i-n}L/\pi^{d}L)^{r}\times\textup{Rep}_{H(N,i),M}(\mathcal{O}_{F}/\pi^{d}).
    \label{keydesc}
\end{equation}
and the claim implies that the map $\Theta^{-1}((D\circ r)^{-1}(\varphi))\stackrel{\Theta}\longrightarrow (D\circ r)^{-1}(\varphi)$ is surjective.
\par
Now we compute $\#\textup{ker}(\Theta)$, which equlas $(\#\textup{ker}(\theta))^{r}$ by definition. By the exact sequence (\ref{exactsequence}), $\#\textup{ker}(\overline{\theta})=\#\textup{Tor}_{\mathcal{O}_{F}}^{1}(Q,\mathcal{O}_{F}/\pi^{d})=\# Q/\pi^{d}Q$. Therefore when $d$ is large enough, $Q/\pi^{d}Q=Q$. Since $l_{N}^{\prime}\notin \pi H$, there exists $y\in H$ such that $(l_{N}^{\prime},y)=1$. The existence of $y$ implies the following exact sequence,
\begin{align*}
    0\longrightarrow H(N,i)\stackrel{\theta}\longrightarrow H&\longrightarrow L^{\vee}\longrightarrow 0,\\
    x&\longmapsto l(x):v\in L\mapsto (x,v).
\end{align*}
Therefore $H\simeq L^{\vee}\oplus H(N,i)$ as $\mathcal{O}_{F}$-modules, and $Q\simeq L^{\vee}/L\simeq \pi^{2i-n}L/L$. Then by (\ref{keydesc})
\begin{equation*}
    \#(D\circ r)^{-1}(\varphi) = \frac{q^{r(n-i)}}{q^{r(n-2i)}}\cdot \#\textup{Rep}_{H(N,i),M}(\mathcal{O}_{F}/\pi^{d})= q^{ir}\cdot \#\textup{Rep}_{H(N,i),M}(\mathcal{O}_{F}/\pi^{d}).
\end{equation*}
\end{proof}
\textit{Proof of Theorem \ref{anadecom}}: By Lemma \ref{decomcodim1} and Lemma \ref{fiberr}, we only need to know the size of the set $\textup{PRep}_{H,\langle\pi^{-2i}N+\pi^{d-2i}x\rangle}(\mathcal{O}_{F}/\pi^{d-i})$ when $x\in\mathcal{O}_{F}$. We first show that when $d$ is large enough, the following identity holds for any $x\in\mathcal{O}_{F}$,
\begin{equation*}
    \# \textup{PRep}_{H, \langle\pi^{-2i}N+\pi^{d-2i}x\rangle}(\mathcal{O}_{F}/\pi^{d-i}) = \# \textup{PRep}_{H, \langle \pi^{-2i}N\rangle}(\mathcal{O}_{F}/\pi^{d-i}).
\end{equation*}
Because when $d$ is large enough, for any $x\in\mathcal{O}_{F}$, we could find $c_{x}\in\mathcal{O}_{F}^{\times}$ such that $c_{x}^{-2}= 1+\pi^{d}N^{-1}x$ mod $\pi^{d-i}$, then for any element $l\in \textup{PRep}_{H, \langle\pi^{-2i}N+\pi^{d-2i}x\rangle}(\mathcal{O}_{F}/\pi^{d-i})$, $c_{x}\cdot l\in \textup{PRep}_{H, \langle \pi^{-2i}N\rangle}(\mathcal{O}_{F}/\pi^{d-i})$. Let $M^{\sharp}=M\obot\langle N\rangle$, we have
\begin{align*}
    \textup{Den}(H,M\obot\langle N\rangle) &= \lim\limits_{d\rightarrow\infty}\frac{\#\textup{Rep}_{H,M^{\sharp}}(\mathcal{O}_{F}/\pi^{d})}{q^{d(k(r+1)-(r+1)(r+2)/2)}}\\
    &=\lim\limits_{d\rightarrow\infty}\sum\limits_{i=0}^{[n/2]}q^{i}\cdot\frac{\#\textup{PRep}_{H, \langle \pi^{-2i}N\rangle}(\mathcal{O}_{F}/\pi^{d-i})}{q^{(d-i)(k-1)}}\cdot\frac{q^{ir}}{q^{i(k-1)}}\cdot\frac{\#\textup{Rep}_{H(N,i),M}(\mathcal{O}_{F}/\pi^{d})}{q^{d((k-1)r-r(r+1)/2)}}\\
    &=\sum\limits_{i=0}^{[n/2]}q^{(2-k+r)i}\cdot\textup{Pden}(H, \langle \pi^{-2i}N\rangle)\cdot\textup{Den}(H(N,i), M).
\end{align*}$\hfill\qed$\\
\begin{remark}
When $p$ odd, it has been calculated explicitly that for any $N\in\mathcal{O}_{F}$ (cf. \cite[(3.3.2.1)]{LZ22}), 
\begin{equation}
    \textup{Pden}(H_{k}^{\varepsilon}, \langle N\rangle) = \begin{cases}
    1-q^{1-k}, & \textup{when $k$ is odd and $\pi\,\vert\, N$;}\\
    1+\varepsilon\chi_{F}(N)q^{(1-k)/2}, &\textup{when $k$ is odd and $\pi\nmid N$;}\\
    (1-\varepsilon q^{-k/2})(1+\varepsilon q^{1-k/2}), & \textup{when $k$ is even and $\pi\,\vert\, N$;}\\
    1-\varepsilon q^{-k/2}, & \textup{when $k$ is even and $\pi\nmid N$.}
    \end{cases}
    \label{rank1}
\end{equation}
When $p=2$, the same formula makes sense and holds true only in the case that $k$ is even and $\varepsilon=+1$.
\end{remark}
\begin{definition}
Let $M$ be a quadratic lattice of rank $r$ over $\mathcal{O}_{F}$. Let $N\in\mathcal{O}_{F}$. When $p$ is odd, we define the local density of $M$ with level $N$ to be a polynomial $\textup{Den}^{\varepsilon}_{\Delta(N)}(X,M)$ satisfying
\begin{equation*}
    \textup{Den}^{\varepsilon}_{\Delta(N)}(X,M)\,\big\vert_{X=q^{-m}} = \left\{\begin{aligned}
    &\frac{\textup{Den}(\delta_{F}(N)\obot H_{2m+r-2}^{\varepsilon},M)}{\textup{Nor}^{\varepsilon}(q^{-m},r-1)} , & \textup{When $\pi\,\vert\, N$};\\
    &\frac{\textup{Den}(\delta_{F}(N)\obot H_{2m+r-2}^{\varepsilon},M)}{\textup{Nor}^{\varepsilon\chi_{F}(N)}(q^{-m},r)}, & \textup{When $\pi\nmid N$}.
    \end{aligned}\right.
\end{equation*}
for $m\geq0$. When $p=2$, $r$ is even and $\varepsilon=+1$, the local density of $M$ with level $N$ is the polynomial $\textup{Den}^{+}_{\Delta(N)}(X,M)$ with the same evaluation formula at $X=q^{-m}$ ($m\geq0$) as above.\par
We define the derived local density of $M$ with level $N$ to be
\begin{equation*}
    \partial\textup{Den}^{\varepsilon}_{\Delta(N)}(M) = -\frac{\textup{d}}{\textup{d}X}\bigg|_{X=1}\textup{Den}^{\varepsilon}_{\Delta(N)}(X,M).
\end{equation*}
\label{twisted}
\end{definition}
\begin{theorem}
Let $M$ be a quadratic $\mathcal{O}_{F}$-lattice of finite rank $r$. Let $N\in\mathcal{O}_{F}$, then when $p$ is odd,
\begin{equation*}
    \textup{Den}^{\varepsilon}_{\Delta(N)}(X,M)=\textup{Den}^{\varepsilon}(X, M\obot\langle N\rangle)-X^{2}\cdot\textup{Den}^{\varepsilon}(X, M\obot\langle \pi^{-2}N\rangle).  
\end{equation*}
When $p=2$ and $r$ is even, the same formula for $\varepsilon=+1$ holds.\par
Moreover, if $M\obot\langle N\rangle$ can't be isometrically embedded into the quadratic space $H_{r+2}^{\varepsilon}\otimes_{\mathcal{O}_{F}}F$, then
\begin{equation*}
    \partial\textup{Den}^{\varepsilon}_{\Delta(N)}(M)= \partial\textup{Den}^{\varepsilon}(M\obot\langle N\rangle)-\partial\textup{Den}^{\varepsilon}(M\obot\langle \pi^{-2}N\rangle).
\end{equation*}
When $p=2$ and $r$ is even, the same formula for $\varepsilon=+1$ holds.
\label{anadiff}
\end{theorem}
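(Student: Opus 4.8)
The plan is to establish the polynomial identity first and then extract the derived identity by specializing at $X=1$ and differentiating. The right-hand side $\textup{Den}^{\varepsilon}(X,M\obot\langle N\rangle)-X^{2}\textup{Den}^{\varepsilon}(X,M\obot\langle\pi^{-2}N\rangle)$ is a polynomial in $X$ by Lemma \ref{poly}, so it suffices to show that its value at $X=q^{-m}$ is the number prescribed in Definition \ref{twisted} for all sufficiently large integers $m$; this simultaneously shows $\textup{Den}^{\varepsilon}_{\Delta(N)}(X,M)$ exists and equals that polynomial. We may assume $M$ is integral: otherwise $M\obot\langle N\rangle$, $M\obot\langle\pi^{-2}N\rangle$ and $M$ admit no integral representation into the integral lattices appearing in the definitions, all three densities vanish identically, and the identity reads $0=0-X^{2}\cdot0$. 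For $m$ large, $k:=2m+r+2>4$, so the splitting $H^{\varepsilon}_{k}\simeq H^{+}_{4}\obot H^{\varepsilon}_{k-4}$ of Example \ref{explictchoice} is available.

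Next I would feed $H=H^{\varepsilon}_{2m+r+2}$ into Theorem \ref{anadecom}, applied to the rank-$r$ lattice $M$ and to $N$ of valuation $n=\nu_{\pi}(N)$. The weighting exponent there becomes $q^{(2-k+r)i}=q^{-2mi}=(q^{-m})^{2i}$, and by Example \ref{explictchoice} the codimension-one lattice is $H^{\varepsilon}_{2m+r+2}(N,i)\simeq\delta_{F}(\pi^{-2i}N)\obot H^{\varepsilon}_{2m+r-2}$; combined with $\textup{Den}^{\varepsilon}(q^{-m},M\obot\langle N\rangle)\cdot\textup{Nor}^{\varepsilon}(q^{-m},r+1)=\textup{Den}(H^{\varepsilon}_{2m+r+2},M\obot\langle N\rangle)$ from Lemma \ref{poly}, this writes $\textup{Den}^{\varepsilon}(q^{-m},M\obot\langle N\rangle)$ as a sum over $0\le i\le[n/2]$. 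Running the same computation for $\pi^{-2}N$ (of valuation $n-2$), multiplying by $X^{2}=q^{-2m}$, and reindexing $i\mapsto i+1$ --- using $[(n-2)/2]=[n/2]-1$ --- shows that $X^{2}\textup{Den}^{\varepsilon}(q^{-m},M\obot\langle\pi^{-2}N\rangle)$ is exactly the $i\ge1$ tail of the first sum (and for $n\in\{0,1\}$ this tail is empty, matching $\textup{Den}^{\varepsilon}(X,M\obot\langle\pi^{-2}N\rangle)=0$, since $\pi^{-2}N\notin\mathcal{O}_{F}$ forces the integral representation scheme into a self-dual lattice to be empty). Subtracting, only the $i=0$ term survives:
\begin{equation*}
\textup{Den}^{\varepsilon}(q^{-m},M\obot\langle N\rangle)-q^{-2m}\textup{Den}^{\varepsilon}(q^{-m},M\obot\langle\pi^{-2}N\rangle)=\frac{\textup{Pden}(H^{\varepsilon}_{2m+r+2},\langle N\rangle)\cdot\textup{Den}(\delta_{F}(N)\obot H^{\varepsilon}_{2m+r-2},M)}{\textup{Nor}^{\varepsilon}(q^{-m},r+1)}.
\end{equation*}

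Comparing this with $\textup{Den}^{\varepsilon}_{\Delta(N)}(q^{-m},M)$ from Definition \ref{twisted} then reduces to the elementary identities $\textup{Pden}(H^{\varepsilon}_{2m+r+2},\langle N\rangle)=\textup{Nor}^{\varepsilon}(q^{-m},r+1)/\textup{Nor}^{\varepsilon}(q^{-m},r-1)$ when $\pi\mid N$, and $\textup{Pden}(H^{\varepsilon}_{2m+r+2},\langle N\rangle)=\textup{Nor}^{\varepsilon}(q^{-m},r+1)/\textup{Nor}^{\varepsilon\chi_{F}(N)}(q^{-m},r)$ when $\pi\nmid N$. Each follows by substituting the explicit value of $\textup{Pden}(H^{\varepsilon}_{k},\langle N\rangle)$ from (\ref{rank1}) and the explicit shape of $\textup{Nor}$ from Definition \ref{normal1}, in a short case analysis on the parity of $r$ (which governs whether $\textup{Nor}$ carries a linear prefactor); the one subtle point is that $\pi\nmid N$ forces $\nu_{\pi}(N)=0$, so $\chi_{F}(N)=\pm1$ and then $1-q^{-(r+1)}X^{2}=(1-\varepsilon\chi_{F}(N)q^{-(r+1)/2}X)(1+\varepsilon\chi_{F}(N)q^{-(r+1)/2}X)$ lets the character be absorbed into the normalizing factor. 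When $p=2$ only the even-$r$ cases arise and $k$ is even, so (\ref{rank1}) still applies. This proves the polynomial identity. For the derived statement, if $M\obot\langle N\rangle$ does not embed isometrically into $H^{\varepsilon}_{r+2}\otimes_{\mathcal{O}_{F}}F$ then neither does $M\obot\langle\pi^{-2}N\rangle$ (as $\pi^{-2}$ is a square in $F^{\times}$), hence $\textup{Den}^{\varepsilon}(1,M\obot\langle N\rangle)=\textup{Den}^{\varepsilon}(1,M\obot\langle\pi^{-2}N\rangle)=0$ and, by the identity just proved, $\textup{Den}^{\varepsilon}_{\Delta(N)}(1,M)=0$; differentiating at $X=1$, the quantity $\frac{\textup{d}}{\textup{d}X}\big|_{X=1}\big(X^{2}\textup{Den}^{\varepsilon}(X,M\obot\langle\pi^{-2}N\rangle)\big)$ equals $2\cdot0+\frac{\textup{d}}{\textup{d}X}\big|_{X=1}\textup{Den}^{\varepsilon}(X,M\obot\langle\pi^{-2}N\rangle)$, and multiplying the differentiated identity by $-1$ gives exactly $\partial\textup{Den}^{\varepsilon}_{\Delta(N)}(M)=\partial\textup{Den}^{\varepsilon}(M\obot\langle N\rangle)-\partial\textup{Den}^{\varepsilon}(M\obot\langle\pi^{-2}N\rangle)$.

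I expect the main obstacle to be the bookkeeping: tracking the telescoping reindexing and its degenerate cases $\nu_{\pi}(N)\in\{0,1\}$, and --- above all --- verifying the normalizing-factor identities across the parity cases and the $\textup{Nor}^{\varepsilon}$ versus $\textup{Nor}^{\varepsilon\chi_{F}(N)}$ dichotomy. Everything else is formal once Theorem \ref{anadecom} and Example \ref{explictchoice} are in hand.
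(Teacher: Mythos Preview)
Your proposal is correct and follows essentially the same route as the paper: apply Theorem \ref{anadecom} with $H=H^{\varepsilon}_{2m+r+2}$, use Example \ref{explictchoice} to identify $H(N,i)$ with $\delta_{F}(\pi^{-2i}N)\obot H^{\varepsilon}_{2m+r-2}$, invoke the explicit values (\ref{rank1}) to reduce the normalizing factors, and then telescope. The only cosmetic difference is that the paper first packages the sum as $\textup{Den}^{\varepsilon}(X,M\obot\langle N\rangle)=\sum_{i=0}^{[n/2]}X^{2i}\,\textup{Den}^{\varepsilon}_{\Delta(\pi^{-2i}N)}(X,M)$ and subtracts the analogous sum for $\pi^{-2}N$, whereas you subtract before converting to the twisted densities; the content is identical.
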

\begin{proof}
Recall the definition of the polynomial $\textup{Nor}^{\varepsilon}(X,n)$ in Definition \ref{normal1}, we can verify immediately by formula (\ref{rank1}) that, for any $x\in\mathcal{O}_{F}$, 
\begin{align*}
    \textup{Nor}^{\varepsilon}(q^{-m},r+1)&=\textup{Pden}(H_{2m+r+2}^{\varepsilon}, \langle x\rangle)\cdot\textup{Nor}^{\varepsilon\chi_{F}(x)}(q^{-m},r),\,\,\, \text{when $\pi\nmid x$;}\\
    \textup{Nor}^{\varepsilon}(q^{-m},r+1)&=\textup{Pden}(H_{2m+r+2}^{\varepsilon}, \langle x\rangle)\cdot\textup{Nor}^{\varepsilon}(q^{-m},r-1),\,\,\, \text{when $\pi\,\vert\, x$.}
\end{align*}
Note that when $p=2$, the above formulas only make sense for $r$ is even and $\varepsilon=+1$. Let $n=\nu_{\pi}(N)$, Theorem \ref{anadecom} and Example \ref{explictchoice} imply the following decomposition,
\begin{align*}
    \textup{Den}(H_{2m+r+2}^{\varepsilon}, M\obot\langle N\rangle) &= \sum\limits_{i=0}^{[n/2]}q^{-2mi}\cdot\textup{Pden}(H_{2m+r+2}^{\varepsilon},\langle \pi^{-2i}N\rangle)\cdot \textup{Den}(H_{2m+r+2}^{\varepsilon}(N,i), M)\\
    &=\sum\limits_{i=0}^{[n/2]}q^{-2mi}\cdot\textup{Pden}(H_{2m+r+2}^{\varepsilon},\langle \pi^{-2i}N\rangle)\cdot \textup{Den}(\delta_{F}(\pi^{-2i}N)\obot H_{2m+r-2}^{\varepsilon}, M).
\end{align*}
By Definition \ref{twisted}, when $p$ is odd, the following formula holds 
\begin{equation}
    \textup{Den}^{\varepsilon}(X,M\obot\langle N\rangle) = \sum\limits_{i=0}^{[n/2]}X^{2i}\cdot\textup{Den}^{\varepsilon}_{\Delta(\pi^{-2i}N)}(X,M).
    \label{decompp}
\end{equation}
when $p=2$ and $r$ is even, formula (\ref{decompp}) for $\varepsilon=+1$ holds.
\par
When $n=0$ or $1$, $\textup{Den}^{\varepsilon}(X,M\obot\langle N\rangle)=\textup{Den}^{\varepsilon}_{\Delta(N)}(X,M)$ and $\textup{Den}^{\varepsilon}(X,M\obot\langle \pi^{-2}N\rangle)=0$ since $\pi^{-2}N$ is not in $\mathcal{O}_{F}$, therefore $\textup{Den}_{\Delta(N)}^{\varepsilon}(X,M)=\textup{Den}^{\varepsilon}(X, M\obot\langle N\rangle)-X^{2}\cdot\textup{Den}^{\varepsilon}(X, M\obot\langle \pi^{-2}N\rangle)$. When $n\geq2$, $\textup{Den}_{\Delta(N)}^{\varepsilon}(X,M)=\textup{Den}^{\varepsilon}(X, M\obot\langle N\rangle)-X^{2}\cdot\textup{Den}^{\varepsilon}(X, M\obot\langle \pi^{-2}N\rangle)$ follows from the formula (\ref{decompp}).
\par
The fact that the lattice $M\obot\langle N\rangle$ can't be isometrically embedded into the quadratic space $H_{r+2}^{\varepsilon}\otimes_{\mathcal{O}_{F}}F$ implies that $\textup{Den}^{\varepsilon}(1, M\obot\langle N\rangle)=\textup{Den}^{\varepsilon}(1, M\obot\langle \pi^{-2}N\rangle)=0$, the second formula in the theorem follows from the first one and the definitions of the symbols $\partial\textup{Den}^{\varepsilon}_{\Delta(N)}$ and $\partial\textup{Den}^{\varepsilon}$.
\end{proof}
Now we apply Theorem \ref{anadiff} to the case that we are interested, i.e., $F=\mathbb{Q}_{p}$, $\epsilon=+1$ and $r=2$, Let $N\in\mathbb{Z}_{p}$, we get the following difference formula of local density functions,
\begin{equation}
    \textup{Den}^{+}_{\Delta(N)}(X,M)=\textup{Den}^{+}(X, M\obot\langle N\rangle)-X^{2}\cdot\textup{Den}^{+}(X, M\obot\langle p^{-2}N\rangle).
    \label{decder}
\end{equation}
Note that the lattice $M\obot\langle N\rangle$ is a sub-lattice of $\mathbb{B}\simeq\textup{End}^{0}(\mathbb{X})$ which is the unique division quaternion algebra over $\mathbb{Q}_{p}$, hence the lattice $M\obot\langle N\rangle$ can't be isometrically embedded into the quadratic space $H_{4}^{+}\otimes\mathbb{Q}_{p}\simeq\textup{M}_{2}(\mathbb{Q}_{p})$, therefore Theorem \ref{anadiff} implies the following difference formula of the derivatives of local densities,
\begin{equation}
    \partial\textup{Den}_{\Delta(N)}^{+}(M)=\partial\textup{Den}^{+}(M\obot\langle N\rangle)-\partial\textup{Den}^{+}(M\obot\langle p^{-2}N\rangle).
    \label{diffana}
\end{equation}
\par
\subsection{Induction formula}
In this section, we assume $p$ is odd. Let $N_{k}=\pi^{2k}N_{0}$ where $N_{0}\in\mathcal{O}_{F}$ has valuation 0 or 1. Next we show that when $k$ is sufficiently large, $\textup{Den}^{\varepsilon}_{\Delta(N_{k})}(X,M)$ is independent of $k$, i.e., it will become a constant polynomial.
\begin{proposition}
Let $M$ be a quadratic lattice of rank $r\geq2$ over $\mathcal{O}_{F}$ which is $\textup{GL}_{2}(\mathcal{O}_{F})$-equivalent to $\textup{diag}\{\varepsilon_{1}\pi^{a_{1}},\varepsilon_{2}\pi^{a_{2}},\cdot\cdot\cdot,\varepsilon_{r}\pi^{r}\}$, where $\varepsilon_{i}\in\mathcal{O}_{F}^{\times}$, $a_{i}\in\mathbb{N}$ and $a_{1}\leq a_{2}\leq\cdot\cdot\cdot\leq a_{r}$. Let $N_{k}=\pi^{2k}N_{0}$ where $N_{0}\in\mathcal{O}_{F}$ has valuation 0 or 1. Then when $k>\frac{a_{r}-\nu_{\pi}(N_{0})}{2}$,
\begin{equation*}
    \textup{Den}^{\varepsilon}_{\Delta(N_{k})}(X,M) = (1-\varepsilon\chi_{F}(M)X)^{-1}(1-X^{2})\cdot\textup{Den}^{\flat\varepsilon}(X,M).
\end{equation*}
Moreover, if $M\obot\langle N_{0}\rangle$ can't be isometrically embedded into the quadratic space $H_{r+2}^{\varepsilon}\otimes_{\mathcal{O}_{F}}F$, then
\begin{equation*}
   \partial\textup{Den}^{\varepsilon}_{\Delta(N_{k})}(M)=\begin{cases}
    -\varepsilon\chi_{F}(M)\cdot\textup{Den}^{\flat\varepsilon}(1,M), & \textup{if $\chi_{F}(M)\neq 0$;}\\
    2\cdot \textup{Den}^{\flat\varepsilon}(1,M), &\textup{if $\chi_{F}(M)=0$.}\\
    \end{cases}
\end{equation*}
\end{proposition}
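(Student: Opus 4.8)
The plan is to deduce the proposition by combining the analytic difference formula of Theorem~\ref{anadiff} with the induction formula for ordinary local densities of Lemma~\ref{induction}; the only substantive new ingredient will be an \emph{explicit} version of the phrase ``$\nu_{\pi}(N)$ sufficiently large'' occurring in the latter. To begin, I would apply Theorem~\ref{anadiff} to the lattice $M$ (of rank $r$) and the element $N=N_{k}$. Writing $\pi^{-2}N_{k}=\pi^{2(k-1)}N_{0}=N_{k-1}$, which lies in $\mathcal{O}_{F}$ as soon as $k\geq1$ (the case $k=0$, possible only when $a_{r}=0$, follows at once from the $\nu_{\pi}(N)\leq1$ case of Theorem~\ref{anadiff}), and noting $\langle\pi^{-2}N_{k}\rangle\simeq\langle N_{k-1}\rangle$, this gives
\begin{equation*}
    \textup{Den}^{\varepsilon}_{\Delta(N_{k})}(X,M)=\textup{Den}^{\varepsilon}(X,M\obot\langle N_{k}\rangle)-X^{2}\cdot\textup{Den}^{\varepsilon}(X,M\obot\langle N_{k-1}\rangle).
\end{equation*}

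Next I would invoke Lemma~\ref{induction} with $L=M$ and $N=N_{k}$: provided $\nu_{\pi}(N_{k})$ is large enough, it asserts
\begin{equation*}
    \textup{Den}^{\varepsilon}(X,M\obot\langle N_{k}\rangle)=X^{2}\cdot\textup{Den}^{\varepsilon}(X,M\obot\langle N_{k-1}\rangle)+(1-\varepsilon\chi_{F}(M)X)^{-1}(1-X^{2})\cdot\textup{Den}^{\flat\varepsilon}(X,M),
\end{equation*}
and subtracting the previous display from this one yields precisely the first assertion. For the ``moreover'' part I would argue purely formally from the corresponding derived statements. Since $N_{k}/N_{0}$ and $N_{k-1}/N_{0}$ are squares in $F^{\times}$, the lattices $M\obot\langle N_{k}\rangle$ and $M\obot\langle N_{k-1}\rangle$ span the same quadratic space over $F$ as $M\obot\langle N_{0}\rangle$, so the hypothesis that $M\obot\langle N_{0}\rangle$ does not embed isometrically into $H^{\varepsilon}_{r+2}\otimes_{\mathcal{O}_{F}}F$ applies to both of them. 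Hence the derived form of Theorem~\ref{anadiff} gives $\partial\textup{Den}^{\varepsilon}_{\Delta(N_{k})}(M)=\partial\textup{Den}^{\varepsilon}(M\obot\langle N_{k}\rangle)-\partial\textup{Den}^{\varepsilon}(M\obot\langle N_{k-1}\rangle)$, and the ``moreover'' clause of Lemma~\ref{induction} (again with $L=M$, $N=N_{k}$) evaluates the right-hand side to $-\varepsilon\chi_{F}(M)\cdot\textup{Den}^{\flat\varepsilon}(1,M)$ when $\chi_{F}(M)\neq0$ and to $2\cdot\textup{Den}^{\flat\varepsilon}(1,M)$ when $\chi_{F}(M)=0$, which is the claimed case distinction.

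The step I expect to be the main obstacle is to make ``$\nu_{\pi}(N_{k})$ large enough'' quantitative, i.e.\ to show it suffices that $\nu_{\pi}(N_{k})>a_{r}$, equivalently $k>\tfrac{a_{r}-\nu_{\pi}(N_{0})}{2}$, where $a_{r}$ is the largest of the exponents $a_{1}\leq\dots\leq a_{r}$ appearing in the diagonal form assumed for $M$. For this I would revisit the proof of Lemma~\ref{induction}, i.e.\ of \cite[Theorem 3.6.1]{LZ22}: its input is the behaviour, under $N\mapsto\pi^{2}N$, of the densities $\textup{Den}(\delta_{F}(N)\obot H^{\varepsilon}_{2m+r-2},M)$ entering Definition~\ref{twisted}. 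Rewriting $\delta_{F}(N)\simeq\langle-N\rangle\obot(\textup{hyperbolic plane})$, these are densities of $M$ into $\langle-N\rangle$ orthogonally summed with a self-dual lattice whose rank grows with $m$, and from the Cho--Yamauchi stratification (Lemma~\ref{stratification}) together with the explicit rank-one primitive local densities~(\ref{rank1}) one checks that the contribution of the summand $\langle-N\rangle$ becomes stationary once $\nu_{\pi}(N)$ exceeds every elementary divisor exponent of $M$, and that this stabilization is uniform over the evaluation points $X=q^{-m}$, hence passes to the polynomials $\textup{Den}^{\varepsilon}(X,-)$ themselves. Granting this threshold, both applications of Lemma~\ref{induction} above are legitimate under the hypothesis $k>\tfrac{a_{r}-\nu_{\pi}(N_{0})}{2}$, and the rest of the argument is the formal bookkeeping indicated above.
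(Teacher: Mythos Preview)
Your proposal is correct and follows essentially the same approach as the paper: combine the difference formula of Theorem~\ref{anadiff} with the induction formula of Lemma~\ref{induction}. The paper's proof is in fact terser than yours: it simply asserts that Lemma~\ref{induction} applies for $k>\tfrac{a_{r}-\nu_{\pi}(N_{0})}{2}$ without spelling out why, and for the second assertion it bypasses the derived versions of Theorem~\ref{anadiff} and Lemma~\ref{induction} entirely, instead just applying $-\tfrac{\textup{d}}{\textup{d}X}\big\vert_{X=1}$ directly to the first displayed identity (note that $(1-\varepsilon\chi_{F}(M)X)^{-1}(1-X^{2})$ is a polynomial vanishing at $X=1$ in every case, so the derivative is elementary and the non-embeddability hypothesis is not actually used). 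Your route through the derived statements is correct but slightly longer than necessary; the paper's direct differentiation is the cleaner path here.
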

\begin{proof}
Proposition \ref{anadiff} implies that for any $k\geq 1$,
\begin{equation*}
    \textup{Den}^{\varepsilon}(X,M\obot\langle N_{k}\rangle)-X^{2}\cdot\textup{Den}^{\varepsilon}(X,M\obot\langle N_{k-1}\rangle)=\textup{Den}^{\varepsilon}_{\Delta(N_{k})}(X,M).
\end{equation*}
Lemma \ref{induction} implies the following induction formula for $k>\frac{a_{r}-\nu_{\pi}(N_{0})}{2}$,
\begin{equation*}
    \textup{Den}^{\varepsilon}(X,M\obot\langle N_{k}\rangle)-X^{2}\cdot\textup{Den}^{\varepsilon}(X,M\obot\langle N_{k-1}\rangle) = (1-\varepsilon\chi_{F}(M)X)^{-1}(1-X^{2})\cdot\textup{Den}^{\flat\varepsilon}(X,M).
\end{equation*}
therefore we obtain the first formula, the second formula follows from the first one by applying $-\frac{\textup{d}}{\textup{d}X}\big\vert_{X=1}$.
\end{proof}
\subsection{Examples}In this section, we assume $p$ is odd. We will compare our formulas with known formulas given in \cite{Wed07} and \cite{SSY22}. 
\begin{example}
When $\nu_{p}(N)=1$, the above formula implies that $\textup{Den}^{+}(X,M\obot\langle N\rangle) =$ $\linebreak  \textup{Den}^{+}_{\Delta(N)}(X,M)$, suppose $M\hookrightarrow\mathbb{W}$ is $\textup{GL}_{2}(\mathbb{Z}_{p})$-equivalent to the matrix $\textup{diag}\{\varepsilon_{1}p^{a},\varepsilon_{2}p^{b}\}$ where $\varepsilon_{1},\varepsilon_{2}\in\mathbb{Z}_{p}^{\times}$ and $1\leq a\leq b$. \\
$\bullet$ When $a$ is even, by \cite[$\S$2.11]{Wed07},
\begin{align*}
    \textup{Den}^{+}(X,M\obot\langle N \rangle)=\sum\limits_{i=0}^{a/2}p^{i}X^{2i}+\sum\limits_{i=1}^{a/2}p^{i}X^{2i-1}-\sum\limits_{i=0}^{a/2}p^{i}X^{a+b+1-2i}-\sum\limits_{i=1}^{a/2}p^{i}X^{a+b+2-2i}.
\end{align*}
we compare this formula with the formula appeared in \cite[Proposition 3.6]{SSY22}, it computes $\textup{Den}^{+}_{\Delta(N)}(X,M)$ in the following way,
\begin{equation*}
    \textup{Den}^{+}_{\Delta(N)}(X,M)=\frac{\alpha_{p}(X,M,\delta_{p}(N))}{1-p^{-1}X},
\end{equation*}
where $\alpha_{p}(X,M,\delta_{p}(N))=X^{-2}(p\cdot\alpha(X)+(X-p)(R_{1}(X)+1+p^{-1}X))$, and
\begin{equation*}
    \alpha(X)  = (1-p^{-2}X^{2})\left(\sum\limits_{i=0}^{a/2}p^{i}(X^{2i}-X^{a+b+2-2i})\right),\,\,\,
    R_{1}(X) = -1 -p^{a/2}X^{b+2}+(1+p^{-1}X^{2})\sum\limits_{i=0}^{a/2}p^{i}X^{2i},
\end{equation*}
then 
\begin{align*}
    &\frac{\alpha_{p}(X,M,\delta_{p}(N))}{1-p^{-1}X} \\& = pX^{-2}\left((1+p^{-1}X)\left(\sum\limits_{i=0}^{a/2}p^{i}(X^{2i}-X^{a+b+2-2i})\right)-p^{-1}X +p^{a/2}X^{b+2}-(1-p^{-1}X^{2})\sum\limits_{i=0}^{a/2}p^{i}X^{2i}\right)\\
    & = (pX^{-2}+X^{-1})\left(\sum\limits_{i=0}^{a/2}p^{i}(X^{2i}-X^{a+b+2-2i})\right)-X^{-1}+p^{a/2+1}X^{b} +(1-pX^{-2})\sum\limits_{i=0}^{a/2}p^{i}X^{2i}\\
    & =\sum\limits_{i=0}^{a/2}p^{i+1}X^{2i-2}+\sum\limits_{i=0}^{a/2}p^{i}X^{2i-1}-\sum\limits_{i=0}^{a/2}p^{i+1}X^{a+b-2i}-\sum\limits_{i=0}^{a/2}p^{i}X^{a+b+1-2i}+\sum\limits_{i=0}^{a/2}p^{i}X^{2i}\\&-\sum\limits_{i=0}^{a/2}p^{i+1}X^{2i-2}+p^{a/2+1}X^{b}-X^{-1}=\textup{Den}^{+}(X,M\obot\langle N\rangle).
\end{align*}
$\bullet$ When $a$ is odd, again by \cite[$\S$2.11]{Wed07} (note that $b$ must be even since $M$ is anisotropic),
\begin{align*}
    \textup{Den}^{+}(X,M\obot\langle N\rangle)=&\sum\limits_{i=0}^{(a-1)/2}p^{i}X^{2i}+\sum\limits_{i=1}^{(a-1)/2}p^{i}X^{2i-1}-\sum\limits_{i=0}^{(a-1)/2}p^{i}X^{a+b+1-2i}\\&-\sum\limits_{i=1}^{(a-1)/2}p^{i}X^{a+b+2-2i}-p^{(a+1)/2}(X^{b+1}-X^{a}).
\end{align*}
we also compare this with the formula appeared in \cite[Proposition 3.6]{SSY22}, it computes $\textup{Den}^{+}_{\Delta(N)}(X,M)$ in the following way,
\begin{equation*}
    \textup{Den}_{\Delta(N)}^{+}(X,M)=\frac{\alpha_{p}(X,M,\delta_{p}(N))}{1-p^{-1}X}.
\end{equation*}
where $\alpha_{p}(X,M,\delta_{p}(N))=X^{-2}(p\cdot\alpha(X)+(X-p)(R_{1}(X)+1+p^{-1}X))$, and
\begin{align*}
    \alpha(X) & = (1-p^{-2}X^{2})\left(\sum\limits_{i=0}^{(a-1)/2}p^{i}(X^{2i}-X^{a+b+2-2i})+p^{(a+1)/2}\sum\limits_{i=a+1}^{b+1}(-X)^{k}\right),\\
    R_{1}(X) & = (1-p^{-2})\sum\limits_{i=1}^{(a+1)/2}p^{i}X^{2i}+p^{(a+1)/2}(1+p^{-1}X)\sum\limits_{i=a+2}^{b+1}(-X)^{k}.
\end{align*}
then 
\begin{align*}
    &\frac{\alpha_{p}(X,M,\delta_{p}(N))}{1-p^{-1}X}  = (pX^{-2}+X^{-1})\left(\sum\limits_{i=0}^{(a-1)/2}p^{i}(X^{2i}-X^{a+b+2-2i})+p^{(a+1)/2}\sum\limits_{i=a+1}^{b+1}(-X)^{k}\right)\\
    &-(pX^{-2}-p^{-1}X^{-2})\sum\limits_{i=1}^{(a+1)/2}p^{i}X^{2i}-p^{(a+3)/2}(1+p^{-1}X)\sum\limits_{i=a+2}^{b+1}(-X)^{k-2}-pX^{-2}-X^{-1}\\
    & = \sum\limits_{i=1}^{(a+1)/2}p^{i}X^{2i-4}+\sum\limits_{i=0}^{(a-1)/2}p^{i}X^{2i-1}-\sum\limits_{i=1}^{(a+1)/2}p^{i}X^{a+b+2-2i}-\sum\limits_{i=0}^{(a-1)/2}p^{i}X^{a+b+1-2i}\\
    &-\sum\limits_{i=2}^{(a+3)/2}p^{i}X^{2i-4}+\sum\limits_{i=0}^{(a-1)/2}p^{i}X^{2i}-pX^{-2}-X^{-1}+p^{(a+3)/2}X^{a-1}+p^{(a+1)/2}X^{a}\\& =\textup{Den}^{+}(X,M\obot\langle N \rangle).
\end{align*}
therefore we see that our computation result of $\textup{Den}^{+}_{\Delta(N)}(X,M)$ coincides with the computation in \cite{SSY22}, but our method is different: we embed $M$ and $\delta_{p}(N)$ into a higher rank self-dual quadratic lattice and do all the computations in the self-dual lattice. 
\end{example}

\begin{example}
We give examples for higher $n$ when $M\hookrightarrow\mathbb{W}$ is $\textup{GL}_{2}(\mathbb{Z}_{p})$-equivalent to the matrix $\textup{diag}\{\varepsilon_{1}p,\varepsilon_{2}p^{2}\}$ where $\varepsilon_{1},\varepsilon_{2}\in\mathbb{Z}_{p}^{\times}$. Let $N_{k}=p^{2k}N_{0}$ where $N_{0}$ is a positive integer with $\nu_{p}(N_{0})=0$ or $1$ and $k\geq1$ is an integer. By the formula in \cite[$\S$2.11]{Wed07}, $\textup{Den}^{+}(X,M\obot\langle N_{0}\rangle )=1-X^{3}$ when $\nu_{p}(N_{0})=0$, $\textup{Den}^{+}(X,M\obot\langle N_{0}\rangle )=1+pX-pX^{3}-X^{4}$ when $\nu_{p}(N_{0})=1$, and
\begin{equation*}
    \textup{Den}^{+}(X,M\obot\langle N_{k}\rangle )=
    1+pX+pX^{2}-pX^{2k+1+\nu_{p}(N_{0})}-pX^{2k+2+\nu_{p}(N_{0})}-X^{2k+3+\nu_{p}(N_{0})},\,\,\,\textup{when $k\geq1$.}
\end{equation*}
therefore we get
\begin{equation*}
    \textup{Den}^{+}_{\Delta(N_{k})}(X,M)=1+pX+(p-1)X^{2}-pX^{3}-pX^{4}.
\end{equation*}
when $k\geq1$, and $\textup{Den}^{+}_{\Delta(N_{0})}(X,M)=\textup{Den}^{+}(X,M\obot\langle N_{0}\rangle )=1-X^{3}$ or $1+pX-pX^{3}-X^{4}$ depending on $\nu_{p}(N_{0})=0$ or $1$.
\par
Therefore $\partial\textup{Den}^{+}_{\Delta(N_{0})}(M)=3$ or $2p+4$ depending on $\nu_{p}(N_{0})=0$ or $1$, and
\begin{equation*}
    \partial\textup{Den}^{+}_{\Delta(N_{k})}(M)=-\frac{\textup{d}}{\textup{d}X}\bigg|_{X=1}\textup{Den}^{+}_{\Delta(N_{k})}(X,M)=4p+2.
\end{equation*}
when $k\geq1$.
\par
We will double check our results by comparing it with the formulas of local density given in \cite[Theorem 7.1]{Yn98}. The theorem implies that for sufficiently large positive integer $m$,
\begin{equation*}
    \textup{Den}(\delta_{p}(N_{k})\obot H_{2m}^{+},M)=1+R_{1,k}(X)+R_{2,k}(X)\,\big\vert_{X=p^{-m}},
\end{equation*}
where $R_{1,k}(X)=\sum\limits_{i=1}^{8}I_{1,i,k}(X)$ and $R_{2,k}(X)=(1-p^{-1})\sum\limits_{i=1}^{8}I_{2,i,k}(X)+p^{-1}I_{2,6,k}(X)$. $I_{1,i,k}(X)$ and $I_{2,i,k}(X)$ are explicitly constructed polynomials in the beginning of section 7 of \cite{Yn98}. In our case, when $k\geq1$,
\begin{equation*}
   I_{1,1,k}(X)=(p-p^{-1})X,\,\, I_{1,2,k}(X)=-X^{2},\,\,  I_{1,3,k}(X)=0,\,\, I_{1,4,k}=-pX^{3}.
\end{equation*}
\begin{equation*}
    I_{2,1,k}(X)=I_{2,3,k}(X)=I_{2,5,k}(X)=I_{2,7,k}(X)=0.
\end{equation*}
\begin{equation*}
    I_{2,2,k}(X)=-X^{2},\,\,I_{2,4,k}(X)=-pX^{4},\,\,I_{2,6,k}(X)=X^{5},\,\,I_{2,8,k}(X)=pX^{2}.
\end{equation*}
therefore when $m$ is sufficiently large,
\begin{equation*}
    \textup{Den}(\delta_{p}(N_{k})\obot H_{2m}^{+},M)=1+(p-p^{-1})X+(p-2)X^{2}-(p+1-p^{-1})X^{3}+(1-p)X^{4}+X^{5}\,\big\vert_{X=p^{-m}}.
\end{equation*}
By Definition \ref{twisted}, when $k\geq1$,
\begin{equation*}
     \textup{Den}^{+}_{\Delta(N_{k})}(X,M)\,\big\vert_{X=p^{-m}}=\frac{\textup{Den}(\delta_{p}(N_{k})\obot H_{2m}^{+},M)}{1-p^{-m-1}}=1+pX+(p-1)X^{2}-pX^{3}-pX^{4}\,\big\vert_{X=p^{-m}}.
\end{equation*}
hence $\textup{Den}^{+}_{\Delta(N_{k})}(X,M)=1+pX+(p-1)X^{2}-pX^{3}-pX^{4}$ when $k\geq1$, this agrees with our previous calculations.
\end{example}

\begin{example}
We give another examples for higher $n$ when $M\hookrightarrow\mathbb{W}$ is $\textup{GL}_{2}(\mathbb{Z}_{p})$-equivalent to the matrix $\textup{diag}\{\varepsilon_{1}p^{2},\varepsilon_{2}p^{3}\}$ where $\varepsilon_{1},\varepsilon_{2}\in\mathbb{Z}_{p}^{\times}$. Let $N_{k}=p^{2k}N_{0}$ where $N_{0}$ is a positive integer with $\nu_{p}(N_{0})=0$ or $1$ and $k\geq1$ is an integer. By the formula in \cite[$\S$2.11]{Wed07},
\begin{align*}
    \textup{Den}^{+}(X,M\obot\langle N_{k}\rangle )=&
    1+pX+(p+p^{2})X^{2}+p^{2}X^{3}+p^{2}X^{4}-p^{2}X^{2k+1+\nu_{p}(N_{0})}-p^{2}X^{2k+2+\nu_{p}(N_{0})}\\&-(p+p^{2})X^{2k+3+\nu_{p}(N_{0})}-pX^{2k+4+\nu_{p}(N_{0})}-X^{2k+5+\nu_{p}(N_{0})},\,\,\,\textup{when $k\geq3$.}
\end{align*}
the formula (\ref{decder}) implies when $k\geq3$,
\begin{equation*}
    \textup{Den}^{+}_{\Delta(N_{k})}(X,M)=1+pX+(p^{2}+p-1)X^{2}+(p^{2}-p)X^{3}-pX^{4}-p^{2}X^{4}-p^{2}X^{5}-p^{2}X^{6}.
\end{equation*}
when $k\geq3$. Therefore $\partial\textup{Den}^{+}_{\Delta(N_{0})}(M)=2+4p+6p^{2}$ when $k\geq3$.
\par
We will double check our results by comparing it with the formulas of local density given in \cite[Theorem 7.1]{Yn98}. The theorem implies that for sufficiently large positive integer $m$,
\begin{equation*}
    \textup{Den}(\delta_{p}(N_{k})\obot H_{2m}^{+},M)=1+R_{1,k}(X)+R_{2,k}(X)\,\big\vert_{X=p^{-m}}.
\end{equation*}
where $R_{1,k}(X)=\sum\limits_{i=1}^{8}I_{1,i,k}(X)$ and $R_{2,k}(X)=(1-p^{-1})\sum\limits_{i=1}^{8}I_{2,i,k}(X)+p^{-1}I_{2,6,k}(X)$. $I_{1,i,k}(X)$ and $I_{2,i,k}(X)$ are explicitly constructed polynomials in the beginning of section 7 of \cite{Yn98}. In our case, when $k\geq3$,
\begin{equation*}
   I_{1,1,k}(X)=(p-p^{-1})X+(p^{2}-1)X^{2},\,\, I_{1,2,k}(X)=-X^{3},\,\,  I_{1,3,k}(X)=0,\,\, I_{1,4,k}=-p^{2}X^{4}.
\end{equation*}
\begin{equation*}
    I_{2,1,k}(X)=(p^{2}-1)X^{3},\,\,I_{2,3,k}(X)=I_{2,5,k}(X)=I_{2,7,k}(X)=0.
\end{equation*}
\begin{equation*}
    I_{2,2,k}(X)=-pX^{4}-pX^{5},\,\,I_{2,4,k}(X)=-p^{2}X^{5}-p^{2}X^{6},\,\,I_{2,6,k}(X)=pX^{7},\,\,I_{2,8,k}(X)=pX^{2}+p^{2}X^{4}.
\end{equation*}
therefore when $m$ is sufficiently large,
\begin{align*}
    \textup{Den}(\delta_{p}(N_{k})\obot H_{2m}^{+},M)&=1+(p-p^{-1})X+(p^{2}+p-2)X^{2}+(p^{2}-2p+p^{-1}-1)X^{3}\\&-(2p-1)X^{4}+(1-p^{2})X^{5}+(p-p^{2})X^{6}+pX^{7}\,\big\vert_{X=p^{-m}}.
\end{align*}
By Definition \ref{twisted}, when $k\geq3$,
\begin{align*}
     &\textup{Den}^{+}_{\Delta(N_{k})}(X,M)\,\big\vert_{X=p^{-m}}=\frac{\textup{Den}(\delta_{p}(N_{k})\obot H_{2m}^{+},M)}{1-p^{-m-1}}\\&=1+pX+(p^{2}+p-1)X^{2}+(p^{2}-p)X^{3}-pX^{4}-p^{2}X^{4}-p^{2}X^{5}-p^{2}X^{6}\,\big\vert_{X=p^{-m}}.
\end{align*}
hence $\textup{Den}^{+}_{\Delta(N_{k})}(X,M)=1+pX+(p^{2}+p-1)X^{2}+(p^{2}-p)X^{3}-pX^{4}-p^{2}X^{4}-p^{2}X^{5}-p^{2}X^{6}$ when $k\geq3$, this agrees with our previous calculations.
\end{example}

\section{Proof of Arithmetic Siegel-Weil formula on $\mathcal{X}_{0}(N)$}
\label{8}
\subsection{Local arithmetic Siegel-Weil formula with level $N$}
Let $p$ be a prime number. The difference formulas at the analytic side and the geometric side are combined together to prove the following theorem.
\begin{theorem}
Let $M\subset\mathbb{W}$ be a $\mathbb{Z}_{p}$-lattice of rank 2. Then
\begin{equation}
    \textup{Int}_{\mathcal{N}_{0}(N)}(M) = \partial\textup{Den}^{+}_{\Delta(N)}(M).
    \label{tws}
\end{equation}
\label{INT=DER}
\end{theorem}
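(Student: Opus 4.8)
The plan is to obtain this identity by concatenating three results already established in the paper: the geometric difference formula (Theorem~\ref{geodiff}), the hyperspecial-level local arithmetic Siegel--Weil identity in one dimension higher (Theorem~\ref{mainlz22}), and the analytic difference formula (Theorem~\ref{anadiff}, in the explicit form recorded in~(\ref{diffana})). No new geometric or representation-theoretic input is needed; the argument is a formal chain of equalities once the bookkeeping is set up.

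First I would fix the $N$-isogeny $x_0\in\mathbb{B}$ with $x_0^\vee\circ x_0=N$ used to define $\mathcal{N}_0(N)$, and record the elementary lattice identifications that link the two sides. Since the quadratic form on $\mathbb{B}=\textup{End}^0(\mathbb{X})$ is the reduced norm, $\mathbb{Z}_p\cdot x_0\simeq\langle N\rangle$ and $\mathbb{Z}_p\cdot p^{-1}x_0\simeq\langle p^{-2}N\rangle$ as rank-one quadratic lattices; and since $M\subset\mathbb{W}=\{x_0\}^{\perp}$, these sit orthogonally to $M$, so $M\obot\mathbb{Z}_p\cdot x_0=M\obot\langle N\rangle$ and $M\obot\mathbb{Z}_p\cdot p^{-1}x_0=M\obot\langle p^{-2}N\rangle$ as sublattices of $\mathbb{B}$. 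With this in place, Theorem~\ref{geodiff} expresses $\textup{Int}_{\mathcal{N}_0(N)}(M)$ as $\textup{Int}^{\sharp}(M\obot\langle N\rangle)-\textup{Int}^{\sharp}(M\obot\langle p^{-2}N\rangle)$; applying Theorem~\ref{mainlz22} to each of the two rank-$3$ lattices (both lie inside the division algebra $\mathbb{B}$, so the theorem is applicable) rewrites this as $\partial\textup{Den}^{+}(M\obot\langle N\rangle)-\partial\textup{Den}^{+}(M\obot\langle p^{-2}N\rangle)$; and this is exactly the right-hand side of~(\ref{diffana}), which equals $\partial\textup{Den}^{+}_{\Delta(N)}(M)$. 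Chaining these equalities yields~(\ref{tws}).

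The step that requires care --- rather than a genuine obstacle --- is the compatibility of the two difference formulas. On the geometric side Theorem~\ref{geodiff} is a plain difference of Euler characteristics with no weight, whereas on the analytic side the polynomial identity~(\ref{decder}) carries a factor $X^2$ in front of $\textup{Den}^{+}(X,M\obot\langle p^{-2}N\rangle)$. These reconcile because $M\obot\langle N\rangle$, being a sublattice of the division quaternion algebra $\mathbb{B}$, has Hasse invariant $-1$ and hence cannot be isometrically embedded into $H_4^{+}\otimes\mathbb{Q}_p\simeq\textup{M}_2(\mathbb{Q}_p)$; this forces $\textup{Den}^{+}(1,M\obot\langle p^{-2}N\rangle)=0$, so the $X^2$ factor contributes nothing after applying $-\frac{\textup{d}}{\textup{d}X}\big\vert_{X=1}$, and~(\ref{decder}) upgrades to~(\ref{diffana}). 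I would also check that Definition~\ref{twisted} specializes correctly, for $F=\mathbb{Q}_p$, $r=2$, $\varepsilon=+1$, to the normalization~(\ref{pcase}) used to define $\partial\textup{Den}^{+}_{\Delta(N)}(M)$ in the introduction, matching the $\textup{Nor}$-factors in the two regimes $p\mid N$ and $p\nmid N$, and note that each of Theorems~\ref{geodiff}, \ref{mainlz22} and~\ref{anadiff} is available at $p=2$ as well, so the argument is uniform in $p$. Beyond these verifications there is nothing further to do.
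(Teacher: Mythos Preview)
Your proposal is correct and follows essentially the same approach as the paper: both chain together Theorem~\ref{geodiff}, Theorem~\ref{mainlz22}, and the analytic difference formula~(\ref{diffana}) to obtain~(\ref{tws}). Your write-up is more explicit than the paper's about the lattice identifications $\mathbb{Z}_p\cdot x_0\simeq\langle N\rangle$, the reconciliation of the $X^2$ factor in~(\ref{decder}) via the non-embeddability of $M\obot\langle N\rangle$ into $\textup{M}_2(\mathbb{Q}_p)$, and the uniformity at $p=2$, but these are elaborations of details the paper treats as already established in the surrounding text.
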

\begin{proof}
Theorem \ref{geodiff} gives the following difference formula of local arithmetic intersection numbers,
\begin{equation*}
    \textup{Int}_{\mathcal{N}_{0}(N)}(M) = 
    \textup{Int}^{\sharp}(M\obot\mathbb{Z}_{p}\cdot x_{0})-\textup{Int}^{\sharp}(M\obot\mathbb{Z}_{p}\cdot p^{-1}x_{0}).
\end{equation*}
We also have the difference formula of the derived local densities (cf.(\ref{diffana})),
\begin{equation*}
    \partial\textup{Den}^{+}_{\Delta(N)}(M)= \partial\textup{Den}^{+}(M\obot\mathbb{Z}_{p}\cdot x_{0})-\partial\textup{Den}^{+}(M\obot\mathbb{Z}_{p}\cdot p^{-1}x_{0}).
\end{equation*}
Theorem \ref{mainlz22} implies that $\textup{Int}^{\sharp}(L) = \partial\textup{Den}^{+}(L)$ for any rank 3 lattice $L\subset\mathbb{B}$. Therefore $\linebreak\textup{Int}_{\mathcal{N}_{0}(N)}(M)=\partial\textup{Den}^{+}_{\Delta(N)}(M)$ holds by combining the above two difference formulas.
\end{proof}

\subsection{Intersection Numbers and Whittaker functions} Let $p$ be a prime number.
\begin{proposition}
Let $M\subset\mathbb{W}$ be a $\mathbb{Z}_{p}$-lattice of rank 2. Then
\begin{equation}
    W_{T}^{\prime}(1,0,1_{\delta_{p}(N)^{2}}) = c_{p}\cdot\textup{Int}_{\mathcal{N}_{0}(N)}(M)\cdot\textup{log}(p) 
    \label{wht}
\end{equation}
where the constant $c_{p}$ is given as follows
\begin{equation*}
    c_{p} =\begin{cases}
    (1-p^{-1})\cdot (N,-1)_{p}\cdot\vert N\vert_{p}\cdot\vert 2\vert_{p}^{3/2} & \textup{when $p\,\vert\, N$;}\\
    (1-p^{-2})\cdot (N,-1)_{p}\cdot\vert N\vert_{p}\cdot\vert 2\vert_{p}^{3/2} & \textup{when $p\nmid N$.}
    \end{cases}
\end{equation*}
\label{loccc}
\end{proposition}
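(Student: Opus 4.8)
The plan is to combine Proposition \ref{non-homo}, which expresses the local Whittaker function through a local representation density, with the normalization (\ref{pcase}) defining $\textup{Den}^{+}_{\Delta(N)}$, and then to differentiate and invoke Theorem \ref{INT=DER}. Concretely, apply Proposition \ref{non-homo} with the rank-$3$ integral lattice $\delta_p(N)\subset\mathbb{V}_p$ in the role of its ``$M$'' and the rank-$2$ lattice $M$ in the role of its ``$L$'' (so $T$ is a Gram matrix of $M$): for every integer $k\geq 0$,
\[
W_{T}(1,k,1_{\delta_p(N)^2})=C_p\cdot\textup{Den}(\delta_p(N)\obot H_{2k}^{+},M),\qquad C_p:=|\delta_p(N)^{\vee}/\delta_p(N)|_p\cdot\gamma(\mathbb{V}_p)^2\cdot|2|_p^{1/2}.
\]
By (\ref{pcase}), $\textup{Den}(\delta_p(N)\obot H_{2k}^{+},M)=\textup{Nor}^{\bullet}(p^{-k})\cdot\textup{Den}^{+}_{\Delta(N)}(p^{-k},M)$, where $\textup{Nor}^{\bullet}$ denotes $\textup{Nor}^{+}(\cdot,1)$ if $p\mid N$ and $\textup{Nor}^{(N,p)_{p}}(\cdot,2)$ if $p\nmid N$, so that $W_{T}(1,k,1_{\delta_p(N)^2})=C_p\cdot\textup{Nor}^{\bullet}(p^{-k})\cdot\textup{Den}^{+}_{\Delta(N)}(p^{-k},M)$ for all $k\geq 0$.

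Next I would upgrade this to an identity of functions of the complex variable $s$. The function $s\mapsto W_{T}(1,s,1_{\delta_p(N)^2})$ is a polynomial in $p^{-s}$ (a standard property of local Whittaker functions attached to characteristic-function sections at finite places; cf.\ \cite{KRY06}), and the right-hand side is a polynomial in $p^{-s}$ by Lemma \ref{poly} and Definition \ref{normal1}; since the two agree at the infinitely many points $s=0,1,2,\dots$ they agree identically:
\[
W_{T}(1,s,1_{\delta_p(N)^2})=C_p\cdot\textup{Nor}^{\bullet}(p^{-s})\cdot\textup{Den}^{+}_{\Delta(N)}(p^{-s},M).
\]
Now differentiate at $s=0$. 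Since $\textup{Den}^{+}_{\Delta(N)}(1,M)=0$ (the lattice $M$ does not embed isometrically into $\delta_p(N)$), the product rule leaves only the term in which $\textup{Den}^{+}_{\Delta(N)}$ is differentiated, and the chain rule with $X=p^{-s}$ gives $\tfrac{d}{ds}\big\vert_{s=0}\textup{Den}^{+}_{\Delta(N)}(p^{-s},M)=\log(p)\cdot\partial\textup{Den}^{+}_{\Delta(N)}(M)$. Hence $W_{T}'(1,0,1_{\delta_p(N)^2})=C_p\cdot\textup{Nor}^{\bullet}(1)\cdot\partial\textup{Den}^{+}_{\Delta(N)}(M)\cdot\log(p)$, and Theorem \ref{INT=DER} converts $\partial\textup{Den}^{+}_{\Delta(N)}(M)$ into $\textup{Int}_{\mathcal{N}_0(N)}(M)$.

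It then remains to identify $c_p=C_p\cdot\textup{Nor}^{\bullet}(1)$. From Definition \ref{normal1}, $\textup{Nor}^{+}(1,1)=1-p^{-1}$ and $\textup{Nor}^{\varepsilon}(1,2)=1-p^{-2}$ for any $\varepsilon$; from Example \ref{lambda}, $\delta_p(N)^{\vee}/\delta_p(N)\simeq\mathbb{Z}_p/2N$, so $|\delta_p(N)^{\vee}/\delta_p(N)|_p=|2N|_p=|2|_p|N|_p$, while $\textup{disc}(\mathbb{V}_p)\equiv -N$ and $\epsilon(\mathbb{V}_p)=(N,-1)_p$. The remaining ingredient is $\gamma(\mathbb{V}_p)^2=(N,-1)_p$, which I would deduce from $\gamma(\mathbb{V}_p)=\gamma(\textup{disc}(\mathbb{V}_p),\psi_p)^{-1}\epsilon(\mathbb{V}_p)\gamma(\psi_p)^{-3}$ together with the standard Weil-index identities $\gamma(a,\psi_p)^2=(a,-1)_p$, $\gamma(\psi_p)^2=(-1,-1)_p$ and $\epsilon(\mathbb{V}_p)^2=1$. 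Assembling these pieces gives exactly the stated $c_p$ in both cases $p\mid N$ and $p\nmid N$. I expect the main point requiring care to be precisely this constant bookkeeping — pinning down $\gamma(\mathbb{V}_p)^2$ in the conventions of \cite{Rao93} and tracking the $|2|_p$-factors — together with confirming that the variable ``$k$'' of Proposition \ref{non-homo} is the one for which $W_{T}(1,s,\cdot)$ is genuinely a polynomial in $p^{-s}$, so that the interpolation step is legitimate; the substantive content, passing from the derived local density to the arithmetic intersection number, is already supplied by Theorem \ref{INT=DER}.
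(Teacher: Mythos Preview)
Your proof is correct and follows essentially the same approach as the paper: apply Proposition \ref{non-homo} to obtain $W_T(1,k,1_{\delta_p(N)^2})$ as a constant times $\textup{Den}(\delta_p(N)\obot H_{2k}^{+},M)$, identify the constant via Example \ref{lambda} and the Weil-index formulas in \cite{Rao93}, differentiate to produce $\partial\textup{Den}^{+}_{\Delta(N)}(M)\cdot\log(p)$, and then invoke Theorem \ref{INT=DER}. The paper's argument is more terse (it writes ``Taking derivatives of both sides'' without spelling out the interpolation or the Weil-index computation), but your added details—the polynomial interpolation in $p^{-s}$, the explicit evaluation of $\textup{Nor}^{\bullet}(1)$, and the verification that $\gamma(\mathbb{V}_p)^2=(N,-1)_p$—are all correct and simply make explicit what the paper leaves implicit.
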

\begin{proof}
Recall that $\delta_{p}(N)^{\vee}/\delta_{p}(N)\simeq\mathbb{Z}_{p}/2N\mathbb{Z}_{p}$ (cf. Example \ref{lambda}). By Proposition \ref{non-homo} and the explicit formula given by Rao in the appendix of \cite{Rao93},
\begin{align}
    W_{T}(1,k,1_{\delta_{p}(N)^{2}}) & = \vert 2N\vert_{p}\cdot\gamma(\delta_{p}(N)\otimes\mathbb{Q}_{p})^{2}\cdot\vert 2\vert_{p}^{1/2}\cdot \textup{Den}(\delta_{p}(N)\oplus H_{2k}^{+},M)\notag\\
    & = \vert N\vert_{p}\cdot(N,-1)_{p}\cdot\vert 2\vert_{p}^{3/2}\cdot \textup{Den}(\delta_{p}(N)\oplus H_{2k}^{+},M).\label{wder}
\end{align}
Taking derivatives of both sides of (\ref{wder}),
\begin{equation*}
    W_{T}^{\prime}(1,0,1_{\delta_{p}(N)^{2}}) = c_{p}\cdot\partial\textup{Den}^{+}_{\Delta(N)}(M)\cdot\textup{log}(p).
\end{equation*}
The formula (\ref{wht}) follows from Theorem \ref{INT=DER}.
\end{proof}

\subsection{Proof of the main theorem}
\begin{proposition}
Let $T\in \textup{Sym}_{2}(\mathbb{Q})$ be a positive definite symmetric matrix. Let $\boldsymbol{\varphi}\in\mathscr{S}(\mathbb{V}_{f}^{2})$ be a $T$-admissible Schwartz function. Suppose $\boldsymbol\varphi=\varphi_{1}\times\varphi_{2}$ where $\varphi_{i}\in\mathscr{S}(\mathbb{V}_{f})$, then for any $\mathsf{y}\in\textup{Sym}_{2}(\mathbb{R})_{>0}$, we have
\begin{equation*}
    \widehat{\textup{deg}}(\widehat{\mathcal{Z}}(T,\mathsf{y},\boldsymbol\varphi)) = \begin{cases}
    \chi(\mathcal{Z}(T,\boldsymbol{\varphi}), \mathcal{O}_{\mathcal{Z}(t_{1},\varphi_{1})}\otimes^{\mathbb{L}}\mathcal{O}_{\mathcal{Z}(t_{2},\varphi_{2})})\cdot\textup{log}(p), & \textup{when $\text{Diff}(T,\Delta(N))=\{p\}$;}\\
    0, & \textup{when $\#\text{Diff}(T,\Delta(N))\neq1$.}
    \end{cases}
\end{equation*}
\label{intdeg}
\end{proposition}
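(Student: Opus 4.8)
\subsection*{Proof proposal}

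\emph{Setup and easy case.} Since both sides are linear in $\boldsymbol\varphi$ (Remark \ref{fuldef}), I may assume $\boldsymbol\varphi$ is the characteristic function of a $\Gamma_{0}(N)(\hat{\mathbb{Z}})$-invariant open compact subset, with $\boldsymbol\varphi=\varphi_{1}\times\varphi_{2}$ and, in the case $\textup{Diff}(T,\Delta(N))=\{p\}$, with $p$-component $\mathbf{1}_{\delta_{p}(N)^{2}}=\mathbf{1}_{\delta_{p}(N)}\times\mathbf{1}_{\delta_{p}(N)}$. By \eqref{codim2}, for positive definite $T$ one has $\widehat{\mathcal{Z}}(T,\mathsf{y},\boldsymbol{\varphi})=([\mathcal{Z}(T,\boldsymbol{\varphi})],0)$, so the dependence on $\mathsf{y}$ is vacuous and in \eqref{degreemap} the archimedean term $\tfrac12\int g$ vanishes. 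If $\#\textup{Diff}(T,\Delta(N))\neq1$ then $\mathcal{Z}(T,\boldsymbol\varphi)=\varnothing$ by Definition \ref{codim22}, so $\widehat{\textup{deg}}(\widehat{\mathcal{Z}}(T,\mathsf{y},\boldsymbol\varphi))=0=\chi(\varnothing,-)$ and both sides vanish. From now on assume $\textup{Diff}(T,\Delta(N))=\{p\}$.

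\emph{Unwinding $\widehat{\textup{deg}}$.} By Proposition \ref{onlyone} the cycle $\mathcal{Z}(T,\boldsymbol\varphi)$ is supported in the supersingular locus of $\mathcal{X}_{0}(N)_{\mathbb{F}_{p}}$, and by Corollary \ref{nocusp2} it is finite over $\mathbb{F}_{p}$; hence it is a $0$-dimensional (Artinian) Deligne--Mumford stack, say $\mathcal{Z}(T,\boldsymbol\varphi)=\bigsqcup_{i}[\textup{Spec}(R_{i})/\textup{Aut}(\mathcal{Z}_{i})]$ with each $R_{i}$ Artinian local over $\mathbb{F}_{p}$ with residue field $\kappa(\mathcal{Z}_{i})$. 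By definition of the associated cycle, $[\mathcal{Z}(T,\boldsymbol\varphi)]=\sum_{i}\textup{length}(R_{i})\,[\mathcal{Z}_{i}]$ with $\mathcal{Z}_{i}$ the reduced point; since $\log|\kappa(\mathcal{Z}_{i})|=[\kappa(\mathcal{Z}_{i}):\mathbb{F}_{p}]\log p$ and $\dim_{\mathbb{F}_{p}}R_{i}=\textup{length}(R_{i})\cdot[\kappa(\mathcal{Z}_{i}):\mathbb{F}_{p}]$, formula \eqref{degreemap} gives
\begin{equation*}
    \widehat{\textup{deg}}(\widehat{\mathcal{Z}}(T,\mathsf{y},\boldsymbol\varphi))=\sum_{i}\frac{\textup{length}(R_{i})\log|\kappa(\mathcal{Z}_{i})|}{|\textup{Aut}(\mathcal{Z}_{i})|}=\Big(\sum_{i}\frac{\dim_{\mathbb{F}_{p}}R_{i}}{|\textup{Aut}(\mathcal{Z}_{i})|}\Big)\log p=\chi\big(\mathcal{Z}(T,\boldsymbol\varphi),\mathcal{O}_{\mathcal{Z}(T,\boldsymbol\varphi)}\big)\log p.
\end{equation*}
So it remains to prove $\chi(\mathcal{Z}(T,\boldsymbol\varphi),\mathcal{O}_{\mathcal{Z}(T,\boldsymbol\varphi)})=\chi(\mathcal{Z}(T,\boldsymbol\varphi),\mathcal{O}_{\mathcal{Z}(t_{1},\varphi_{1})}\otimes^{\mathbb{L}}_{\mathcal{O}_{\mathcal{X}_{0}(N)}}\mathcal{O}_{\mathcal{Z}(t_{2},\varphi_{2})})$, i.e.\ that the derived tensor product, restricted to $\mathcal{Z}(T,\boldsymbol\varphi)$, is $\mathcal{O}_{\mathcal{Z}(T,\boldsymbol\varphi)}$ in degree $0$.

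\emph{Tor-vanishing.} As in the proof of Corollary \ref{nocusp2}, $\mathcal{Z}(T,\boldsymbol\varphi)$ is a union of connected components of $\mathcal{Z}(t_{1},\varphi_{1})\times_{\mathcal{X}_{0}(N)}\mathcal{Z}(t_{2},\varphi_{2})$, so the derived tensor product splits accordingly and I only compare the summands supported on $\mathcal{Z}(T,\boldsymbol\varphi)$. By Proposition \ref{nocusp} each $\mathcal{Z}(t_{i},\varphi_{i})$ is a generalized Cartier divisor on the regular $2$-dimensional stack $\mathcal{X}_{0}(N)$. Fix a point $z$ of $\mathcal{Z}(T,\boldsymbol\varphi)$ and let $A=\mathcal{O}^{\textup{\'et}}_{\mathcal{X}_{0}(N),z}$, a $2$-dimensional regular (strictly henselian) local ring; over $A$ the divisor $\mathcal{Z}(t_{i},\varphi_{i})$ becomes a finite disjoint union of principal effective Cartier divisors $V(g_{j})$ (resp.\ $V(h_{k})$). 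Thus $\mathcal{O}_{\mathcal{Z}(t_{1},\varphi_{1})}\otimes^{\mathbb{L}}_{\mathcal{O}_{\mathcal{X}_{0}(N)}}\mathcal{O}_{\mathcal{Z}(t_{2},\varphi_{2})}$ pulls back to $\bigoplus_{j,k}A/(g_{j})\otimes^{\mathbb{L}}_{A}A/(h_{k})$, and $\mathcal{Z}(T,\boldsymbol\varphi)$ pulls back to a union of some of the connected pieces $\textup{Spec}(A/(g_{j},h_{k}))$. Because $\dim\mathcal{Z}(T,\boldsymbol\varphi)=0$, each such $(g_{j},h_{k})$ is $\mathfrak{m}_{A}$-primary, hence a system of parameters, hence a regular sequence in the Cohen--Macaulay ring $A$; its Koszul complex resolves $A/(g_{j},h_{k})$, so $A/(g_{j})\otimes^{\mathbb{L}}_{A}A/(h_{k})=A/(g_{j},h_{k})$ with all higher Tor vanishing. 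Therefore the derived tensor product restricted to $\mathcal{Z}(T,\boldsymbol\varphi)$ is $\mathcal{O}_{\mathcal{Z}(T,\boldsymbol\varphi)}$ concentrated in degree $0$, giving the desired equality of Euler characteristics and finishing the proof.

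\emph{Expected main obstacle.} Everything except the last paragraph is bookkeeping; the real content is the Tor-vanishing, and its one subtle point is that $\mathcal{Z}(t_{1},\varphi_{1})$ and $\mathcal{Z}(t_{2},\varphi_{2})$ genuinely meet in dimension $0$ along $\mathcal{Z}(T,\boldsymbol\varphi)$. This is precisely where nonsingularity of $T$ is used: a common $1$-dimensional component through a point of $\mathcal{Z}(T,\boldsymbol\varphi)$ would, over its generic point (outside the $0$-dimensional supersingular locus, where $\textup{Hom}$ of elliptic curves has rank $\le2$), force the two special quasi-isogenies to be proportional, hence $\det T=0$ --- impossible. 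Care is also needed with the étale-local structure of the generalized Cartier divisors on the stack $\mathcal{X}_{0}(N)$, but Proposition \ref{nocusp} and Definition of a generalized Cartier divisor supply exactly what is required.
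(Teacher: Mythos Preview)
Your proof is correct and follows essentially the same approach as the paper's: reduce to $\textup{Diff}(T,\Delta(N))=\{p\}$, observe the cycle is $0$-dimensional (supported in the supersingular locus), and conclude that the two divisors meet properly so that the derived tensor product degenerates to the naive one and the Euler characteristic equals the sum of local lengths appearing in the arithmetic degree. The paper's argument is considerably terser---it simply asserts proper intersection ``because $T$ is nonsingular'' and then identifies both sides with a sum of local lengths---whereas you spell out the Tor-vanishing via the Cohen--Macaulay/regular-sequence argument in the \'etale-local regular ring and unwind the degree map more carefully; the content is the same.
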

\begin{proof}
By definition (cf. (\ref{codim2})), the arithmetic special cycle $\widehat{\mathcal{Z}}(T,\mathsf{y},\boldsymbol\varphi)=([\mathcal{Z}(T,\boldsymbol{\varphi})],0)$, therefore $\widehat{\textup{deg}}(\widehat{\mathcal{Z}}(T,\mathsf{y},\boldsymbol\varphi))$ is independent of $\mathsf{y}$. We can assume $\textup{Diff}(T,\Delta(N)) = \{p\}$ for some prime number $p$ since otherwise both sides are 0 since $\mathcal{Z}(T,\boldsymbol{\varphi})$ would be an empty stack.
\par
Let $x\in\mathcal{Z}(T,\boldsymbol{\varphi})(\overline{\mathbb{F}}_{p})$ be a geometric point, it is contained in $\mathcal{Y}_{0}(N)$ by Corollary \ref{nocusp2}, hence the special divisors $\mathcal{Z}(t_{1},\varphi_{1})$ and $\mathcal{Z}(t_{2},\varphi_{2})$ intersect properly at $x$ because $T$ is nonsingular. Then $\chi(\mathcal{Z}(T,\boldsymbol{\varphi}), \mathcal{O}_{\mathcal{Z}(t_{1},\varphi_{1})}\otimes^{\mathbb{L}}\mathcal{O}_{\mathcal{Z}(t_{2},\varphi_{2})})\cdot\textup{log}(p)$ is the sum of the length of local rings $\mathcal{O}_{\mathcal{X}_{0}(N),x}$ cut out by these two divisors times $\textup{log}\,(p)$, which is exactly $\widehat{\textup{deg}}(\widehat{\mathcal{Z}}(T,\mathsf{y},\boldsymbol\varphi))$ by the definition of the degree homomorphism.
\end{proof}
\noindent \textit{Proof of Theorem \ref{main}}: 
We first consider the case that $T$ is positive definite. By Proposition \ref{onlyone}, we only need to consider the case $\textup{Diff}(T,\Delta(N))=\{p\}$ for some prime number $p$ because otherwise both sides are 0. The same proposition and Corollary \ref{nocusp2} imply that the special cycle $\mathcal{Z}(T,\boldsymbol{\varphi})$ lies in the supersingular locus of $\mathcal{X}_{0}(N)_{\mathbb{F}_{p}}$. Then by the definition of special cycles and the formal uniformization of the special cycle $\mathcal{Z}(T,\boldsymbol{\varphi})$ (cf. Corollary \ref{puni}),
\begin{align*}
   \chi(\mathcal{Z}(T,\boldsymbol{\varphi}), \mathcal{O}_{\mathcal{Z}(t_{1},\varphi_{1})}\otimes^{\mathbb{L}}\mathcal{O}_{\mathcal{Z}(t_{2},\varphi_{2})})\cdot\textup{log}(p) =&\\ \sum\limits_{\substack{\boldsymbol{x}\in B^{\times}(\mathbb{Q})_{0}\backslash (\Delta(N)^{(p)})^{2}\\ T(\boldsymbol{x})= T}}\sum\limits_{g\in B^{\times}_{\boldsymbol{x}}(\mathbb{Q})_{0}\backslash \textup{GL}_{2}(\mathbb{A}_{f}^{p})/\Gamma_{0}(N)(\hat{\mathbb{Z}}^{p})}&\boldsymbol\varphi(g^{-1}\boldsymbol{x})\cdot \textup{Int}_{\mathcal{N}_{0}(N)}(\boldsymbol{x})\cdot\textup{log}(p).
\end{align*}
we already know that (cf. (\ref{wht}))
\begin{equation*}
    W_{T}^{\prime}(1,0,1_{\delta_{p}(N)^{2}}) = c_{p}\cdot \textup{Int}_{\mathcal{N}_{0}(N)}(\boldsymbol{x})\cdot\textup{log}(p).
\end{equation*}
with constants $c_{p}$ given by (\ref{loccc}).
\par
There exists a Haar measure on $\textup{GL}_{2}(\mathbb{A}_{f}^{p})$ such that 
\begin{equation*}
    \sum\limits_{\substack{\boldsymbol{x}\in B^{\times}(\mathbb{Q})_{0}\backslash (\Delta(N)^{(p)})^{2}\\ T(\boldsymbol{x})= T}}\sum\limits_{g\in B^{\times}_{\boldsymbol{x}}(\mathbb{Q})_{0}\backslash \textup{GL}_{2}(\mathbb{A}_{f}^{p})/\Gamma_{0}(N)(\hat{\mathbb{Z}}^{p})}\boldsymbol\varphi(g^{-1}\boldsymbol{x}) = \frac{1}{\textup{vol}(\Gamma_{0}(N)(\hat{\mathbb{Z}}^{p}))}\cdot\int_{\textup{SO}(\Delta(N)^{(p)})(\mathbb{A}_{f}^{p})}\boldsymbol\varphi^{p}(g^{-1}\boldsymbol{x})\textup{d}g.
\end{equation*}
By definition, the last integral is a product of ``local" integrals
\begin{align*}
    \int_{\textup{SO}(\Delta(N)^{(p)})(\mathbb{A}_{f}^{p})}\boldsymbol\varphi^{p}(g^{-1}\boldsymbol{x})\textup{d}g = \prod\limits_{v\neq p,\infty}\int_{\textup{SO}(\delta_{v}(N))(\mathbb{Q}_{v})}\boldsymbol\varphi_{v}(g_{v}^{-1}\boldsymbol{x})\textup{d}g_{v}.
\end{align*}
By the classical local Siegel-Weil formula which is made explicit in the work of Kudla, Rapoport and Yang \cite[Proposition 5.3.3]{KRY06}, for every place $v$ of $\mathbb{Q}$, there exists a number $d_{v}\in\mathbb{R}^{\times}$ such that
\begin{equation*}
    \int_{\textup{SO}(\delta_{v}(N))(\mathbb{Q}_{v})}\boldsymbol\varphi_{v}(g_{v}^{-1}\boldsymbol{x})\textup{d}g_{v} = d_{v}\cdot W_{T,v}(1, 0, \boldsymbol\varphi_{v}),
\end{equation*}
with $\prod\limits_{v\leq\infty}d_{v}=1$. Moreover, \cite[Lemma 5.3.9]{KRY06} implies the following,
\begin{equation*}
    \textup{vol}(\Gamma_{0}(N)_{v},dg_{v}) = d_{v}\cdot\gamma(\delta_{v}(N))^{2}\cdot\vert2\vert_{v}^{3/2}\cdot\begin{cases}
    (1-v^{-2}), &\textup{when $v\nmid N$};\\
    \vert N\vert_{v}^{-1}(1+v^{-1}) &\textup{when $v\vert N$}.
    \end{cases}
\end{equation*}
then it can be checked immediately that
\begin{equation*}
     \textup{vol}(\Gamma_{0}(N)(\hat{\mathbb{Z}}^{p})) \cdot d_{p}d_{\infty}\cdot c_{p} = 2^{-1/2}\psi(N)^{-1}\cdot\frac{3}{\pi^{2}}.
\end{equation*}
Suppose $\mathsf{z}=\mathsf{x}+i\mathsf{y}$, it's a classical result that 
\begin{equation*}
    W_{T,\infty}(g_{\mathsf{z}},0,\Phi_{\infty}^{3/2})=-2^{7/2}\pi^{2}\cdot\textup{det}(\mathsf{y})^{3/4}q^{T}.
\end{equation*}
Combining these together with the definitions made in previous sections (cf. (\ref{dewhit}) and (\ref{classi})) and Proposition \ref{intdeg}, we get the formula stated in the theorem.
\par
When $T$ is not positive definite, the equality follows from \cite[$\S$4.2]{SSY22} and our computations of the volume of $\textup{vol}(\Gamma_{0}(N)(\hat{\mathbb{Z}}))=\prod\limits_{v<\infty}\textup{vol}(\Gamma_{0}(N)_{v},dg_{v})$ above.
$\hfill\qed$
\label{proof}

\end{document}